\definecolor{darkgreen}{rgb}{0.1,0.45,0.1}
\definecolor{darkblue}{rgb}{0.1,0.1,0.4}
\definecolor{darkgrey}{rgb}{0.5,0.5,0.5}
\definecolor{darkred}{rgb}{0.6,0.0,0.0}
\global\let\figforTeXisloaded=\relax\fi
\def\ctr@ln@m#1{\ifx#1\undefined\else%
    \immediate\write16{*** Fig4TeX WARNING : \string#1 already defined.}\fi}
\def\ctr@ld@f#1#2{\ctr@ln@m#2#1#2}
\def\ctr@ln@w#1#2{\ctr@ln@m#2\csname#1\endcsname#2}
{\catcode`\/=0 \catcode`/\=12 /ctr@ld@f/gdef/BS@{\}}
\ctr@ld@f\def\ctr@lcsn@m#1{\expandafter\ifx\csname#1\endcsname\relax\else%
    \immediate\write16{*** Fig4TeX WARNING : \BS@\expandafter\string#1\space already defined.}\fi}
\ctr@ld@f\edef\colonc@tcode{\the\catcode`\:}
\ctr@ld@f\edef\semicolonc@tcode{\the\catcode`\;}
\ctr@ld@f\def\t@stc@tcodech@nge{{\let\c@tcodech@nged=\z@%
    \ifnum\colonc@tcode=\the\catcode`\:\else\let\c@tcodech@nged=\@ne\fi%
    \ifnum\semicolonc@tcode=\the\catcode`\;\else\let\c@tcodech@nged=\@ne\fi%
    \ifx\c@tcodech@nged\@ne%
    \immediate\write16{}
    \immediate\write16{!!!=============================================================!!!}
    \immediate\write16{ Fig4TeX WARNING:}
    \immediate\write16{ The category code of some characters has been changed, which will}
    \immediate\write16{ result in an error (message "Runaway argument?").}
    \immediate\write16{ This probably comes from another package that changed the category}
    \immediate\write16{ code after Fig4TeX was loaded. If that proves to be exact, the}
    \immediate\write16{ solution is to exchange the loading commands on top of your file}
    \immediate\write16{ so that Fig4TeX is loaded last. For example, in LaTeX, we should}
    \immediate\write16{ say :}
    \immediate\write16{\BS@ usepackage[french]{babel}}
    \immediate\write16{\BS@ usepackage{fig4tex}}
    \immediate\write16{!!!=============================================================!!!}
    \immediate\write16{}
    \fi}}
\ctr@ld@f\def\FigforTeX{F\kern-.05em i\kern-.05em g\kern-.1em\raise-.14em\hbox{4}\kern-.19em\TeX}
\ctr@ld@f\def\W@rnmesoldA#1{\W@rnmesold}
\ctr@ld@f\def\W@rnmesoldAB#1(#2){\W@rnmesold}
\ctr@ld@f\def\W@rnmesold{%
    \immediate\write16{}
    \immediate\write16{!!!=============================================================!!!}
    \immediate\write16{ Fig4TeX WARNING:}
    \immediate\write16{ The file to be compiled is not compatible with the current version}
    \immediate\write16{ of Fig4TeX. To fix that, upgrade the source file (mainly change \BS@ ps*}
    \immediate\write16{ macros by \BS@ fig* macros), or use fig4tex184.tex instead (\BS@ input fig4tex184}
    \immediate\write16{ or \BS@ usepackage{fig4tex184}).}
    \immediate\write16{!!!=============================================================!!!}
    \immediate\write16{}}
\ctr@ln@m\psbeginfig\let\psbeginfig\W@rnmesoldA
\ctr@ln@m\psset\let\psset\W@rnmesoldAB
\ctr@ln@m\pssetdefault\let\pssetdefault\W@rnmesoldAB
\ctr@ln@m\pssetupdate\let\pssetupdate\W@rnmesoldA
\ctr@ln@w{newdimen}\epsil@n\epsil@n=0.00005pt
\ctr@ln@w{newdimen}\Cepsil@n\Cepsil@n=0.005pt
\ctr@ln@w{newdimen}\dcq@\dcq@=254pt
\ctr@ln@w{newdimen}\PI@\PI@=3.141592pt
\ctr@ln@w{newdimen}\DemiPI@deg\DemiPI@deg=90pt
\ctr@ln@w{newdimen}\PI@deg\PI@deg=180pt
\ctr@ln@w{newdimen}\DePI@deg\DePI@deg=360pt
\ctr@ld@f\chardef\t@n=10
\ctr@ld@f\chardef\c@nt=100
\ctr@ld@f\chardef\@lxxiv=74
\ctr@ld@f\chardef\@xci=91
\ctr@ld@f\mathchardef\@nMnCQn=9949
\ctr@ld@f\chardef\@vi=6
\ctr@ld@f\chardef\@xxx=30
\ctr@ld@f\chardef\@lvi=56
\ctr@ld@f\chardef\@@lxxi=71
\ctr@ld@f\chardef\@lxxxv=85
\ctr@ld@f\mathchardef\@@mmmmlxviii=4068
\ctr@ld@f\mathchardef\@ccclx=360
\ctr@ld@f\mathchardef\@dccxx=720
\ctr@ln@w{newcount}\p@rtent \ctr@ln@w{newcount}\f@ctech \ctr@ln@w{newcount}\result@tent
\ctr@ln@w{newdimen}\v@lmin \ctr@ln@w{newdimen}\v@lmax \ctr@ln@w{newdimen}\v@leur
\ctr@ln@w{newdimen}\result@t\ctr@ln@w{newdimen}\result@@t
\ctr@ln@w{newdimen}\mili@u \ctr@ln@w{newdimen}\c@rre \ctr@ln@w{newdimen}\delt@
\ctr@ld@f\def\degT@rd{0.017453 }  % pi/180
\ctr@ld@f\def\rdT@deg{57.295779 } % 180/pi
\ctr@ln@m\v@leurseule
{\catcode`p=12 \catcode`t=12 \gdef\v@leurseule#1pt{#1}}
\ctr@ld@f\def\repdecn@mb#1{\expandafter\v@leurseule\the#1\space}
\ctr@ld@f\def\arct@n#1(#2,#3){{\v@lmin=#2\v@lmax=#3%
    \maxim@m{\mili@u}{-\v@lmin}{\v@lmin}\maxim@m{\c@rre}{-\v@lmax}{\v@lmax}%
    \delt@=\mili@u\m@ech\mili@u%
    \ifdim\c@rre>\@nMnCQn\mili@u\divide\v@lmax\tw@\c@lATAN\v@leur(\z@,\v@lmax)% DY > 9949 DX
    \else%
    \maxim@m{\mili@u}{-\v@lmin}{\v@lmin}\maxim@m{\c@rre}{-\v@lmax}{\v@lmax}%
    \m@ech\c@rre%
    \ifdim\mili@u>\@nMnCQn\c@rre\divide\v@lmin\tw@% DX > 9949 DY
    \maxim@m{\mili@u}{-\v@lmin}{\v@lmin}\c@lATAN\v@leur(\mili@u,\z@)%
    \else\c@lATAN\v@leur(\delt@,\v@lmax)\fi\fi%
    \ifdim\v@lmin<\z@\v@leur=-\v@leur\ifdim\v@lmax<\z@\advance\v@leur-\PI@%
    \else\advance\v@leur\PI@\fi\fi%
    \global\result@t=\v@leur}#1=\result@t}
\ctr@ld@f\def\m@ech#1{\ifdim#1>1.646pt\divide\mili@u\t@n\divide\c@rre\t@n\m@ech#1\fi}
\ctr@ld@f\def\c@lATAN#1(#2,#3){{\v@lmin=#2\v@lmax=#3\v@leur=\z@\delt@=\tw@ pt%
    \un@iter{0.785398}{\v@lmax<}%
    \un@iter{0.463648}{\v@lmax<}%
    \un@iter{0.244979}{\v@lmax<}%
    \un@iter{0.124355}{\v@lmax<}%
    \un@iter{0.062419}{\v@lmax<}%
    \un@iter{0.031240}{\v@lmax<}%
    \un@iter{0.015624}{\v@lmax<}%
    \un@iter{0.007812}{\v@lmax<}%
    \un@iter{0.003906}{\v@lmax<}%
    \un@iter{0.001953}{\v@lmax<}%
    \un@iter{0.000976}{\v@lmax<}%
    \un@iter{0.000488}{\v@lmax<}%
    \un@iter{0.000244}{\v@lmax<}%
    \un@iter{0.000122}{\v@lmax<}%
    \un@iter{0.000061}{\v@lmax<}%
    \un@iter{0.000030}{\v@lmax<}%
    \un@iter{0.000015}{\v@lmax<}%
    \global\result@t=\v@leur}#1=\result@t}
\ctr@ld@f\def\un@iter#1#2{%
    \divide\delt@\tw@\edef\dpmn@{\repdecn@mb{\delt@}}%
    \mili@u=\v@lmin%
    \ifdim#2\z@%
      \advance\v@lmin-\dpmn@\v@lmax\advance\v@lmax\dpmn@\mili@u%
      \advance\v@leur-#1pt%
    \else%
      \advance\v@lmin\dpmn@\v@lmax\advance\v@lmax-\dpmn@\mili@u%
      \advance\v@leur#1pt%
    \fi}
\ctr@ld@f\def\c@ssin#1#2#3{\expandafter\ifx\csname COS@\number#3\endcsname\relax\c@lCS{#3pt}%
    \expandafter\xdef\csname COS@\number#3\endcsname{\repdecn@mb\result@t}%
    \expandafter\xdef\csname SIN@\number#3\endcsname{\repdecn@mb\result@@t}\fi%
    \edef#1{\csname COS@\number#3\endcsname}\edef#2{\csname SIN@\number#3\endcsname}}
\ctr@ld@f\def\c@lCS#1{{\mili@u=#1\p@rtent=\@ne%
    \relax\ifdim\mili@u<\z@\red@ng<-\else\red@ng>+\fi\f@ctech=\p@rtent%
    \relax\ifdim\mili@u<\z@\mili@u=-\mili@u\f@ctech=-\f@ctech\fi\c@@lCS}}
\ctr@ld@f\def\c@@lCS{\v@lmin=\mili@u\c@rre=-\mili@u\advance\c@rre\DemiPI@deg\v@lmax=\c@rre%
    \mili@u\@@lxxi\mili@u\divide\mili@u\@@mmmmlxviii%
    \edef\v@larg{\repdecn@mb{\mili@u}}\mili@u=-\v@larg\mili@u%
    \edef\v@lmxde{\repdecn@mb{\mili@u}}%
    \c@rre\@@lxxi\c@rre\divide\c@rre\@@mmmmlxviii%
    \edef\v@largC{\repdecn@mb{\c@rre}}\c@rre=-\v@largC\c@rre%
    \edef\v@lmxdeC{\repdecn@mb{\c@rre}}%
    \fctc@s\mili@u\v@lmin\global\result@t\p@rtent\v@leur%
    \let\t@mp=\v@larg\let\v@larg=\v@largC\let\v@largC=\t@mp%
    \let\t@mp=\v@lmxde\let\v@lmxde=\v@lmxdeC\let\v@lmxdeC=\t@mp%
    \fctc@s\c@rre\v@lmax\global\result@@t\f@ctech\v@leur}
\ctr@ld@f\def\fctc@s#1#2{\v@leur=#1\relax\ifdim#2<\@lxxxv\p@\cosser@h\else\sinser@t\fi}
\ctr@ld@f\def\cosser@h{\advance\v@leur\@lvi\p@\divide\v@leur\@lvi%
    \v@leur=\v@lmxde\v@leur\advance\v@leur\@xxx\p@%
    \v@leur=\v@lmxde\v@leur\advance\v@leur\@ccclx\p@%
    \v@leur=\v@lmxde\v@leur\advance\v@leur\@dccxx\p@\divide\v@leur\@dccxx}
\ctr@ld@f\def\sinser@t{\v@leur=\v@lmxdeC\p@\advance\v@leur\@vi\p@%
    \v@leur=\v@largC\v@leur\divide\v@leur\@vi}
\ctr@ld@f\def\red@ng#1#2{\relax\ifdim\mili@u#1#2\DemiPI@deg\advance\mili@u#2-\PI@deg%
    \p@rtent=-\p@rtent\red@ng#1#2\fi}
\ctr@ld@f\def\pr@c@lCS#1#2#3{\ctr@lcsn@m{COS@\number#3 }%
    \expandafter\xdef\csname COS@\number#3\endcsname{#1}%
    \expandafter\xdef\csname SIN@\number#3\endcsname{#2}}
\pr@c@lCS{1}{0}{0}
\pr@c@lCS{0.7071}{0.7071}{45}\pr@c@lCS{0.7071}{-0.7071}{-45}
\pr@c@lCS{0}{1}{90}          \pr@c@lCS{0}{-1}{-90}
\pr@c@lCS{-1}{0}{180}        \pr@c@lCS{-1}{0}{-180}
\pr@c@lCS{0}{-1}{270}        \pr@c@lCS{0}{1}{-270}
\ctr@ld@f\def\invers@#1#2{{\v@leur=#2\maxim@m{\v@lmax}{-\v@leur}{\v@leur}%
    \f@ctech=\@ne\m@inv@rs%
    \multiply\v@leur\f@ctech\edef\v@lv@leur{\repdecn@mb{\v@leur}}%
    \p@rtentiere{\p@rtent}{\v@leur}\v@lmin=\p@\divide\v@lmin\p@rtent%
    \inv@rs@\multiply\v@lmax\f@ctech\global\result@t=\v@lmax}#1=\result@t}
\ctr@ld@f\def\m@inv@rs{\ifdim\v@lmax<\p@\multiply\v@lmax\t@n\multiply\f@ctech\t@n\m@inv@rs\fi}
\ctr@ld@f\def\inv@rs@{\v@lmax=-\v@lmin\v@lmax=\v@lv@leur\v@lmax%
    \advance\v@lmax\tw@ pt\v@lmax=\repdecn@mb{\v@lmin}\v@lmax%
    \delt@=\v@lmax\advance\delt@-\v@lmin\ifdim\delt@<\z@\delt@=-\delt@\fi%
    \ifdim\delt@>\epsil@n\v@lmin=\v@lmax\inv@rs@\fi}
\ctr@ld@f\def\minim@m#1#2#3{\relax\ifdim#2<#3#1=#2\else#1=#3\fi}
\ctr@ld@f\def\maxim@m#1#2#3{\relax\ifdim#2>#3#1=#2\else#1=#3\fi}
\ctr@ld@f\def\p@rtentiere#1#2{#1=#2\divide#1by65536 }
\ctr@ld@f\def\r@undint#1#2{{\v@leur=#2\divide\v@leur\t@n\p@rtentiere{\p@rtent}{\v@leur}%
    \v@leur=\p@rtent pt\global\result@t=\t@n\v@leur}#1=\result@t}
\ctr@ld@f\def\sqrt@#1#2{{\v@leur=#2%
    \minim@m{\v@lmin}{\p@}{\v@leur}\maxim@m{\v@lmax}{\p@}{\v@leur}%
    \f@ctech=\@ne\m@sqrt@\sqrt@@%
    \mili@u=\v@lmin\advance\mili@u\v@lmax\divide\mili@u\tw@\multiply\mili@u\f@ctech%
    \global\result@t=\mili@u}#1=\result@t}
\ctr@ld@f\def\m@sqrt@{\ifdim\v@leur>\dcq@\divide\v@leur\c@nt\v@lmax=\v@leur%
    \multiply\f@ctech\t@n\m@sqrt@\fi}
\ctr@ld@f\def\sqrt@@{\mili@u=\v@lmin\advance\mili@u\v@lmax\divide\mili@u\tw@%
    \c@rre=\repdecn@mb{\mili@u}\mili@u%
    \ifdim\c@rre<\v@leur\v@lmin=\mili@u\else\v@lmax=\mili@u\fi%
    \delt@=\v@lmax\advance\delt@-\v@lmin\ifdim\delt@>\epsil@n\sqrt@@\fi}
\ctr@ld@f\def\extrairelepremi@r#1\de#2{\expandafter\lepremi@r#2@#1#2}
\ctr@ld@f\def\lepremi@r#1,#2@#3#4{\def#3{#1}\def#4{#2}\ignorespaces}
\ctr@ld@f\def\@cfor#1:=#2\do#3{%
  \edef\@fortemp{#2}%
  \ifx\@fortemp\empty\else\@cforloop#2,\@nil,\@nil\@@#1{#3}\fi}
\ctr@ln@m\@nextwhile
\ctr@ld@f\def\@cforloop#1,#2\@@#3#4{%
  \def#3{#1}%
  \ifx#3\Fig@nnil\let\@nextwhile=\Fig@fornoop\else#4\relax\let\@nextwhile=\@cforloop\fi%
  \@nextwhile#2\@@#3{#4}}

\ctr@ld@f\def\@ecfor#1:=#2\do#3{%
  \def\@@cfor{\@cfor#1:=}%
  \edef\@@@cfor{#2}%
  \expandafter\@@cfor\@@@cfor\do{#3}}
\ctr@ld@f\def\Fig@nnil{\@nil}
\ctr@ld@f\def\Fig@fornoop#1\@@#2#3{}
\ctr@ln@m\list@@rg
\ctr@ld@f\def\trtlis@rg#1#2{\def\list@@rg{#1}%
    \@ecfor\p@rv@l:=\list@@rg\do{\expandafter#2\p@rv@l|}}
\ctr@ld@f\def\trtlis@rgtok#1{\let@xte={}\let\n@xt\addt@t@xt\addt@t@xt #1}
\ctr@ln@m\M@cro
\ctr@ln@m\n@xt
\ctr@ld@f\def\addt@t@xt#1{\if#1|\let\n@xt\relax\else%
    \if#1,\expandafter\M@cro\the\let@xte|\let@xte={}%
    \else\let@xte=\expandafter{\the\let@xte #1}\fi\fi\n@xt}
\ctr@ln@w{newbox}\b@xvisu
\ctr@ln@w{newtoks}\let@xte
\ctr@ln@w{newif}\ifitis@K
\ctr@ln@w{newcount}\s@mme
\ctr@ln@w{newcount}\l@mbd@un \ctr@ln@w{newcount}\l@mbd@de
\ctr@ln@w{newcount}\superc@ntr@l\superc@ntr@l=\@ne        % Controle impose
\ctr@ln@w{newcount}\typec@ntr@l\typec@ntr@l=\superc@ntr@l % Controle souhaite
\ctr@ln@w{newdimen}\v@lX  \ctr@ln@w{newdimen}\v@lY  \ctr@ln@w{newdimen}\v@lZ
\ctr@ln@w{newdimen}\v@lXa \ctr@ln@w{newdimen}\v@lYa \ctr@ln@w{newdimen}\v@lZa
\ctr@ln@w{newdimen}\unit@\unit@=\p@ % Initialisation a la valeur par defaut.
\ctr@ld@f\def\unit@util{pt}
\ctr@ld@f\def\ptT@ptps{0.996264 }
\ctr@ld@f\def\ptpsT@pt{1.00375 }
\ctr@ld@f\def\ptT@unit@{1} % Initialisation correspondant a la valeur par defaut de \unit@
\ctr@ld@f\def\setunit@#1{\def\unit@util{#1}\setunit@@#1:\invers@{\result@t}{\unit@}%
    \edef\ptT@unit@{\repdecn@mb\result@t}}
\ctr@ld@f\def\setunit@@#1#2:{\ifcat#1a\unit@=\@ne#1#2\else\unit@=#1#2\fi}
\ctr@ld@f\def\d@fm@cdim#1#2{{\v@leur=#2\v@leur=\ptT@unit@\v@leur\xdef#1{\repdecn@mb\v@leur}}}
\ctr@ln@w{newif}\ifBdingB@x\BdingB@xtrue
\ctr@ln@w{newdimen}\c@@rdXmin \ctr@ln@w{newdimen}\c@@rdYmin  % Dimensions de la BoundingBox
\ctr@ln@w{newdimen}\c@@rdXmax \ctr@ln@w{newdimen}\c@@rdYmax
\ctr@ld@f\def\b@undb@x#1#2{\ifBdingB@x%
    \relax\ifdim#1<\c@@rdXmin\global\c@@rdXmin=#1\fi%
    \relax\ifdim#2<\c@@rdYmin\global\c@@rdYmin=#2\fi%
    \relax\ifdim#1>\c@@rdXmax\global\c@@rdXmax=#1\fi%
    \relax\ifdim#2>\c@@rdYmax\global\c@@rdYmax=#2\fi\fi}
\ctr@ld@f\def\b@undb@xP#1{{\Figg@tXY{#1}\b@undb@x{\v@lX}{\v@lY}}}
\ctr@ld@f\def\ellBB@x#1;#2,#3(#4,#5,#6){{\s@uvc@ntr@l\et@tellBB@x%
    \setc@ntr@l{2}\figptell-2::#1;#2,#3(#4,#6)\b@undb@xP{-2}%
    \figptell-2::#1;#2,#3(#5,#6)\b@undb@xP{-2}%
    \c@ssin{\C@}{\S@}{#6}\v@lmin=\C@ pt\v@lmax=\S@ pt%
    \mili@u=#3\v@lmin\delt@=#2\v@lmax\arct@n\v@leur(\delt@,\mili@u)%
    \mili@u=-#3\v@lmax\delt@=#2\v@lmin\arct@n\c@rre(\delt@,\mili@u)%
    \v@leur=\rdT@deg\v@leur\advance\v@leur-\DePI@deg%
    \c@rre=\rdT@deg\c@rre\advance\c@rre-\DePI@deg%
    \v@lmin=#4pt\v@lmax=#5pt%
    \loop\ifdim\v@leur<\v@lmax\ifdim\v@leur>\v@lmin%
    \edef\@ngle{\repdecn@mb\v@leur}\figptell-2::#1;#2,#3(\@ngle,#6)%
    \b@undb@xP{-2}\fi\advance\v@leur\PI@deg\repeat%
    \loop\ifdim\c@rre<\v@lmax\ifdim\c@rre>\v@lmin%
    \edef\@ngle{\repdecn@mb\c@rre}\figptell-2::#1;#2,#3(\@ngle,#6)%
    \b@undb@xP{-2}\fi\advance\c@rre\PI@deg\repeat%
    \resetc@ntr@l\et@tellBB@x}\ignorespaces}
\ctr@ld@f\def\initb@undb@x{\c@@rdXmin=\maxdimen\c@@rdYmin=\maxdimen%
    \c@@rdXmax=-\maxdimen\c@@rdYmax=-\maxdimen}
\ctr@ld@f\def\c@ntr@lnum#1{%
    \relax\ifnum\typec@ntr@l=\@ne%
    \ifnum#1<\z@%
    \immediate\write16{*** Forbidden point number (#1). Abort.}\end\fi\fi%
    \set@bjc@de{#1}}
\ctr@ln@m\objc@de
\ctr@ld@f\def\set@bjc@de#1{\edef\objc@de{@BJ\ifnum#1<\z@ M\romannumeral-#1\else\romannumeral#1\fi}}
\s@mme=\m@ne\loop\ifnum\s@mme>-19
  \set@bjc@de{\s@mme}\ctr@lcsn@m\objc@de\ctr@lcsn@m{\objc@de T}
\advance\s@mme\m@ne\repeat
\s@mme=\@ne\loop\ifnum\s@mme<6
  \set@bjc@de{\s@mme}\ctr@lcsn@m\objc@de\ctr@lcsn@m{\objc@de T}
\advance\s@mme\@ne\repeat
\ctr@ld@f\def\setc@ntr@l#1{\ifnum\superc@ntr@l>#1\typec@ntr@l=\superc@ntr@l%
    \else\typec@ntr@l=#1\fi}
\ctr@ld@f\def\resetc@ntr@l#1{\global\superc@ntr@l=#1\setc@ntr@l{#1}}
\ctr@ld@f\def\s@uvc@ntr@l#1{\edef#1{\the\superc@ntr@l}}
\ctr@ln@m\c@lproscal
\ctr@ld@f\def\c@lproscalDD#1[#2,#3]{{\Figg@tXY{#2}%
    \edef\Xu@{\repdecn@mb{\v@lX}}\edef\Yu@{\repdecn@mb{\v@lY}}\Figg@tXY{#3}%
    \global\result@t=\Xu@\v@lX\global\advance\result@t\Yu@\v@lY}#1=\result@t}
\ctr@ld@f\def\c@lproscalTD#1[#2,#3]{{\Figg@tXY{#2}\edef\Xu@{\repdecn@mb{\v@lX}}%
    \edef\Yu@{\repdecn@mb{\v@lY}}\edef\Zu@{\repdecn@mb{\v@lZ}}%
    \Figg@tXY{#3}\global\result@t=\Xu@\v@lX\global\advance\result@t\Yu@\v@lY%
    \global\advance\result@t\Zu@\v@lZ}#1=\result@t}
\ctr@ld@f\def\c@lprovec#1{%
    \det@rmC\v@lZa(\v@lX,\v@lY,\v@lmin,\v@lmax)%
    \det@rmC\v@lXa(\v@lY,\v@lZ,\v@lmax,\v@leur)%
    \det@rmC\v@lYa(\v@lZ,\v@lX,\v@leur,\v@lmin)%
    \Figv@ctCreg#1(\v@lXa,\v@lYa,\v@lZa)}
\ctr@ld@f\def\det@rm#1[#2,#3]{{\Figg@tXY{#2}\Figg@tXYa{#3}%
    \delt@=\repdecn@mb{\v@lX}\v@lYa\advance\delt@-\repdecn@mb{\v@lY}\v@lXa%
    \global\result@t=\delt@}#1=\result@t}
\ctr@ld@f\def\det@rmC#1(#2,#3,#4,#5){{\global\result@t=\repdecn@mb{#2}#5%
    \global\advance\result@t-\repdecn@mb{#3}#4}#1=\result@t}
\ctr@ld@f\def\getredf@ctDD#1(#2,#3){{\maxim@m{\v@lXa}{-#2}{#2}\maxim@m{\v@lYa}{-#3}{#3}%
    \maxim@m{\v@lXa}{\v@lXa}{\v@lYa}% \v@lXa = ||X||inf
    \ifdim\v@lXa>\@xci pt\divide\v@lXa\@xci%
    \p@rtentiere{\p@rtent}{\v@lXa}\advance\p@rtent\@ne\else\p@rtent=\@ne\fi%
    \global\result@tent=\p@rtent}#1=\result@tent\ignorespaces}
\ctr@ld@f\def\getredf@ctTD#1(#2,#3,#4){{\maxim@m{\v@lXa}{-#2}{#2}\maxim@m{\v@lYa}{-#3}{#3}%
    \maxim@m{\v@lZa}{-#4}{#4}\maxim@m{\v@lXa}{\v@lXa}{\v@lYa}%
    \maxim@m{\v@lXa}{\v@lXa}{\v@lZa}% \v@lXa = ||X||inf
    \ifdim\v@lXa>\@lxxiv pt\divide\v@lXa\@lxxiv%
    \p@rtentiere{\p@rtent}{\v@lXa}\advance\p@rtent\@ne\else\p@rtent=\@ne\fi%
    \global\result@tent=\p@rtent}#1=\result@tent\ignorespaces}
\ctr@ln@m\getredf@ctB
\ctr@ld@f\def\getredf@ctBDD#1{\getredf@ctDD#1(\v@lX,\v@lY)}
\ctr@ld@f\def\getredf@ctBTD#1{\getredf@ctTD#1(\v@lX,\v@lY,\v@lZ)}
\ctr@ld@f\def\FigptintercircB@zDD#1:#2:#3,#4[#5,#6,#7,#8]{{\s@uvc@ntr@l\et@tfigptintercircB@zDD%
    \setc@ntr@l{2}\figvectPDD-1[#5,#8]\Figg@tXY{-1}\getredf@ctDD\f@ctech(\v@lX,\v@lY)%
    \mili@u=#4\unit@\divide\mili@u\f@ctech\c@rre=\repdecn@mb{\mili@u}\mili@u%
    \figptBezierDD-5::#3[#5,#6,#7,#8]%
    \v@lmin=#3\p@\v@lmax=\v@lmin\advance\v@lmax0.1\p@%
    \loop\edef\T@{\repdecn@mb{\v@lmax}}\figptBezierDD-2::\T@[#5,#6,#7,#8]%
    \figvectPDD-1[-5,-2]\n@rmeucCDD{\delt@}{-1}\ifdim\delt@<\c@rre\v@lmin=\v@lmax%
    \advance\v@lmax0.1\p@\repeat%
    \loop\mili@u=\v@lmin\advance\mili@u\v@lmax%
    \divide\mili@u\tw@\edef\T@{\repdecn@mb{\mili@u}}\figptBezierDD-2::\T@[#5,#6,#7,#8]%
    \figvectPDD-1[-5,-2]\n@rmeucCDD{\delt@}{-1}\ifdim\delt@>\c@rre\v@lmax=\mili@u%
    \else\v@lmin=\mili@u\fi\v@leur=\v@lmax\advance\v@leur-\v@lmin%
    \ifdim\v@leur>\epsil@n\repeat\figptcopyDD#1:#2/-2/%
    \resetc@ntr@l\et@tfigptintercircB@zDD}\ignorespaces}
\ctr@ln@m\figptinterlines
\ctr@ld@f\def\inters@cDD#1:#2[#3,#4;#5,#6]{{\s@uvc@ntr@l\et@tinters@cDD%
    \setc@ntr@l{2}\vecunit@{-1}{#4}\vecunit@{-2}{#6}%
    \Figg@tXY{-1}\setc@ntr@l{1}\Figg@tXYa{#3}%
    \edef\A@{\repdecn@mb{\v@lX}}\edef\B@{\repdecn@mb{\v@lY}}%
    \v@lmin=\B@\v@lXa\advance\v@lmin-\A@\v@lYa%
    \Figg@tXYa{#5}\setc@ntr@l{2}\Figg@tXY{-2}%
    \edef\C@{\repdecn@mb{\v@lX}}\edef\D@{\repdecn@mb{\v@lY}}%
    \v@lmax=\D@\v@lXa\advance\v@lmax-\C@\v@lYa%
    \delt@=\A@\v@lY\advance\delt@-\B@\v@lX%
    \invers@{\v@leur}{\delt@}\edef\v@ldelta{\repdecn@mb{\v@leur}}%
    \v@lXa=\A@\v@lmax\advance\v@lXa-\C@\v@lmin%
    \v@lYa=\B@\v@lmax\advance\v@lYa-\D@\v@lmin%
    \v@lXa=\v@ldelta\v@lXa\v@lYa=\v@ldelta\v@lYa%
    \setc@ntr@l{1}\Figp@intregDD#1:{#2}(\v@lXa,\v@lYa)%
    \resetc@ntr@l\et@tinters@cDD}\ignorespaces}
\ctr@ld@f\def\inters@cTD#1:#2[#3,#4;#5,#6]{{\s@uvc@ntr@l\et@tinters@cTD%
    \setc@ntr@l{2}\figvectNVTD-1[#4,#6]\figvectNVTD-2[#6,-1]\figvectPTD-1[#3,#5]%
    \r@pPSTD\v@leur[-2,-1,#4]\edef\v@lcoef{\repdecn@mb{\v@leur}}%
    \figpttraTD#1:{#2}=#3/\v@lcoef,#4/\resetc@ntr@l\et@tinters@cTD}\ignorespaces}
\ctr@ld@f\def\r@pPSTD#1[#2,#3,#4]{{\Figg@tXY{#2}\edef\Xu@{\repdecn@mb{\v@lX}}%
    \edef\Yu@{\repdecn@mb{\v@lY}}\edef\Zu@{\repdecn@mb{\v@lZ}}%
    \Figg@tXY{#3}\v@lmin=\Xu@\v@lX\advance\v@lmin\Yu@\v@lY\advance\v@lmin\Zu@\v@lZ%
    \Figg@tXY{#4}\v@lmax=\Xu@\v@lX\advance\v@lmax\Yu@\v@lY\advance\v@lmax\Zu@\v@lZ%
    \invers@{\v@leur}{\v@lmax}\global\result@t=\repdecn@mb{\v@leur}\v@lmin}%
    #1=\result@t}
\ctr@ln@m\n@rminf
\ctr@ld@f\def\n@rminfDD#1#2{{\Figg@tXY{#2}\maxim@m{\v@lX}{\v@lX}{-\v@lX}%
    \maxim@m{\v@lY}{\v@lY}{-\v@lY}\maxim@m{\global\result@t}{\v@lX}{\v@lY}}%
    #1=\result@t}
\ctr@ld@f\def\n@rminfTD#1#2{{\Figg@tXY{#2}\maxim@m{\v@lX}{\v@lX}{-\v@lX}%
    \maxim@m{\v@lY}{\v@lY}{-\v@lY}\maxim@m{\v@lZ}{\v@lZ}{-\v@lZ}%
    \maxim@m{\v@lX}{\v@lX}{\v@lY}\maxim@m{\global\result@t}{\v@lX}{\v@lZ}}%
    #1=\result@t}
\ctr@ln@m\n@rmeucC
\ctr@ld@f\def\n@rmeucCDD#1#2{\Figg@tXY{#2}\divide\v@lX\f@ctech\divide\v@lY\f@ctech%
    #1=\repdecn@mb{\v@lX}\v@lX\v@lX=\repdecn@mb{\v@lY}\v@lY\advance#1\v@lX}
\ctr@ld@f\def\n@rmeucCTD#1#2{\Figg@tXY{#2}%
    \divide\v@lX\f@ctech\divide\v@lY\f@ctech\divide\v@lZ\f@ctech%
    #1=\repdecn@mb{\v@lX}\v@lX\v@lX=\repdecn@mb{\v@lY}\v@lY\advance#1\v@lX%
    \v@lX=\repdecn@mb{\v@lZ}\v@lZ\advance#1\v@lX}
\ctr@ln@m\n@rmeucSV
\ctr@ld@f\def\n@rmeucSVDD#1#2{{\Figg@tXY{#2}%
    \v@lXa=\repdecn@mb{\v@lX}\v@lX\v@lYa=\repdecn@mb{\v@lY}\v@lY%
    \advance\v@lXa\v@lYa\sqrt@{\global\result@t}{\v@lXa}}#1=\result@t}
\ctr@ld@f\def\n@rmeucSVTD#1#2{{\Figg@tXY{#2}\v@lXa=\repdecn@mb{\v@lX}\v@lX%
    \v@lYa=\repdecn@mb{\v@lY}\v@lY\v@lZa=\repdecn@mb{\v@lZ}\v@lZ%
    \advance\v@lXa\v@lYa\advance\v@lXa\v@lZa\sqrt@{\global\result@t}{\v@lXa}}#1=\result@t}
\ctr@ln@m\n@rmeuc
\ctr@ld@f\def\n@rmeucDD#1#2{{\Figg@tXY{#2}\getredf@ctDD\f@ctech(\v@lX,\v@lY)%
    \divide\v@lX\f@ctech\divide\v@lY\f@ctech%
    \v@lXa=\repdecn@mb{\v@lX}\v@lX\v@lYa=\repdecn@mb{\v@lY}\v@lY%
    \advance\v@lXa\v@lYa\sqrt@{\global\result@t}{\v@lXa}%
    \global\multiply\result@t\f@ctech}#1=\result@t}
\ctr@ld@f\def\n@rmeucTD#1#2{{\Figg@tXY{#2}\getredf@ctTD\f@ctech(\v@lX,\v@lY,\v@lZ)%
    \divide\v@lX\f@ctech\divide\v@lY\f@ctech\divide\v@lZ\f@ctech%
    \v@lXa=\repdecn@mb{\v@lX}\v@lX%
    \v@lYa=\repdecn@mb{\v@lY}\v@lY\v@lZa=\repdecn@mb{\v@lZ}\v@lZ%
    \advance\v@lXa\v@lYa\advance\v@lXa\v@lZa\sqrt@{\global\result@t}{\v@lXa}%
    \global\multiply\result@t\f@ctech}#1=\result@t}
\ctr@ln@m\vecunit@
\ctr@ld@f\def\vecunit@DD#1#2{{\Figg@tXY{#2}\getredf@ctDD\f@ctech(\v@lX,\v@lY)%
    \divide\v@lX\f@ctech\divide\v@lY\f@ctech%
    \Figv@ctCreg#1(\v@lX,\v@lY)\n@rmeucSV{\v@lYa}{#1}%
    \invers@{\v@lXa}{\v@lYa}\edef\v@lv@lXa{\repdecn@mb{\v@lXa}}%
    \v@lX=\v@lv@lXa\v@lX\v@lY=\v@lv@lXa\v@lY%
    \Figv@ctCreg#1(\v@lX,\v@lY)\multiply\v@lYa\f@ctech\global\result@t=\v@lYa}}
\ctr@ld@f\def\vecunit@TD#1#2{{\Figg@tXY{#2}\getredf@ctTD\f@ctech(\v@lX,\v@lY,\v@lZ)%
    \divide\v@lX\f@ctech\divide\v@lY\f@ctech\divide\v@lZ\f@ctech%
    \Figv@ctCreg#1(\v@lX,\v@lY,\v@lZ)\n@rmeucSV{\v@lYa}{#1}%
    \invers@{\v@lXa}{\v@lYa}\edef\v@lv@lXa{\repdecn@mb{\v@lXa}}%
    \v@lX=\v@lv@lXa\v@lX\v@lY=\v@lv@lXa\v@lY\v@lZ=\v@lv@lXa\v@lZ%
    \Figv@ctCreg#1(\v@lX,\v@lY,\v@lZ)\multiply\v@lYa\f@ctech\global\result@t=\v@lYa}}
\ctr@ld@f\def\vecunitC@TD[#1,#2]{\Figg@tXYa{#1}\Figg@tXY{#2}%
    \advance\v@lX-\v@lXa\advance\v@lY-\v@lYa\advance\v@lZ-\v@lZa\c@lvecunitTD}
\ctr@ld@f\def\vecunitCV@TD#1{\Figg@tXY{#1}\c@lvecunitTD}
\ctr@ld@f\def\c@lvecunitTD{\getredf@ctTD\f@ctech(\v@lX,\v@lY,\v@lZ)%
    \divide\v@lX\f@ctech\divide\v@lY\f@ctech\divide\v@lZ\f@ctech%
    \v@lXa=\repdecn@mb{\v@lX}\v@lX%
    \v@lYa=\repdecn@mb{\v@lY}\v@lY\v@lZa=\repdecn@mb{\v@lZ}\v@lZ%
    \advance\v@lXa\v@lYa\advance\v@lXa\v@lZa\sqrt@{\v@lYa}{\v@lXa}%
    \invers@{\v@lXa}{\v@lYa}\edef\v@lv@lXa{\repdecn@mb{\v@lXa}}%
    \v@lX=\v@lv@lXa\v@lX\v@lY=\v@lv@lXa\v@lY\v@lZ=\v@lv@lXa\v@lZ}
\ctr@ln@m\figgetangle
\ctr@ld@f\def\figgetangleDD#1[#2,#3,#4]{\ifGR@cri{\s@uvc@ntr@l\et@tfiggetangleDD\setc@ntr@l{2}%
    \figvectPDD-1[#2,#3]\figvectPDD-2[#2,#4]\vecunit@{-1}{-1}%
    \c@lproscalDD\delt@[-2,-1]\figvectNVDD-1[-1]\c@lproscalDD\v@leur[-2,-1]%
    \arct@n\v@lmax(\delt@,\v@leur)\v@lmax=\rdT@deg\v@lmax%
    \ifdim\v@lmax<\z@\advance\v@lmax\DePI@deg\fi\xdef#1{\repdecn@mb{\v@lmax}}%
    \resetc@ntr@l\et@tfiggetangleDD}\ignorespaces\fi}
\ctr@ld@f\def\figgetangleTD#1[#2,#3,#4,#5]{\ifGR@cri{\s@uvc@ntr@l\et@tfiggetangleTD\setc@ntr@l{2}%
    \figvectPTD-1[#2,#3]\figvectPTD-2[#2,#5]\figvectNVTD-3[-1,-2]%
    \figvectPTD-2[#2,#4]\figvectNVTD-4[-3,-1]%
    \vecunit@{-1}{-1}\c@lproscalTD\delt@[-2,-1]\c@lproscalTD\v@leur[-2,-4]%
    \arct@n\v@lmax(\delt@,\v@leur)\v@lmax=\rdT@deg\v@lmax%
    \ifdim\v@lmax<\z@\advance\v@lmax\DePI@deg\fi\xdef#1{\repdecn@mb{\v@lmax}}%
    \resetc@ntr@l\et@tfiggetangleTD}\ignorespaces\fi}    
\ctr@ld@f\def\figgetdist#1[#2,#3]{\ifGR@cri{\s@uvc@ntr@l\et@tfiggetdist\setc@ntr@l{2}%
    \figvectP-1[#2,#3]\n@rmeuc{\v@lX}{-1}\v@lX=\ptT@unit@\v@lX\xdef#1{\repdecn@mb{\v@lX}}%
    \resetc@ntr@l\et@tfiggetdist}\ignorespaces\fi}
\ctr@ld@f\def\figget#1=#2[#3]{\keln@mun#1|%
    \def\n@mref{a}\ifx\l@debut\n@mref\figgetangle#2[#3]\else% angle
    \def\n@mref{d}\ifx\l@debut\n@mref\figgetdist#2[#3]\else% distance
    \W@rnmeskwd{figget}{#1}\fi\fi\ignorespaces}
\ctr@ld@f\def\Figg@tT#1{\c@ntr@lnum{#1}%
    {\expandafter\expandafter\expandafter\extr@ctT\csname\objc@de\endcsname:%
     \ifnum\B@@ltxt=\z@\ptn@me{#1}\else\csname\objc@de T\endcsname\fi}}
\ctr@ld@f\def\extr@ctT#1,#2,#3/#4:{\def\B@@ltxt{#3}}
\ctr@ld@f\def\Figg@tXY#1{\c@ntr@lnum{#1}%
    \expandafter\expandafter\expandafter\extr@ctC\csname\objc@de\endcsname:}
\ctr@ln@m\extr@ctC
\ctr@ld@f\def\extr@ctCDD#1/#2,#3,#4:{\v@lX=#2\v@lY=#3}
\ctr@ld@f\def\extr@ctCTD#1/#2,#3,#4:{\v@lX=#2\v@lY=#3\v@lZ=#4}
\ctr@ld@f\def\Figg@tXYa#1{\c@ntr@lnum{#1}%
    \expandafter\expandafter\expandafter\extr@ctCa\csname\objc@de\endcsname:}
\ctr@ln@m\extr@ctCa
\ctr@ld@f\def\extr@ctCaDD#1/#2,#3,#4:{\v@lXa=#2\v@lYa=#3}
\ctr@ld@f\def\extr@ctCaTD#1/#2,#3,#4:{\v@lXa=#2\v@lYa=#3\v@lZa=#4}
\ctr@ln@m\t@xt@
\ctr@ld@f\def\figinit#1{\t@stc@tcodech@nge\initpr@lim\Figinit@#1,:\initpss@ttings\ignorespaces}
\ctr@ld@f\def\Figinit@#1,#2:{\setunit@{#1}\def\t@xt@{#2}\ifx\t@xt@\empty\else\Figinit@@#2:\fi}
\ctr@ld@f\def\Figinit@@#1#2:{\if#12 \else\Figs@tproj{#1}\initTD@\fi}
\ctr@ln@w{newif}\ifTr@isDim
\ctr@ld@f\def\UnD@fined{UNDEFINED}
\ctr@ln@m\@utoFN
\ctr@ln@m\@utoFInDone
\ctr@ln@m\disob@unit
\ctr@ld@f\def\initpr@lim{\initb@undb@x\figsetmark{}\figsetptname{$A_{##1}$}\def\Sc@leFact{1}%
    \initDD@\figsetroundcoord{yes}\GR@critrue\expandafter\setupd@te\D@FTupdate:%
    \edef\disob@unit{\UnD@fined}\edef\t@rgetpt{\UnD@fined}\gdef\@utoFInDone{1}\gdef\@utoFN{0}}
\ctr@ld@f\def\initDD@{\Tr@isDimfalse%
    \ifPDFm@ke%
     \let\Ps@rcerc=\Ps@rcercBz%
     \let\Ps@rell=\Ps@rellBz%
    \fi
    \let\c@lDCUn=\c@lDCUnDD%
    \let\c@lDCDeux=\c@lDCDeuxDD%
    \let\c@ldefproj=\relax%
    \let\c@lproscal=\c@lproscalDD%
    \let\c@lprojSP=\relax%
    \let\extr@ctC=\extr@ctCDD%
    \let\extr@ctCa=\extr@ctCaDD%
    \let\extr@ctCF=\extr@ctCFDD%
    \let\Figp@intreg=\Figp@intregDD%
    \let\Figpts@xes=\Figpts@xesDD%
    \let\getredf@ctB=\getredf@ctBDD%
    \let\n@rmeucSV=\n@rmeucSVDD\let\n@rmeuc=\n@rmeucDD\let\n@rmeucC\n@rmeucCDD\let\n@rminf=\n@rminfDD%
    \let\pr@dMatV=\pr@dMatVDD%
    \let\Q@@xes=\Q@@xesDD%
    \let\vecunit@=\vecunit@DD%
    \let\figcoord=\figcoordDD%
    \let\figgetangle=\figgetangleDD%
    \let\figpt=\figptDD%
    \let\figptBezier=\figptBezierDD%
    \let\figptbary=\figptbaryDD%
    \let\figptcirc=\figptcircDD%
    \let\figptcircumcenter=\figptcircumcenterDD%
    \let\figptcopy=\figptcopyDD%
    \let\figptcurvcenter=\figptcurvcenterDD%
    \let\figptell=\figptellDD%
    \let\figptendnormal=\figptendnormalDD%
    \let\figptinterlineplane=\figptinterlineplaneDD%
    \let\figptinterlines=\inters@cDD%
    \let\figptorthocenter=\figptorthocenterDD%
    \let\figptorthoprojline=\figptorthoprojlineDD%
    \let\figptorthoprojplane=\figptorthoprojplaneDD%
    \let\figptrot=\figptrotDD%
    \let\figptscontrol=\figptscontrolDD%
    \let\figptsintercirc=\figptsintercircDD%
    \let\figptsinterlinell=\figptsinterlinellDD%
    \let\figptsorthoprojline=\figptsorthoprojlineDD%
    \let\figptorthoprojplane=\figptorthoprojplaneDD%
    \let\figptsrot=\figptsrotDD%
    \let\figptssym=\figptssymDD%
    \let\figptstra=\figptstraDD%
    \let\figptsym=\figptsymDD%
    \let\figpttraC=\figpttraCDD%
    \let\figpttra=\figpttraDD%
    \let\figptvisilimSL=\figptvisilimSLDD%
    \let\figsetobdist=\figsetobdistDD%
    \let\figsettarget=\figsettargetDD%
    \let\figsetview=\figsetviewDD%
    \let\figvectDBezier=\figvectDBezierDD%
    \let\figvectN=\figvectNDD%
    \let\figvectNV=\figvectNVDD%
    \let\figvectP=\figvectPDD%
    \let\figvectU=\figvectUDD%
    \let\figdrawarccircP=\Q@arccircPDD%
    \let\figdrawarccirc=\Q@arccircDD%
    \let\figdrawarcell=\Q@arcellDD%
    \let\figdrawarcellPA=\Q@arcellPADD%
    \let\figdrawarrowBezier=\Q@arrowBezierDD%
    \let\figdrawarrowcircP=\Q@arrowcircPDD%
    \let\figdrawarrowcirc=\Q@arrowcircDD%
    \let\figdrawarrowhead=\Q@arrowheadDD%
    \let\figdrawarrow=\Q@arrowDD%
    \let\figdrawBezier=\Q@BezierDD%
    \let\figdrawcirc=\Q@circDD%
    \let\figdrawcurve=\Q@curveDD%
    \let\figdrawnormal=\Q@normalDD%
    }
\ctr@ld@f\def\initTD@{\Tr@isDimtrue\initb@undb@xTD\newt@rgetptfalse\newdis@bfalse%
    \let\c@lDCUn=\c@lDCUnTD%
    \let\c@lDCDeux=\c@lDCDeuxTD%
    \let\c@ldefproj=\c@ldefprojTD%
    \let\c@lproscal=\c@lproscalTD%
    \let\extr@ctC=\extr@ctCTD%
    \let\extr@ctCa=\extr@ctCaTD%
    \let\extr@ctCF=\extr@ctCFTD%
    \let\Figp@intreg=\Figp@intregTD%
    \let\Figpts@xes=\Figpts@xesTD%
    \let\getredf@ctB=\getredf@ctBTD%
    \let\n@rmeucSV=\n@rmeucSVTD\let\n@rmeuc=\n@rmeucTD\let\n@rmeucC\n@rmeucCTD\let\n@rminf=\n@rminfTD%
    \let\pr@dMatV=\pr@dMatVTD%
    \let\Q@@xes=\Q@@xesTD%
    \let\vecunit@=\vecunit@TD%
    \let\figcoord=\figcoordTD%
    \let\figgetangle=\figgetangleTD%
    \let\figpt=\figptTD%
    \let\figptBezier=\figptBezierTD%
    \let\figptbary=\figptbaryTD%
    \let\figptcirc=\figptcircTD%
    \let\figptcircumcenter=\figptcircumcenterTD%
    \let\figptcopy=\figptcopyTD%
    \let\figptcurvcenter=\figptcurvcenterTD%
    \let\figptinterlineplane=\figptinterlineplaneTD%
    \let\figptinterlines=\inters@cTD%
    \let\figptorthocenter=\figptorthocenterTD%
    \let\figptorthoprojline=\figptorthoprojlineTD%
    \let\figptorthoprojplane=\figptorthoprojplaneTD%
    \let\figptrot=\figptrotTD%
    \let\figptscontrol=\figptscontrolTD%
    \let\figptsintercirc=\figptsintercircTD%
    \let\figptsorthoprojline=\figptsorthoprojlineTD%
    \let\figptsorthoprojplane=\figptsorthoprojplaneTD%
    \let\figptsrot=\figptsrotTD%
    \let\figptssym=\figptssymTD%
    \let\figptstra=\figptstraTD%
    \let\figptsym=\figptsymTD%
    \let\figpttraC=\figpttraCTD%
    \let\figpttra=\figpttraTD%
    \let\figptvisilimSL=\figptvisilimSLTD%
    \let\figsetobdist=\figsetobdistTD%
    \let\figsettarget=\figsettargetTD%
    \let\figsetview=\figsetviewTD%
    \let\figvectDBezier=\figvectDBezierTD%
    \let\figvectN=\figvectNTD%
    \let\figvectNV=\figvectNVTD%
    \let\figvectP=\figvectPTD%
    \let\figvectU=\figvectUTD%
    \let\figdrawarccircP=\Q@arccircPTD%
    \let\figdrawarccirc=\Q@arccircTD%
    \let\figdrawarcell=\Q@arcellTD%
    \let\figdrawarcellPA=\Q@arcellPATD%
    \let\figdrawarrowBezier=\Q@arrowBezierTD%
    \let\figdrawarrowcircP=\Q@arrowcircPTD%
    \let\figdrawarrowcirc=\Q@arrowcircTD%
    \let\figdrawarrowhead=\Q@arrowheadTD%
    \let\figdrawarrow=\Q@arrowTD%
    \let\figdrawBezier=\Q@BezierTD%
    \let\figdrawcirc=\Q@circTD%
    \let\figdrawcurve=\Q@curveTD%
    }
\ctr@ld@f\def\un@v@ilable#1{\immediate\write16{*** The macro #1 is not available in the current context.}}
\ctr@ld@f\def\figinsert#1{{\def\t@xt@{#1}\relax%
    \ifx\t@xt@\empty\ifnum\@utoFInDone>\z@\Figinsert@\DefGIfilen@me,:\fi%
    \else\expandafter\FiginsertNu@#1 :\fi}\ignorespaces}
\ctr@ld@f\def\FiginsertNu@#1 #2:{\def\t@xt@{#1}\relax\ifx\t@xt@\empty\def\t@xt@{#2}%
    \ifx\t@xt@\empty\ifnum\@utoFInDone>\z@\Figinsert@\DefGIfilen@me,:\fi%
    \else\FiginsertNu@#2:\fi\else\expandafter\FiginsertNd@#1 #2:\fi}
\ctr@ld@f\def\FiginsertNd@#1#2:{\ifcat#1a\Figinsert@#1#2,:\else%
    \ifnum\@utoFInDone>\z@\Figinsert@\DefGIfilen@me,#1#2,:\fi\fi}
\ctr@ln@m\Sc@leFact
\ctr@ld@f\def\Figinsert@#1,#2:{\def\t@xt@{#2}\ifx\t@xt@\empty\xdef\Sc@leFact{1}\else%
    \X@rgdeux@#2\xdef\Sc@leFact{\@rgdeux}\fi%
    \Figdisc@rdLTS{#1}{\t@xt@}\@psfgetbb{\t@xt@}%
    \v@lX=\@psfllx\p@\v@lX=\ptpsT@pt\v@lX\v@lX=\Sc@leFact\v@lX%
    \v@lY=\@psflly\p@\v@lY=\ptpsT@pt\v@lY\v@lY=\Sc@leFact\v@lY%
    \b@undb@x{\v@lX}{\v@lY}%
    \v@lX=\@psfurx\p@\v@lX=\ptpsT@pt\v@lX\v@lX=\Sc@leFact\v@lX%
    \v@lY=\@psfury\p@\v@lY=\ptpsT@pt\v@lY\v@lY=\Sc@leFact\v@lY%
    \b@undb@x{\v@lX}{\v@lY}%
    \ifPDFm@ke\Figinclud@PDF{\t@xt@}{\Sc@leFact}\else%
    \v@lX=\c@nt pt\v@lX=\Sc@leFact\v@lX\edef\F@ct{\repdecn@mb{\v@lX}}%
    \ifx\TeXturesonMacOSltX\special{postscriptfile #1 vscale=\F@ct\space hscale=\F@ct}%
    \else\includegraphics{#1}\fi\fi%
    \message{[\t@xt@]}\ignorespaces}
\ctr@ld@f\def\Figdisc@rdLTS#1#2{\expandafter\Figdisc@rdLTS@#1 :#2}
\ctr@ld@f\def\Figdisc@rdLTS@#1 #2:#3{\def#3{#1}\relax\ifx#3\empty\expandafter\Figdisc@rdLTS@#2:#3\fi}
\ctr@ld@f\def\figinsertE#1{\FiginsertE@#1,:\ignorespaces}
\ctr@ld@f\def\FiginsertE@#1,#2:{{\def\t@xt@{#2}\ifx\t@xt@\empty\xdef\Sc@leFact{1}\else%
    \X@rgdeux@#2\xdef\Sc@leFact{\@rgdeux}\fi%
    \Figdisc@rdLTS{#1}{\t@xt@}\pdfximage{\t@xt@}%
    \setbox\Gb@x=\hbox{\pdfrefximage\pdflastximage}%
    \v@lX=\z@\v@lY=-\Sc@leFact\dp\Gb@x\b@undb@x{\v@lX}{\v@lY}%
    \advance\v@lX\Sc@leFact\wd\Gb@x\advance\v@lY\Sc@leFact\dp\Gb@x%
    \advance\v@lY\Sc@leFact\ht\Gb@x\b@undb@x{\v@lX}{\v@lY}%
    \v@lX=\Sc@leFact\wd\Gb@x\pdfximage width \v@lX {\t@xt@}%
    \rlap{\pdfrefximage\pdflastximage}\message{[\t@xt@]}}\ignorespaces}
\ctr@ld@f\def\X@rgdeux@#1,{\edef\@rgdeux{#1}}
\ctr@ln@m\figpt
\ctr@ld@f\def\figptDD#1:#2(#3,#4){\ifGR@cri\c@ntr@lnum{#1}%
    {\v@lX=#3\unit@\v@lY=#4\unit@\Fig@dmpt{#2}{\z@}}\ignorespaces\fi}
\ctr@ld@f\def\Fig@dmpt#1#2{\def\t@xt@{#1}\ifx\t@xt@\empty\def\B@@ltxt{\z@}%
    \else\expandafter\gdef\csname\objc@de T\endcsname{#1}\def\B@@ltxt{\@ne}\fi%
    \expandafter\xdef\csname\objc@de\endcsname{\ifitis@vect@r\C@dCl@svect%
    \else\C@dCl@spt\fi,\z@,\B@@ltxt/\the\v@lX,\the\v@lY,#2}}
\ctr@ld@f\def\C@dCl@spt{P}
\ctr@ld@f\def\C@dCl@svect{V}
\ctr@ln@m\c@@rdYZ
\ctr@ln@m\c@@rdY
\ctr@ld@f\def\figptTD#1:#2(#3,#4){\ifGR@cri\c@ntr@lnum{#1}%
    \def\c@@rdYZ{#4,0,0}\extrairelepremi@r\c@@rdY\de\c@@rdYZ%
    \extrairelepremi@r\c@@rdZ\de\c@@rdYZ%
    {\v@lX=#3\unit@\v@lY=\c@@rdY\unit@\v@lZ=\c@@rdZ\unit@\Fig@dmpt{#2}{\the\v@lZ}%
    \b@undb@xTD{\v@lX}{\v@lY}{\v@lZ}}\ignorespaces\fi}
\ctr@ln@m\Figp@intreg
\ctr@ld@f\def\Figp@intregDD#1:#2(#3,#4){\c@ntr@lnum{#1}%
    {\result@t=#4\v@lX=#3\v@lY=\result@t\Fig@dmpt{#2}{\z@}}\ignorespaces}
\ctr@ld@f\def\Figp@intregTD#1:#2(#3,#4){\c@ntr@lnum{#1}%
    \def\c@@rdYZ{#4,\z@,\z@}\extrairelepremi@r\c@@rdY\de\c@@rdYZ%
    \extrairelepremi@r\c@@rdZ\de\c@@rdYZ%
    {\v@lX=#3\v@lY=\c@@rdY\v@lZ=\c@@rdZ\Fig@dmpt{#2}{\the\v@lZ}%
    \b@undb@xTD{\v@lX}{\v@lY}{\v@lZ}}\ignorespaces}
\ctr@ln@m\figptBezier
\ctr@ld@f\def\figptBezierDD#1:#2:#3[#4,#5,#6,#7]{\ifGR@cri{\s@uvc@ntr@l\et@tfigptBezierDD%
    \FigptBezier@#3[#4,#5,#6,#7]\Figp@intregDD#1:{#2}(\v@lX,\v@lY)%
    \resetc@ntr@l\et@tfigptBezierDD}\ignorespaces\fi}
\ctr@ld@f\def\figptBezierTD#1:#2:#3[#4,#5,#6,#7]{\ifGR@cri{\s@uvc@ntr@l\et@tfigptBezierTD%
    \FigptBezier@#3[#4,#5,#6,#7]\Figp@intregTD#1:{#2}(\v@lX,\v@lY,\v@lZ)%
    \resetc@ntr@l\et@tfigptBezierTD}\ignorespaces\fi}
\ctr@ld@f\def\FigptBezier@#1[#2,#3,#4,#5]{\setc@ntr@l{2}%
    \edef\T@{#1}\v@leur=\p@\advance\v@leur-#1pt\edef\UNmT@{\repdecn@mb{\v@leur}}%
    \figptcopy-4:/#2/\figptcopy-3:/#3/\figptcopy-2:/#4/\figptcopy-1:/#5/%
    \l@mbd@un=-4 \l@mbd@de=-\thr@@\p@rtent=\m@ne\c@lDecast%
    \l@mbd@un=-4 \l@mbd@de=-\thr@@\p@rtent=-\tw@\c@lDecast%
    \l@mbd@un=-4 \l@mbd@de=-\thr@@\p@rtent=-\thr@@\c@lDecast\Figg@tXY{-4}}
\ctr@ln@m\c@lDCUn
\ctr@ld@f\def\c@lDCUnDD#1#2{\Figg@tXY{#1}\v@lX=\UNmT@\v@lX\v@lY=\UNmT@\v@lY%
    \Figg@tXYa{#2}\advance\v@lX\T@\v@lXa\advance\v@lY\T@\v@lYa%
    \Figp@intregDD#1:(\v@lX,\v@lY)}
\ctr@ld@f\def\c@lDCUnTD#1#2{\Figg@tXY{#1}\v@lX=\UNmT@\v@lX\v@lY=\UNmT@\v@lY\v@lZ=\UNmT@\v@lZ%
    \Figg@tXYa{#2}\advance\v@lX\T@\v@lXa\advance\v@lY\T@\v@lYa\advance\v@lZ\T@\v@lZa%
    \Figp@intregTD#1:(\v@lX,\v@lY,\v@lZ)}
\ctr@ld@f\def\c@lDecast{\relax\ifnum\l@mbd@un<\p@rtent\c@lDCUn{\l@mbd@un}{\l@mbd@de}%
    \advance\l@mbd@un\@ne\advance\l@mbd@de\@ne\c@lDecast\fi}
\ctr@ld@f\def\figptmap#1:#2=#3/#4/#5/{\ifGR@cri{\s@uvc@ntr@l\et@tfigptmap%
    \setc@ntr@l{2}\figvectP-1[#4,#3]\Figg@tXY{-1}%
    \pr@dMatV/#5/\figpttra#1:{#2}=#4/1,-1/%
    \resetc@ntr@l\et@tfigptmap}\ignorespaces\fi}
\ctr@ln@m\pr@dMatV
\ctr@ld@f\def\pr@dMatVDD/#1,#2;#3,#4/{\v@lXa=#1\v@lX\advance\v@lXa#2\v@lY%
    \v@lYa=#3\v@lX\advance\v@lYa#4\v@lY\Figv@ctCreg-1(\v@lXa,\v@lYa)}
\ctr@ld@f\def\pr@dMatVTD/#1,#2,#3;#4,#5,#6;#7,#8,#9/{%
    \v@lXa=#1\v@lX\advance\v@lXa#2\v@lY\advance\v@lXa#3\v@lZ%
    \v@lYa=#4\v@lX\advance\v@lYa#5\v@lY\advance\v@lYa#6\v@lZ%
    \v@lZa=#7\v@lX\advance\v@lZa#8\v@lY\advance\v@lZa#9\v@lZ%
    \Figv@ctCreg-1(\v@lXa,\v@lYa,\v@lZa)}
\ctr@ln@m\figptbary
\ctr@ld@f\def\figptbaryDD#1:#2[#3;#4]{\ifGR@cri{\edef\list@num{#3}\extrairelepremi@r\p@int\de\list@num%
    \s@mme=\z@\@ecfor\c@ef:=#4\do{\advance\s@mme\c@ef}%
    \edef\listec@ef{#4,0}\extrairelepremi@r\c@ef\de\listec@ef%
    \Figg@tXY{\p@int}\divide\v@lX\s@mme\divide\v@lY\s@mme%
    \multiply\v@lX\c@ef\multiply\v@lY\c@ef%
    \@ecfor\p@int:=\list@num\do{\extrairelepremi@r\c@ef\de\listec@ef%
           \Figg@tXYa{\p@int}\divide\v@lXa\s@mme\divide\v@lYa\s@mme%
           \multiply\v@lXa\c@ef\multiply\v@lYa\c@ef%
           \advance\v@lX\v@lXa\advance\v@lY\v@lYa}%
    \Figp@intregDD#1:{#2}(\v@lX,\v@lY)}\ignorespaces\fi}
\ctr@ld@f\def\figptbaryTD#1:#2[#3;#4]{\ifGR@cri{\edef\list@num{#3}\extrairelepremi@r\p@int\de\list@num%
    \s@mme=\z@\@ecfor\c@ef:=#4\do{\advance\s@mme\c@ef}%
    \edef\listec@ef{#4,0}\extrairelepremi@r\c@ef\de\listec@ef%
    \Figg@tXY{\p@int}\divide\v@lX\s@mme\divide\v@lY\s@mme\divide\v@lZ\s@mme%
    \multiply\v@lX\c@ef\multiply\v@lY\c@ef\multiply\v@lZ\c@ef%
    \@ecfor\p@int:=\list@num\do{\extrairelepremi@r\c@ef\de\listec@ef%
           \Figg@tXYa{\p@int}\divide\v@lXa\s@mme\divide\v@lYa\s@mme\divide\v@lZa\s@mme%
           \multiply\v@lXa\c@ef\multiply\v@lYa\c@ef\multiply\v@lZa\c@ef%
           \advance\v@lX\v@lXa\advance\v@lY\v@lYa\advance\v@lZ\v@lZa}%
    \Figp@intregTD#1:{#2}(\v@lX,\v@lY,\v@lZ)}\ignorespaces\fi}
\ctr@ld@f\def\figptbaryR#1:#2[#3;#4]{\ifGR@cri{%
    \v@leur=\z@\@ecfor\c@ef:=#4\do{\maxim@m{\v@lmax}{\c@ef pt}{-\c@ef pt}%
    \ifdim\v@lmax>\v@leur\v@leur=\v@lmax\fi}%
    \ifdim\v@leur<\p@\f@ctech=\@M\else\ifdim\v@leur<\t@n\p@\f@ctech=\@m\else%
    \ifdim\v@leur<\c@nt\p@\f@ctech=\c@nt\else\ifdim\v@leur<\@m\p@\f@ctech=\t@n\else%
    \f@ctech=\@ne\fi\fi\fi\fi%
    \def\listec@ef{0}%
    \@ecfor\c@ef:=#4\do{\sc@lec@nvRI{\c@ef pt}\edef\listec@ef{\listec@ef,\the\s@mme}}%
    \extrairelepremi@r\c@ef\de\listec@ef\figptbary#1:#2[#3;\listec@ef]}\ignorespaces\fi}
\ctr@ld@f\def\sc@lec@nvRI#1{\v@leur=#1\p@rtentiere{\s@mme}{\v@leur}\advance\v@leur-\s@mme\p@%
    \multiply\v@leur\f@ctech\p@rtentiere{\p@rtent}{\v@leur}%
    \multiply\s@mme\f@ctech\advance\s@mme\p@rtent}
\ctr@ln@m\figptcirc
\ctr@ld@f\def\figptcircDD#1:#2:#3;#4(#5){\ifGR@cri{\s@uvc@ntr@l\et@tfigptcircDD%
    \c@lptellDD#1:{#2}:#3;#4,#4(#5)\resetc@ntr@l\et@tfigptcircDD}\ignorespaces\fi}
\ctr@ld@f\def\figptcircTD#1:#2:#3,#4,#5;#6(#7){\ifGR@cri{\s@uvc@ntr@l\et@tfigptcircTD%
    \setc@ntr@l{2}\c@lExtAxes#3,#4,#5(#6)\figptellP#1:{#2}:#3,-4,-5(#7)%
    \resetc@ntr@l\et@tfigptcircTD}\ignorespaces\fi}
\ctr@ln@m\figptcircumcenter
\ctr@ld@f\def\figptcircumcenterDD#1:#2[#3,#4,#5]{\ifGR@cri{\s@uvc@ntr@l\et@tfigptcircumcenterDD%
    \setc@ntr@l{2}\figvectNDD-5[#3,#4]\figptbaryDD-3:[#3,#4;1,1]%
                  \figvectNDD-6[#4,#5]\figptbaryDD-4:[#4,#5;1,1]%
    \resetc@ntr@l{2}\inters@cDD#1:{#2}[-3,-5;-4,-6]%
    \resetc@ntr@l\et@tfigptcircumcenterDD}\ignorespaces\fi}
\ctr@ld@f\def\figptcircumcenterTD#1:#2[#3,#4,#5]{\ifGR@cri{\s@uvc@ntr@l\et@tfigptcircumcenterTD%
    \setc@ntr@l{2}\figvectNTD-1[#3,#4,#5]%
    \figvectPTD-3[#3,#4]\figvectNVTD-5[-1,-3]\figptbaryTD-3:[#3,#4;1,1]%
    \figvectPTD-4[#4,#5]\figvectNVTD-6[-1,-4]\figptbaryTD-4:[#4,#5;1,1]%
    \resetc@ntr@l{2}\inters@cTD#1:{#2}[-3,-5;-4,-6]%
    \resetc@ntr@l\et@tfigptcircumcenterTD}\ignorespaces\fi}
\ctr@ln@m\figptcopy
\ctr@ld@f\def\figptcopyDD#1:#2/#3/{\ifGR@cri{\Figg@tXY{#3}%
    \Figp@intregDD#1:{#2}(\v@lX,\v@lY)}\ignorespaces\fi}
\ctr@ld@f\def\figptcopyTD#1:#2/#3/{\ifGR@cri{\Figg@tXY{#3}%
    \Figp@intregTD#1:{#2}(\v@lX,\v@lY,\v@lZ)}\ignorespaces\fi}
\ctr@ln@m\figptcurvcenter
\ctr@ld@f\def\figptcurvcenterDD#1:#2:#3[#4,#5,#6,#7]{\ifGR@cri{\s@uvc@ntr@l\et@tfigptcurvcenterDD%
    \setc@ntr@l{2}\c@lcurvradDD#3[#4,#5,#6,#7]\edef\Sprim@{\repdecn@mb{\result@t}}%
    \figptBezierDD-1::#3[#4,#5,#6,#7]\figpttraDD#1:{#2}=-1/\Sprim@,-5/%
    \resetc@ntr@l\et@tfigptcurvcenterDD}\ignorespaces\fi}
\ctr@ld@f\def\figptcurvcenterTD#1:#2:#3[#4,#5,#6,#7]{\ifGR@cri{\s@uvc@ntr@l\et@tfigptcurvcenterTD%
    \setc@ntr@l{2}\figvectDBezierTD -5:1,#3[#4,#5,#6,#7]%
    \figvectDBezierTD -6:2,#3[#4,#5,#6,#7]\vecunit@TD{-5}{-5}%
    \edef\Sprim@{\repdecn@mb{\result@t}}\figvectNVTD-1[-6,-5]%
    \figvectNVTD-5[-5,-1]\c@lproscalTD\v@leur[-6,-5]%
    \invers@{\v@leur}{\v@leur}\v@leur=\Sprim@\v@leur\v@leur=\Sprim@\v@leur%
    \figptBezierTD-1::#3[#4,#5,#6,#7]\edef\Sprim@{\repdecn@mb{\v@leur}}%
    \figpttraTD#1:{#2}=-1/\Sprim@,-5/\resetc@ntr@l\et@tfigptcurvcenterTD}\ignorespaces\fi}
\ctr@ld@f\def\c@lcurvradDD#1[#2,#3,#4,#5]{{\figvectDBezierDD -5:1,#1[#2,#3,#4,#5]%
    \figvectDBezierDD -6:2,#1[#2,#3,#4,#5]\vecunit@DD{-5}{-5}%
    \edef\Sprim@{\repdecn@mb{\result@t}}\figvectNVDD-5[-5]\c@lproscalDD\v@leur[-6,-5]%
    \invers@{\v@leur}{\v@leur}\v@leur=\Sprim@\v@leur\v@leur=\Sprim@\v@leur%
    \global\result@t=\v@leur}}
\ctr@ln@m\figptell
\ctr@ld@f\def\figptellDD#1:#2:#3;#4,#5(#6,#7){\ifGR@cri{\s@uvc@ntr@l\et@tfigptell%
    \c@lptellDD#1::#3;#4,#5(#6)\figptrotDD#1:{#2}=#1/#3,#7/%
    \resetc@ntr@l\et@tfigptell}\ignorespaces\fi}
\ctr@ld@f\def\c@lptellDD#1:#2:#3;#4,#5(#6){\c@ssin{\C@}{\S@}{#6}\v@lmin=\C@ pt\v@lmax=\S@ pt%
    \v@lmin=#4\v@lmin\v@lmax=#5\v@lmax%
    \edef\Xc@mp{\repdecn@mb{\v@lmin}}\edef\Yc@mp{\repdecn@mb{\v@lmax}}%
    \setc@ntr@l{2}\figvectC-1(\Xc@mp,\Yc@mp)\figpttraDD#1:{#2}=#3/1,-1/}
\ctr@ld@f\def\figptellP#1:#2:#3,#4,#5(#6){\ifGR@cri{\s@uvc@ntr@l\et@tfigptellP%
    \setc@ntr@l{2}\figvectP-1[#3,#4]\figvectP-2[#3,#5]%
    \v@leur=#6pt\c@lptellP{#3}{-1}{-2}\figptcopy#1:{#2}/-3/%
    \resetc@ntr@l\et@tfigptellP}\ignorespaces\fi}
\ctr@ln@m\@ngle
\ctr@ld@f\def\c@lptellP#1#2#3{\edef\@ngle{\repdecn@mb\v@leur}\c@ssin{\C@}{\S@}{\@ngle}%
    \figpttra-3:=#1/\C@,#2/\figpttra-3:=-3/\S@,#3/}
\ctr@ln@m\figptendnormal
\ctr@ld@f\def\figptendnormalDD#1:#2:#3,#4[#5,#6]{\ifGR@cri{\s@uvc@ntr@l\et@tfigptendnormal%
    \Figg@tXYa{#5}\Figg@tXY{#6}%
    \advance\v@lX-\v@lXa\advance\v@lY-\v@lYa%
    \setc@ntr@l{2}\Figv@ctCreg-1(\v@lX,\v@lY)\vecunit@{-1}{-1}\Figg@tXY{-1}%
    \delt@=#3\unit@\maxim@m{\delt@}{\delt@}{-\delt@}\edef\l@ngueur{\repdecn@mb{\delt@}}%
    \v@lX=\l@ngueur\v@lX\v@lY=\l@ngueur\v@lY%
    \delt@=\p@\advance\delt@-#4pt\edef\l@ngueur{\repdecn@mb{\delt@}}%
    \figptbaryR-1:[#5,#6;#4,\l@ngueur]\Figg@tXYa{-1}%
    \advance\v@lXa\v@lY\advance\v@lYa-\v@lX%
    \setc@ntr@l{1}\Figp@intregDD#1:{#2}(\v@lXa,\v@lYa)\resetc@ntr@l\et@tfigptendnormal}%
    \ignorespaces\fi}
\ctr@ld@f\def\figptexcenter#1:#2[#3,#4,#5]{\ifGR@cri{\let@xte={-}% for computational macro
    \Figptexinsc@nter#1:#2[#3,#4,#5]}\ignorespaces\fi}
\ctr@ld@f\def\figptincenter#1:#2[#3,#4,#5]{\ifGR@cri{\let@xte={}% for computational macro
    \Figptexinsc@nter#1:#2[#3,#4,#5]}\ignorespaces\fi}
\ctr@ld@f% for compatibility with older versions
\ctr@ld@f\def\Figptexinsc@nter#1:#2[#3,#4,#5]{%
    \figgetdist\LA@[#4,#5]\figgetdist\LB@[#3,#5]\figgetdist\LC@[#3,#4]%
    \figptbaryR#1:{#2}[#3,#4,#5;\the\let@xte\LA@,\LB@,\LC@]}
\ctr@ln@m\figptinterlineplane
\ctr@ld@f\def\figptinterlineplaneDD{\un@v@ilable{figptinterlineplane}}
\ctr@ld@f\def\figptinterlineplaneTD#1:#2[#3,#4;#5,#6]{\ifGR@cri{\s@uvc@ntr@l\et@tfigptinterlineplane%
    \setc@ntr@l{2}\figvectPTD-1[#3,#5]\vecunit@TD{-2}{#6}%
    \r@pPSTD\v@leur[-2,-1,#4]\edef\v@lcoef{\repdecn@mb{\v@leur}}%
    \figpttraTD#1:{#2}=#3/\v@lcoef,#4/\resetc@ntr@l\et@tfigptinterlineplane}\ignorespaces\fi}
\ctr@ln@m\figptorthocenter
\ctr@ld@f\def\figptorthocenterDD#1:#2[#3,#4,#5]{\ifGR@cri{\s@uvc@ntr@l\et@tfigptorthocenterDD%
    \setc@ntr@l{2}\figvectNDD-3[#3,#4]\figvectNDD-4[#4,#5]%
    \resetc@ntr@l{2}\inters@cDD#1:{#2}[#5,-3;#3,-4]%
    \resetc@ntr@l\et@tfigptorthocenterDD}\ignorespaces\fi}
\ctr@ld@f\def\figptorthocenterTD#1:#2[#3,#4,#5]{\ifGR@cri{\s@uvc@ntr@l\et@tfigptorthocenterTD%
    \setc@ntr@l{2}\figvectNTD-1[#3,#4,#5]%
    \figvectPTD-2[#3,#4]\figvectNVTD-3[-1,-2]%
    \figvectPTD-2[#4,#5]\figvectNVTD-4[-1,-2]%
    \resetc@ntr@l{2}\inters@cTD#1:{#2}[#5,-3;#3,-4]%
    \resetc@ntr@l\et@tfigptorthocenterTD}\ignorespaces\fi}
\ctr@ln@m\figptorthoprojline
\ctr@ld@f\def\figptorthoprojlineDD#1:#2=#3/#4,#5/{\ifGR@cri{\s@uvc@ntr@l\et@tfigptorthoprojlineDD%
    \setc@ntr@l{2}\figvectPDD-3[#4,#5]\figvectNVDD-4[-3]\resetc@ntr@l{2}%
    \inters@cDD#1:{#2}[#3,-4;#4,-3]\resetc@ntr@l\et@tfigptorthoprojlineDD}\ignorespaces\fi}
\ctr@ld@f\def\figptorthoprojlineTD#1:#2=#3/#4,#5/{\ifGR@cri{\s@uvc@ntr@l\et@tfigptorthoprojlineTD%
    \setc@ntr@l{2}\figvectPTD-1[#4,#3]\figvectPTD-2[#4,#5]\vecunit@TD{-2}{-2}%
    \c@lproscalTD\v@leur[-1,-2]\edef\v@lcoef{\repdecn@mb{\v@leur}}%
    \figpttraTD#1:{#2}=#4/\v@lcoef,-2/\resetc@ntr@l\et@tfigptorthoprojlineTD}\ignorespaces\fi}
\ctr@ln@m\figptorthoprojplane
\ctr@ld@f\def\figptorthoprojplaneDD{\un@v@ilable{figptorthoprojplane}}
\ctr@ld@f\def\figptorthoprojplaneTD#1:#2=#3/#4,#5/{\ifGR@cri{\s@uvc@ntr@l\et@tfigptorthoprojplane%
    \setc@ntr@l{2}\figvectPTD-1[#3,#4]\vecunit@TD{-2}{#5}%
    \c@lproscalTD\v@leur[-1,-2]\edef\v@lcoef{\repdecn@mb{\v@leur}}%
    \figpttraTD#1:{#2}=#3/\v@lcoef,-2/\resetc@ntr@l\et@tfigptorthoprojplane}\ignorespaces\fi}
\ctr@ld@f\def\figpthom#1:#2=#3/#4,#5/{\ifGR@cri{\s@uvc@ntr@l\et@tfigpthom%
    \setc@ntr@l{2}\figvectP-1[#4,#3]\figpttra#1:{#2}=#4/#5,-1/%
    \resetc@ntr@l\et@tfigpthom}\ignorespaces\fi}
\ctr@ld@f\def\figptinv#1:#2=#3/#4,#5/{\ifGR@cri{\s@uvc@ntr@l\et@tfigptinv%
    \setc@ntr@l{2}\figvectP-1[#4,#3]\Figg@tXY{-1}%
    \getredf@ctB\f@ctech\n@rmeucC{\delt@}{-1}%
    \delt@=\ptT@unit@\delt@\delt@=\ptT@unit@\delt@%
    \invers@{\delt@}{\delt@}\multiply\f@ctech\f@ctech\divide\delt@\f@ctech%
    \delt@=#5\delt@\edef\v@lcoef{\repdecn@mb{\delt@}}\figpttra#1:{#2}=#4/\v@lcoef,-1/%
    \resetc@ntr@l\et@tfigptinv}\ignorespaces\fi}
\ctr@ln@m\figptrot
\ctr@ld@f\def\figptrotDD#1:#2=#3/#4,#5/{\ifGR@cri{\s@uvc@ntr@l\et@tfigptrotDD%
    \c@ssin{\C@}{\S@}{#5}\setc@ntr@l{2}\figvectPDD-1[#4,#3]\Figg@tXY{-1}%
    \v@lXa=\C@\v@lX\advance\v@lXa-\S@\v@lY%
    \v@lYa=\S@\v@lX\advance\v@lYa\C@\v@lY%
    \Figv@ctCreg-1(\v@lXa,\v@lYa)\figpttraDD#1:{#2}=#4/1,-1/%
    \resetc@ntr@l\et@tfigptrotDD}\ignorespaces\fi}
\ctr@ld@f\def\figptrotTD#1:#2=#3/#4,#5,#6/{\ifGR@cri{\s@uvc@ntr@l\et@tfigptrotTD%
    \c@ssin{\C@}{\S@}{#5}%
    \setc@ntr@l{2}\figptorthoprojplaneTD-3:=#4/#3,#6/\figvectPTD-2[-3,#3]%
    \n@rmeucTD\v@leur{-2}\ifdim\v@leur<\Cepsil@n\Figg@tXYa{#3}\else%
    \edef\v@lcoef{\repdecn@mb{\v@leur}}\figvectNVTD-1[#6,-2]%
    \Figg@tXYa{-1}\v@lXa=\v@lcoef\v@lXa\v@lYa=\v@lcoef\v@lYa\v@lZa=\v@lcoef\v@lZa%
    \v@lXa=\S@\v@lXa\v@lYa=\S@\v@lYa\v@lZa=\S@\v@lZa\Figg@tXY{-2}%
    \advance\v@lXa\C@\v@lX\advance\v@lYa\C@\v@lY\advance\v@lZa\C@\v@lZ%
    \Figg@tXY{-3}\advance\v@lXa\v@lX\advance\v@lYa\v@lY\advance\v@lZa\v@lZ\fi%
    \Figp@intregTD#1:{#2}(\v@lXa,\v@lYa,\v@lZa)\resetc@ntr@l\et@tfigptrotTD}\ignorespaces\fi}
\ctr@ln@m\figptsym
\ctr@ld@f\def\figptsymDD#1:#2=#3/#4,#5/{\ifGR@cri{\s@uvc@ntr@l\et@tfigptsymDD%
    \resetc@ntr@l{2}\figptorthoprojlineDD-5:=#3/#4,#5/\figvectPDD-2[#3,-5]%
    \figpttraDD#1:{#2}=#3/2,-2/\resetc@ntr@l\et@tfigptsymDD}\ignorespaces\fi}
\ctr@ld@f\def\figptsymTD#1:#2=#3/#4,#5/{\ifGR@cri{\s@uvc@ntr@l\et@tfigptsymTD%
    \resetc@ntr@l{2}\figptorthoprojplaneTD-3:=#3/#4,#5/\figvectPTD-2[#3,-3]%
    \figpttraTD#1:{#2}=#3/2,-2/\resetc@ntr@l\et@tfigptsymTD}\ignorespaces\fi}
\ctr@ln@m\figpttra
\ctr@ld@f\def\figpttraDD#1:#2=#3/#4,#5/{\ifGR@cri{\Figg@tXYa{#5}\v@lXa=#4\v@lXa\v@lYa=#4\v@lYa%
    \Figg@tXY{#3}\advance\v@lX\v@lXa\advance\v@lY\v@lYa%
    \Figp@intregDD#1:{#2}(\v@lX,\v@lY)}\ignorespaces\fi}
\ctr@ld@f\def\figpttraTD#1:#2=#3/#4,#5/{\ifGR@cri{\Figg@tXYa{#5}\v@lXa=#4\v@lXa\v@lYa=#4\v@lYa%
    \v@lZa=#4\v@lZa\Figg@tXY{#3}\advance\v@lX\v@lXa\advance\v@lY\v@lYa%
    \advance\v@lZ\v@lZa\Figp@intregTD#1:{#2}(\v@lX,\v@lY,\v@lZ)}\ignorespaces\fi}
\ctr@ln@m\figpttraC
\ctr@ld@f\def\figpttraCDD#1:#2=#3/#4,#5/{\ifGR@cri{\v@lXa=#4\unit@\v@lYa=#5\unit@%
    \Figg@tXY{#3}\advance\v@lX\v@lXa\advance\v@lY\v@lYa%
    \Figp@intregDD#1:{#2}(\v@lX,\v@lY)}\ignorespaces\fi}
\ctr@ld@f\def\figpttraCTD#1:#2=#3/#4,#5,#6/{\ifGR@cri{\v@lXa=#4\unit@\v@lYa=#5\unit@\v@lZa=#6\unit@%
    \Figg@tXY{#3}\advance\v@lX\v@lXa\advance\v@lY\v@lYa\advance\v@lZ\v@lZa%
    \Figp@intregTD#1:{#2}(\v@lX,\v@lY,\v@lZ)}\ignorespaces\fi}
\ctr@ld@f\def\figptsaxes#1:#2(#3){\ifGR@cri{\an@lys@xes#3,:\ifx\t@xt@\empty%
    \ifTr@isDim\Figpts@xes#1:#2(0,#3,0,#3,0,#3)\else\Figpts@xes#1:#2(0,#3,0,#3)\fi%
    \else\Figpts@xes#1:#2(#3)\fi}\ignorespaces\fi}
\ctr@ln@m\Figpts@xes
\ctr@ld@f\def\Figpts@xesDD#1:#2(#3,#4,#5,#6){%
    \s@mme=#1\figpttraC\the\s@mme:$x$=#2/#4,0/%
    \advance\s@mme\@ne\figpttraC\the\s@mme:$y$=#2/0,#6/}
\ctr@ld@f\def\Figpts@xesTD#1:#2(#3,#4,#5,#6,#7,#8){%
    \s@mme=#1\figpttraC\the\s@mme:$x$=#2/#4,0,0/%
    \advance\s@mme\@ne\figpttraC\the\s@mme:$y$=#2/0,#6,0/%
    \advance\s@mme\@ne\figpttraC\the\s@mme:$z$=#2/0,0,#8/}
\ctr@ld@f\def\figptsmap#1=#2/#3/#4/{\ifGR@cri{\s@uvc@ntr@l\et@tfigptsmap%
    \setc@ntr@l{2}\def\list@num{#2}\s@mme=#1%
    \@ecfor\p@int:=\list@num\do{\figvectP-1[#3,\p@int]\Figg@tXY{-1}%
    \pr@dMatV/#4/\figpttra\the\s@mme:=#3/1,-1/\advance\s@mme\@ne}%
    \resetc@ntr@l\et@tfigptsmap}\ignorespaces\fi}
\ctr@ln@m\figptscontrol
\ctr@ld@f\def\figptscontrolDD#1[#2,#3,#4,#5]{\ifGR@cri{\s@uvc@ntr@l\et@tfigptscontrolDD\setc@ntr@l{2}%
    \v@lX=\z@\v@lY=\z@\Figtr@nptDD{-5}{#2}\Figtr@nptDD{2}{#5}%
    \divide\v@lX\@vi\divide\v@lY\@vi%
    \Figtr@nptDD{3}{#3}\Figtr@nptDD{-1.5}{#4}\Figp@intregDD-1:(\v@lX,\v@lY)%
    \v@lX=\z@\v@lY=\z@\Figtr@nptDD{2}{#2}\Figtr@nptDD{-5}{#5}%
    \divide\v@lX\@vi\divide\v@lY\@vi\Figtr@nptDD{-1.5}{#3}\Figtr@nptDD{3}{#4}%
    \s@mme=#1\advance\s@mme\@ne\Figp@intregDD\the\s@mme:(\v@lX,\v@lY)%
    \figptcopyDD#1:/-1/\resetc@ntr@l\et@tfigptscontrolDD}\ignorespaces\fi}
\ctr@ld@f\def\figptscontrolTD#1[#2,#3,#4,#5]{\ifGR@cri{\s@uvc@ntr@l\et@tfigptscontrolTD\setc@ntr@l{2}%
    \v@lX=\z@\v@lY=\z@\v@lZ=\z@\Figtr@nptTD{-5}{#2}\Figtr@nptTD{2}{#5}%
    \divide\v@lX\@vi\divide\v@lY\@vi\divide\v@lZ\@vi%
    \Figtr@nptTD{3}{#3}\Figtr@nptTD{-1.5}{#4}\Figp@intregTD-1:(\v@lX,\v@lY,\v@lZ)%
    \v@lX=\z@\v@lY=\z@\v@lZ=\z@\Figtr@nptTD{2}{#2}\Figtr@nptTD{-5}{#5}%
    \divide\v@lX\@vi\divide\v@lY\@vi\divide\v@lZ\@vi\Figtr@nptTD{-1.5}{#3}\Figtr@nptTD{3}{#4}%
    \s@mme=#1\advance\s@mme\@ne\Figp@intregTD\the\s@mme:(\v@lX,\v@lY,\v@lZ)%
    \figptcopyTD#1:/-1/\resetc@ntr@l\et@tfigptscontrolTD}\ignorespaces\fi}
\ctr@ld@f\def\Figtr@nptDD#1#2{\Figg@tXYa{#2}\v@lXa=#1\v@lXa\v@lYa=#1\v@lYa%
    \advance\v@lX\v@lXa\advance\v@lY\v@lYa}
\ctr@ld@f\def\Figtr@nptTD#1#2{\Figg@tXYa{#2}\v@lXa=#1\v@lXa\v@lYa=#1\v@lYa\v@lZa=#1\v@lZa%
    \advance\v@lX\v@lXa\advance\v@lY\v@lYa\advance\v@lZ\v@lZa}
\ctr@ld@f\def\figptscontrolcurve#1,#2[#3]{\ifGR@cri{\s@uvc@ntr@l\et@tfigptscontrolcurve%
    \def\list@num{#3}\extrairelepremi@r\Ak@\de\list@num%
    \extrairelepremi@r\Ai@\de\list@num\extrairelepremi@r\Aj@\de\list@num%
    \s@mme=#1\figptcopy\the\s@mme:/\Ai@/%
    \setc@ntr@l{2}\figvectP -1[\Ak@,\Aj@]%
    \@ecfor\Ak@:=\list@num\do{\advance\s@mme\@ne\figpttra\the\s@mme:=\Ai@/\curv@roundness,-1/%
       \figvectP -1[\Ai@,\Ak@]\advance\s@mme\@ne\figpttra\the\s@mme:=\Aj@/-\curv@roundness,-1/%
       \advance\s@mme\@ne\figptcopy\the\s@mme:/\Aj@/%
       \edef\Ai@{\Aj@}\edef\Aj@{\Ak@}}\advance\s@mme-#1\divide\s@mme\thr@@%
       \xdef#2{\the\s@mme}%
    \resetc@ntr@l\et@tfigptscontrolcurve}\ignorespaces\fi}
\ctr@ln@m\figptsintercirc
\ctr@ld@f\def\figptsintercircDD#1[#2,#3;#4,#5]{\ifGR@cri{\s@uvc@ntr@l\et@tfigptsintercircDD%
    \setc@ntr@l{2}\let\c@lNVintc=\c@lNVintcDD\Figptsintercirc@#1[#2,#3;#4,#5]%    
    \resetc@ntr@l\et@tfigptsintercircDD}\ignorespaces\fi}
\ctr@ld@f\def\figptsintercircTD#1[#2,#3;#4,#5;#6]{\ifGR@cri{\s@uvc@ntr@l\et@tfigptsintercircTD%
    \setc@ntr@l{2}\let\c@lNVintc=\c@lNVintcTD\vecunitC@TD[#2,#6]%
    \Figv@ctCreg-3(\v@lX,\v@lY,\v@lZ)\Figptsintercirc@#1[#2,#3;#4,#5]%
    \resetc@ntr@l\et@tfigptsintercircTD}\ignorespaces\fi}
\ctr@ld@f\def\Figptsintercirc@#1[#2,#3;#4,#5]{\figvectP-1[#2,#4]%
    \vecunit@{-1}{-1}\delt@=\result@t\f@ctech=\result@tent%
    \s@mme=#1\advance\s@mme\@ne\figptcopy#1:/#2/\figptcopy\the\s@mme:/#4/%
    \ifdim\delt@=\z@\else%
    \v@lmin=#3\unit@\v@lmax=#5\unit@\v@leur=\v@lmin\advance\v@leur\v@lmax%
    \ifdim\v@leur>\delt@%
    \v@leur=\v@lmin\advance\v@leur-\v@lmax\maxim@m{\v@leur}{\v@leur}{-\v@leur}%
    \ifdim\v@leur<\delt@%
    \divide\v@lmin\f@ctech\divide\v@lmax\f@ctech\divide\delt@\f@ctech%
    \v@lmin=\repdecn@mb{\v@lmin}\v@lmin\v@lmax=\repdecn@mb{\v@lmax}\v@lmax%
    \invers@{\v@leur}{\delt@}\advance\v@lmax-\v@lmin%
    \v@lmax=-\repdecn@mb{\v@leur}\v@lmax\advance\delt@\v@lmax\delt@=.5\delt@%
    \v@lmax=\delt@\multiply\v@lmax\f@ctech%
    \edef\t@ille{\repdecn@mb{\v@lmax}}\figpttra-2:=#2/\t@ille,-1/%
    \delt@=\repdecn@mb{\delt@}\delt@\advance\v@lmin-\delt@%
    \sqrt@{\v@leur}{\v@lmin}\multiply\v@leur\f@ctech\edef\t@ille{\repdecn@mb{\v@leur}}%
    \c@lNVintc\figpttra#1:=-2/-\t@ille,-1/\figpttra\the\s@mme:=-2/\t@ille,-1/\fi\fi\fi}
\ctr@ld@f\def\c@lNVintcDD{\Figg@tXY{-1}\Figv@ctCreg-1(-\v@lY,\v@lX)} % <=> \figvectNVDD-1[-1]
\ctr@ld@f\def\c@lNVintcTD{{\Figg@tXY{-3}\v@lmin=\v@lX\v@lmax=\v@lY\v@leur=\v@lZ%
    \Figg@tXY{-1}\c@lprovec{-3}\vecunit@{-3}{-3}% <=> \figvectNVTD-3[-1,-3]\vecunit@{-3}{-3}
    \Figg@tXY{-1}\v@lmin=\v@lX\v@lmax=\v@lY%
    \v@leur=\v@lZ\Figg@tXY{-3}\c@lprovec{-1}}} % <=> \figvectNVTD-1[-3,-1]
\ctr@ln@m\figptsinterlinell
\ctr@ld@f\def\figptsinterlinellDD#1[#2,#3,#4,#5;#6,#7]{\ifGR@cri{\s@uvc@ntr@l\et@tfigptsinterlinellDD%
    \figptcopy#1:/#6/\s@mme=#1\advance\s@mme\@ne\figptcopy\the\s@mme:/#7/%
    \v@lmin=#3\unit@\v@lmax=#4\unit@% a, b
    \setc@ntr@l{2}\figptbaryDD-4:[#6,#7;1,1]\figptsrotDD-3=-4,#7/#2,-#5/% D et rotation
    \Figg@tXY{-3}\Figg@tXYa{#2}\advance\v@lX-\v@lXa\advance\v@lY-\v@lYa% alpha, beta
    \figvectP-1[-3,-2]\Figg@tXYa{-1}\figvectP-3[-4,#7]\Figptsint@rLE{#1}% u1, u2
    \resetc@ntr@l\et@tfigptsinterlinellDD}\ignorespaces\fi}
\ctr@ld@f\def\figptsinterlinellP#1[#2,#3,#4;#5,#6]{\ifGR@cri{\s@uvc@ntr@l\et@tfigptsinterlinellP%
    \figptcopy#1:/#5/\s@mme=#1\advance\s@mme\@ne\figptcopy\the\s@mme:/#6/\setc@ntr@l{2}%
    \figvectP-1[#2,#3]\vecunit@{-1}{-1}\v@lmin=\result@t% a
    \figvectP-2[#2,#4]\vecunit@{-2}{-2}\v@lmax=\result@t% b
    \figptbary-4:[#5,#6;1,1]% D
    \figvectP-3[#2,-4]\c@lproscal\v@lX[-3,-1]\c@lproscal\v@lY[-3,-2]% alpha, beta
    \figvectP-3[-4,#6]\c@lproscal\v@lXa[-3,-1]\c@lproscal\v@lYa[-3,-2]% u1, u2
    \Figptsint@rLE{#1}\resetc@ntr@l\et@tfigptsinterlinellP}\ignorespaces\fi}
\ctr@ld@f\def\Figptsint@rLE#1{%
    \getredf@ctDD\f@ctech(\v@lmin,\v@lmax)%
    \getredf@ctDD\p@rtent(\v@lX,\v@lY)\ifnum\p@rtent>\f@ctech\f@ctech=\p@rtent\fi%
    \getredf@ctDD\p@rtent(\v@lXa,\v@lYa)\ifnum\p@rtent>\f@ctech\f@ctech=\p@rtent\fi%
    \divide\v@lmin\f@ctech\divide\v@lmax\f@ctech\divide\v@lX\f@ctech\divide\v@lY\f@ctech%
    \divide\v@lXa\f@ctech\divide\v@lYa\f@ctech%
    \c@rre=\repdecn@mb\v@lXa\v@lmax\mili@u=\repdecn@mb\v@lYa\v@lmin%
    \getredf@ctDD\f@ctech(\c@rre,\mili@u)%
    \c@rre=\repdecn@mb\v@lX\v@lmax\mili@u=\repdecn@mb\v@lY\v@lmin%
    \getredf@ctDD\p@rtent(\c@rre,\mili@u)\ifnum\p@rtent>\f@ctech\f@ctech=\p@rtent\fi%
    \divide\v@lmin\f@ctech\divide\v@lmax\f@ctech\divide\v@lX\f@ctech\divide\v@lY\f@ctech%
    \divide\v@lXa\f@ctech\divide\v@lYa\f@ctech%
    \v@lmin=\repdecn@mb{\v@lmin}\v@lmin\v@lmax=\repdecn@mb{\v@lmax}\v@lmax%
    \edef\G@xde{\repdecn@mb\v@lmin}\edef\P@xde{\repdecn@mb\v@lmax}%
    \c@rre=-\v@lmax\v@leur=\repdecn@mb\v@lY\v@lY\advance\c@rre\v@leur\c@rre=\G@xde\c@rre%
    \v@leur=\repdecn@mb\v@lX\v@lX\v@leur=\P@xde\v@leur\advance\c@rre\v@leur% C
    \v@lmin=\repdecn@mb\v@lYa\v@lmin\v@lmax=\repdecn@mb\v@lXa\v@lmax%
    \mili@u=\repdecn@mb\v@lX\v@lmax\advance\mili@u\repdecn@mb\v@lY\v@lmin% B
    \v@lmax=\repdecn@mb\v@lXa\v@lmax\advance\v@lmax\repdecn@mb\v@lYa\v@lmin% A
    \ifdim\v@lmax>\epsil@n%
    \maxim@m{\v@leur}{\c@rre}{-\c@rre}\maxim@m{\v@lmin}{\mili@u}{-\mili@u}%
    \maxim@m{\v@leur}{\v@leur}{\v@lmin}\maxim@m{\v@lmin}{\v@lmax}{-\v@lmax}%
    \maxim@m{\v@leur}{\v@leur}{\v@lmin}\p@rtentiere{\p@rtent}{\v@leur}\advance\p@rtent\@ne%
    \divide\c@rre\p@rtent\divide\mili@u\p@rtent\divide\v@lmax\p@rtent%
    \delt@=\repdecn@mb{\mili@u}\mili@u\v@leur=\repdecn@mb{\v@lmax}\c@rre%
    \advance\delt@-\v@leur\ifdim\delt@<\z@\else\sqrt@\delt@\delt@%
    \invers@\v@lmax\v@lmax\edef\Uns@rAp{\repdecn@mb\v@lmax}%
    \v@leur=-\mili@u\advance\v@leur-\delt@\v@leur=\Uns@rAp\v@leur%
    \edef\t@ille{\repdecn@mb\v@leur}\figpttra#1:=-4/\t@ille,-3/\s@mme=#1\advance\s@mme\@ne%
    \v@leur=-\mili@u\advance\v@leur\delt@\v@leur=\Uns@rAp\v@leur%
    \edef\t@ille{\repdecn@mb\v@leur}\figpttra\the\s@mme:=-4/\t@ille,-3/\fi\fi}
\ctr@ln@m\figptsorthoprojline
\ctr@ld@f\def\figptsorthoprojlineDD#1=#2/#3,#4/{\ifGR@cri{\s@uvc@ntr@l\et@tfigptsorthoprojlineDD%
    \setc@ntr@l{2}\figvectPDD-3[#3,#4]\figvectNVDD-4[-3]\resetc@ntr@l{2}%
    \def\list@num{#2}\s@mme=#1\@ecfor\p@int:=\list@num\do{%
    \inters@cDD\the\s@mme:[\p@int,-4;#3,-3]\advance\s@mme\@ne}%
    \resetc@ntr@l\et@tfigptsorthoprojlineDD}\ignorespaces\fi}
\ctr@ld@f\def\figptsorthoprojlineTD#1=#2/#3,#4/{\ifGR@cri{\s@uvc@ntr@l\et@tfigptsorthoprojlineTD%
    \setc@ntr@l{2}\figvectPTD-2[#3,#4]\vecunit@TD{-2}{-2}%
    \def\list@num{#2}\s@mme=#1\@ecfor\p@int:=\list@num\do{%
    \figvectPTD-1[#3,\p@int]\c@lproscalTD\v@leur[-1,-2]%
    \edef\v@lcoef{\repdecn@mb{\v@leur}}\figpttraTD\the\s@mme:=#3/\v@lcoef,-2/%
    \advance\s@mme\@ne}\resetc@ntr@l\et@tfigptsorthoprojlineTD}\ignorespaces\fi}
\ctr@ln@m\figptsorthoprojplane
\ctr@ld@f\def\figptsorthoprojplaneDD{\un@v@ilable{figptsorthoprojplane}}
\ctr@ld@f\def\figptsorthoprojplaneTD#1=#2/#3,#4/{\ifGR@cri{\s@uvc@ntr@l\et@tfigptsorthoprojplane%
    \setc@ntr@l{2}\vecunit@TD{-2}{#4}%
    \def\list@num{#2}\s@mme=#1\@ecfor\p@int:=\list@num\do{\figvectPTD-1[\p@int,#3]%
    \c@lproscalTD\v@leur[-1,-2]\edef\v@lcoef{\repdecn@mb{\v@leur}}%
    \figpttraTD\the\s@mme:=\p@int/\v@lcoef,-2/\advance\s@mme\@ne}%
    \resetc@ntr@l\et@tfigptsorthoprojplane}\ignorespaces\fi}
\ctr@ld@f\def\figptshom#1=#2/#3,#4/{\ifGR@cri{\s@uvc@ntr@l\et@tfigptshom%
    \setc@ntr@l{2}\def\list@num{#2}\s@mme=#1%
    \@ecfor\p@int:=\list@num\do{\figvectP-1[#3,\p@int]%
    \figpttra\the\s@mme:=#3/#4,-1/\advance\s@mme\@ne}%
    \resetc@ntr@l\et@tfigptshom}\ignorespaces\fi}
\ctr@ld@f\def\figptsinv#1=#2/#3,#4/{\ifGR@cri{\s@uvc@ntr@l\et@tfigptsinv%
    \setc@ntr@l{2}\def\list@num{#2}\s@mme=#1%
    \@ecfor\p@int:=\list@num\do{\figvectP-1[#3,\p@int]\Figg@tXY{-1}%
    \getredf@ctB\f@ctech\n@rmeucC{\delt@}{-1}%
    \delt@=\ptT@unit@\delt@\delt@=\ptT@unit@\delt@%
    \invers@{\delt@}{\delt@}\multiply\f@ctech\f@ctech\divide\delt@\f@ctech%
    \delt@=#4\delt@\edef\v@lcoef{\repdecn@mb{\delt@}}\figpttra\the\s@mme:=#3/\v@lcoef,-1/%
    \advance\s@mme\@ne}\resetc@ntr@l\et@tfigptsinv}\ignorespaces\fi}
\ctr@ln@m\figptsrot
\ctr@ld@f\def\figptsrotDD#1=#2/#3,#4/{\ifGR@cri{\s@uvc@ntr@l\et@tfigptsrotDD%
    \c@ssin{\C@}{\S@}{#4}\setc@ntr@l{2}\def\list@num{#2}\s@mme=#1%
    \@ecfor\p@int:=\list@num\do{\figvectPDD-1[#3,\p@int]\Figg@tXY{-1}%
    \v@lXa=\C@\v@lX\advance\v@lXa-\S@\v@lY%
    \v@lYa=\S@\v@lX\advance\v@lYa\C@\v@lY%
    \Figv@ctCreg-1(\v@lXa,\v@lYa)\figpttraDD\the\s@mme:=#3/1,-1/\advance\s@mme\@ne}%
    \resetc@ntr@l\et@tfigptsrotDD}\ignorespaces\fi}
\ctr@ld@f\def\figptsrotTD#1=#2/#3,#4,#5/{\ifGR@cri{\s@uvc@ntr@l\et@tfigptsrotTD%
    \c@ssin{\C@}{\S@}{#4}%
    \setc@ntr@l{2}\def\list@num{#2}\s@mme=#1%
    \@ecfor\p@int:=\list@num\do{\figptorthoprojplaneTD-3:=#3/\p@int,#5/%
    \figvectPTD-2[-3,\p@int]%
    \figvectNVTD-1[#5,-2]\n@rmeucTD\v@leur{-2}\edef\v@lcoef{\repdecn@mb{\v@leur}}%
    \Figg@tXYa{-1}\v@lXa=\v@lcoef\v@lXa\v@lYa=\v@lcoef\v@lYa\v@lZa=\v@lcoef\v@lZa%
    \v@lXa=\S@\v@lXa\v@lYa=\S@\v@lYa\v@lZa=\S@\v@lZa\Figg@tXY{-2}%
    \advance\v@lXa\C@\v@lX\advance\v@lYa\C@\v@lY\advance\v@lZa\C@\v@lZ%
    \Figg@tXY{-3}\advance\v@lXa\v@lX\advance\v@lYa\v@lY\advance\v@lZa\v@lZ%
    \Figp@intregTD\the\s@mme:(\v@lXa,\v@lYa,\v@lZa)\advance\s@mme\@ne}%
    \resetc@ntr@l\et@tfigptsrotTD}\ignorespaces\fi}
\ctr@ln@m\figptssym
\ctr@ld@f\def\figptssymDD#1=#2/#3,#4/{\ifGR@cri{\s@uvc@ntr@l\et@tfigptssymDD%
    \setc@ntr@l{2}\figvectPDD-3[#3,#4]\Figg@tXY{-3}\Figv@ctCreg-4(-\v@lY,\v@lX)%
    \resetc@ntr@l{2}\def\list@num{#2}\s@mme=#1%
    \@ecfor\p@int:=\list@num\do{\inters@cDD-5:[#3,-3;\p@int,-4]\figvectPDD-2[\p@int,-5]%
    \figpttraDD\the\s@mme:=\p@int/2,-2/\advance\s@mme\@ne}%
    \resetc@ntr@l\et@tfigptssymDD}\ignorespaces\fi}
\ctr@ld@f\def\figptssymTD#1=#2/#3,#4/{\ifGR@cri{\s@uvc@ntr@l\et@tfigptssymTD%
    \setc@ntr@l{2}\vecunit@TD{-2}{#4}\def\list@num{#2}\s@mme=#1%
    \@ecfor\p@int:=\list@num\do{\figvectPTD-1[\p@int,#3]%
    \c@lproscalTD\v@leur[-1,-2]\v@leur=2\v@leur\edef\v@lcoef{\repdecn@mb{\v@leur}}%
    \figpttraTD\the\s@mme:=\p@int/\v@lcoef,-2/\advance\s@mme\@ne}%
    \resetc@ntr@l\et@tfigptssymTD}\ignorespaces\fi}
\ctr@ln@m\figptstra
\ctr@ld@f\def\figptstraDD#1=#2/#3,#4/{\ifGR@cri{\Figg@tXYa{#4}\v@lXa=#3\v@lXa\v@lYa=#3\v@lYa%
    \def\list@num{#2}\s@mme=#1\@ecfor\p@int:=\list@num\do{\Figg@tXY{\p@int}%
    \advance\v@lX\v@lXa\advance\v@lY\v@lYa%
    \Figp@intregDD\the\s@mme:(\v@lX,\v@lY)\advance\s@mme\@ne}}\ignorespaces\fi}
\ctr@ld@f\def\figptstraTD#1=#2/#3,#4/{\ifGR@cri{\Figg@tXYa{#4}\v@lXa=#3\v@lXa\v@lYa=#3\v@lYa%
    \v@lZa=#3\v@lZa\def\list@num{#2}\s@mme=#1\@ecfor\p@int:=\list@num\do{\Figg@tXY{\p@int}%
    \advance\v@lX\v@lXa\advance\v@lY\v@lYa\advance\v@lZ\v@lZa%
    \Figp@intregTD\the\s@mme:(\v@lX,\v@lY,\v@lZ)\advance\s@mme\@ne}}\ignorespaces\fi}
\ctr@ln@m\figptvisilimSL
\ctr@ld@f\def\figptvisilimSLDD{\un@v@ilable{figptvisilimSL}}
\ctr@ld@f\def\figptvisilimSLTD#1:#2[#3,#4;#5,#6]{\ifGR@cri{\s@uvc@ntr@l\et@tfigptvisilimSLTD%
    \setc@ntr@l{2}\figvectP-1[#3,#4]\n@rminf{\delt@}{-1}%
    \ifcase\CUR@proj\v@lX=\cxa@\p@\v@lY=-\p@\v@lZ=\cxb@\p@% Proj cav
    \Figv@ctCreg-2(\v@lX,\v@lY,\v@lZ)\figvectP-3[#5,#6]\figvectNV-1[-2,-3]%
    \or\figvectP-1[#5,#6]\vecunitCV@TD{-1}\v@lmin=\v@lX\v@lmax=\v@lY% Proj ortho
    \v@leur=\v@lZ\v@lX=\cza@\p@\v@lY=\czb@\p@\v@lZ=\czc@\p@\c@lprovec{-1}%
    \or\c@ley@pt{-2}\figvectN-1[#5,#6,-2]\fi% Proj rea
    \edef\Ai@{#3}\edef\Aj@{#4}\figvectP-2[#5,\Ai@]\c@lproscal\v@leur[-1,-2]%
    \ifdim\v@leur>\z@\p@rtent=\@ne\else\p@rtent=\m@ne\fi%
    \figvectP-2[#5,\Aj@]\c@lproscal\v@leur[-1,-2]%
    \ifdim\p@rtent\v@leur>\z@\figptcopy#1:#2/#3/%
    \message{*** \BS@ figptvisilimSL: points are on the same side.}\else%
    \figptcopy-3:/#3/\figptcopy-4:/#4/%
    \loop\figptbary-5:[-3,-4;1,1]\figvectP-2[#5,-5]\c@lproscal\v@leur[-1,-2]%
    \ifdim\p@rtent\v@leur>\z@\figptcopy-3:/-5/\else\figptcopy-4:/-5/\fi%
    \divide\delt@\tw@\ifdim\delt@>\epsil@n\repeat%
    \figptbary#1:#2[-3,-4;1,1]\fi\resetc@ntr@l\et@tfigptvisilimSLTD}\ignorespaces\fi}
\ctr@ld@f\def\c@ley@pt#1{\t@stp@r\ifitis@K\v@lX=\cza@\p@\v@lY=\czb@\p@\v@lZ=\czc@\p@%
    \Figv@ctCreg-1(\v@lX,\v@lY,\v@lZ)\Figp@intreg-2:(\wd\Bt@rget,\ht\Bt@rget,\dp\Bt@rget)%
    \figpttra#1:=-2/-\disob@intern,-1/\else\end\fi}
\ctr@ld@f\def\t@stp@r{\itis@Ktrue\ifnewt@rgetpt\else\itis@Kfalse%
    \message{*** \BS@ figptvisilimXX: target point undefined.}\fi\ifnewdis@b\else%
    \itis@Kfalse\message{*** \BS@ figptvisilimXX: observation distance undefined.}\fi%
    \ifitis@K\else\message{*** This macro must be called after \BS@ figdrawbegin or after
    having set the missing parameter(s) with \BS@ figset proj()}\fi}
\ctr@ld@f\def\figscan#1(#2,#3){{\s@uvc@ntr@l\et@tfigscan\@psfgetbb{#1}\if@psfbbfound\else%
    \def\@psfllx{0}\def\@psflly{20}\def\@psfurx{540}\def\@psfury{640}\fi\figscan@{#2}{#3}%
    \resetc@ntr@l\et@tfigscan}\ignorespaces}
\ctr@ld@f\def\figscan@#1#2{%
    \unit@=\@ne bp\setc@ntr@l{2}\figsetmark{}%
    \def\minst@p{20pt}%
    \v@lX=\@psfllx\p@\v@lX=\Sc@leFact\v@lX\r@undint\v@lX\v@lX%
    \v@lY=\@psflly\p@\v@lY=\Sc@leFact\v@lY\ifdim\v@lY>\z@\r@undint\v@lY\v@lY\fi%
    \delt@=\@psfury\p@\delt@=\Sc@leFact\delt@%
    \advance\delt@-\v@lY\v@lXa=\@psfurx\p@\v@lXa=\Sc@leFact\v@lXa\v@leur=\minst@p%
    \edef\valv@lY{\repdecn@mb{\v@lY}}\edef\LgTr@it{\the\delt@}%
    \loop\ifdim\v@lX<\v@lXa\edef\valv@lX{\repdecn@mb{\v@lX}}%
    \figptDD -1:(\valv@lX,\valv@lY)\figwriten -1:\hbox{\vrule height\LgTr@it}(0)%
    \ifdim\v@leur<\minst@p\else\figsetmark{\raise-8bp\hbox{$\scriptscriptstyle\triangle$}}%
    \figwrites -1:\@ffichnb{0}{\valv@lX}(6)\v@leur=\z@\figsetmark{}\fi%
    \advance\v@leur#1pt\advance\v@lX#1pt\repeat%
    \def\minst@p{10pt}%
    \v@lX=\@psfllx\p@\v@lX=\Sc@leFact\v@lX\ifdim\v@lX>\z@\r@undint\v@lX\v@lX\fi%
    \v@lY=\@psflly\p@\v@lY=\Sc@leFact\v@lY\r@undint\v@lY\v@lY%
    \delt@=\@psfurx\p@\delt@=\Sc@leFact\delt@%
    \advance\delt@-\v@lX\v@lYa=\@psfury\p@\v@lYa=\Sc@leFact\v@lYa\v@leur=\minst@p%
    \edef\valv@lX{\repdecn@mb{\v@lX}}\edef\LgTr@it{\the\delt@}%
    \loop\ifdim\v@lY<\v@lYa\edef\valv@lY{\repdecn@mb{\v@lY}}%
    \figptDD -1:(\valv@lX,\valv@lY)\figwritee -1:\vbox{\hrule width\LgTr@it}(0)%
    \ifdim\v@leur<\minst@p\else\figsetmark{$\triangleright$\kern4bp}%
    \figwritew -1:\@ffichnb{0}{\valv@lY}(6)\v@leur=\z@\figsetmark{}\fi%
    \advance\v@leur#2pt\advance\v@lY#2pt\repeat}
\ctr@ld@f
\ctr@ld@f\def\figscan@E#1(#2,#3){{\s@uvc@ntr@l\et@tfigscan@E%
    \Figdisc@rdLTS{#1}{\t@xt@}\pdfximage{\t@xt@}%
    \setbox\Gb@x=\hbox{\pdfrefximage\pdflastximage}%
    \edef\@psfllx{0}\v@lY=-\dp\Gb@x\edef\@psflly{\repdecn@mb{\v@lY}}%
    \edef\@psfurx{\repdecn@mb{\wd\Gb@x}}%
    \v@lY=\dp\Gb@x\advance\v@lY\ht\Gb@x\edef\@psfury{\repdecn@mb{\v@lY}}%
    \figscan@{#2}{#3}\resetc@ntr@l\et@tfigscan@E}\ignorespaces}
\ctr@ld@f\def\figshowpts[#1,#2]{{\figsetmark{$\bullet$}\figsetptname{\bf ##1}%
    \p@rtent=#2\relax\ifnum\p@rtent<\z@\p@rtent=\z@\fi%
    \s@mme=#1\relax\ifnum\s@mme<\z@\s@mme=\z@\fi%
    \loop\ifnum\s@mme<\p@rtent\pt@rvect{\s@mme}%
    \ifitis@K\figwriten{\the\s@mme}:(4pt)\fi\advance\s@mme\@ne\repeat%
    \pt@rvect{\s@mme}\ifitis@K\figwriten{\the\s@mme}:(4pt)\fi}\ignorespaces}
\ctr@ld@f\def\pt@rvect#1{\set@bjc@de{#1}%
    \expandafter\expandafter\expandafter\inqpt@rvec\csname\objc@de\endcsname:}
\ctr@ld@f\def\inqpt@rvec#1#2:{\if#1\C@dCl@spt\itis@Ktrue\else\itis@Kfalse\fi}
\ctr@ld@f\def\figshowsettings{{%
    \immediate\write16{====================================================================}%
    \immediate\write16{ Current settings are (DDV means "with dynamic default value"):}%
    \immediate\write16{ --- GENERAL ---}%
    \immediate\write16{Scale factor and Unit = \unit@util\space (\the\unit@)
     \space -> \BS@ figinit{ScaleFactorUnit}}%
    \immediate\write16{Update mode = \ifGRupdatem@de yes\else no\fi
     \space-> \BS@ figset(update=yes/no) or \BS@ figsetdefault(update=yes/no)}%
    \immediate\write16{ --- WRITING ---}%
    \immediate\write16{Implicit point name = \ptn@me{i} \space-> \BS@ figset write(ptname={Name})}%
    \immediate\write16{Point marker = \the\c@nsymb \space -> \BS@ figset write(mark=Mark)}%
    \immediate\write16{Print rounded coordinates = \ifr@undcoord yes\else no\fi
     \space-> \BS@ figset write(roundcoord=yes/no)}%
    \immediate\write16{ --- GRAPHICAL (general) ---}%
    \immediate\write16{Color = \CUR@color \space-> \BS@ figset(color=ColorDefinition)}%
    \immediate\write16{Filling mode = \iffillm@de yes\else no\fi
     \space-> \BS@ figset(fillmode=yes/no)}%
    \immediate\write16{Line join = \CUR@join \space-> \BS@ figset(join=miter/round/bevel)}%
    \immediate\write16{Line style = \CUR@dash \space-> \BS@ figset(dash=Index/Pattern)}%
    \immediate\write16{Line width = \CUR@width
     \space-> \BS@ figset(width=real in PostScript units)}%
    \immediate\write16{ --- GRAPHICAL (specific) ---}%
    \immediate\write16{Altitude (all the following attributes are DDV):}%
    \immediate\write16{ Base line color =
     \ifx\DDV@blcolor\D@FTref general color\else\DDV@blcolor\fi
     \space-> \BS@ figset altitude(blcolor=ColorDefinition)}%
    \immediate\write16{ Base line style =
     \ifx\DDV@bldash\D@FTref general style\else\DDV@bldash\fi
     \space-> \BS@ figset altitude(bldash=Index/Pattern)}%
    \immediate\write16{ Base line width =
     \ifx\DDV@blwidth\D@FTref general width\else\DDV@blwidth\fi
     \space-> \BS@ figset altitude(blwidth=real in PostScript units)}%
    \immediate\write16{ Square line color =
     \ifx\DDV@sqcolor\D@FTref general color\else\DDV@sqcolor\fi
     \space-> \BS@ figset altitude(sqcolor=ColorDefinition)}%
    \immediate\write16{ Square line style =
     \ifx\DDV@sqdash\D@FTref general style\else\DDV@sqdash\fi
     \space-> \BS@ figset altitude(sqdash=Index/Pattern)}%
    \immediate\write16{ Square line width =
     \ifx\DDV@sqwidth\D@FTref general width\else\DDV@sqwidth\fi
     \space-> \BS@ figset altitude(sqwidth=real in PostScript units)}%
    \immediate\write16{Arrowhead:}%
    \immediate\write16{ (half-)Angle = \@rrowheadangle
     \space-> \BS@ figset arrowhead(angle=real in degrees)}%
    \immediate\write16{ Filling mode = \if@rrowhfill yes\else no\fi
     \space-> \BS@ figset arrowhead(fillmode=yes/no)}%
    \immediate\write16{ "Outside" = \if@rrowhout yes\else no\fi
     \space-> \BS@ figset arrowhead(out=yes/no)}%
    \immediate\write16{ Length = \@rrowheadlength
     \if@rrowratio\space(not active)\else\space(active)\fi
     \space-> \BS@ figset arrowhead(length=real in user coord.)}%
    \immediate\write16{ Ratio = \@rrowheadratio
     \if@rrowratio\space(active)\else\space(not active)\fi
     \space-> \BS@ figset arrowhead(ratio=real in [0,1])}%
    \immediate\write16{Curve:}%
    \immediate\write16{ Roundness = \curv@roundness
     \space-> \BS@ figset curve(roundness=real in [0,0.5])}%
    \immediate\write16{Flow chart:}%
    \immediate\write16{ Arrow position = \@rrowp@s
     \space-> \BS@ figset flowchart(arrowposition=real in [0,1])}%
    \immediate\write16{ Arrow reference point = \ifcase\@rrowr@fpt start\else end\fi
     \space-> \BS@ figset flowchart(arrowrefpt = start/end)}%     
    \immediate\write16{ Background color = \fcbgc@lor
     \space-> \BS@ figset flowchart(bgcolor=ColorDefinition)}%
    \immediate\write16{ Line type = \ifcase\fclin@typ@ curve\else polygon\fi
     \space-> \BS@ figset flowchart(line=polygon/curve)}%
    \immediate\write16{ Padding = (\Xp@dd, \Yp@dd)
     \space-> \BS@ figset flowchart(padding = real in user coord.)}%
    \immediate\write16{\space\space\space\space(or
     \BS@ figset flowchart(xpadding=real, ypadding=real) )}%
    \immediate\write16{ Radius = \fclin@r@d
     \space-> \BS@ figset flowchart(radius=positive real in user coord.)}%
    \immediate\write16{ Shape = \fcsh@pe
     \space-> \BS@ figset flowchart(shape = rectangle, ellipse or lozenge)}%
    \immediate\write16{ Thickness color (DDV) = 
     \ifx\DDV@thickcolor\D@FTref general color\else\DDV@thickcolor\fi
     \space-> \BS@ figset flowchart(thickcolor=ColorDefinition)}%
    \immediate\write16{ Thickness = \thickn@ss
     \space-> \BS@ figset flowchart(thickness = real in user coord.)}%
    \immediate\write16{Mesh:}%
    \immediate\write16{ Diagonal = \c@ntrolmesh
     \space-> \BS@ figset mesh(diag=integer in {-1,0,1})}%
    \immediate\write16{ Lines color (DDV) =
     \ifx\DDV@meshcolor\D@FTref general color\else\DDV@meshcolor\fi
     \space-> \BS@ figset mesh(color=ColorDefinition)}%
    \immediate\write16{ Lines style (DDV) =
     \ifx\DDV@meshdash\D@FTref general style\else\DDV@meshdash\fi
     \space-> \BS@ figset mesh(dash=Index/Pattern)}%
    \immediate\write16{ Lines width (DDV) =
     \ifx\DDV@meshwidth\D@FTref general width\else\DDV@meshwidth\fi
     \space-> \BS@ figset mesh(width=real in PostScript units)}%
    \immediate\write16{Trimesh:}%
    \immediate\write16{ Lines color (DDV) =
     \ifx\DDV@tmeshcolor\D@FTref general color\else\DDV@tmeshcolor\fi
     \space-> \BS@ figset trimesh(color=ColorDefinition)}%
    \immediate\write16{ Lines style (DDV) =
     \ifx\DDV@tmeshdash\D@FTref general style\else\DDV@tmeshdash\fi
     \space-> \BS@ figset trimesh(dash=Index/Pattern)}%
    \immediate\write16{ Lines width (DDV) =
     \ifx\DDV@tmeshwidth\D@FTref general width\else\DDV@tmeshwidth\fi
     \space-> \BS@ figset trimesh(width=real in PostScript units)}%
    \ifTr@isDim%
    \immediate\write16{ --- 3D to 2D PROJECTION ---}%
    \immediate\write16{Projection : \typ@proj \space-> \BS@ figinit{ScaleFactorUnit, ProjType}}%
    \immediate\write16{Longitude (psi) = \v@lPsi \space-> \BS@ figset proj(psi=real in degrees)}%
    \ifcase\CUR@proj\immediate\write16{Depth coeff. (Lambda)
     \space = \v@lTheta \space-> \BS@ figset proj(lambda=real in [0,1])}%
    \else\immediate\write16{Latitude (theta)
     \space = \v@lTheta \space-> \BS@ figset proj(theta=real in degrees)}%
    \fi%
    \ifnum\CUR@proj=\tw@%
    \immediate\write16{Observation distance = \disob@unit
     \space-> \BS@ figset proj(dist=real in user coord.)}%
    \immediate\write16{Target point = \t@rgetpt \space-> \BS@ figset proj(targetpt=pt number)}%
     \v@lX=\ptT@unit@\wd\Bt@rget\v@lY=\ptT@unit@\ht\Bt@rget\v@lZ=\ptT@unit@\dp\Bt@rget%
    \immediate\write16{ Its coordinates are
     (\repdecn@mb{\v@lX}, \repdecn@mb{\v@lY}, \repdecn@mb{\v@lZ})}%
    \fi%
    \fi%
    \immediate\write16{====================================================================}%
    \ignorespaces}}
\ctr@ln@w{newif}\ifitis@vect@r
\ctr@ld@f\def\figvectC#1(#2,#3){{\itis@vect@rtrue\figpt#1:(#2,#3)}\ignorespaces}
\ctr@ld@f\def\Figv@ctCreg#1(#2,#3){{\itis@vect@rtrue\Figp@intreg#1:(#2,#3)}\ignorespaces}
\ctr@ln@m\figvectDBezier
\ctr@ld@f\def\figvectDBezierDD#1:#2,#3[#4,#5,#6,#7]{\ifGR@cri{\s@uvc@ntr@l\et@tfigvectDBezierDD%
    \FigvectDBezier@#2,#3[#4,#5,#6,#7]\v@lX=\c@ef\v@lX\v@lY=\c@ef\v@lY%
    \Figv@ctCreg#1(\v@lX,\v@lY)\resetc@ntr@l\et@tfigvectDBezierDD}\ignorespaces\fi}
\ctr@ld@f\def\figvectDBezierTD#1:#2,#3[#4,#5,#6,#7]{\ifGR@cri{\s@uvc@ntr@l\et@tfigvectDBezierTD%
    \FigvectDBezier@#2,#3[#4,#5,#6,#7]\v@lX=\c@ef\v@lX\v@lY=\c@ef\v@lY\v@lZ=\c@ef\v@lZ%
    \Figv@ctCreg#1(\v@lX,\v@lY,\v@lZ)\resetc@ntr@l\et@tfigvectDBezierTD}\ignorespaces\fi}
\ctr@ld@f\def\FigvectDBezier@#1,#2[#3,#4,#5,#6]{\setc@ntr@l{2}%
    \edef\T@{#2}\v@leur=\p@\advance\v@leur-#2pt\edef\UNmT@{\repdecn@mb{\v@leur}}%
    \ifnum#1=\tw@\def\c@ef{6}\else\def\c@ef{3}\fi%
    \figptcopy-4:/#3/\figptcopy-3:/#4/\figptcopy-2:/#5/\figptcopy-1:/#6/%
    \l@mbd@un=-4 \l@mbd@de=-\thr@@\p@rtent=\m@ne\c@lDecast%
    \ifnum#1=\tw@\c@lDCDeux{-4}{-3}\c@lDCDeux{-3}{-2}\c@lDCDeux{-4}{-3}\else%
    \l@mbd@un=-4 \l@mbd@de=-\thr@@\p@rtent=-\tw@\c@lDecast%
    \c@lDCDeux{-4}{-3}\fi\Figg@tXY{-4}}
\ctr@ln@m\c@lDCDeux
\ctr@ld@f\def\c@lDCDeuxDD#1#2{\Figg@tXY{#2}\Figg@tXYa{#1}%
    \advance\v@lX-\v@lXa\advance\v@lY-\v@lYa\Figp@intregDD#1:(\v@lX,\v@lY)}
\ctr@ld@f\def\c@lDCDeuxTD#1#2{\Figg@tXY{#2}\Figg@tXYa{#1}\advance\v@lX-\v@lXa%
    \advance\v@lY-\v@lYa\advance\v@lZ-\v@lZa\Figp@intregTD#1:(\v@lX,\v@lY,\v@lZ)}
\ctr@ln@m\figvectN
\ctr@ld@f\def\figvectNDD#1[#2,#3]{\ifGR@cri{\Figg@tXYa{#2}\Figg@tXY{#3}%
    \advance\v@lX-\v@lXa\advance\v@lY-\v@lYa%
    \Figv@ctCreg#1(-\v@lY,\v@lX)}\ignorespaces\fi}
\ctr@ld@f\def\figvectNTD#1[#2,#3,#4]{\ifGR@cri{\vecunitC@TD[#2,#4]\v@lmin=\v@lX\v@lmax=\v@lY%
    \v@leur=\v@lZ\vecunitC@TD[#2,#3]\c@lprovec{#1}}\ignorespaces\fi}
\ctr@ln@m\figvectNV
\ctr@ld@f\def\figvectNVDD#1[#2]{\ifGR@cri{\Figg@tXY{#2}\Figv@ctCreg#1(-\v@lY,\v@lX)}\ignorespaces\fi}
\ctr@ld@f\def\figvectNVTD#1[#2,#3]{\ifGR@cri{\vecunitCV@TD{#3}\v@lmin=\v@lX\v@lmax=\v@lY%
    \v@leur=\v@lZ\vecunitCV@TD{#2}\c@lprovec{#1}}\ignorespaces\fi}
\ctr@ln@m\figvectP
\ctr@ld@f\def\figvectPDD#1[#2,#3]{\ifGR@cri{\Figg@tXYa{#2}\Figg@tXY{#3}%
    \advance\v@lX-\v@lXa\advance\v@lY-\v@lYa%
    \Figv@ctCreg#1(\v@lX,\v@lY)}\ignorespaces\fi}
\ctr@ld@f\def\figvectPTD#1[#2,#3]{\ifGR@cri{\Figg@tXYa{#2}\Figg@tXY{#3}%
    \advance\v@lX-\v@lXa\advance\v@lY-\v@lYa\advance\v@lZ-\v@lZa%
    \Figv@ctCreg#1(\v@lX,\v@lY,\v@lZ)}\ignorespaces\fi}
\ctr@ln@m\figvectU
\ctr@ld@f\def\figvectUDD#1[#2]{\ifGR@cri{\n@rmeuc\v@leur{#2}\invers@\v@leur\v@leur%
    \delt@=\repdecn@mb{\v@leur}\unit@\edef\v@ldelt@{\repdecn@mb{\delt@}}%
    \Figg@tXY{#2}\v@lX=\v@ldelt@\v@lX\v@lY=\v@ldelt@\v@lY%
    \Figv@ctCreg#1(\v@lX,\v@lY)}\ignorespaces\fi}
\ctr@ld@f\def\figvectUTD#1[#2]{\ifGR@cri{\n@rmeuc\v@leur{#2}\invers@\v@leur\v@leur%
    \delt@=\repdecn@mb{\v@leur}\unit@\edef\v@ldelt@{\repdecn@mb{\delt@}}%
    \Figg@tXY{#2}\v@lX=\v@ldelt@\v@lX\v@lY=\v@ldelt@\v@lY\v@lZ=\v@ldelt@\v@lZ%
    \Figv@ctCreg#1(\v@lX,\v@lY,\v@lZ)}\ignorespaces\fi}
\ctr@ld@f\def\figvisu#1#2#3{\c@ldefproj\initb@undb@x\xdef\figforTeXFigno{\figforTeXnextFigno}%
    \s@mme=\figforTeXnextFigno\advance\s@mme\@ne\xdef\figforTeXnextFigno{\number\s@mme}%
    \setbox\b@xvisu=\hbox{\ifnum\@utoFN>\z@\figinsert{}\gdef\@utoFInDone{0}\fi\ignorespaces#3}%
    \gdef\@utoFInDone{1}\gdef\@utoFN{0}%
    \v@lXa=-\c@@rdYmin\v@lYa=\c@@rdYmax\advance\v@lYa-\c@@rdYmin%
    \v@lX=\c@@rdXmax\advance\v@lX-\c@@rdXmin%
    \setbox#1=\hbox{#2}\v@lY=-\v@lX\maxim@m{\v@lX}{\v@lX}{\wd#1}%
    \advance\v@lY\v@lX\divide\v@lY\tw@\advance\v@lY-\c@@rdXmin%
    \setbox#1=\vbox{\parindent\z@\hsize=\v@lX\vskip\v@lYa%
    \rlap{\hskip\v@lY\smash{\raise\v@lXa\box\b@xvisu}}%
    \def\t@xt@{#2}\ifx\t@xt@\empty\else\medskip\centerline{#2}\fi}\wd#1=\v@lX}
\ctr@ld@f\def\figDecrementFigno{{\xdef\figforTeXnextFigno{\figforTeXFigno}%
    \s@mme=\figforTeXFigno\advance\s@mme\m@ne\xdef\figforTeXFigno{\number\s@mme}}}
\ctr@ln@w{newbox}\Bt@rget\setbox\Bt@rget=\null
\ctr@ln@w{newbox}\BminTD@\setbox\BminTD@=\null
\ctr@ln@w{newbox}\BmaxTD@\setbox\BmaxTD@=\null
\ctr@ln@w{newif}\ifnewt@rgetpt\ctr@ln@w{newif}\ifnewdis@b
\ctr@ld@f\def\b@undb@xTD#1#2#3{%
    \relax\ifdim#1<\wd\BminTD@\global\wd\BminTD@=#1\fi%
    \relax\ifdim#2<\ht\BminTD@\global\ht\BminTD@=#2\fi%
    \relax\ifdim#3<\dp\BminTD@\global\dp\BminTD@=#3\fi%
    \relax\ifdim#1>\wd\BmaxTD@\global\wd\BmaxTD@=#1\fi%
    \relax\ifdim#2>\ht\BmaxTD@\global\ht\BmaxTD@=#2\fi%
    \relax\ifdim#3>\dp\BmaxTD@\global\dp\BmaxTD@=#3\fi}
\ctr@ld@f\def\c@ldefdisob{{\ifdim\wd\BminTD@<\maxdimen\v@leur=\wd\BmaxTD@\advance\v@leur-\wd\BminTD@%
    \delt@=\ht\BmaxTD@\advance\delt@-\ht\BminTD@\maxim@m{\v@leur}{\v@leur}{\delt@}%
    \delt@=\dp\BmaxTD@\advance\delt@-\dp\BminTD@\maxim@m{\v@leur}{\v@leur}{\delt@}%
    \v@leur=5\v@leur\else\v@leur=800pt\fi\c@ldefdisob@{\v@leur}}}
\ctr@ln@m\disob@intern
\ctr@ln@m\disob@
\ctr@ln@m\divf@ctproj
\ctr@ld@f\def\c@ldefdisob@#1{{\v@leur=#1\ifdim\v@leur<\p@\v@leur=800pt\fi%
    \xdef\disob@intern{\repdecn@mb{\v@leur}}%
    \delt@=\ptT@unit@\v@leur\xdef\disob@unit{\repdecn@mb{\delt@}}%
    \f@ctech=\@ne\loop\ifdim\v@leur>\t@n pt\divide\v@leur\t@n\multiply\f@ctech\t@n\repeat%
    \xdef\disob@{\repdecn@mb{\v@leur}}\xdef\divf@ctproj{\the\f@ctech}}%
    \global\newdis@btrue}
\ctr@ln@m\t@rgetpt
\ctr@ld@f\def\c@ldeft@rgetpt{\newt@rgetpttrue\def\t@rgetpt{CenterBoundBox}{%
    \delt@=\wd\BmaxTD@\advance\delt@-\wd\BminTD@\divide\delt@\tw@%
    \v@leur=\wd\BminTD@\advance\v@leur\delt@\global\wd\Bt@rget=\v@leur%
    \delt@=\ht\BmaxTD@\advance\delt@-\ht\BminTD@\divide\delt@\tw@%
    \v@leur=\ht\BminTD@\advance\v@leur\delt@\global\ht\Bt@rget=\v@leur%
    \delt@=\dp\BmaxTD@\advance\delt@-\dp\BminTD@\divide\delt@\tw@%
    \v@leur=\dp\BminTD@\advance\v@leur\delt@\global\dp\Bt@rget=\v@leur}}
\ctr@ln@m\c@ldefproj
\ctr@ld@f\def\c@ldefprojTD{\ifnewt@rgetpt\else\c@ldeft@rgetpt\fi\ifnewdis@b\else\c@ldefdisob\fi}
\ctr@ld@f\def\c@lprojcav{% Projection cavaliere : X = x + y L cos t, Y = z + y L sin t
    \v@lZa=\cxa@\v@lY\advance\v@lX\v@lZa%
    \v@lZa=\cxb@\v@lY\v@lY=\v@lZ\advance\v@lY\v@lZa\ignorespaces}
\ctr@ln@m\v@lcoef
\ctr@ld@f\def\c@lprojrea{% Projection realiste
    \advance\v@lX-\wd\Bt@rget\advance\v@lY-\ht\Bt@rget\advance\v@lZ-\dp\Bt@rget%
    \v@lZa=\cza@\v@lX\advance\v@lZa\czb@\v@lY\advance\v@lZa\czc@\v@lZ%
    \divide\v@lZa\divf@ctproj\advance\v@lZa\disob@ pt\invers@{\v@lZa}{\v@lZa}%
    \v@lZa=\disob@\v@lZa\edef\v@lcoef{\repdecn@mb{\v@lZa}}%
    \v@lXa=\cxa@\v@lX\advance\v@lXa\cxb@\v@lY\v@lXa=\v@lcoef\v@lXa%
    \v@lY=\cyb@\v@lY\advance\v@lY\cya@\v@lX\advance\v@lY\cyc@\v@lZ%
    \v@lY=\v@lcoef\v@lY\v@lX=\v@lXa\ignorespaces}
\ctr@ld@f\def\c@lprojort{% Projection orthogonale
    \v@lXa=\cxa@\v@lX\advance\v@lXa\cxb@\v@lY%
    \v@lY=\cyb@\v@lY\advance\v@lY\cya@\v@lX\advance\v@lY\cyc@\v@lZ%
    \v@lX=\v@lXa\ignorespaces}
\ctr@ld@f\def\Figptpr@j#1:#2/#3/{{\Figg@tXY{#3}\superc@lprojSP%
    \Figp@intregDD#1:{#2}(\v@lX,\v@lY)}\ignorespaces}
\ctr@ln@m\figsetobdist
\ctr@ld@f\def\figsetobdistDD{\un@v@ilable{figsetobdist}}
\ctr@ld@f\def\figsetobdistTD(#1){{\ifCUR@PS\W@rnmesIgn{figset proj(dist=...)}%
    \else\v@leur=#1\unit@\c@ldefdisob@{\v@leur}\fi}\ignorespaces}
\ctr@ln@m\c@lprojSP
\ctr@ln@m\CUR@proj
\ctr@ln@m\typ@proj
\ctr@ln@m\superc@lprojSP
\ctr@ld@f\def\Figs@tproj#1{%
    \if#13 \def@ultproj\else\if#1c\def@ultproj%
    \else\if#1o\xdef\CUR@proj{1}\xdef\typ@proj{orthogonal}%
         \figsetviewTD(\def@ultpsi,\def@ulttheta)%
         \global\let\c@lprojSP=\c@lprojort\global\let\superc@lprojSP=\c@lprojort%
    \else\if#1r\xdef\CUR@proj{2}\xdef\typ@proj{realistic}%
         \figsetviewTD(\def@ultpsi,\def@ulttheta)%
         \global\let\c@lprojSP=\c@lprojrea\global\let\superc@lprojSP=\c@lprojrea%
    \else\def@ultproj\message{*** Unknown projection. Cavalier projection assumed.}%
    \fi\fi\fi\fi}
\ctr@ld@f\def\def@ultproj{\xdef\CUR@proj{0}\xdef\typ@proj{cavalier}\figsetviewTD(\def@ultpsi,0.5)%
         \global\let\c@lprojSP=\c@lprojcav\global\let\superc@lprojSP=\c@lprojcav}
\ctr@ln@m\figsettarget
\ctr@ld@f\def\figsettargetDD{\un@v@ilable{figsettarget}}
\ctr@ld@f\def\figsettargetTD[#1]{{\ifCUR@PS\W@rnmesIgn{figset proj(targetpt=...)}%
    \else\global\newt@rgetpttrue\xdef\t@rgetpt{#1}\Figg@tXY{#1}\global\wd\Bt@rget=\v@lX%
    \global\ht\Bt@rget=\v@lY\global\dp\Bt@rget=\v@lZ\fi}\ignorespaces}
\ctr@ln@m\figsetview
\ctr@ld@f\def\figsetviewDD{\un@v@ilable{figsetview}}
\ctr@ld@f\def\figsetviewTD(#1){\ifCUR@PS\W@rnmesIgn{figset proj(Psi|Theta|Lambda=...)}%
     \else\Figsetview@#1,:\fi\ignorespaces}
\ctr@ld@f\def\Figsetview@#1,#2:{{\xdef\v@lPsi{#1}\def\t@xt@{#2}%
    \ifx\t@xt@\empty\def\@rgdeux{\v@lTheta}\else\X@rgdeux@#2\fi%
    \c@ssin{\costhet@}{\sinthet@}{#1}\v@lmin=\costhet@ pt\v@lmax=\sinthet@ pt%
    \ifcase\CUR@proj%
    \v@leur=\@rgdeux\v@lmin\xdef\cxa@{\repdecn@mb{\v@leur}}%
    \v@leur=\@rgdeux\v@lmax\xdef\cxb@{\repdecn@mb{\v@leur}}\v@leur=\@rgdeux pt%
    \relax\ifdim\v@leur>\p@\message{*** Lambda too large ! See \BS@ figset proj() !}\fi%
    \else%
    \v@lmax=-\v@lmax\xdef\cxa@{\repdecn@mb{\v@lmax}}\xdef\cxb@{\costhet@}%
    \ifx\t@xt@\empty\edef\@rgdeux{\def@ulttheta}\fi\c@ssin{\C@}{\S@}{\@rgdeux}%
    \v@lmax=-\S@ pt%
    \v@leur=\v@lmax\v@leur=\costhet@\v@leur\xdef\cya@{\repdecn@mb{\v@leur}}%
    \v@leur=\v@lmax\v@leur=\sinthet@\v@leur\xdef\cyb@{\repdecn@mb{\v@leur}}%
    \xdef\cyc@{\C@}\v@lmin=-\C@ pt%
    \v@leur=\v@lmin\v@leur=\costhet@\v@leur\xdef\cza@{\repdecn@mb{\v@leur}}%
    \v@leur=\v@lmin\v@leur=\sinthet@\v@leur\xdef\czb@{\repdecn@mb{\v@leur}}%
    \xdef\czc@{\repdecn@mb{\v@lmax}}\fi%
    \xdef\v@lTheta{\@rgdeux}}}
\ctr@ld@f\def\def@ultpsi{40}
\ctr@ld@f\def\def@ulttheta{25}
\ctr@ln@m\l@debut
\ctr@ln@m\n@mref
\ctr@ld@f\def\Figsetpr@j#1=#2|{\keln@mtr#1|%
    \def\n@mref{dep}\ifx\l@debut\n@mref\Figsetd@p{#2}\else% depth (lambda)
    \def\n@mref{dis}\ifx\l@debut\n@mref%
     \ifnum\CUR@proj=\tw@\figsetobdist(#2)\else\Figset@rr\fi\else% dist
    \def\n@mref{lam}\ifx\l@debut\n@mref\Figsetd@p{#2}\else% depth (lambda)
    \def\n@mref{lat}\ifx\l@debut\n@mref\Figsetth@{#2}\else% latitude (theta)
    \def\n@mref{lon}\ifx\l@debut\n@mref\figsetview(#2)\else% longitude (psi)
    \def\n@mref{psi}\ifx\l@debut\n@mref\figsetview(#2)\else% longitude (psi)
    \def\n@mref{tar}\ifx\l@debut\n@mref%
     \ifnum\CUR@proj=\tw@\figsettarget[#2]\else\Figset@rr\fi\else% target point
    \def\n@mref{the}\ifx\l@debut\n@mref\Figsetth@{#2}\else% latitude (theta)
    \W@rnmesAttr{figset proj}{#1}\fi\fi\fi\fi\fi\fi\fi\fi}
\ctr@ld@f\def\Figsetd@p#1{\ifnum\CUR@proj=\z@\figsetview(\v@lPsi,#1)\else\Figset@rr\fi}
\ctr@ld@f\def\Figsetth@#1{\ifnum\CUR@proj=\z@\Figset@rr\else\figsetview(\v@lPsi,#1)\fi}
\ctr@ld@f\def\Figset@rr{\message{*** \BS@ figset proj(): Attribute "\n@mref" ignored, incompatible
    with current projection}}
\ctr@ld@f\def\initb@undb@xTD{\wd\BminTD@=\maxdimen\ht\BminTD@=\maxdimen\dp\BminTD@=\maxdimen%
    \wd\BmaxTD@=-\maxdimen\ht\BmaxTD@=-\maxdimen\dp\BmaxTD@=-\maxdimen}
\ctr@ln@w{newbox}\Gb@x      % boite a tout faire
\ctr@ln@w{newbox}\Gb@xSC    % boite qui contient le point marker
\ctr@ln@w{newtoks}\c@nsymb  % the point marker
\ctr@ln@w{newif}\ifr@undcoord\ctr@ln@w{newif}\ifunitpr@sent
\ctr@ld@f\def\unssqrttw@{0.707106 }
\ctr@ld@f\def\figAst{\raise-1.15ex\hbox{$\ast$}}
\ctr@ld@f\def\figBullet{\raise-1.15ex\hbox{$\bullet$}}
\ctr@ld@f\def\figCirc{\raise-1.15ex\hbox{$\circ$}}
\ctr@ld@f\def\figDiamond{\raise-1.15ex\hbox{$\diamond$}}%
\ctr@ld@f\def\boxit#1#2{\leavevmode\hbox{\vrule\vbox{\hrule\vglue#1%
    \vtop{\hbox{\kern#1{#2}\kern#1}\vglue#1\hrule}}\vrule}}
\ctr@ld@f
\ctr@ld@f
\ctr@ld@f\def\c@nterpt{\ignorespaces%
    \kern-.5\wd\Gb@xSC%
    \raise-.5\ht\Gb@xSC\rlap{\hbox{\raise.5\dp\Gb@xSC\hbox{\copy\Gb@xSC}}}%
    \kern .5\wd\Gb@xSC\ignorespaces}
\ctr@ld@f\def\b@undb@xSC#1#2{{\v@lXa=#1\v@lYa=#2%
    \v@leur=\ht\Gb@xSC\advance\v@leur\dp\Gb@xSC%
    \advance\v@lXa-.5\wd\Gb@xSC\advance\v@lYa-.5\v@leur\b@undb@x{\v@lXa}{\v@lYa}%
    \advance\v@lXa\wd\Gb@xSC\advance\v@lYa\v@leur\b@undb@x{\v@lXa}{\v@lYa}}}
\ctr@ln@m\Dist@n
\ctr@ln@m\l@suite
\ctr@ld@f\def\@keldist#1#2{\edef\Dist@n{#2}\y@tiunit{\Dist@n}%
    \ifunitpr@sent#1=\Dist@n\else#1=\Dist@n\unit@\fi}
\ctr@ld@f\def\y@tiunit#1{\unitpr@sentfalse\expandafter\y@tiunit@#1:}
\ctr@ld@f\def\y@tiunit@#1#2:{\ifcat#1a\unitpr@senttrue\else\def\l@suite{#2}%
    \ifx\l@suite\empty\else\y@tiunit@#2:\fi\fi}
\ctr@ln@m\figcoord
\ctr@ld@f\def\figcoordDD#1{{\v@lX=\ptT@unit@\v@lX\v@lY=\ptT@unit@\v@lY%
    \ifr@undcoord\ifcase#1\v@leur=0.5pt\or\v@leur=0.05pt\or\v@leur=0.005pt%
    \or\v@leur=0.0005pt\else\v@leur=\z@\fi%
    \ifdim\v@lX<\z@\advance\v@lX-\v@leur\else\advance\v@lX\v@leur\fi%
    \ifdim\v@lY<\z@\advance\v@lY-\v@leur\else\advance\v@lY\v@leur\fi\fi%
    (\@ffichnb{#1}{\repdecn@mb{\v@lX}},\ifmmode\else\thinspace\fi%
    \@ffichnb{#1}{\repdecn@mb{\v@lY}})}}
\ctr@ld@f\def\@ffichnb#1#2{{\def\@@ffich{\@ffich#1(}\edef\n@mbre{#2}%
    \expandafter\@@ffich\n@mbre)}}
\ctr@ld@f\def\@ffich#1(#2.#3){{#2\ifnum#1>\z@.\fi\def\dig@ts{#3}\s@mme=\z@%
    \loop\ifnum\s@mme<#1\expandafter\@ffichdec\dig@ts:\advance\s@mme\@ne\repeat}}
\ctr@ld@f\def\@ffichdec#1#2:{\relax#1\def\dig@ts{#20}}
\ctr@ld@f\def\figcoordTD#1{{\v@lX=\ptT@unit@\v@lX\v@lY=\ptT@unit@\v@lY\v@lZ=\ptT@unit@\v@lZ%
    \ifr@undcoord\ifcase#1\v@leur=0.5pt\or\v@leur=0.05pt\or\v@leur=0.005pt%
    \or\v@leur=0.0005pt\else\v@leur=\z@\fi%
    \ifdim\v@lX<\z@\advance\v@lX-\v@leur\else\advance\v@lX\v@leur\fi%
    \ifdim\v@lY<\z@\advance\v@lY-\v@leur\else\advance\v@lY\v@leur\fi%
    \ifdim\v@lZ<\z@\advance\v@lZ-\v@leur\else\advance\v@lZ\v@leur\fi\fi%
    (\@ffichnb{#1}{\repdecn@mb{\v@lX}},\ifmmode\else\thinspace\fi%
     \@ffichnb{#1}{\repdecn@mb{\v@lY}},\ifmmode\else\thinspace\fi%
     \@ffichnb{#1}{\repdecn@mb{\v@lZ}})}}
\ctr@ld@f\def\figsetroundcoord#1{\expandafter\Figsetr@undcoord#1:\ignorespaces}
\ctr@ld@f\def\Figsetr@undcoord#1#2:{\if#1n\r@undcoordfalse\else\r@undcoordtrue\fi}
\ctr@ld@f\def\Figsetwr@te#1=#2|{\keln@mun#1|%
    \def\n@mref{m}\ifx\l@debut\n@mref\figsetmark{#2}\else% mark
    \def\n@mref{p}\ifx\l@debut\n@mref\figsetptname{#2}\else% ptname
    \def\n@mref{r}\ifx\l@debut\n@mref\figsetroundcoord{#2}\else% roundcoord
    \W@rnmesAttr{figset write}{#1}\fi\fi\fi}
\ctr@ld@f\def\figsetmark#1{\c@nsymb={#1}\setbox\Gb@xSC=\hbox{\the\c@nsymb}\ignorespaces}
\ctr@ln@m\ptn@me
\ctr@ld@f\def\figsetptname#1{\def\ptn@me##1{#1}\ignorespaces}
\ctr@ld@f\def\FigWrit@L#1:#2(#3,#4){\ignorespaces\@keldist\v@leur{#3}\@keldist\delt@{#4}%
    \C@rp@r@m\def\list@num{#1}\@ecfor\p@int:=\list@num\do{\FigWrit@pt{\p@int}{#2}}}
\ctr@ld@f\def\FigWrit@pt#1#2{\FigWp@r@m{#1}{#2}\Vc@rrect\figWp@si%
    \ifdim\wd\Gb@xSC>\z@\b@undb@xSC{\v@lX}{\v@lY}\fi\figWBB@x}
\ctr@ld@f\def\FigWp@r@m#1#2{\Figg@tXY{#1}%
    \setbox\Gb@x=\hbox{\def\t@xt@{#2}\ifx\t@xt@\empty\Figg@tT{#1}\else#2\fi}\c@lprojSP}
\ctr@ld@f\let\Vc@rrect=\relax
\ctr@ld@f\let\C@rp@r@m=\relax
\ctr@ld@f\def\figwrite[#1]#2{{\ignorespaces\def\list@num{#1}\@ecfor\p@int:=\list@num\do{%
    \setbox\Gb@x=\hbox{\def\t@xt@{#2}\ifx\t@xt@\empty\Figg@tT{\p@int}\else#2\fi}%
    \Figwrit@{\p@int}}}\ignorespaces}
\ctr@ld@f\def\Figwrit@#1{\Figg@tXY{#1}\c@lprojSP%
    \rlap{\kern\v@lX\raise\v@lY\hbox{\unhcopy\Gb@x}}\v@leur=\v@lY%
    \advance\v@lY\ht\Gb@x\b@undb@x{\v@lX}{\v@lY}\advance\v@lX\wd\Gb@x%
    \v@lY=\v@leur\advance\v@lY-\dp\Gb@x\b@undb@x{\v@lX}{\v@lY}}
\ctr@ld@f\def\figwritec[#1]#2{{\ignorespaces\def\list@num{#1}%
    \@ecfor\p@int:=\list@num\do{\Figwrit@c{\p@int}{#2}}}\ignorespaces}
\ctr@ld@f\def\Figwrit@c#1#2{\FigWp@r@m{#1}{#2}%
    \rlap{\kern\v@lX\raise\v@lY\hbox{\rlap{\kern-.5\wd\Gb@x%
    \raise-.5\ht\Gb@x\hbox{\raise.5\dp\Gb@x\hbox{\unhcopy\Gb@x}}}}}%
    \v@leur=\ht\Gb@x\advance\v@leur\dp\Gb@x%
    \advance\v@lX-.5\wd\Gb@x\advance\v@lY-.5\v@leur\b@undb@x{\v@lX}{\v@lY}%
    \advance\v@lX\wd\Gb@x\advance\v@lY\v@leur\b@undb@x{\v@lX}{\v@lY}}
\ctr@ld@f\def\figwritep[#1]{{\ignorespaces\def\list@num{#1}\setbox\Gb@x=\hbox{\c@nterpt}%
    \@ecfor\p@int:=\list@num\do{\Figwrit@{\p@int}}}\ignorespaces}
\ctr@ld@f\def\figwritew#1:#2(#3){\figwritegcw#1:{#2}(#3,0pt)}
\ctr@ld@f\def\figwritee#1:#2(#3){\figwritegce#1:{#2}(#3,0pt)}
\ctr@ld@f\def\figwriten#1:#2(#3){{\def\Vc@rrect{\v@lZ=\v@leur\advance\v@lZ\dp\Gb@x}%
    \Figwrit@NS#1:{#2}(#3)}\ignorespaces}
\ctr@ld@f\def\figwrites#1:#2(#3){{\def\Vc@rrect{\v@lZ=-\v@leur\advance\v@lZ-\ht\Gb@x}%
    \Figwrit@NS#1:{#2}(#3)}\ignorespaces}
\ctr@ld@f\def\Figwrit@NS#1:#2(#3){\let\figWp@si=\FigWp@siNS\let\figWBB@x=\FigWBB@xNS%
    \FigWrit@L#1:{#2}(#3,0pt)}
\ctr@ld@f\def\FigWp@siNS{\rlap{\kern\v@lX\raise\v@lY\hbox{\rlap{\kern-.5\wd\Gb@x%
    \raise\v@lZ\hbox{\unhcopy\Gb@x}}\c@nterpt}}}
\ctr@ld@f\def\FigWBB@xNS{\advance\v@lY\v@lZ%
    \advance\v@lY-\dp\Gb@x\advance\v@lX-.5\wd\Gb@x\b@undb@x{\v@lX}{\v@lY}%
    \advance\v@lY\ht\Gb@x\advance\v@lY\dp\Gb@x%
    \advance\v@lX\wd\Gb@x\b@undb@x{\v@lX}{\v@lY}}
\ctr@ld@f\def\figwritenw#1:#2(#3){{\let\figWp@si=\FigWp@sigW\let\figWBB@x=\FigWBB@xgWE%
    \def\C@rp@r@m{\v@leur=\unssqrttw@\v@leur\delt@=\v@leur%
    \ifdim\delt@=\z@\delt@=\epsil@n\fi}\let@xte={-}\FigWrit@L#1:{#2}(#3,0pt)}\ignorespaces}
\ctr@ld@f\def\figwritesw#1:#2(#3){{\let\figWp@si=\FigWp@sigW\let\figWBB@x=\FigWBB@xgWE%
    \def\C@rp@r@m{\v@leur=\unssqrttw@\v@leur\delt@=-\v@leur%
    \ifdim\delt@=\z@\delt@=-\epsil@n\fi}\let@xte={-}\FigWrit@L#1:{#2}(#3,0pt)}\ignorespaces}
\ctr@ld@f\def\figwritene#1:#2(#3){{\let\figWp@si=\FigWp@sigE\let\figWBB@x=\FigWBB@xgWE%
    \def\C@rp@r@m{\v@leur=\unssqrttw@\v@leur\delt@=\v@leur%
    \ifdim\delt@=\z@\delt@=\epsil@n\fi}\let@xte={}\FigWrit@L#1:{#2}(#3,0pt)}\ignorespaces}
\ctr@ld@f\def\figwritese#1:#2(#3){{\let\figWp@si=\FigWp@sigE\let\figWBB@x=\FigWBB@xgWE%
    \def\C@rp@r@m{\v@leur=\unssqrttw@\v@leur\delt@=-\v@leur%
    \ifdim\delt@=\z@\delt@=-\epsil@n\fi}\let@xte={}\FigWrit@L#1:{#2}(#3,0pt)}\ignorespaces}
\ctr@ld@f\def\figwritegw#1:#2(#3,#4){{\let\figWp@si=\FigWp@sigW\let\figWBB@x=\FigWBB@xgWE%
    \let@xte={-}\FigWrit@L#1:{#2}(#3,#4)}\ignorespaces}
\ctr@ld@f\def\figwritege#1:#2(#3,#4){{\let\figWp@si=\FigWp@sigE\let\figWBB@x=\FigWBB@xgWE%
    \let@xte={}\FigWrit@L#1:{#2}(#3,#4)}\ignorespaces}
\ctr@ld@f\def\FigWp@sigW{\v@lXa=\z@\v@lYa=\ht\Gb@x\advance\v@lYa\dp\Gb@x%
    \ifdim\delt@>\z@\relax%
    \rlap{\kern\v@lX\raise\v@lY\hbox{\rlap{\kern-\wd\Gb@x\kern-\v@leur%
          \raise\delt@\hbox{\raise\dp\Gb@x\hbox{\unhcopy\Gb@x}}}\c@nterpt}}%
    \else\ifdim\delt@<\z@\relax\v@lYa=-\v@lYa%
    \rlap{\kern\v@lX\raise\v@lY\hbox{\rlap{\kern-\wd\Gb@x\kern-\v@leur%
          \raise\delt@\hbox{\raise-\ht\Gb@x\hbox{\unhcopy\Gb@x}}}\c@nterpt}}%
    \else\v@lXa=-.5\v@lYa%
    \rlap{\kern\v@lX\raise\v@lY\hbox{\rlap{\kern-\wd\Gb@x\kern-\v@leur%
          \raise-.5\ht\Gb@x\hbox{\raise.5\dp\Gb@x\hbox{\unhcopy\Gb@x}}}\c@nterpt}}%
    \fi\fi}
\ctr@ld@f\def\FigWp@sigE{\v@lXa=\z@\v@lYa=\ht\Gb@x\advance\v@lYa\dp\Gb@x%
    \ifdim\delt@>\z@\relax%
    \rlap{\kern\v@lX\raise\v@lY\hbox{\c@nterpt\kern\v@leur%
          \raise\delt@\hbox{\raise\dp\Gb@x\hbox{\unhcopy\Gb@x}}}}%
    \else\ifdim\delt@<\z@\relax\v@lYa=-\v@lYa%
    \rlap{\kern\v@lX\raise\v@lY\hbox{\c@nterpt\kern\v@leur%
          \raise\delt@\hbox{\raise-\ht\Gb@x\hbox{\unhcopy\Gb@x}}}}%
    \else\v@lXa=-.5\v@lYa%
    \rlap{\kern\v@lX\raise\v@lY\hbox{\c@nterpt\kern\v@leur%
          \raise-.5\ht\Gb@x\hbox{\raise.5\dp\Gb@x\hbox{\unhcopy\Gb@x}}}}%
    \fi\fi}
\ctr@ld@f\def\FigWBB@xgWE{\advance\v@lY\delt@%
    \advance\v@lX\the\let@xte\v@leur\advance\v@lY\v@lXa\b@undb@x{\v@lX}{\v@lY}%
    \advance\v@lX\the\let@xte\wd\Gb@x\advance\v@lY\v@lYa\b@undb@x{\v@lX}{\v@lY}}
\ctr@ld@f\def\figwritegcw#1:#2(#3,#4){{\let\figWp@si=\FigWp@sigcW\let\figWBB@x=\FigWBB@xgcWE%
    \let@xte={-}\FigWrit@L#1:{#2}(#3,#4)}\ignorespaces}
\ctr@ld@f\def\figwritegce#1:#2(#3,#4){{\let\figWp@si=\FigWp@sigcE\let\figWBB@x=\FigWBB@xgcWE%
    \let@xte={}\FigWrit@L#1:{#2}(#3,#4)}\ignorespaces}
\ctr@ld@f\def\FigWp@sigcW{\rlap{\kern\v@lX\raise\v@lY\hbox{\rlap{\kern-\wd\Gb@x\kern-\v@leur%
     \raise-.5\ht\Gb@x\hbox{\raise\delt@\hbox{\raise.5\dp\Gb@x\hbox{\unhcopy\Gb@x}}}}%
     \c@nterpt}}}
\ctr@ld@f\def\FigWp@sigcE{\rlap{\kern\v@lX\raise\v@lY\hbox{\c@nterpt\kern\v@leur%
    \raise-.5\ht\Gb@x\hbox{\raise\delt@\hbox{\raise.5\dp\Gb@x\hbox{\unhcopy\Gb@x}}}}}}
\ctr@ld@f\def\FigWBB@xgcWE{\v@lZ=\ht\Gb@x\advance\v@lZ\dp\Gb@x%
    \advance\v@lX\the\let@xte\v@leur\advance\v@lY\delt@\advance\v@lY.5\v@lZ%
    \b@undb@x{\v@lX}{\v@lY}%
    \advance\v@lX\the\let@xte\wd\Gb@x\advance\v@lY-\v@lZ\b@undb@x{\v@lX}{\v@lY}}
\ctr@ld@f\def\figwritebn#1:#2(#3){{\def\Vc@rrect{\v@lZ=\v@leur}\Figwrit@NS#1:{#2}(#3)}\ignorespaces}
\ctr@ld@f\def\figwritebs#1:#2(#3){{\def\Vc@rrect{\v@lZ=-\v@leur}\Figwrit@NS#1:{#2}(#3)}\ignorespaces}
\ctr@ld@f\def\figwritebw#1:#2(#3){{\let\figWp@si=\FigWp@sibW\let\figWBB@x=\FigWBB@xbWE%
    \let@xte={-}\FigWrit@L#1:{#2}(#3,0pt)}\ignorespaces}
\ctr@ld@f\def\figwritebe#1:#2(#3){{\let\figWp@si=\FigWp@sibE\let\figWBB@x=\FigWBB@xbWE%
    \let@xte={}\FigWrit@L#1:{#2}(#3,0pt)}\ignorespaces}
\ctr@ld@f\def\FigWp@sibW{\rlap{\kern\v@lX\raise\v@lY\hbox{\rlap{\kern-\wd\Gb@x\kern-\v@leur%
          \hbox{\unhcopy\Gb@x}}\c@nterpt}}}
\ctr@ld@f\def\FigWp@sibE{\rlap{\kern\v@lX\raise\v@lY\hbox{\c@nterpt\kern\v@leur%
          \hbox{\unhcopy\Gb@x}}}}
\ctr@ld@f\def\FigWBB@xbWE{\v@lZ=\ht\Gb@x\advance\v@lZ\dp\Gb@x%
    \advance\v@lX\the\let@xte\v@leur\advance\v@lY\ht\Gb@x\b@undb@x{\v@lX}{\v@lY}%
    \advance\v@lX\the\let@xte\wd\Gb@x\advance\v@lY-\v@lZ\b@undb@x{\v@lX}{\v@lY}}
\ctr@ln@w{newread}\frf@g  \ctr@ln@w{newwrite}\fwf@g
\ctr@ln@w{newif}\ifCUR@PS
\ctr@ln@w{newif}\ifGR@cri
\ctr@ln@w{newif}\ifUse@llipse
\ctr@ln@w{newif}\ifGRdebugm@de \GRdebugm@defalse 
\ctr@ln@w{newif}\ifPDFm@ke
\ifx\pdfliteral\undefined\else\ifnum\pdfoutput>\z@\PDFm@ketrue\fi\fi
\ctr@ld@f\def\initPDF@rDVI{%
\ifPDFm@ke
 \let\figscan=\figscan@E
 \let\newGr@FN=\newGr@FNPDF
 \ctr@ld@f\def\c@mcurveto{c}
 \ctr@ld@f\def\c@mfill{f}
 \ctr@ld@f\def\c@mgsave{q}
 \ctr@ld@f\def\c@mgrestore{Q}
 \ctr@ld@f\def\c@mlineto{l}
 \ctr@ld@f\def\c@mmoveto{m}
 \ctr@ld@f\def\c@msetgray{g}     \ctr@ld@f\def\c@msetgrayStroke{G}
 \ctr@ld@f\def\c@msetcmykcolor{k}\ctr@ld@f\def\c@msetcmykcolorStroke{K}
 \ctr@ld@f\def\c@msetrgbcolor{rg}\ctr@ld@f\def\c@msetrgbcolorStroke{RG}
 \ctr@ld@f\def\d@fprimarC@lor{\CUR@color\space\CUR@colorc@md%
               \space\CUR@color\space\CUR@colorc@mdStroke}
 \ctr@ld@f\def\c@msetdash{d}
 \ctr@ld@f\def\c@msetlinejoin{j}
 \ctr@ld@f\def\c@msetlinewidth{w}
 \ctr@ld@f\def\f@gclosestroke{\immediate\write\fwf@g{s}}
 \ctr@ld@f\def\f@gfill{\immediate\write\fwf@g{\fillc@md}}% Voir la def de \fillc@md ****
 \ctr@ld@f\def\f@gnewpath{}
 \ctr@ld@f\def\f@gstroke{\immediate\write\fwf@g{S}}
\else
 \let\figinsertE=\figinsert
 \let\newGr@FN=\newGr@FNDVI
 \ctr@ld@f\def\c@mcurveto{curveto}
 \ctr@ld@f\def\c@mfill{fill}
 \ctr@ld@f\def\c@mgsave{gsave}
 \ctr@ld@f\def\c@mgrestore{grestore}
 \ctr@ld@f\def\c@mlineto{lineto}
 \ctr@ld@f\def\c@mmoveto{moveto}
 \ctr@ld@f\def\c@msetgray{setgray}          \ctr@ld@f\def\c@msetgrayStroke{}
 \ctr@ld@f\def\c@msetcmykcolor{setcmykcolor}\ctr@ld@f\def\c@msetcmykcolorStroke{}
 \ctr@ld@f\def\c@msetrgbcolor{setrgbcolor}  \ctr@ld@f\def\c@msetrgbcolorStroke{}
 \ctr@ld@f\def\d@fprimarC@lor{\CUR@color\space\CUR@colorc@md}
 \ctr@ld@f\def\c@msetdash{setdash}
 \ctr@ld@f\def\c@msetlinejoin{setlinejoin}
 \ctr@ld@f\def\c@msetlinewidth{setlinewidth}
 \ctr@ld@f\def\f@gclosestroke{\immediate\write\fwf@g{closepath\space stroke}}
 \ctr@ld@f\def\f@gfill{\immediate\write\fwf@g{\fillc@md}}
 \ctr@ld@f\def\f@gnewpath{\immediate\write\fwf@g{newpath}}
 \ctr@ld@f\def\f@gstroke{\immediate\write\fwf@g{stroke}}
\fi}
\ctr@ld@f\def\c@pypsfile#1#2{\c@pyfil@{\immediate\write#1}{#2}}
\ctr@ld@f\def\Figinclud@PDF#1#2{\openin\frf@g=#1\pdfliteral{q #2 0 0 #2 0 0 cm}%
    \c@pyfil@{\pdfliteral}{\frf@g}\pdfliteral{Q}\closein\frf@g}
\ctr@ln@w{newif}\ifmored@ta
\ctr@ln@m\bl@nkline
\ctr@ld@f\def\c@pyfil@#1#2{\def\bl@nkline{\par}{\catcode`\%=12
    \loop\ifeof#2\mored@tafalse\else\mored@tatrue\immediate\read#2 to\tr@c
    \ifx\tr@c\bl@nkline\else#1{\tr@c}\fi\fi\ifmored@ta\repeat}}
\ctr@ld@f\def\keln@mun#1#2|{\def\l@debut{#1}\def\l@suite{#2}}
\ctr@ld@f\def\keln@mde#1#2#3|{\def\l@debut{#1#2}\def\l@suite{#3}}
\ctr@ld@f\def\keln@mtr#1#2#3#4|{\def\l@debut{#1#2#3}\def\l@suite{#4}}
\ctr@ld@f\def\keln@mqu#1#2#3#4#5|{\def\l@debut{#1#2#3#4}\def\l@suite{#5}}
\ctr@ld@f\let\@psffilein=\frf@g % file to \read
\ctr@ln@w{newif}\if@psffileok    % continue looking for the bounding box?
\ctr@ln@w{newif}\if@psfbbfound   % success?
\ctr@ln@w{newif}\if@psfverbose   % report what you're making?
\@psfverbosetrue
\ctr@ln@m\@psfllx \ctr@ln@m\@psflly
\ctr@ln@m\@psfurx \ctr@ln@m\@psfury
\ctr@ln@m\resetcolonc@tcode
\ctr@ld@f\def\@psfgetbb#1{\global\@psfbbfoundfalse%
\global\def\@psfllx{0}\global\def\@psflly{0}%
\global\def\@psfurx{30}\global\def\@psfury{30}%
\openin\@psffilein=#1\relax
\ifeof\@psffilein\errmessage{I couldn't open #1, will ignore it}\else
   \edef\resetcolonc@tcode{\catcode`\noexpand\:\the\catcode`\:\relax}%
   {\@psffileoktrue \chardef\other=12
    \def\do##1{\catcode`##1=\other}\dospecials \catcode`\ =10 \resetcolonc@tcode
    \loop
       \read\@psffilein to \@psffileline
       \ifeof\@psffilein\@psffileokfalse\else
          \expandafter\@psfaux\@psffileline:. \\%
       \fi
   \if@psffileok\repeat
   \if@psfbbfound\else
    \if@psfverbose\message{No bounding box comment in #1; using defaults}\fi\fi
   }\closein\@psffilein\fi}%
\ctr@ln@m\@psfbblit
\ctr@ln@m\@psfpercent
{\catcode`\%=12 \global\let\@psfpercent=%\global\def\@psfbblit{%BoundingBox}}%
\ctr@ln@m\@psfaux
\long\def\@psfaux#1#2:#3\\{\ifx#1\@psfpercent
   \def\testit{#2}\ifx\testit\@psfbblit
      \@psfgrab #3 . . . \\%
      \@psffileokfalse
      \global\@psfbbfoundtrue
   \fi\else\ifx#1\par\else\@psffileokfalse\fi\fi}%
\ctr@ld@f\def\@psfempty{}%
\ctr@ld@f\def\@psfgrab #1 #2 #3 #4 #5\\{%
\global\def\@psfllx{#1}\ifx\@psfllx\@psfempty
      \@psfgrab #2 #3 #4 #5 .\\\else
   \global\def\@psflly{#2}%
   \global\def\@psfurx{#3}\global\def\@psfury{#4}\fi}%
\ctr@ld@f\def\PSwrit@cmd#1#2#3{{\Figg@tXY{#1}\c@lprojSP\b@undb@x{\v@lX}{\v@lY}%
    \v@lX=\ptT@ptps\v@lX\v@lY=\ptT@ptps\v@lY%
    \immediate\write#3{\repdecn@mb{\v@lX}\space\repdecn@mb{\v@lY}\space#2}}}
\ctr@ld@f\def\PSwrit@cmdS#1#2#3#4#5{{\Figg@tXY{#1}\c@lprojSP\b@undb@x{\v@lX}{\v@lY}%
    \global\result@t=\v@lX\global\result@@t=\v@lY%
    \v@lX=\ptT@ptps\v@lX\v@lY=\ptT@ptps\v@lY%
    \immediate\write#3{\repdecn@mb{\v@lX}\space\repdecn@mb{\v@lY}\space#2}}%
    \edef#4{\the\result@t}\edef#5{\the\result@@t}}
\ctr@ld@f\def\update@ttr#1#2#3{\Figdisc@rdLTS{#3}{\n@mref}%
    \ifx\n@mref\D@FTref#2{#1}\else#2{#3}\fi}
\ctr@ld@f\def\D@FTref{default}
\ctr@ld@f\def\W@rnmesAttr#1#2{%
    \immediate\write16{*** Unknown attribute: \BS@ #1(..., #2=...)}}
\ctr@ld@f\def\W@rnmeskwd#1#2{%
    \immediate\write16{*** Unknown keyword #2 in \BS@ #1}}
\ctr@ld@f\def\W@rnmesIgn#1{\immediate\write16{*** \BS@ #1 is ignored inside a
     \BS@ figdrawbegin-\BS@ figdrawend block.}}
\ctr@ld@f\def\Psset@lti#1=#2|{\keln@mtr#1|%
    \def\n@mref{blc}\ifx\l@debut\n@mref\update@ttr\D@FTref\P@setblcolor{#2}\else% base line color
    \def\n@mref{bld}\ifx\l@debut\n@mref\update@ttr\D@FTref\P@setbldash{#2}\else% base line dash
    \def\n@mref{blw}\ifx\l@debut\n@mref\update@ttr\D@FTref\P@setblwidth{#2}\else% base line width
    \def\n@mref{sqc}\ifx\l@debut\n@mref\update@ttr\D@FTref\P@setsqcolor{#2}\else% square color
    \def\n@mref{sqd}\ifx\l@debut\n@mref\update@ttr\D@FTref\P@setsqdash{#2}\else% square dash
    \def\n@mref{sqw}\ifx\l@debut\n@mref\update@ttr\D@FTref\P@setsqwidth{#2}\else% square width
    \W@rnmesAttr{figset altitude}{#1}\fi\fi\fi\fi\fi\fi}
\ctr@ln@m\DDV@blcolor
\ctr@ld@f\def\P@setblcolor#1{\edef\DDV@blcolor{#1}}
\ctr@ln@m\DDV@bldash
\ctr@ld@f\def\P@setbldash#1{\edef\DDV@bldash{#1}}
\ctr@ln@m\DDV@blwidth
\ctr@ld@f\def\P@setblwidth#1{\edef\DDV@blwidth{#1}}
\ctr@ln@m\DDV@sqcolor
\ctr@ld@f\def\P@setsqcolor#1{\edef\DDV@sqcolor{#1}}
\ctr@ln@m\DDV@sqdash
\ctr@ld@f\def\P@setsqdash#1{\edef\DDV@sqdash{#1}}
\ctr@ln@m\DDV@sqwidth
\ctr@ld@f\def\P@setsqwidth#1{\edef\DDV@sqwidth{#1}}
\ctr@ld@f\def\figdrawaltitude#1[#2,#3,#4]{{\ifCUR@PS\ifGR@cri%
    \PSc@mment{altitude Square Dim=#1, Triangle=[#2 / #3,#4]}%
    \s@uvc@ntr@l\et@tpsaltitude\resetc@ntr@l{2}\figptorthoprojline-5:=#2/#3,#4/%
    \figvectP -1[#3,#4]\n@rminf{\v@leur}{-1}\vecunit@{-3}{-1}%
    \figvectP -1[-5,#3]\n@rminf{\v@lmin}{-1}\figvectP -2[-5,#4]\n@rminf{\v@lmax}{-2}%
    \ifdim\v@lmin<\v@lmax\s@mme=#3\else\v@lmax=\v@lmin\s@mme=#4\fi%
    \figvectP -4[-5,#2]\vecunit@{-4}{-4}\delt@=#1\unit@%
    \edef\t@ille{\repdecn@mb{\delt@}}\figpttra-1:=-5/\t@ille,-3/%
    \figptstra-3=-5,-1/\t@ille,-4/\figdrawline[#2,-5]%
    \Pss@tspecifSt{color=\DDV@sqcolor,dash=\DDV@sqdash,width=\DDV@sqwidth}%
    \figdrawline[-1,-2,-3]%
    \Psrest@reSt{color=\DDV@sqcolor,dash=\DDV@sqdash,width=\DDV@sqwidth}%
    \ifdim\v@leur<\v@lmax%
    \Pss@tspecifSt{color=\DDV@blcolor,dash=\DDV@bldash,width=\DDV@blwidth}%
    \figdrawline[-5,\the\s@mme]%
    \Psrest@reSt{color=\DDV@blcolor,dash=\DDV@bldash,width=\DDV@blwidth}%
    \fi\PSc@mment{End altitude}\resetc@ntr@l\et@tpsaltitude\fi\fi}}
\ctr@ld@f\def\Ps@rcerc#1;#2(#3,#4){\ellBB@x#1;#2,#2(#3,#4,0)%
    \f@gnewpath{\delt@=#2\unit@\delt@=\ptT@ptps\delt@%
    \BdingB@xfalse%
    \PSwrit@cmd{#1}{\repdecn@mb{\delt@}\space #3\space #4\space arc}{\fwf@g}}}
\ctr@ln@m\figdrawarccirc
\ctr@ld@f\def\Q@arccircDD#1;#2(#3,#4){\ifCUR@PS\ifGR@cri%
    \PSc@mment{arccircDD Center=#1 ; Radius=#2 (Ang1=#3, Ang2=#4)}%
    \iffillm@de\Ps@rcerc#1;#2(#3,#4)%
    \f@gfill%
    \else\Ps@rcerc#1;#2(#3,#4)\f@gstroke\fi%
    \PSc@mment{End arccircDD}\fi\fi}
\ctr@ld@f\def\Q@arccircTD#1,#2,#3;#4(#5,#6){{\ifCUR@PS\ifGR@cri\s@uvc@ntr@l\et@tpsarccircTD%
    \PSc@mment{arccircTD Center=#1,P1=#2,P2=#3 ; Radius=#4 (Ang1=#5, Ang2=#6)}%
    \setc@ntr@l{2}\c@lExtAxes#1,#2,#3(#4)\Q@arcellPATD#1,-4,-5(#5,#6)%
    \PSc@mment{End arccircTD}\resetc@ntr@l\et@tpsarccircTD\fi\fi}}
\ctr@ld@f\def\c@lExtAxes#1,#2,#3(#4){%
    \figvectPTD-5[#1,#2]\vecunit@{-5}{-5}\figvectNTD-4[#1,#2,#3]\vecunit@{-4}{-4}%
    \figvectNVTD-3[-4,-5]\delt@=#4\unit@\edef\r@yon{\repdecn@mb{\delt@}}%
    \figpttra-4:=#1/\r@yon,-5/\figpttra-5:=#1/\r@yon,-3/}
\ctr@ln@m\figdrawarccircP
\ctr@ld@f\def\Q@arccircPDD#1;#2[#3,#4]{{\ifCUR@PS\ifGR@cri\s@uvc@ntr@l\et@tpsarccircPDD%
    \PSc@mment{arccircPDD Center=#1; Radius=#2, [P1=#3, P2=#4]}%
    \Ps@ngleparam#1;#2[#3,#4]\ifdim\v@lmin>\v@lmax\advance\v@lmax\DePI@deg\fi%
    \edef\@ngdeb{\repdecn@mb{\v@lmin}}\edef\@ngfin{\repdecn@mb{\v@lmax}}%
    \figdrawarccirc#1;\r@dius(\@ngdeb,\@ngfin)%
    \PSc@mment{End arccircPDD}\resetc@ntr@l\et@tpsarccircPDD\fi\fi}}
\ctr@ld@f\def\Q@arccircPTD#1;#2[#3,#4,#5]{{\ifCUR@PS\ifGR@cri\s@uvc@ntr@l\et@tpsarccircPTD%
    \PSc@mment{arccircPTD Center=#1; Radius=#2, [P1=#3, P2=#4, P3=#5]}%
    \setc@ntr@l{2}\c@lExtAxes#1,#3,#5(#2)\figdrawarcellPP#1,-4,-5[#3,#4]%
    \PSc@mment{End arccircPTD}\resetc@ntr@l\et@tpsarccircPTD\fi\fi}}
\ctr@ld@f\def\Ps@ngleparam#1;#2[#3,#4]{\setc@ntr@l{2}%
    \figvectPDD-1[#1,#3]\vecunit@{-1}{-1}\Figg@tXY{-1}\arct@n\v@lmin(\v@lX,\v@lY)%
    \figvectPDD-2[#1,#4]\vecunit@{-2}{-2}\Figg@tXY{-2}\arct@n\v@lmax(\v@lX,\v@lY)%
    \v@lmin=\rdT@deg\v@lmin\v@lmax=\rdT@deg\v@lmax%
    \v@leur=#2pt\maxim@m{\mili@u}{-\v@leur}{\v@leur}%
    \edef\r@dius{\repdecn@mb{\mili@u}}}
\ctr@ld@f\def\Ps@rcercBz#1;#2(#3,#4){\Ps@rellBz#1;#2,#2(#3,#4,0)}
\ctr@ld@f\def\Ps@rellBz#1;#2,#3(#4,#5,#6){%
    \ellBB@x#1;#2,#3(#4,#5,#6)\BdingB@xfalse%
    \c@lNbarcs{#4}{#5}\v@leur=#4pt\setc@ntr@l{2}\figptell-13::#1;#2,#3(#4,#6)%
    \f@gnewpath\PSwrit@cmd{-13}{\c@mmoveto}{\fwf@g}%
    \s@mme=\z@\bcl@rellBz#1;#2,#3(#6)\BdingB@xtrue}
\ctr@ld@f\def\bcl@rellBz#1;#2,#3(#4){\relax%
    \ifnum\s@mme<\p@rtent\advance\s@mme\@ne%
    \advance\v@leur\delt@\edef\@ngle{\repdecn@mb\v@leur}\figptell-14::#1;#2,#3(\@ngle,#4)%
    \advance\v@leur\delt@\edef\@ngle{\repdecn@mb\v@leur}\figptell-15::#1;#2,#3(\@ngle,#4)%
    \advance\v@leur\delt@\edef\@ngle{\repdecn@mb\v@leur}\figptell-16::#1;#2,#3(\@ngle,#4)%
    \figptscontrolDD-18[-13,-14,-15,-16]%
    \PSwrit@cmd{-18}{}{\fwf@g}\PSwrit@cmd{-17}{}{\fwf@g}%
    \PSwrit@cmd{-16}{\c@mcurveto}{\fwf@g}%
    \figptcopyDD-13:/-16/\bcl@rellBz#1;#2,#3(#4)\fi}
\ctr@ld@f\def\Ps@rell#1;#2,#3(#4,#5,#6){\ellBB@x#1;#2,#3(#4,#5,#6)%
    \f@gnewpath{\v@lmin=#2\unit@\v@lmin=\ptT@ptps\v@lmin%
    \v@lmax=#3\unit@\v@lmax=\ptT@ptps\v@lmax\BdingB@xfalse%
    \PSwrit@cmd{#1}%
    {#6\space\repdecn@mb{\v@lmin}\space\repdecn@mb{\v@lmax}\space #4\space #5\space ellipse}{\fwf@g}}%
    \global\Use@llipsetrue}
\ctr@ln@m\figdrawarcell
\ctr@ld@f\def\Q@arcellDD#1;#2,#3(#4,#5,#6){{\ifCUR@PS\ifGR@cri%
    \PSc@mment{arcellDD Center=#1 ; XRad=#2, YRad=#3 (Ang1=#4, Ang2=#5, Inclination=#6)}%
    \iffillm@de\Ps@rell#1;#2,#3(#4,#5,#6)%
    \f@gfill%
    \else\Ps@rell#1;#2,#3(#4,#5,#6)\f@gstroke\fi%
    \PSc@mment{End arcellDD}\fi\fi}}
\ctr@ld@f\def\Q@arcellTD#1;#2,#3(#4,#5,#6){{\ifCUR@PS\ifGR@cri\s@uvc@ntr@l\et@tpsarcellTD%
    \PSc@mment{arcellTD Center=#1 ; XRad=#2, YRad=#3 (Ang1=#4, Ang2=#5, Inclination=#6)}%
    \setc@ntr@l{2}\figpttraC -8:=#1/#2,0,0/\figpttraC -7:=#1/0,#3,0/%
    \figvectC -4(0,0,1)\figptsrot -8=-8,-7/#1,#6,-4/\Q@arcellPATD#1,-8,-7(#4,#5)%
    \PSc@mment{End arcellTD}\resetc@ntr@l\et@tpsarcellTD\fi\fi}}
\ctr@ln@m\figdrawarcellPA
\ctr@ld@f\def\Q@arcellPADD#1,#2,#3(#4,#5){{\ifCUR@PS\ifGR@cri\s@uvc@ntr@l\et@tpsarcellPADD%
    \PSc@mment{arcellPADD Center=#1,PtAxis1=#2,PtAxis2=#3 (Ang1=#4, Ang2=#5)}%
    \setc@ntr@l{2}\figvectPDD-1[#1,#2]\vecunit@DD{-1}{-1}\v@lX=\ptT@unit@\result@t%
    \edef\XR@d{\repdecn@mb{\v@lX}}\Figg@tXY{-1}\arct@n\v@lmin(\v@lX,\v@lY)%
    \v@lmin=\rdT@deg\v@lmin\edef\Inclin@{\repdecn@mb{\v@lmin}}%
    \figgetdist\YR@d[#1,#3]\Q@arcellDD#1;\XR@d,\YR@d(#4,#5,\Inclin@)%
    \PSc@mment{End arcellPADD}\resetc@ntr@l\et@tpsarcellPADD\fi\fi}}
\ctr@ld@f\def\Q@arcellPATD#1,#2,#3(#4,#5){{\ifCUR@PS\ifGR@cri\s@uvc@ntr@l\et@tpsarcellPATD%
    \PSc@mment{arcellPATD Center=#1,PtAxis1=#2,PtAxis2=#3 (Ang1=#4, Ang2=#5)}%
    \iffillm@de\Ps@rellPATD#1,#2,#3(#4,#5)%
    \f@gfill%
    \else\Ps@rellPATD#1,#2,#3(#4,#5)\f@gstroke\fi%
    \PSc@mment{End arcellPATD}\resetc@ntr@l\et@tpsarcellPATD\fi\fi}}
\ctr@ld@f\def\Ps@rellPATD#1,#2,#3(#4,#5){\let\c@lprojSP=\relax%
    \setc@ntr@l{2}\figvectPTD-1[#1,#2]\figvectPTD-2[#1,#3]\c@lNbarcs{#4}{#5}%
    \v@leur=#4pt\c@lptellP{#1}{-1}{-2}\Figptpr@j-5:/-3/%
    \f@gnewpath\PSwrit@cmdS{-5}{\c@mmoveto}{\fwf@g}{\X@un}{\Y@un}%
    \edef\C@nt@r{#1}\s@mme=\z@\bcl@rellPATD}
\ctr@ld@f\def\bcl@rellPATD{\relax%
    \ifnum\s@mme<\p@rtent\advance\s@mme\@ne%
    \advance\v@leur\delt@\c@lptellP{\C@nt@r}{-1}{-2}\Figptpr@j-4:/-3/%
    \advance\v@leur\delt@\c@lptellP{\C@nt@r}{-1}{-2}\Figptpr@j-6:/-3/%
    \advance\v@leur\delt@\c@lptellP{\C@nt@r}{-1}{-2}\Figptpr@j-3:/-3/%
    \v@lX=\z@\v@lY=\z@\Figtr@nptDD{-5}{-5}\Figtr@nptDD{2}{-3}%
    \divide\v@lX\@vi\divide\v@lY\@vi%
    \Figtr@nptDD{3}{-4}\Figtr@nptDD{-1.5}{-6}\v@lmin=\v@lX\v@lmax=\v@lY%
    \v@lX=\z@\v@lY=\z@\Figtr@nptDD{2}{-5}\Figtr@nptDD{-5}{-3}%
    \divide\v@lX\@vi\divide\v@lY\@vi\Figtr@nptDD{-1.5}{-4}\Figtr@nptDD{3}{-6}%
    \BdingB@xfalse%
    \Figp@intregDD-4:(\v@lmin,\v@lmax)\PSwrit@cmdS{-4}{}{\fwf@g}{\X@de}{\Y@de}%
    \Figp@intregDD-4:(\v@lX,\v@lY)\PSwrit@cmdS{-4}{}{\fwf@g}{\X@tr}{\Y@tr}%
    \BdingB@xtrue\PSwrit@cmdS{-3}{\c@mcurveto}{\fwf@g}{\X@qu}{\Y@qu}%
    \B@zierBB@x{1}{\Y@un}(\X@un,\X@de,\X@tr,\X@qu)%
    \B@zierBB@x{2}{\X@un}(\Y@un,\Y@de,\Y@tr,\Y@qu)%
    \edef\X@un{\X@qu}\edef\Y@un{\Y@qu}\figptcopyDD-5:/-3/\bcl@rellPATD\fi}
\ctr@ld@f\def\c@lNbarcs#1#2{%
    \delt@=#2pt\advance\delt@-#1pt\maxim@m{\v@lmax}{\delt@}{-\delt@}%
    \v@leur=\v@lmax\divide\v@leur45 \p@rtentiere{\p@rtent}{\v@leur}\advance\p@rtent\@ne%
    \s@mme=\p@rtent\multiply\s@mme\thr@@\divide\delt@\s@mme}
\ctr@ld@f\def\figdrawarcellPP#1,#2,#3[#4,#5]{{\ifCUR@PS\ifGR@cri\s@uvc@ntr@l\et@tpsarcellPP%
    \PSc@mment{arcellPP Center=#1,PtAxis1=#2,PtAxis2=#3 [Point1=#4, Point2=#5]}%
    \setc@ntr@l{2}\figvectP-2[#1,#3]\vecunit@{-2}{-2}\v@lmin=\result@t%
    \invers@{\v@lmax}{\v@lmin}%
    \figvectP-1[#1,#2]\vecunit@{-1}{-1}\v@leur=\result@t%
    \v@leur=\repdecn@mb{\v@lmax}\v@leur\edef\AsB@{\repdecn@mb{\v@leur}}% a/b
    \c@lAngle{#1}{#4}{\v@lmin}\edef\@ngdeb{\repdecn@mb{\v@lmin}}%
    \c@lAngle{#1}{#5}{\v@lmax}\ifdim\v@lmin>\v@lmax\advance\v@lmax\DePI@deg\fi%
    \edef\@ngfin{\repdecn@mb{\v@lmax}}\figdrawarcellPA#1,#2,#3(\@ngdeb,\@ngfin)%
    \PSc@mment{End arcellPP}\resetc@ntr@l\et@tpsarcellPP\fi\fi}}
\ctr@ld@f\def\c@lAngle#1#2#3{\figvectP-3[#1,#2]%
    \c@lproscal\delt@[-3,-1]\c@lproscal\v@leur[-3,-2]%
    \v@leur=\AsB@\v@leur\arct@n#3(\delt@,\v@leur)#3=\rdT@deg#3}
\ctr@ln@w{newif}\if@rrowratio\@rrowratiotrue
\ctr@ln@w{newif}\if@rrowhfill
\ctr@ln@w{newif}\if@rrowhout
\ctr@ld@f\def\Psset@rrowhe@d#1=#2|{\keln@mun#1|%
    \def\n@mref{a}\ifx\l@debut\n@mref\update@ttr\D@FTarrowheadangle\Q@s@tarrowheadangle{#2}\else% angle
    \def\n@mref{f}\ifx\l@debut\n@mref\update@ttr\D@FTarrowheadfill\Q@s@tarrowheadfill{#2}\else% fillmode
    \def\n@mref{l}\ifx\l@debut\n@mref\update@ttr\D@FTarrowheadlength\Q@s@tarrowheadlength{#2}\else% length
    \def\n@mref{o}\ifx\l@debut\n@mref\update@ttr\D@FTarrowheadout\Q@s@tarrowheadout{#2}\else% out
    \def\n@mref{r}\ifx\l@debut\n@mref\update@ttr\D@FTarrowheadratio\Q@s@tarrowheadratio{#2}\else% ratio
    \W@rnmesAttr{figset arrowhead}{#1}\fi\fi\fi\fi\fi}
\ctr@ln@m\@rrowheadangle
\ctr@ln@m\C@AHANG \ctr@ln@m\S@AHANG \ctr@ln@m\UNSS@N
\ctr@ld@f\def\Q@s@tarrowheadangle#1{\edef\@rrowheadangle{#1}{\c@ssin{\C@}{\S@}{#1}%
    \xdef\C@AHANG{\C@}\xdef\S@AHANG{\S@}\v@lmax=\S@ pt%
    \invers@{\v@leur}{\v@lmax}\maxim@m{\v@leur}{\v@leur}{-\v@leur}%
    \xdef\UNSS@N{\the\v@leur}}}
\ctr@ld@f\def\Q@s@tarrowheadfill#1{\expandafter\set@rrowhfill#1:}
\ctr@ld@f\def\set@rrowhfill#1#2:{\if#1n\@rrowhfillfalse\else\@rrowhfilltrue\fi}
\ctr@ld@f\def\Q@s@tarrowheadout#1{\expandafter\set@rrowhout#1:}
\ctr@ld@f\def\set@rrowhout#1#2:{\if#1n\@rrowhoutfalse\else\@rrowhouttrue\fi}
\ctr@ln@m\@rrowheadlength
\ctr@ld@f\def\Q@s@tarrowheadlength#1{\edef\@rrowheadlength{#1}\@rrowratiofalse}
\ctr@ln@m\@rrowheadratio
\ctr@ld@f\def\Q@s@tarrowheadratio#1{\edef\@rrowheadratio{#1}\@rrowratiotrue}
\ctr@ln@m\D@FTarrowheadlength
\ctr@ld@f\def\figresetarrowhead{%
    \Q@s@tarrowheadangle{\D@FTarrowheadangle}%
    \Q@s@tarrowheadfill{\D@FTarrowheadfill}%
    \Q@s@tarrowheadout{\D@FTarrowheadout}%
    \Q@s@tarrowheadratio{\D@FTarrowheadratio}%
    \d@fm@cdim\D@FTarrowheadlength{\D@FTh@rdahlength}% Valeur par defaut...
    \Q@s@tarrowheadlength{\D@FTarrowheadlength}}
\ctr@ld@f\def\D@FTarrowheadratio{0.1}
\ctr@ld@f\def\D@FTarrowheadangle{20}
\ctr@ld@f\def\D@FTarrowheadfill{no}
\ctr@ld@f\def\D@FTarrowheadout{no}
\ctr@ld@f\def\D@FTh@rdahlength{8pt}
\ctr@ln@m\figdrawarrow
\ctr@ld@f\def\Q@arrowDD[#1,#2]{{\ifCUR@PS\ifGR@cri\s@uvc@ntr@l\et@tpsarrow%
    \PSc@mment{arrowDD [Pt1,Pt2]=[#1,#2]}\Q@s@tfillmode{no}%
    \Q@arrowheadDD[#1,#2]\setc@ntr@l{2}\figdrawline[#1,-3]%
    \PSc@mment{End arrowDD}\resetc@ntr@l\et@tpsarrow\fi\fi}}
\ctr@ld@f\def\Q@arrowTD[#1,#2]{{\ifCUR@PS\ifGR@cri\s@uvc@ntr@l\et@tpsarrowTD%
    \PSc@mment{arrowTD [Pt1,Pt2]=[#1,#2]}\resetc@ntr@l{2}%
    \Figptpr@j-5:/#1/\Figptpr@j-6:/#2/\let\c@lprojSP=\relax\Q@arrowDD[-5,-6]%
    \PSc@mment{End arrowTD}\resetc@ntr@l\et@tpsarrowTD\fi\fi}}
\ctr@ln@m\figdrawarrowhead
\ctr@ld@f\def\Q@arrowheadDD[#1,#2]{{\ifCUR@PS\ifGR@cri\s@uvc@ntr@l\et@tpsarrowheadDD%
    \if@rrowhfill\def\@hangle{-\@rrowheadangle}\else\def\@hangle{\@rrowheadangle}\fi%
    \if@rrowratio%
    \if@rrowhout\def\@hratio{-\@rrowheadratio}\else\def\@hratio{\@rrowheadratio}\fi%
    \PSc@mment{arrowheadDD Ratio=\@hratio, Angle=\@hangle, [Pt1,Pt2]=[#1,#2]}%
    \Ps@rrowhead\@hratio,\@hangle[#1,#2]%
    \else%
    \if@rrowhout\def\@hlength{-\@rrowheadlength}\else\def\@hlength{\@rrowheadlength}\fi%
    \PSc@mment{arrowheadDD Length=\@hlength, Angle=\@hangle, [Pt1,Pt2]=[#1,#2]}%
    \Ps@rrowheadfd\@hlength,\@hangle[#1,#2]%
    \fi%
    \PSc@mment{End arrowheadDD}\resetc@ntr@l\et@tpsarrowheadDD\fi\fi}}
\ctr@ld@f\def\Q@arrowheadTD[#1,#2]{{\ifCUR@PS\ifGR@cri\s@uvc@ntr@l\et@tpsarrowheadTD%
    \PSc@mment{arrowheadTD [Pt1,Pt2]=[#1,#2]}\resetc@ntr@l{2}%
    \Figptpr@j-5:/#1/\Figptpr@j-6:/#2/\let\c@lprojSP=\relax\Q@arrowheadDD[-5,-6]%
    \PSc@mment{End arrowheadTD}\resetc@ntr@l\et@tpsarrowheadTD\fi\fi}}
\ctr@ld@f\def\Ps@rrowhead#1,#2[#3,#4]{\v@leur=#1\p@\maxim@m{\v@leur}{\v@leur}{-\v@leur}%
    \ifdim\v@leur>\Cepsil@n{% Arrow is not degenerated
    \PSc@mment{@rrowhead Ratio=#1, Angle=#2, [Pt1,Pt2]=[#3,#4]}\v@leur=\UNSS@N%
    \v@leur=\CUR@width\v@leur\v@leur=\ptpsT@pt\v@leur\delt@=.5\v@leur% = width / (2 sin(Angle))
    \setc@ntr@l{2}\figvectPDD-3[#4,#3]%
    \Figg@tXY{-3}\v@lX=#1\v@lX\v@lY=#1\v@lY\Figv@ctCreg-3(\v@lX,\v@lY)%
    \vecunit@{-4}{-3}\mili@u=\result@t%
    \ifdim#2pt>\z@\v@lXa=-\C@AHANG\delt@%
     \edef\c@ef{\repdecn@mb{\v@lXa}}\figpttraDD-3:=-3/\c@ef,-4/\fi%
    \edef\c@ef{\repdecn@mb{\delt@}}%
    \v@lXa=\mili@u\v@lXa=\C@AHANG\v@lXa%
    \v@lYa=\ptpsT@pt\p@\v@lYa=\CUR@width\v@lYa\v@lYa=\sDcc@ngle\v@lYa%
    \advance\v@lXa-\v@lYa\gdef\sDcc@ngle{0}%
    \ifdim\v@lXa>\v@leur\edef\c@efendpt{\repdecn@mb{\v@leur}}%
    \else\edef\c@efendpt{\repdecn@mb{\v@lXa}}\fi%
    \Figg@tXY{-3}\v@lmin=\v@lX\v@lmax=\v@lY%
    \v@lXa=\C@AHANG\v@lmin\v@lYa=\S@AHANG\v@lmax\advance\v@lXa\v@lYa%
    \v@lYa=-\S@AHANG\v@lmin\v@lX=\C@AHANG\v@lmax\advance\v@lYa\v@lX%
    \setc@ntr@l{1}\Figg@tXY{#4}\advance\v@lX\v@lXa\advance\v@lY\v@lYa%
    \setc@ntr@l{2}\Figp@intregDD-2:(\v@lX,\v@lY)%
    \v@lXa=\C@AHANG\v@lmin\v@lYa=-\S@AHANG\v@lmax\advance\v@lXa\v@lYa%
    \v@lYa=\S@AHANG\v@lmin\v@lX=\C@AHANG\v@lmax\advance\v@lYa\v@lX%
    \setc@ntr@l{1}\Figg@tXY{#4}\advance\v@lX\v@lXa\advance\v@lY\v@lYa%
    \setc@ntr@l{2}\Figp@intregDD-1:(\v@lX,\v@lY)%
    \ifdim#2pt<\z@\fillm@detrue\figdrawline[-2,#4,-1]% fill
    \else\figptstraDD-3=#4,-2,-1/\c@ef,-4/\s@uvdash{\typ@dash}\Q@s@tdash{\D@FTdash}%
    \figdrawline[-2,-3,-1]\Q@s@tdash{\typ@dash}\fi% no fill
    \ifdim#1pt>\z@\figpttraDD-3:=#4/\c@efendpt,-4/\else\figptcopyDD-3:/#4/\fi%
    \PSc@mment{End @rrowhead}}\fi}
\ctr@ld@f\def\sDcc@ngle{0}% Initialisation
\ctr@ld@f\def\Ps@rrowheadfd#1,#2[#3,#4]{{%
    \PSc@mment{@rrowheadfd Length=#1, Angle=#2, [Pt1,Pt2]=[#3,#4]}%
    \setc@ntr@l{2}\figvectPDD-1[#3,#4]\n@rmeucDD{\v@leur}{-1}\v@leur=\ptT@unit@\v@leur%
    \invers@{\v@leur}{\v@leur}\v@leur=#1\v@leur\edef\R@tio{\repdecn@mb{\v@leur}}%
    \Ps@rrowhead\R@tio,#2[#3,#4]\PSc@mment{End @rrowheadfd}}}
\ctr@ln@m\figdrawarrowBezier
\ctr@ld@f\def\Q@arrowBezierDD[#1,#2,#3,#4]{{\ifCUR@PS\ifGR@cri\s@uvc@ntr@l\et@tpsarrowBezierDD%
    \PSc@mment{arrowBezierDD Control points=#1,#2,#3,#4}\setc@ntr@l{2}%
    \if@rrowratio\c@larclengthDD\v@leur,10[#1,#2,#3,#4]\else\v@leur=\z@\fi%
    \Ps@rrowB@zDD\v@leur[#1,#2,#3,#4]%
    \PSc@mment{End arrowBezierDD}\resetc@ntr@l\et@tpsarrowBezierDD\fi\fi}}
\ctr@ld@f\def\Q@arrowBezierTD[#1,#2,#3,#4]{{\ifCUR@PS\ifGR@cri\s@uvc@ntr@l\et@tpsarrowBezierTD%
    \PSc@mment{arrowBezierTD Control points=#1,#2,#3,#4}\resetc@ntr@l{2}%
    \Figptpr@j-7:/#1/\Figptpr@j-8:/#2/\Figptpr@j-9:/#3/\Figptpr@j-10:/#4/%
    \let\c@lprojSP=\relax\ifnum\CUR@proj<\tw@\Q@arrowBezierDD[-7,-8,-9,-10]%
    \else\f@gnewpath\PSwrit@cmd{-7}{\c@mmoveto}{\fwf@g}%
    \if@rrowratio\c@larclengthDD\mili@u,10[-7,-8,-9,-10]\else\mili@u=\z@\fi%
    \p@rtent=\NBz@rcs\advance\p@rtent\m@ne\subB@zierTD\p@rtent[#1,#2,#3,#4]%
    \f@gstroke%
    \advance\v@lmin\p@rtent\delt@% Initialized in \subB@zierTD
    \v@leur=\v@lmin\advance\v@leur0.33333 \delt@\edef\unti@rs{\repdecn@mb{\v@leur}}%
    \v@leur=\v@lmin\advance\v@leur0.66666 \delt@\edef\deti@rs{\repdecn@mb{\v@leur}}%
    \figptcopyDD-8:/-10/\c@lsubBzarc\unti@rs,\deti@rs[#1,#2,#3,#4]%
    \figptcopyDD-8:/-4/\figptcopyDD-9:/-3/\Ps@rrowB@zDD\mili@u[-7,-8,-9,-10]\fi%
    \PSc@mment{End arrowBezierTD}\resetc@ntr@l\et@tpsarrowBezierTD\fi\fi}}
\ctr@ld@f\def\c@larclengthDD#1,#2[#3,#4,#5,#6]{{\p@rtent=#2\figptcopyDD-5:/#3/%
    \delt@=\p@\divide\delt@\p@rtent\c@rre=\z@\v@leur=\z@\s@mme=\z@%
    \loop\ifnum\s@mme<\p@rtent\advance\s@mme\@ne\advance\v@leur\delt@%
    \edef\T@{\repdecn@mb{\v@leur}}\figptBezierDD-6::\T@[#3,#4,#5,#6]%
    \figvectPDD-1[-5,-6]\n@rmeucDD{\mili@u}{-1}\advance\c@rre\mili@u%
    \figptcopyDD-5:/-6/\repeat\global\result@t=\ptT@unit@\c@rre}#1=\result@t}
\ctr@ld@f\def\Ps@rrowB@zDD#1[#2,#3,#4,#5]{{\Q@s@tfillmode{no}%
    \if@rrowratio\delt@=\@rrowheadratio#1\else\delt@=\@rrowheadlength pt\fi%
    \v@leur=\C@AHANG\delt@\edef\R@dius{\repdecn@mb{\v@leur}}%
    \FigptintercircB@zDD-5::0,\R@dius[#5,#4,#3,#2]%
    \Q@s@tarrowheadlength{\repdecn@mb{\delt@}}\Q@arrowheadDD[-5,#5]%
    \let\n@rmeuc=\n@rmeucDD\figgetdist\R@dius[#5,-3]%
    \FigptintercircB@zDD-6::0,\R@dius[#5,#4,#3,#2]%
    \figptBezierDD-5::0.33333[#5,#4,#3,#2]\figptBezierDD-3::0.66666[#5,#4,#3,#2]%
    \figptscontrolDD-5[-6,-5,-3,#2]\Q@BezierDD1[-6,-5,-4,#2]}}
\ctr@ln@m\figdrawarrowcirc
\ctr@ld@f\def\Q@arrowcircDD#1;#2(#3,#4){{\ifCUR@PS\ifGR@cri\s@uvc@ntr@l\et@tpsarrowcircDD%
    \PSc@mment{arrowcircDD Center=#1 ; Radius=#2 (Ang1=#3,Ang2=#4)}%
    \Q@s@tfillmode{no}\Pscirc@rrowhead#1;#2(#3,#4)%
    \setc@ntr@l{2}\figvectPDD -4[#1,-3]\vecunit@{-4}{-4}%
    \Figg@tXY{-4}\arct@n\v@lmin(\v@lX,\v@lY)%
    \v@lmin=\rdT@deg\v@lmin\v@leur=#4pt\advance\v@leur-\v@lmin%
    \maxim@m{\v@leur}{\v@leur}{-\v@leur}%
    \ifdim\v@leur>\DemiPI@deg\relax\ifdim\v@lmin<#4pt\advance\v@lmin\DePI@deg%
    \else\advance\v@lmin-\DePI@deg\fi\fi\edef\ar@ngle{\repdecn@mb{\v@lmin}}%
    \ifdim#3pt<#4pt\figdrawarccirc#1;#2(#3,\ar@ngle)\else\figdrawarccirc#1;#2(\ar@ngle,#3)\fi%
    \PSc@mment{End arrowcircDD}\resetc@ntr@l\et@tpsarrowcircDD\fi\fi}}
\ctr@ld@f\def\Q@arrowcircTD#1,#2,#3;#4(#5,#6){{\ifCUR@PS\ifGR@cri\s@uvc@ntr@l\et@tpsarrowcircTD%
    \PSc@mment{arrowcircTD Center=#1,P1=#2,P2=#3 ; Radius=#4 (Ang1=#5, Ang2=#6)}%
    \resetc@ntr@l{2}\c@lExtAxes#1,#2,#3(#4)\let\c@lprojSP=\relax%
    \figvectPTD-11[#1,-4]\figvectPTD-12[#1,-5]\c@lNbarcs{#5}{#6}%
    \if@rrowratio\v@lmax=\degT@rd\v@lmax\edef\D@lpha{\repdecn@mb{\v@lmax}}\fi%
    \advance\p@rtent\m@ne\mili@u=\z@%
    \v@leur=#5pt\c@lptellP{#1}{-11}{-12}\Figptpr@j-9:/-3/%
    \f@gnewpath\PSwrit@cmdS{-9}{\c@mmoveto}{\fwf@g}{\X@un}{\Y@un}%
    \edef\C@nt@r{#1}\s@mme=\z@\bcl@rcircTD\f@gstroke%
    \advance\v@leur\delt@\c@lptellP{#1}{-11}{-12}\Figptpr@j-5:/-3/%
    \advance\v@leur\delt@\c@lptellP{#1}{-11}{-12}\Figptpr@j-6:/-3/%
    \advance\v@leur\delt@\c@lptellP{#1}{-11}{-12}\Figptpr@j-10:/-3/%
    \figptscontrolDD-8[-9,-5,-6,-10]%
    \if@rrowratio\c@lcurvradDD0.5[-9,-8,-7,-10]\advance\mili@u\result@t%
    \maxim@m{\mili@u}{\mili@u}{-\mili@u}\mili@u=\ptT@unit@\mili@u%
    \mili@u=\D@lpha\mili@u\advance\p@rtent\@ne\divide\mili@u\p@rtent\fi%
    \Ps@rrowB@zDD\mili@u[-9,-8,-7,-10]%
    \PSc@mment{End arrowcircTD}\resetc@ntr@l\et@tpsarrowcircTD\fi\fi}}
\ctr@ld@f\def\bcl@rcircTD{\relax%
    \ifnum\s@mme<\p@rtent\advance\s@mme\@ne%
    \advance\v@leur\delt@\c@lptellP{\C@nt@r}{-11}{-12}\Figptpr@j-5:/-3/%
    \advance\v@leur\delt@\c@lptellP{\C@nt@r}{-11}{-12}\Figptpr@j-6:/-3/%
    \advance\v@leur\delt@\c@lptellP{\C@nt@r}{-11}{-12}\Figptpr@j-10:/-3/%
    \figptscontrolDD-8[-9,-5,-6,-10]\BdingB@xfalse%
    \PSwrit@cmdS{-8}{}{\fwf@g}{\X@de}{\Y@de}\PSwrit@cmdS{-7}{}{\fwf@g}{\X@tr}{\Y@tr}%
    \BdingB@xtrue\PSwrit@cmdS{-10}{\c@mcurveto}{\fwf@g}{\X@qu}{\Y@qu}%
    \if@rrowratio\c@lcurvradDD0.5[-9,-8,-7,-10]\advance\mili@u\result@t\fi%
    \B@zierBB@x{1}{\Y@un}(\X@un,\X@de,\X@tr,\X@qu)%
    \B@zierBB@x{2}{\X@un}(\Y@un,\Y@de,\Y@tr,\Y@qu)%
    \edef\X@un{\X@qu}\edef\Y@un{\Y@qu}\figptcopyDD-9:/-10/\bcl@rcircTD\fi}
\ctr@ld@f\def\Pscirc@rrowhead#1;#2(#3,#4){{%
    \PSc@mment{circ@rrowhead Center=#1 ; Radius=#2 (Ang1=#3,Ang2=#4)}%
    \v@leur=#2\unit@\edef\s@glen{\repdecn@mb{\v@leur}}\v@lY=\z@\v@lX=\v@leur%
    \resetc@ntr@l{2}\Figv@ctCreg-3(\v@lX,\v@lY)\figpttraDD-5:=#1/1,-3/%
    \figptrotDD-5:=-5/#1,#4/%
    \figvectPDD-3[#1,-5]\Figg@tXY{-3}\v@leur=\v@lX%
    \ifdim#3pt<#4pt\v@lX=\v@lY\v@lY=-\v@leur\else\v@lX=-\v@lY\v@lY=\v@leur\fi%
    \Figv@ctCreg-3(\v@lX,\v@lY)\vecunit@{-3}{-3}%
    \if@rrowratio\v@leur=#4pt\advance\v@leur-#3pt\maxim@m{\mili@u}{-\v@leur}{\v@leur}%
    \mili@u=\degT@rd\mili@u\v@leur=\s@glen\mili@u\edef\s@glen{\repdecn@mb{\v@leur}}%
    \mili@u=#2\mili@u\mili@u=\@rrowheadratio\mili@u\else\mili@u=\@rrowheadlength pt\fi%
    \figpttraDD-6:=-5/\s@glen,-3/\v@leur=#2pt\v@leur=2\v@leur%
    \invers@{\v@leur}{\v@leur}\c@rre=\repdecn@mb{\v@leur}\mili@u% = sin = L/(2R)
    \mili@u=\c@rre\mili@u=\repdecn@mb{\c@rre}\mili@u%
    \v@leur=\p@\advance\v@leur-\mili@u% \v@leur = cos*cos
    \invers@{\mili@u}{2\v@leur}\delt@=\c@rre\delt@=\repdecn@mb{\mili@u}\delt@%
    \xdef\sDcc@ngle{\repdecn@mb{\delt@}}% sin/(2*cos*cos) used in \Ps@rrowhead
    \sqrt@{\mili@u}{\v@leur}\arct@n\v@leur(\mili@u,\c@rre)%
    \v@leur=\rdT@deg\v@leur% \cor@ngle = atan(L/sqrt(4R*R-L*L))
    \ifdim#3pt<#4pt\v@leur=-\v@leur\fi%
    \if@rrowhout\v@leur=-\v@leur\fi\edef\cor@ngle{\repdecn@mb{\v@leur}}%
    \figptrotDD-6:=-6/-5,\cor@ngle/\Q@arrowheadDD[-6,-5]%
    \PSc@mment{End circ@rrowhead}}}
\ctr@ln@m\figdrawarrowcircP
\ctr@ld@f\def\Q@arrowcircPDD#1;#2[#3,#4]{{\ifCUR@PS\ifGR@cri%
    \PSc@mment{arrowcircPDD Center=#1; Radius=#2, [P1=#3,P2=#4]}%
    \s@uvc@ntr@l\et@tpsarrowcircPDD\Ps@ngleparam#1;#2[#3,#4]%
    \ifdim\v@leur>\z@\ifdim\v@lmin>\v@lmax\advance\v@lmax\DePI@deg\fi%
    \else\ifdim\v@lmin<\v@lmax\advance\v@lmin\DePI@deg\fi\fi%
    \edef\@ngdeb{\repdecn@mb{\v@lmin}}\edef\@ngfin{\repdecn@mb{\v@lmax}}%
    \figdrawarrowcirc#1;\r@dius(\@ngdeb,\@ngfin)%
    \PSc@mment{End arrowcircPDD}\resetc@ntr@l\et@tpsarrowcircPDD\fi\fi}}
\ctr@ld@f\def\Q@arrowcircPTD#1;#2[#3,#4,#5]{{\ifCUR@PS\ifGR@cri\s@uvc@ntr@l\et@tpsarrowcircPTD%
    \PSc@mment{arrowcircPTD Center=#1; Radius=#2, [P1=#3,P2=#4,P3=#5]}%
    \figgetangleTD\@ngfin[#1,#3,#4,#5]\v@leur=#2pt%
    \maxim@m{\mili@u}{-\v@leur}{\v@leur}\edef\r@dius{\repdecn@mb{\mili@u}}%
    \ifdim\v@leur<\z@\v@lmax=\@ngfin pt\advance\v@lmax-\DePI@deg%
    \edef\@ngfin{\repdecn@mb{\v@lmax}}\fi\Q@arrowcircTD#1,#3,#5;\r@dius(0,\@ngfin)%
    \PSc@mment{End arrowcircPTD}\resetc@ntr@l\et@tpsarrowcircPTD\fi\fi}}
\ctr@ld@f\def\figdrawaxes#1(#2){{\ifCUR@PS\ifGR@cri\s@uvc@ntr@l\et@tpsaxes%
    \PSc@mment{axes Origin=#1 Range=(#2)}\an@lys@xes#2,:\resetc@ntr@l{2}%
    \ifx\t@xt@\empty\ifTr@isDim\Q@@xes#1(0,#2,0,#2,0,#2)\else\Q@@xes#1(0,#2,0,#2)\fi%
    \else\Q@@xes#1(#2)\fi\PSc@mment{End axes}\resetc@ntr@l\et@tpsaxes\fi\fi}}
\ctr@ld@f\def\an@lys@xes#1,#2:{\def\t@xt@{#2}}
\ctr@ln@m\Q@@xes
\ctr@ld@f\def\Q@@xesDD#1(#2,#3,#4,#5){%
    \figpttraC-5:=#1/#2,0/\figpttraC-6:=#1/#3,0/\Q@arrowDD[-5,-6]%
    \figpttraC-5:=#1/0,#4/\figpttraC-6:=#1/0,#5/\Q@arrowDD[-5,-6]}
\ctr@ld@f\def\Q@@xesTD#1(#2,#3,#4,#5,#6,#7){%
    \figpttraC-7:=#1/#2,0,0/\figpttraC-8:=#1/#3,0,0/\Q@arrowTD[-7,-8]%
    \figpttraC-7:=#1/0,#4,0/\figpttraC-8:=#1/0,#5,0/\Q@arrowTD[-7,-8]%
    \figpttraC-7:=#1/0,0,#6/\figpttraC-8:=#1/0,0,#7/\Q@arrowTD[-7,-8]}
\ctr@ln@m\newGr@FN
\ctr@ld@f\def\newGr@FNPDF#1{\s@mme=\Gr@FNb\advance\s@mme\@ne\xdef\Gr@FNb{\number\s@mme}}
\ctr@ld@f\def\newGr@FNDVI#1{\newGr@FNPDF{}\xdef#1{\jobname GI\Gr@FNb.anx}}
\ctr@ld@f\def\figdrawbegin#1{\newGr@FN\DefGIfilen@me\gdef\@utoFN{0}%
    \def\t@xt@{#1}\relax\ifx\t@xt@\empty\GRupdatem@detrue%
    \gdef\@utoFN{1}\Psb@ginfig\DefGIfilen@me\else\expandafter\Psb@ginfigNu@#1 :\fi}
\ctr@ld@f\def\Psb@ginfigNu@#1 #2:{\def\t@xt@{#1}\relax\ifx\t@xt@\empty\def\t@xt@{#2}%
    \ifx\t@xt@\empty\GRupdatem@detrue\gdef\@utoFN{1}\Psb@ginfig\DefGIfilen@me%
    \else\Psb@ginfigNu@#2:\fi\else\Psb@ginfig{#1}\fi}
\ctr@ln@m\PSfilen@me \ctr@ln@m\auxfilen@me
\ctr@ld@f\def\Psb@ginfig#1{\ifCUR@PS\else%
    \edef\PSfilen@me{#1}\edef\auxfilen@me{\jobname.anx}%
    \ifGRupdatem@de\GR@critrue\else\openin\frf@g=\PSfilen@me\relax%
    \ifeof\frf@g\GR@critrue\else\GR@crifalse\fi\closein\frf@g\fi%
    \CUR@PStrue\c@ldefproj\expandafter\setupd@te\D@FTupdate:%
    \ifGR@cri\initb@undb@x%
    \immediate\openout\fwf@g=\auxfilen@me\initpss@ttings\fi%
    \fi}
\ctr@ld@f\def\Gr@FNb{0}
\ctr@ld@f\def\figforTeXFileno{\Gr@FNb}
\ctr@ld@f\def\figforTeXFigno{0 }
\ctr@ld@f\def\figforTeXnextFigno{1 }
\ctr@ld@f\edef\DefGIfilen@me{\jobname GI.anx}
\ctr@ld@f\def\initpss@ttings{\figreset{altitude,arrowhead,curve,general,flowchart,mesh,trimesh}%
    \Use@llipsefalse}
\ctr@ld@f\def\B@zierBB@x#1#2(#3,#4,#5,#6){{\c@rre=\t@n\epsil@n% Do not reduce this value
    \v@lmax=#4\advance\v@lmax-#5\v@lmax=\thr@@\v@lmax\advance\v@lmax#6\advance\v@lmax-#3%
    \mili@u=#4\mili@u=-\tw@\mili@u\advance\mili@u#3\advance\mili@u#5%
    \v@lmin=#4\advance\v@lmin-#3\maxim@m{\v@leur}{-\v@lmax}{\v@lmax}%
    \maxim@m{\delt@}{-\mili@u}{\mili@u}\maxim@m{\v@leur}{\v@leur}{\delt@}%
    \maxim@m{\delt@}{-\v@lmin}{\v@lmin}\maxim@m{\v@leur}{\v@leur}{\delt@}%
    \ifdim\v@leur>\c@rre\invers@{\v@leur}{\v@leur}\edef\Uns@rM@x{\repdecn@mb{\v@leur}}%
    \v@lmax=\Uns@rM@x\v@lmax\mili@u=\Uns@rM@x\mili@u\v@lmin=\Uns@rM@x\v@lmin%
    \maxim@m{\v@leur}{-\v@lmax}{\v@lmax}\ifdim\v@leur<\c@rre%
    \maxim@m{\v@leur}{-\mili@u}{\mili@u}\ifdim\v@leur<\c@rre\else%
    \invers@{\mili@u}{\mili@u}\v@leur=-0.5\v@lmin%
    \v@leur=\repdecn@mb{\mili@u}\v@leur\m@jBBB@x{\v@leur}{#1}{#2}(#3,#4,#5,#6)\fi%
    \else\delt@=\repdecn@mb{\mili@u}\mili@u\v@leur=\repdecn@mb{\v@lmax}\v@lmin%
    \advance\delt@-\v@leur\ifdim\delt@<\z@\else\invers@{\v@lmax}{\v@lmax}%
    \edef\Uns@rAp{\repdecn@mb{\v@lmax}}\sqrt@{\delt@}{\delt@}%
    \v@leur=-\mili@u\advance\v@leur\delt@\v@leur=\Uns@rAp\v@leur%
    \m@jBBB@x{\v@leur}{#1}{#2}(#3,#4,#5,#6)%
    \v@leur=-\mili@u\advance\v@leur-\delt@\v@leur=\Uns@rAp\v@leur%
    \m@jBBB@x{\v@leur}{#1}{#2}(#3,#4,#5,#6)\fi\fi\fi}}
\ctr@ld@f\def\m@jBBB@x#1#2#3(#4,#5,#6,#7){{\relax\ifdim#1>\z@\ifdim#1<\p@%
    \edef\T@{\repdecn@mb{#1}}\v@lX=\p@\advance\v@lX-#1\edef\UNmT@{\repdecn@mb{\v@lX}}%
    \v@lX=#4\v@lY=#5\v@lZ=#6\v@lXa=#7\v@lX=\UNmT@\v@lX\advance\v@lX\T@\v@lY%
    \v@lY=\UNmT@\v@lY\advance\v@lY\T@\v@lZ\v@lZ=\UNmT@\v@lZ\advance\v@lZ\T@\v@lXa%
    \v@lX=\UNmT@\v@lX\advance\v@lX\T@\v@lY\v@lY=\UNmT@\v@lY\advance\v@lY\T@\v@lZ%
    \v@lX=\UNmT@\v@lX\advance\v@lX\T@\v@lY%
    \ifcase#2\or\v@lY=#3\or\v@lY=\v@lX\v@lX=#3\fi\b@undb@x{\v@lX}{\v@lY}\fi\fi}}
\ctr@ld@f\def\PsB@zier#1[#2]{{\f@gnewpath%
    \s@mme=\z@\def\list@num{#2,0}\extrairelepremi@r\p@int\de\list@num%
    \PSwrit@cmdS{\p@int}{\c@mmoveto}{\fwf@g}{\X@un}{\Y@un}\p@rtent=#1\bclB@zier}}
\ctr@ld@f\def\bclB@zier{\relax%
    \ifnum\s@mme<\p@rtent\advance\s@mme\@ne\BdingB@xfalse%
    \extrairelepremi@r\p@int\de\list@num\PSwrit@cmdS{\p@int}{}{\fwf@g}{\X@de}{\Y@de}%
    \extrairelepremi@r\p@int\de\list@num\PSwrit@cmdS{\p@int}{}{\fwf@g}{\X@tr}{\Y@tr}%
    \BdingB@xtrue%
    \extrairelepremi@r\p@int\de\list@num\PSwrit@cmdS{\p@int}{\c@mcurveto}{\fwf@g}{\X@qu}{\Y@qu}%
    \B@zierBB@x{1}{\Y@un}(\X@un,\X@de,\X@tr,\X@qu)%
    \B@zierBB@x{2}{\X@un}(\Y@un,\Y@de,\Y@tr,\Y@qu)%
    \edef\X@un{\X@qu}\edef\Y@un{\Y@qu}\bclB@zier\fi}
\ctr@ln@m\figdrawBezier
\ctr@ld@f\def\Q@BezierDD#1[#2]{\ifCUR@PS\ifGR@cri%
    \PSc@mment{BezierDD N arcs=#1, Control points=#2}%
    \iffillm@de\PsB@zier#1[#2]%
    \f@gfill%
    \else\PsB@zier#1[#2]\f@gstroke\fi%
    \PSc@mment{End BezierDD}\fi\fi}
\ctr@ln@m\et@tpsBezierTD% ou doubler les {}
\ctr@ld@f\def\Q@BezierTD#1[#2]{\ifCUR@PS\ifGR@cri\s@uvc@ntr@l\et@tpsBezierTD%
    \PSc@mment{BezierTD N arcs=#1, Control points=#2}%
    \iffillm@de\PsB@zierTD#1[#2]%
    \f@gfill%
    \else\PsB@zierTD#1[#2]\f@gstroke\fi%
    \PSc@mment{End BezierTD}\resetc@ntr@l\et@tpsBezierTD\fi\fi}
\ctr@ld@f\def\PsB@zierTD#1[#2]{\ifnum\CUR@proj<\tw@\PsB@zier#1[#2]\else\PsB@zier@TD#1[#2]\fi}
\ctr@ld@f\def\PsB@zier@TD#1[#2]{{\f@gnewpath%
    \s@mme=\z@\def\list@num{#2,0}\extrairelepremi@r\p@int\de\list@num%
    \let\c@lprojSP=\relax\setc@ntr@l{2}\Figptpr@j-7:/\p@int/%
    \PSwrit@cmd{-7}{\c@mmoveto}{\fwf@g}%
    \loop\ifnum\s@mme<#1\advance\s@mme\@ne\extrairelepremi@r\p@intun\de\list@num%
    \extrairelepremi@r\p@intde\de\list@num\extrairelepremi@r\p@inttr\de\list@num%
    \subB@zierTD\NBz@rcs[\p@int,\p@intun,\p@intde,\p@inttr]\edef\p@int{\p@inttr}\repeat}}
\ctr@ld@f\def\subB@zierTD#1[#2,#3,#4,#5]{\delt@=\p@\divide\delt@\NBz@rcs\v@lmin=\z@%
    {\Figg@tXY{-7}\edef\X@un{\the\v@lX}\edef\Y@un{\the\v@lY}%
    \s@mme=\z@\loop\ifnum\s@mme<#1\advance\s@mme\@ne%
    \v@leur=\v@lmin\advance\v@leur0.33333 \delt@\edef\unti@rs{\repdecn@mb{\v@leur}}%
    \v@leur=\v@lmin\advance\v@leur0.66666 \delt@\edef\deti@rs{\repdecn@mb{\v@leur}}%
    \advance\v@lmin\delt@\edef\trti@rs{\repdecn@mb{\v@lmin}}%
    \figptBezierTD-8::\trti@rs[#2,#3,#4,#5]\Figptpr@j-8:/-8/%
    \c@lsubBzarc\unti@rs,\deti@rs[#2,#3,#4,#5]\BdingB@xfalse%
    \PSwrit@cmdS{-4}{}{\fwf@g}{\X@de}{\Y@de}\PSwrit@cmdS{-3}{}{\fwf@g}{\X@tr}{\Y@tr}%
    \BdingB@xtrue\PSwrit@cmdS{-8}{\c@mcurveto}{\fwf@g}{\X@qu}{\Y@qu}%
    \B@zierBB@x{1}{\Y@un}(\X@un,\X@de,\X@tr,\X@qu)%
    \B@zierBB@x{2}{\X@un}(\Y@un,\Y@de,\Y@tr,\Y@qu)%
    \edef\X@un{\X@qu}\edef\Y@un{\Y@qu}\figptcopyDD-7:/-8/\repeat}}
\ctr@ld@f\def\NBz@rcs{2}
\ctr@ld@f\def\c@lsubBzarc#1,#2[#3,#4,#5,#6]{\figptBezierTD-5::#1[#3,#4,#5,#6]%
    \figptBezierTD-6::#2[#3,#4,#5,#6]\Figptpr@j-4:/-5/\Figptpr@j-5:/-6/%
    \figptscontrolDD-4[-7,-4,-5,-8]}
\ctr@ln@m\figdrawcirc
\ctr@ld@f\def\Q@circDD#1(#2){\ifCUR@PS\ifGR@cri\PSc@mment{circDD Center=#1 (Radius=#2)}%
    \Q@arccircDD#1;#2(0,360)\PSc@mment{End circDD}\fi\fi}
\ctr@ld@f\def\Q@circTD#1,#2,#3(#4){\ifCUR@PS\ifGR@cri%
    \PSc@mment{circTD Center=#1,P1=#2,P2=#3 (Radius=#4)}%
    \Q@arccircTD#1,#2,#3;#4(0,360)\PSc@mment{End circTD}\fi\fi}
\ctr@ln@m\p@urcent
{\catcode`\%=12\gdef\p@urcent{%}}
\ctr@ld@f\def\PSc@mment#1{\ifGRdebugm@de\immediate\write\fwf@g{\p@urcent\space#1}\fi}
\ctr@ln@m\acc@louv \ctr@ln@m\acc@lfer
{\catcode`\[=1\catcode`\{=12\gdef\acc@louv[{}}
{\catcode`\]=2\catcode`\}=12\gdef\acc@lfer{}]]
\ctr@ld@f\def\PSdict@{\ifUse@llipse%
    \immediate\write\fwf@g{/ellipsedict 9 dict def ellipsedict /mtrx matrix put}%
    \immediate\write\fwf@g{/ellipse \acc@louv ellipsedict begin}%
    \immediate\write\fwf@g{ /endangle exch def /startangle exch def}%
    \immediate\write\fwf@g{ /yrad exch def /xrad exch def}%
    \immediate\write\fwf@g{ /rotangle exch def /y exch def /x exch def}%
    \immediate\write\fwf@g{ /savematrix mtrx currentmatrix def}%
    \immediate\write\fwf@g{ x y translate rotangle rotate xrad yrad scale}%
    \immediate\write\fwf@g{ 0 0 1 startangle endangle arc}%
    \immediate\write\fwf@g{ savematrix setmatrix end\acc@lfer def}%
    \fi\PShe@der{EndProlog}}
\ctr@ld@f\def\Pssetc@rve#1=#2|{\keln@mun#1|%
    \def\n@mref{r}\ifx\l@debut\n@mref\update@ttr\D@FTroundness\Q@s@troundness{#2}\else% roundness
    \W@rnmesAttr{figset curve}{#1}\fi}
\ctr@ln@m\curv@roundness
\ctr@ld@f\def\Q@s@troundness#1{\edef\curv@roundness{#1}}
\ctr@ld@f\def\D@FTroundness{0.2} % Valeur par defaut
\ctr@ln@m\figdrawcurve
\ctr@ld@f\def\Q@curveDD[#1]{{\ifCUR@PS\ifGR@cri\PSc@mment{curveDD Points=#1}%
    \s@uvc@ntr@l\et@tpscurveDD%
    \iffillm@de\Psc@rveDD\curv@roundness[#1]%
    \f@gfill%
    \else\Psc@rveDD\curv@roundness[#1]\f@gstroke\fi%
    \PSc@mment{End curveDD}\resetc@ntr@l\et@tpscurveDD\fi\fi}}
\ctr@ld@f\def\Q@curveTD[#1]{{\ifCUR@PS\ifGR@cri%
    \PSc@mment{curveTD Points=#1}\s@uvc@ntr@l\et@tpscurveTD\let\c@lprojSP=\relax%
    \iffillm@de\Psc@rveTD\curv@roundness[#1]%
    \f@gfill%
    \else\Psc@rveTD\curv@roundness[#1]\f@gstroke\fi%
    \PSc@mment{End curveTD}\resetc@ntr@l\et@tpscurveTD\fi\fi}}
\ctr@ld@f\def\Psc@rveDD#1[#2]{%
    \def\list@num{#2}\extrairelepremi@r\Ak@\de\list@num%
    \extrairelepremi@r\Ai@\de\list@num\extrairelepremi@r\Aj@\de\list@num%
    \f@gnewpath\PSwrit@cmdS{\Ai@}{\c@mmoveto}{\fwf@g}{\X@un}{\Y@un}%
    \setc@ntr@l{2}\figvectPDD -1[\Ak@,\Aj@]%
    \@ecfor\Ak@:=\list@num\do{\figpttraDD-2:=\Ai@/#1,-1/\BdingB@xfalse%
       \PSwrit@cmdS{-2}{}{\fwf@g}{\X@de}{\Y@de}%
       \figvectPDD -1[\Ai@,\Ak@]\figpttraDD-2:=\Aj@/-#1,-1/%
       \PSwrit@cmdS{-2}{}{\fwf@g}{\X@tr}{\Y@tr}\BdingB@xtrue%
       \PSwrit@cmdS{\Aj@}{\c@mcurveto}{\fwf@g}{\X@qu}{\Y@qu}%
       \B@zierBB@x{1}{\Y@un}(\X@un,\X@de,\X@tr,\X@qu)%
       \B@zierBB@x{2}{\X@un}(\Y@un,\Y@de,\Y@tr,\Y@qu)%
       \edef\X@un{\X@qu}\edef\Y@un{\Y@qu}\edef\Ai@{\Aj@}\edef\Aj@{\Ak@}}}
\ctr@ld@f\def\Psc@rveTD#1[#2]{\ifnum\CUR@proj<\tw@\Psc@rvePPTD#1[#2]\else\Psc@rveCPTD#1[#2]\fi}
\ctr@ld@f\def\Psc@rvePPTD#1[#2]{\setc@ntr@l{2}%
    \def\list@num{#2}\extrairelepremi@r\Ak@\de\list@num\Figptpr@j-5:/\Ak@/%
    \extrairelepremi@r\Ai@\de\list@num\Figptpr@j-3:/\Ai@/%
    \extrairelepremi@r\Aj@\de\list@num\Figptpr@j-4:/\Aj@/%
    \f@gnewpath\PSwrit@cmdS{-3}{\c@mmoveto}{\fwf@g}{\X@un}{\Y@un}%
    \figvectPDD -1[-5,-4]%
    \@ecfor\Ak@:=\list@num\do{\Figptpr@j-5:/\Ak@/\figpttraDD-2:=-3/#1,-1/%
       \BdingB@xfalse\PSwrit@cmdS{-2}{}{\fwf@g}{\X@de}{\Y@de}%
       \figvectPDD -1[-3,-5]\figpttraDD-2:=-4/-#1,-1/%
       \PSwrit@cmdS{-2}{}{\fwf@g}{\X@tr}{\Y@tr}\BdingB@xtrue%
       \PSwrit@cmdS{-4}{\c@mcurveto}{\fwf@g}{\X@qu}{\Y@qu}%
       \B@zierBB@x{1}{\Y@un}(\X@un,\X@de,\X@tr,\X@qu)%
       \B@zierBB@x{2}{\X@un}(\Y@un,\Y@de,\Y@tr,\Y@qu)%
       \edef\X@un{\X@qu}\edef\Y@un{\Y@qu}\figptcopyDD-3:/-4/\figptcopyDD-4:/-5/}}
\ctr@ld@f\def\Psc@rveCPTD#1[#2]{\setc@ntr@l{2}%
    \def\list@num{#2}\extrairelepremi@r\Ak@\de\list@num%
    \extrairelepremi@r\Ai@\de\list@num\extrairelepremi@r\Aj@\de\list@num%
    \Figptpr@j-7:/\Ai@/%
    \f@gnewpath\PSwrit@cmd{-7}{\c@mmoveto}{\fwf@g}%
    \figvectPTD -9[\Ak@,\Aj@]%
    \@ecfor\Ak@:=\list@num\do{\figpttraTD-10:=\Ai@/#1,-9/%
       \figvectPTD -9[\Ai@,\Ak@]\figpttraTD-11:=\Aj@/-#1,-9/%
       \subB@zierTD\NBz@rcs[\Ai@,-10,-11,\Aj@]\edef\Ai@{\Aj@}\edef\Aj@{\Ak@}}}
\ctr@ld@f\def\figdrawend{\ifCUR@PS\ifGR@cri\immediate\closeout\fwf@g%
    \immediate\openout\fwf@g=\PSfilen@me\relax%
    \ifPDFm@ke\PSBdingB@x\else%
    \immediate\write\fwf@g{\p@urcent\string!PS-Adobe-2.0 EPSF-2.0}%
    \PShe@der{Creator\string: TeX (fig4tex.tex)}%
    \PShe@der{Title\string: \PSfilen@me}%
    \PShe@der{CreationDate\string: \the\day/\the\month/\the\year}%
    \PSBdingB@x%
    \PShe@der{EndComments}\PSdict@\fi%
    \immediate\write\fwf@g{\c@mgsave}%
    \openin\frf@g=\auxfilen@me\c@pypsfile\fwf@g\frf@g\closein\frf@g%
    \immediate\write\fwf@g{\c@mgrestore}%
    \PSc@mment{End of file.}\immediate\closeout\fwf@g%
    \immediate\openout\fwf@g=\auxfilen@me\immediate\closeout\fwf@g%
    \immediate\write16{File \PSfilen@me\space created.}\fi\fi\CUR@PSfalse\GR@critrue}
\ctr@ld@f\def\PShe@der#1{\immediate\write\fwf@g{\p@urcent\p@urcent#1}}
\ctr@ld@f\def\PSBdingB@x{{\v@lX=\ptT@ptps\c@@rdXmin\v@lY=\ptT@ptps\c@@rdYmin%
     \v@lXa=\ptT@ptps\c@@rdXmax\v@lYa=\ptT@ptps\c@@rdYmax%
     \PShe@der{BoundingBox\string: \repdecn@mb{\v@lX}\space\repdecn@mb{\v@lY}%
     \space\repdecn@mb{\v@lXa}\space\repdecn@mb{\v@lYa}}}}
\ctr@ld@f\def\figdrawfcconnect[#1]{{\ifCUR@PS\ifGR@cri\PSc@mment{fcconnect Points=#1}%
    \Q@s@tfillmode{no}\s@uvc@ntr@l\et@tpsfcconnect\resetc@ntr@l{2}%
    \fcc@nnect@[#1]\resetc@ntr@l\et@tpsfcconnect\PSc@mment{End fcconnect}\fi\fi}}
\ctr@ld@f\def\fcc@nnect@[#1]{\let\N@rm=\n@rmeucDD\def\list@num{#1}%
    \extrairelepremi@r\Ai@\de\list@num\edef\pr@m{\Ai@}\v@leur=\z@\p@rtent=\@ne\c@llgtot%
    \ifcase\fclin@typ@\edef\list@num{[\pr@m,#1,\Ai@}\expandafter\figdrawcurve\list@num]%
    \else\ifdim\fclin@r@d\p@>\z@\Pslin@conge[#1]\else\figdrawline[#1]\fi\fi%
    \v@leur=\@rrowp@s\v@leur\edef\list@num{#1,\Ai@,0}%
    \extrairelepremi@r\Ai@\de\list@num\mili@u=\epsil@n\c@llgpart%
    \advance\mili@u-\epsil@n\advance\mili@u-\delt@\advance\v@leur-\mili@u%
    \ifcase\fclin@typ@\invers@\mili@u\delt@%
    \ifnum\@rrowr@fpt>\z@\advance\delt@-\v@leur\v@leur=\delt@\fi%
    \v@leur=\repdecn@mb\v@leur\mili@u\edef\v@lt{\repdecn@mb\v@leur}%
    \extrairelepremi@r\Ak@\de\list@num%
    \figvectPDD-1[\pr@m,\Aj@]\figpttraDD-6:=\Ai@/\curv@roundness,-1/%
    \figvectPDD-1[\Ak@,\Ai@]\figpttraDD-7:=\Aj@/\curv@roundness,-1/%
    \delt@=\@rrowheadlength\p@\delt@=\C@AHANG\delt@\edef\R@dius{\repdecn@mb{\delt@}}%
    \ifcase\@rrowr@fpt%
    \FigptintercircB@zDD-8::\v@lt,\R@dius[\Ai@,-6,-7,\Aj@]\Q@arrowheadDD[-5,-8]\else%
    \FigptintercircB@zDD-8::\v@lt,\R@dius[\Aj@,-7,-6,\Ai@]\Q@arrowheadDD[-8,-5]\fi%
    \else\advance\delt@-\v@leur%
    \p@rtentiere{\p@rtent}{\delt@}\edef\C@efun{\the\p@rtent}%
    \p@rtentiere{\p@rtent}{\v@leur}\edef\C@efde{\the\p@rtent}%
    \figptbaryDD-5:[\Ai@,\Aj@;\C@efun,\C@efde]\ifcase\@rrowr@fpt%
    \delt@=\@rrowheadlength\unit@\delt@=\C@AHANG\delt@\edef\t@ille{\repdecn@mb{\delt@}}%
    \figvectPDD-2[\Ai@,\Aj@]\vecunit@{-2}{-2}\figpttraDD-5:=-5/\t@ille,-2/\fi%
    \Q@arrowheadDD[\Ai@,-5]\fi}
\ctr@ld@f\def\c@llgtot{\@ecfor\Aj@:=\list@num\do{\figvectP-1[\Ai@,\Aj@]\N@rm\delt@{-1}%
    \advance\v@leur\delt@\advance\p@rtent\@ne\edef\Ai@{\Aj@}}}
\ctr@ld@f\def\c@llgpart{\extrairelepremi@r\Aj@\de\list@num\figvectP-1[\Ai@,\Aj@]\N@rm\delt@{-1}%
    \advance\mili@u\delt@\ifdim\mili@u<\v@leur\edef\pr@m{\Ai@}\edef\Ai@{\Aj@}\c@llgpart\fi}
\ctr@ld@f\def\Pslin@conge[#1]{\ifnum\p@rtent>\tw@{\def\list@num{#1}%
    \extrairelepremi@r\Ai@\de\list@num\extrairelepremi@r\Aj@\de\list@num%
    \figptcopy-6:/\Ai@/\figvectP-3[\Ai@,\Aj@]\vecunit@{-3}{-3}\v@lmax=\result@t%
    \@ecfor\Ak@:=\list@num\do{\figvectP-4[\Aj@,\Ak@]\vecunit@{-4}{-4}%
    \minim@m\v@lmin\v@lmax\result@t\v@lmax=\result@t%
    \det@rm\delt@[-3,-4]\maxim@m\mili@u{\delt@}{-\delt@}\ifdim\mili@u>\Cepsil@n%
    \ifdim\delt@>\z@\figgetangleDD\Angl@[\Aj@,\Ak@,\Ai@]\else%
    \figgetangleDD\Angl@[\Aj@,\Ai@,\Ak@]\fi%
    \v@leur=\PI@deg\advance\v@leur-\Angl@\p@\divide\v@leur\tw@%
    \edef\Angl@{\repdecn@mb\v@leur}\c@ssin{\C@}{\S@}{\Angl@}\v@leur=\fclin@r@d\unit@%
    \v@leur=\S@\v@leur\mili@u=\C@\p@\invers@\mili@u\mili@u%
    \v@leur=\repdecn@mb{\mili@u}\v@leur%
    \minim@m\v@leur\v@leur\v@lmin\edef\t@ille{\repdecn@mb{\v@leur}}%
    \figpttra-5:=\Aj@/-\t@ille,-3/\figdrawline[-6,-5]\figpttra-6:=\Aj@/\t@ille,-4/%
    \figvectNVDD-3[-3]\figvectNVDD-8[-4]\inters@cDD-7:[-5,-3;-6,-8]%
    \ifdim\delt@>\z@\figdrawarccircP-7;\fclin@r@d[-5,-6]\else\figdrawarccircP-7;\fclin@r@d[-6,-5]\fi%
    \else\figdrawline[-6,\Aj@]\figptcopy-6:/\Aj@/\fi% Points alignes
    \edef\Ai@{\Aj@}\edef\Aj@{\Ak@}\figptcopy-3:/-4/}\figdrawline[-6,\Aj@]}\else\figdrawline[#1]\fi}
\ctr@ld@f\def\figdrawfcnode[#1]#2{{\ifCUR@PS\ifGR@cri\PSc@mment{fcnode Points=#1}%
    \s@uvc@ntr@l\et@tpsfcnode\resetc@ntr@l{2}%
    \def\t@xt@{#2}\ifx\t@xt@\empty\def\g@tt@xt{\setbox\Gb@x=\hbox{\Figg@tT{\p@int}}}%
    \else\def\g@tt@xt{\setbox\Gb@x=\hbox{#2}}\fi%
    \v@lmin=\h@rdfcXp@dd\advance\v@lmin\Xp@dd\unit@\multiply\v@lmin\tw@%
    \v@lmax=\h@rdfcYp@dd\advance\v@lmax\Yp@dd\unit@\multiply\v@lmax\tw@%
    \Figv@ctCreg-8(\unit@,-\unit@)\def\list@num{#1}%
    \delt@=\CUR@width bp\divide\delt@\tw@%
    \fcn@de\PSc@mment{End fcnode}\resetc@ntr@l\et@tpsfcnode\fi\fi}}
\ctr@ld@f\def\d@butn@de{\g@tt@xt\v@lX=\wd\Gb@x%
    \v@lY=\ht\Gb@x\advance\v@lY\dp\Gb@x\advance\v@lX\v@lmin\advance\v@lY\v@lmax}
\ctr@ld@f\def\fcn@deE{%
    \@ecfor\p@int:=\list@num\do{\d@butn@de\v@lX=\unssqrttw@\v@lX\v@lY=\unssqrttw@\v@lY%
    \ifdim\thickn@ss\p@>\z@% Begin thickness
    \v@lXa=\v@lX\advance\v@lXa\delt@\v@lXa=\ptT@unit@\v@lXa\edef\XR@d{\repdecn@mb\v@lXa}%
    \v@lYa=\v@lY\advance\v@lYa\delt@\v@lYa=\ptT@unit@\v@lYa\edef\YR@d{\repdecn@mb\v@lYa}%
    \arct@n\v@leur(\v@lXa,\v@lYa)\v@leur=\rdT@deg\v@leur\edef\@nglde{\repdecn@mb\v@leur}%
    {\c@lptellDD-2::\p@int;\XR@d,\YR@d(\@nglde)}% \v@lmin & \v@lmax modified in \c@lptellDD
    \advance\v@leur-\PI@deg\edef\@nglun{\repdecn@mb\v@leur}%
    {\c@lptellDD-3::\p@int;\XR@d,\YR@d(\@nglun)}%
    \figptstra-6=-3,-2,\p@int/\thickn@ss,-8/\Q@s@tfillmode{yes}%
    \Pss@tspecifSt{color=\DDV@thickcolor}%
    \figdrawline[-2,-3,-6,-5]\figdrawarcell-4;\XR@d,\YR@d(\@nglun,\@nglde,0)%
    \Psrest@reSt{color=\DDV@thickcolor}\fi% End thickness
    \v@lX=\ptT@unit@\v@lX\v@lY=\ptT@unit@\v@lY%
    \edef\XR@d{\repdecn@mb\v@lX}\edef\YR@d{\repdecn@mb\v@lY}%
    \Q@s@tfillmode{yes}\Pss@tspecifSt{color=\fcbgc@lor}%
    \figdrawarcell\p@int;\XR@d,\YR@d(0,360,0)%
    \Q@s@tfillmode{no}\Psrest@reSt{color=\fcbgc@lor}\figdrawarcell\p@int;\XR@d,\YR@d(0,360,0)}}
\ctr@ld@f\def\fcn@deL{\delt@=\ptT@unit@\delt@\edef\t@ille{\repdecn@mb\delt@}%
    \@ecfor\p@int:=\list@num\do{\Figg@tXYa{\p@int}\d@butn@de%
    \ifdim\v@lX>\v@lY\itis@Ktrue\else\itis@Kfalse\fi%
    \advance\v@lXa-\v@lX\Figp@intreg-1:(\v@lXa,\v@lYa)%
    \advance\v@lXa\v@lX\advance\v@lYa-\v@lY\Figp@intreg-2:(\v@lXa,\v@lYa)%
    \advance\v@lXa\v@lX\advance\v@lYa\v@lY\Figp@intreg-3:(\v@lXa,\v@lYa)%
    \advance\v@lXa-\v@lX\advance\v@lYa\v@lY\Figp@intreg-4:(\v@lXa,\v@lYa)%
    \ifdim\thickn@ss\p@>\z@% Begin thickness
    \Figg@tXYa{\p@int}\Q@s@tfillmode{yes}\Pss@tspecifSt{color=\DDV@thickcolor}%
    \c@lpt@xt{-1}{-4}\c@lpt@xt@\v@lXa\v@lYa\v@lX\v@lY\c@rre\delt@%
    \Figp@intregDD-9:(\v@lZ,\v@lYa)\Figp@intregDD-11:(\v@lZa,\v@lYa)%
    \c@lpt@xt{-4}{-3}\c@lpt@xt@\v@lYa\v@lXa\v@lY\v@lX\delt@\c@rre%
    \Figp@intregDD-12:(\v@lXa,\v@lZ)\Figp@intregDD-10:(\v@lXa,\v@lZa)%
    \ifitis@K\figptstra-7=-9,-10,-11/\thickn@ss,-8/\figdrawline[-9,-11,-5,-6,-7]\else%
    \figptstra-7=-10,-11,-12/\thickn@ss,-8/\figdrawline[-10,-12,-5,-6,-7]\fi%
    \Psrest@reSt{color=\DDV@thickcolor}\fi% End thickness
    \Q@s@tfillmode{yes}\Pss@tspecifSt{color=\fcbgc@lor}\figdrawline[-1,-2,-3,-4]%
    \Q@s@tfillmode{no}\Psrest@reSt{color=\fcbgc@lor}\figdrawline[-1,-2,-3,-4,-1]}}
\ctr@ld@f\def\c@lpt@xt#1#2{\figvectN-7[#1,#2]\vecunit@{-7}{-7}\figpttra-5:=#1/\t@ille,-7/%
    \figvectP-7[#1,#2]\Figg@tXY{-7}\c@rre=\v@lX\delt@=\v@lY\Figg@tXY{-5}}
\ctr@ld@f\def\c@lpt@xt@#1#2#3#4#5#6{\v@lZ=#6\invers@{\v@lZ}{\v@lZ}\v@leur=\repdecn@mb{#5}\v@lZ%
    \v@lZ=#2\advance\v@lZ-#4\mili@u=\repdecn@mb{\v@leur}\v@lZ%
    \v@lZ=#3\advance\v@lZ\mili@u\v@lZa=-\v@lZ\advance\v@lZa\tw@#1}
\ctr@ld@f\def\fcn@deR{\@ecfor\p@int:=\list@num\do{\Figg@tXYa{\p@int}\d@butn@de%
    \advance\v@lXa-0.5\v@lX\advance\v@lYa-0.5\v@lY\Figp@intreg-1:(\v@lXa,\v@lYa)%
    \advance\v@lXa\v@lX\Figp@intreg-2:(\v@lXa,\v@lYa)%
    \advance\v@lYa\v@lY\Figp@intreg-3:(\v@lXa,\v@lYa)%
    \advance\v@lXa-\v@lX\Figp@intreg-4:(\v@lXa,\v@lYa)%
    \ifdim\thickn@ss\p@>\z@% Begin thickness
    \Q@s@tfillmode{yes}\Pss@tspecifSt{color=\DDV@thickcolor}%
    \Figv@ctCreg-5(-\delt@,-\delt@)\figpttra-9:=-1/1,-5/%
    \Figv@ctCreg-5(\delt@,-\delt@)\figpttra-10:=-2/1,-5/%
    \Figv@ctCreg-5(\delt@,\delt@)\figpttra-11:=-3/1,-5/%
    \figptstra-7=-9,-10,-11/\thickn@ss,-8/\figdrawline[-9,-11,-5,-6,-7]%
    \Psrest@reSt{color=\DDV@thickcolor}\fi% End thickness
    \Q@s@tfillmode{yes}\Pss@tspecifSt{color=\fcbgc@lor}\figdrawline[-1,-2,-3,-4]%
    \Q@s@tfillmode{no}\Psrest@reSt{color=\fcbgc@lor}\figdrawline[-1,-2,-3,-4,-1]}}
\ctr@ld@f\def\Pssetfl@wchart#1=#2|{\keln@mtr#1|%
    \def\n@mref{arr}\ifx\l@debut\n@mref\expandafter\keln@mtr\l@suite|%
     \def\n@mref{owp}\ifx\l@debut\n@mref\update@ttr\D@FTfcarrowposition\P@setfcarrowposition{#2}\else% arrowposition
     \def\n@mref{owr}\ifx\l@debut\n@mref\update@ttr\D@FTfcarrowrefpt\P@setfcarrowrefpt{#2}\else% arrowrefpt
     \W@rnmesAttr{figset flowchart}{#1}\fi\fi\else%
    \def\n@mref{bgc}\ifx\l@debut\n@mref\update@ttr\D@FTfcbgcolor\P@setfcbgcolor{#2}\else% background color
    \def\n@mref{lin}\ifx\l@debut\n@mref\update@ttr\D@FTfcline\P@setfcline{#2}\else% line
    \def\n@mref{pad}\ifx\l@debut\n@mref\update@ttr\D@FTfcxpadding\P@setfcxpadding{#2}%
                                       \update@ttr\D@FTfcypadding\P@setfcypadding{#2}\else% padding
    \def\n@mref{rad}\ifx\l@debut\n@mref\update@ttr\D@FTfcradius\P@setfcradius{#2}\else% connection radius
    \def\n@mref{sha}\ifx\l@debut\n@mref\update@ttr\D@FTfcshape\P@setfcshape{#2}\else% shape
    \def\n@mref{thi}\ifx\l@debut\n@mref\expandafter\keln@mtr\l@suite|%
     \def\n@mref{ckc}\ifx\l@debut\n@mref\update@ttr\D@FTref\P@setfcthickcolor{#2}\else% thickness color
     \def\n@mref{ckn}\ifx\l@debut\n@mref\update@ttr\D@FTfcthickness\P@setfcthickness{#2}\else% thickness
     \W@rnmesAttr{figset flowchart}{#1}\fi\fi\else%
    \def\n@mref{xpa}\ifx\l@debut\n@mref\update@ttr\D@FTfcxpadding\P@setfcxpadding{#2}\else% xpadding
    \def\n@mref{ypa}\ifx\l@debut\n@mref\update@ttr\D@FTfcypadding\P@setfcypadding{#2}\else% ypadding
    \W@rnmesAttr{figset flowchart}{#1}\fi\fi\fi\fi\fi\fi\fi\fi\fi}
\ctr@ln@m\@rrowp@s
\ctr@ld@f\def\P@setfcarrowposition#1{\edef\@rrowp@s{#1}}
\ctr@ln@m\@rrowr@fpt
\ctr@ld@f\def\P@setfcarrowrefpt#1{\setfcr@fpt#1|}
\ctr@ld@f\def\setfcr@fpt#1#2|{\if#1e\def\@rrowr@fpt{1}\else\def\@rrowr@fpt{0}\fi}
\ctr@ln@m\fcbgc@lor
\ctr@ld@f\def\P@setfcbgcolor#1{\edef\fcbgc@lor{#1}}
\ctr@ln@m\fclin@typ@
\ctr@ld@f\def\P@setfcline#1{\setfccurv@#1|}
\ctr@ld@f\def\setfccurv@#1#2|{\if#1c\def\fclin@typ@{0}\else\def\fclin@typ@{1}\fi}
\ctr@ln@m\fclin@r@d
\ctr@ld@f\def\P@setfcradius#1{\edef\fclin@r@d{#1}}
\ctr@ln@m\fcn@de \ctr@ln@m\fcsh@pe
\ctr@ln@m\h@rdfcXp@dd \ctr@ln@m\h@rdfcYp@dd
\ctr@ld@f\def\P@setfcshape#1{\setfcshap@#1|}
\ctr@ld@f\def\setfcshap@#1#2|{%
    \if#1e\let\fcn@de=\fcn@deE\def\h@rdfcXp@dd{4pt}\def\h@rdfcYp@dd{4pt}%
     \edef\fcsh@pe{ellipse}\else%
    \if#1l\let\fcn@de=\fcn@deL\def\h@rdfcXp@dd{4pt}\def\h@rdfcYp@dd{4pt}%
     \edef\fcsh@pe{lozenge}\else%
          \let\fcn@de=\fcn@deR\def\h@rdfcXp@dd{6pt}\def\h@rdfcYp@dd{6pt}%
     \edef\fcsh@pe{rectangle}\fi\fi}
\ctr@ln@m\DDV@thickcolor
\ctr@ld@f\def\P@setfcthickcolor#1{\edef\DDV@thickcolor{#1}}
\ctr@ln@m\thickn@ss
\ctr@ld@f\def\P@setfcthickness#1{\edef\thickn@ss{#1}}
\ctr@ln@m\Xp@dd
\ctr@ld@f\def\P@setfcxpadding#1{\edef\Xp@dd{#1}}
\ctr@ln@m\Yp@dd
\ctr@ld@f\def\P@setfcypadding#1{\edef\Yp@dd{#1}}
\ctr@ld@f\def\figdrawline[#1]{{\ifCUR@PS\ifGR@cri\PSc@mment{line Points=#1}%
    \let\figdrawlign@=\Pslign@P\Pslin@{#1}\PSc@mment{End line}\fi\fi}}
\ctr@ld@f\def\figdrawlineF#1{{\ifCUR@PS\ifGR@cri\PSc@mment{lineF Filename=#1}%
    \let\figdrawlign@=\Pslign@F\Pslin@{#1}\PSc@mment{End lineF}\fi\fi}}
\ctr@ld@f\def\figdrawlineC(#1){{\ifCUR@PS\ifGR@cri\PSc@mment{lineC}%
    \let\figdrawlign@=\Pslign@C\Pslin@{#1}\PSc@mment{End lineC}\fi\fi}}
\ctr@ld@f\def\Pslin@#1{\iffillm@de\figdrawlign@{#1}%
    \f@gfill%
    \else\figdrawlign@{#1}\ifx\derp@int\premp@int%
    \f@gclosestroke%
    \else\f@gstroke\fi\fi}
\ctr@ld@f\def\Pslign@P#1{\def\list@num{#1}\extrairelepremi@r\p@int\de\list@num%
    \edef\premp@int{\p@int}\f@gnewpath%
    \PSwrit@cmd{\p@int}{\c@mmoveto}{\fwf@g}%
    \@ecfor\p@int:=\list@num\do{\PSwrit@cmd{\p@int}{\c@mlineto}{\fwf@g}%
    \edef\derp@int{\p@int}}}
\ctr@ld@f\def\Pslign@F#1{\s@uvc@ntr@l\et@tPslign@F\setc@ntr@l{2}\openin\frf@g=#1\relax%
    \ifeof\frf@g\message{*** File #1 not found !}\end\else%
    \read\frf@g to\tr@c\edef\premp@int{\tr@c}\expandafter\extr@ctCF\tr@c:%
    \f@gnewpath\PSwrit@cmd{-1}{\c@mmoveto}{\fwf@g}%
    \loop\read\frf@g to\tr@c\ifeof\frf@g\mored@tafalse\else\mored@tatrue\fi%
    \ifmored@ta\expandafter\extr@ctCF\tr@c:\PSwrit@cmd{-1}{\c@mlineto}{\fwf@g}%
    \edef\derp@int{\tr@c}\repeat\fi\closein\frf@g\resetc@ntr@l\et@tPslign@F}
\ctr@ln@m\extr@ctCF
\ctr@ld@f\def\extr@ctCFDD#1 #2:{\v@lX=#1\unit@\v@lY=#2\unit@\Figp@intregDD-1:(\v@lX,\v@lY)}
\ctr@ld@f\def\extr@ctCFTD#1 #2 #3:{\v@lX=#1\unit@\v@lY=#2\unit@\v@lZ=#3\unit@%
    \Figp@intregTD-1:(\v@lX,\v@lY,\v@lZ)}
\ctr@ld@f\def\Pslign@C#1{\s@uvc@ntr@l\et@tPslign@C\setc@ntr@l{2}%
    \def\list@num{#1}\extrairelepremi@r\p@int\de\list@num%
    \edef\premp@int{\p@int}\f@gnewpath%
    \expandafter\Pslign@C@\p@int:\PSwrit@cmd{-1}{\c@mmoveto}{\fwf@g}%
    \@ecfor\p@int:=\list@num\do{\expandafter\Pslign@C@\p@int:%
    \PSwrit@cmd{-1}{\c@mlineto}{\fwf@g}\edef\derp@int{\p@int}}%
    \resetc@ntr@l\et@tPslign@C}
\ctr@ld@f\def\Pslign@C@#1 #2:{{\def\t@xt@{#1}\ifx\t@xt@\empty\Pslign@C@#2:% Discard leading spaces
    \else\extr@ctCF#1 #2:\fi}}
\ctr@ld@f\def\Pssetm@sh#1=#2|{\keln@mde#1|%
    \def\n@mref{co}\ifx\l@debut\n@mref\update@ttr\D@FTref\P@setmeshcolor{#2}\else% color
    \def\n@mref{da}\ifx\l@debut\n@mref\update@ttr\D@FTref\P@setmeshdash{#2}\else% dash
    \def\n@mref{di}\ifx\l@debut\n@mref\update@ttr\D@FTmeshdiag\Q@s@tmeshdiag{#2}\else% diag
    \def\n@mref{wi}\ifx\l@debut\n@mref\update@ttr\D@FTref\P@setmeshwidth{#2}\else% width
    \W@rnmesAttr{figset mesh}{#1}\fi\fi\fi\fi}
\ctr@ln@m\c@ntrolmesh
\ctr@ld@f\def\Q@s@tmeshdiag#1{\edef\c@ntrolmesh{#1}}
\ctr@ld@f\def\D@FTmeshdiag{0}    % Valeur par defaut
\ctr@ln@m\DDV@meshcolor
\ctr@ld@f\def\P@setmeshcolor#1{\edef\DDV@meshcolor{#1}}
\ctr@ln@m\DDV@meshdash
\ctr@ld@f\def\P@setmeshdash#1{\edef\DDV@meshdash{#1}}
\ctr@ln@m\DDV@meshwidth
\ctr@ld@f\def\P@setmeshwidth#1{\edef\DDV@meshwidth{#1}}
\ctr@ld@f\def\figdrawmesh#1,#2[#3,#4,#5,#6]{{\ifCUR@PS\ifGR@cri%
    \PSc@mment{mesh N1=#1, N2=#2, Quadrangle=[#3,#4,#5,#6]}\s@uvc@ntr@l\et@tpsmesh%
    \Pss@tspecifSt{color=\DDV@meshcolor,dash=\DDV@meshdash,width=\DDV@meshwidth}%
    \setc@ntr@l{2}%
    \ifnum#1>\@ne\Psmeshp@rt#1[#3,#4,#5,#6]\fi%
    \ifnum#2>\@ne\Psmeshp@rt#2[#4,#5,#6,#3]\fi%
    \ifnum\c@ntrolmesh>\z@\Psmeshdi@g#1,#2[#3,#4,#5,#6]\fi%
    \ifnum\c@ntrolmesh<\z@\Psmeshdi@g#2,#1[#4,#5,#6,#3]\fi%
    \Psrest@reSt{color=\DDV@meshcolor,dash=\DDV@meshdash,width=\DDV@meshwidth}%
    \figdrawline[#3,#4,#5,#6,#3]\PSc@mment{End mesh}\resetc@ntr@l\et@tpsmesh\fi\fi}}
\ctr@ld@f\def\Psmeshp@rt#1[#2,#3,#4,#5]{{\l@mbd@un=\@ne\l@mbd@de=#1\loop%
    \ifnum\l@mbd@un<#1\advance\l@mbd@de\m@ne\figptbary-1:[#2,#3;\l@mbd@de,\l@mbd@un]%
    \figptbary-2:[#5,#4;\l@mbd@de,\l@mbd@un]\figdrawline[-1,-2]\advance\l@mbd@un\@ne\repeat}}
\ctr@ld@f\def\Psmeshdi@g#1,#2[#3,#4,#5,#6]{\figptcopy-2:/#3/\figptcopy-3:/#6/%
    \l@mbd@un=\z@\l@mbd@de=#1\loop\ifnum\l@mbd@un<#1%
    \advance\l@mbd@un\@ne\advance\l@mbd@de\m@ne\figptcopy-1:/-2/\figptcopy-4:/-3/%
    \figptbary-2:[#3,#4;\l@mbd@de,\l@mbd@un]%
    \figptbary-3:[#6,#5;\l@mbd@de,\l@mbd@un]\Psmeshdi@gp@rt#2[-1,-2,-3,-4]\repeat}
\ctr@ld@f\def\Psmeshdi@gp@rt#1[#2,#3,#4,#5]{{\l@mbd@un=\z@\l@mbd@de=#1\loop%
    \ifnum\l@mbd@un<#1\figptbary-5:[#2,#5;\l@mbd@de,\l@mbd@un]%
    \advance\l@mbd@de\m@ne\advance\l@mbd@un\@ne%
    \figptbary-6:[#3,#4;\l@mbd@de,\l@mbd@un]\figdrawline[-5,-6]\repeat}}
\ctr@ln@m\figdrawnormal
\ctr@ld@f\def\Q@normalDD#1,#2[#3,#4]{{\ifCUR@PS\ifGR@cri%
    \PSc@mment{normal Length=#1, Lambda=#2 [Pt1,Pt2]=[#3,#4]}%
    \s@uvc@ntr@l\et@tpsnormal\resetc@ntr@l{2}\figptendnormal-6::#1,#2[#3,#4]%
    \figptcopyDD-5:/-1/\figdrawarrow[-5,-6]%
    \PSc@mment{End normal}\resetc@ntr@l\et@tpsnormal\fi\fi}}
\ctr@ld@f\def\figreset#1{\trtlis@rg{#1}{\Psreset@}}
\ctr@ld@f\def\Psreset@#1|{\def\t@xt@{#1}\ifx\t@xt@\empty\P@resetg@n% general settings
    \else\keln@mde#1|%
    \def\n@mref{al}\ifx\l@debut\n@mref%
        \figset altitude(blcolor=default,bldash=default,blwidth=default,%
        sqcolor=default,sqdash=default,sqwidth=default)\else% altitude
    \def\n@mref{ar}\ifx\l@debut\n@mref\figresetarrowhead\else% arrowhead
    \def\n@mref{cu}\ifx\l@debut\n@mref\figset curve(roundness=\D@FTroundness)\else% curve
    \def\n@mref{ge}\ifx\l@debut\n@mref\P@resetg@n\else% general settings
    \def\n@mref{fl}\ifx\l@debut\n@mref%
        \figset flowchart(arrowp=\D@FTfcarrowposition,arrowr=\D@FTfcarrowrefpt,%
	bgcolor=\D@FTfcbgcolor,line=\D@FTfcline,radius=\D@FTfcradius,%
	shape=\D@FTfcshape,thickcolor=default,thickness=\D@FTfcthickness,%
	xpadd=\D@FTfcxpadding,ypadd=\D@FTfcypadding)\else% flow chart
    \def\n@mref{me}\ifx\l@debut\n@mref\figset mesh(diag=\D@FTmeshdiag,%
        color=default,dash=default,width=default)\else% mesh
    \def\n@mref{tr}\ifx\l@debut\n@mref%
        \figset trimesh(color=default,dash=default,width=default)\else% trimesh
    \W@rnmeskwd{figreset}{#1}\fi\fi\fi\fi\fi\fi\fi\fi}
\ctr@ld@f\def\P@resetg@n{\figset (color=\D@FTcolor,dash=\D@FTdash,fill=\D@FTfill,%
    join=\D@FTjoin,width=\D@FTwidth)}
\ctr@ld@f\def\figset#1(#2){\def\t@xt@{#1}\ifx\t@xt@\empty\trtlis@rg{#2}{\Pssetg@n}% general settings
    \else\keln@mde#1|%
    \def\n@mref{al}\ifx\l@debut\n@mref\trtlis@rg{#2}{\Psset@lti}\else% altitude
    \def\n@mref{ar}\ifx\l@debut\n@mref\trtlis@rg{#2}{\Psset@rrowhe@d}\else% arrowhead
    \def\n@mref{cu}\ifx\l@debut\n@mref\trtlis@rg{#2}{\Pssetc@rve}\else% curve
    \def\n@mref{fl}\ifx\l@debut\n@mref\trtlis@rg{#2}{\Pssetfl@wchart}\else% flow chart
    \def\n@mref{ge}\ifx\l@debut\n@mref\trtlis@rg{#2}{\Pssetg@n}\else% general settings
    \def\n@mref{me}\ifx\l@debut\n@mref\trtlis@rg{#2}{\Pssetm@sh}\else% mesh
    \def\n@mref{pr}\ifx\l@debut\n@mref\ifCUR@PS\W@rnmesIgn{figset proj(...)}%
     \else\trtlis@rg{#2}{\Figsetpr@j}\fi\else% projection
    \def\n@mref{tr}\ifx\l@debut\n@mref\trtlis@rg{#2}{\Pssettrim@sh}\else% trimesh
    \def\n@mref{wr}\ifx\l@debut\n@mref\let\M@cro=\Figsetwr@te\trtlis@rgtok{#2,|}\else% write
    \W@rnmeskwd{figset}{#1}\fi\fi\fi\fi\fi\fi\fi\fi\fi\fi\ignorespaces}
\ctr@ld@f\def\figsetdefault#1(#2){\ifCUR@PS\W@rnmesIgn{figsetdefault}\else%
    \def\t@xt@{#1}\ifx\t@xt@\empty\trtlis@rg{#2}{\Pssd@g@n}\else\keln@mun#1|% general settings
    \def\n@mref{a}\ifx\l@debut\n@mref\trtlis@rg{#2}{\Pssd@@rrowhe@d}\else% arrowhead
    \def\n@mref{c}\ifx\l@debut\n@mref\trtlis@rg{#2}{\Pssd@c@rve}\else% curve
    \def\n@mref{g}\ifx\l@debut\n@mref\trtlis@rg{#2}{\Pssd@g@n}\else% general settings
    \def\n@mref{f}\ifx\l@debut\n@mref\trtlis@rg{#2}{\Pssd@fl@wchart}\else% flow chart
    \def\n@mref{m}\ifx\l@debut\n@mref\trtlis@rg{#2}{\Pssd@m@sh}\else% mesh
    \W@rnmeskwd{figsetdefault}{#1}\fi\fi\fi\fi\fi\fi\initpss@ttings\fi}
\ctr@ld@f\def\Pssd@g@n#1=#2|{\keln@mun#1|%
    \def\n@mref{c}\ifx\l@debut\n@mref\edef\D@FTcolor{#2}\else% color
    \def\n@mref{d}\ifx\l@debut\n@mref\edef\D@FTdash{#2}\else% line dash
    \def\n@mref{f}\ifx\l@debut\n@mref\edef\D@FTfill{#2}\else% fillmode
    \def\n@mref{j}\ifx\l@debut\n@mref\edef\D@FTjoin{#2}\else% line join
    \def\n@mref{u}\ifx\l@debut\n@mref\edef\D@FTupdate{#2}\Q@s@tupdate{#2}\else% update
    \def\n@mref{w}\ifx\l@debut\n@mref\edef\D@FTwidth{#2}\else% line width
    \W@rnmesAttr{figsetdefault}{#1}\fi\fi\fi\fi\fi\fi}
\ctr@ld@f\def\Pssd@@rrowhe@d#1=#2|{\keln@mun#1|%
    \def\n@mref{a}\ifx\l@debut\n@mref\edef\D@FTarrowheadangle{#2}\else% angle
    \def\n@mref{f}\ifx\l@debut\n@mref\edef\D@FTarrowheadfill{#2}\else% fillmode
    \def\n@mref{l}\ifx\l@debut\n@mref\y@tiunit{#2}\ifunitpr@sent%
     \edef\D@FTh@rdahlength{#2}\else\edef\D@FTh@rdahlength{#2pt}%
     \message{*** \BS@ figsetdefault (..., #1=#2, ...) : unit is missing, pt is assumed.}%
     \fi\else% length
    \def\n@mref{o}\ifx\l@debut\n@mref\edef\D@FTarrowheadout{#2}\else% out
    \def\n@mref{r}\ifx\l@debut\n@mref\edef\D@FTarrowheadratio{#2}\else% ratio
    \W@rnmesAttr{figsetdefault arrowhead}{#1}\fi\fi\fi\fi\fi}
\ctr@ld@f\def\Pssd@c@rve#1=#2|{\keln@mun#1|%
    \def\n@mref{r}\ifx\l@debut\n@mref\edef\D@FTroundness{#2}\else%
    \W@rnmesAttr{figsetdefault curve}{#1}\fi}
\ctr@ld@f\def\Pssd@fl@wchart#1=#2|{\keln@mtr#1|%
    \def\n@mref{arr}\ifx\l@debut\n@mref\expandafter\keln@mtr\l@suite|%
     \def\n@mref{owp}\ifx\l@debut\n@mref\edef\D@FTfcarrowposition{#2}\else% arrowposition
     \def\n@mref{owr}\ifx\l@debut\n@mref\edef\D@FTfcarrowrefpt{#2}\else% arrowrefpt
                     \W@rnmesAttr{figsetdefault flowchart}{#1}\fi\fi\else%
    \def\n@mref{bgc}\ifx\l@debut\n@mref\edef\D@FTfcbgcolor{#2}\else% background color
    \def\n@mref{lin}\ifx\l@debut\n@mref\edef\D@FTfcline{#2}\else% line
    \def\n@mref{pad}\ifx\l@debut\n@mref\edef\D@FTfcxpadding{#2}%
                    \edef\D@FTfcypadding{#2}\else% padding
    \def\n@mref{rad}\ifx\l@debut\n@mref\edef\D@FTfcradius{#2}\else% connection radius
    \def\n@mref{sha}\ifx\l@debut\n@mref\edef\D@FTfcshape{#2}\else% shape
    \def\n@mref{thi}\ifx\l@debut\n@mref\expandafter\keln@mtr\l@suite|%
     \def\n@mref{ckn}\ifx\l@debut\n@mref\edef\D@FTfcthickness{#2}\else% thickness
                     \W@rnmesAttr{figsetdefault flowchart}{#1}\fi\else%
    \def\n@mref{xpa}\ifx\l@debut\n@mref\edef\D@FTfcxpadding{#2}\else% xpadding
    \def\n@mref{ypa}\ifx\l@debut\n@mref\edef\D@FTfcypadding{#2}\else% ypadding
    \W@rnmesAttr{figsetdefault flowchart}{#1}\fi\fi\fi\fi\fi\fi\fi\fi\fi}
\ctr@ld@f\def\D@FTfcarrowposition{0.5}
\ctr@ld@f\def\D@FTfcarrowrefpt{start}
\ctr@ld@f\def\D@FTfcbgcolor{1}
\ctr@ld@f\def\D@FTfcline{polygon}
\ctr@ld@f\def\D@FTfcradius{0}
\ctr@ld@f\def\D@FTfcshape{rectangle}
\ctr@ld@f\def\D@FTfcthickness{0}
\ctr@ld@f\def\D@FTfcxpadding{0}
\ctr@ld@f\def\D@FTfcypadding{0}
\ctr@ld@f\def\Pssd@m@sh#1=#2|{\keln@mun#1|%
    \def\n@mref{d}\ifx\l@debut\n@mref\edef\D@FTmeshdiag{#2}\else%
    \W@rnmesAttr{figsetdefault mesh}{#1}\fi}
\ctr@ln@w{newif}\iffillm@de
\ctr@ld@f\def\Q@s@tfillmode#1{\expandafter\setfillm@de#1:}
\ctr@ld@f\def\setfillm@de#1#2:{\if#1n\fillm@defalse\else\fillm@detrue\fi}
\ctr@ld@f\def\D@FTfill{no}     % Valeur par defaut
\ctr@ln@w{newif}\ifGRupdatem@de
\ctr@ld@f\def\Q@s@tupdate#1{\ifCUR@PS\W@rnmesIgn{figset (update=...)}%
    \else\expandafter\setupd@te#1:\fi}
\ctr@ld@f\def\setupd@te#1#2:{\if#1n\GRupdatem@defalse\else\GRupdatem@detrue\fi}
\ctr@ld@f\def\D@FTupdate{no}     % Valeur par defaut
\ctr@ln@m\CUR@color \ctr@ln@m\CUR@colorc@md
\ctr@ld@f\def\s@uvcolor#1{\edef#1{\CUR@color}}
\ctr@ld@f\def\D@FTcolor{0}       % Valeur par defaut
\ctr@ld@f\def\Pssetc@lor#1{\ifGR@cri\result@tent=\@ne\expandafter\c@lnbV@l#1 :%
    \def\CUR@color{}\def\CUR@colorc@md{}%
    \ifcase\result@tent\or\Q@s@tgray{#1}\or\or\Q@s@trgb{#1}\or\Q@s@tcmyk{#1}\fi\fi}
\ctr@ln@m\CUR@colorc@mdStroke
\ctr@ld@f\def\Q@s@tcmyk#1{\ifGR@cri\def\CUR@color{#1}\def\CUR@colorc@md{\c@msetcmykcolor}%
    \def\CUR@colorc@mdStroke{\c@msetcmykcolorStroke}%
    \ifCUR@PS\PSc@mment{setcmyk Color=#1}\us@primarC@lor\fi\fi}
\ctr@ld@f\def\Q@s@trgb#1{\ifGR@cri\def\CUR@color{#1}\def\CUR@colorc@md{\c@msetrgbcolor}%
    \def\CUR@colorc@mdStroke{\c@msetrgbcolorStroke}%
    \ifCUR@PS\PSc@mment{setrgb Color=#1}\us@primarC@lor\fi\fi}
\ctr@ld@f\def\Q@s@tgray#1{\ifGR@cri\def\CUR@color{#1}\def\CUR@colorc@md{\c@msetgray}%
    \def\CUR@colorc@mdStroke{\c@msetgrayStroke}%
    \ifCUR@PS\PSc@mment{setgray Gray level=#1}\us@primarC@lor\fi\fi}
\ctr@ln@m\fillc@md
\ctr@ld@f\def\us@primarC@lor{\immediate\write\fwf@g{\d@fprimarC@lor}%
    \let\fillc@md=\prfillc@md}
\ctr@ld@f\def\prfillc@md{\d@fprimarC@lor\space\c@mfill}
\ctr@ld@f\def\c@lnbV@l#1 #2:{\def\t@xt@{#1}\relax\ifx\t@xt@\empty\c@lnbV@l#2:% Discard leading spaces
    \else\c@lnbV@l@#1 #2:\fi}
\ctr@ld@f\def\c@lnbV@l@#1 #2:{\def\t@xt@{#2}\ifx\t@xt@\empty%
    \def\t@xt@{#1}\ifx\t@xt@\empty\advance\result@tent\m@ne\fi% Discard trailing spaces
    \else\advance\result@tent\@ne\c@lnbV@l@#2:\fi}
\ctr@ld@f\def\Blackcmyk{0 0 0 1}
\ctr@ld@f\def\Whitecmyk{0 0 0 0}
\ctr@ld@f\def\Cyancmyk{1 0 0 0}
\ctr@ld@f\def\Magentacmyk{0 1 0 0}
\ctr@ld@f\def\Yellowcmyk{0 0 1 0}
\ctr@ld@f\def\Redcmyk{0 1 1 0}
\ctr@ld@f\def\Greencmyk{1 0 1 0}
\ctr@ld@f\def\Bluecmyk{1 1 0 0}
\ctr@ld@f\def\Graycmyk{0 0 0 0.50}
\ctr@ld@f\def\BrickRedcmyk{0 0.89 0.94 0.28} % PANTONE 1805
\ctr@ld@f\def\Browncmyk{0 0.81 1 0.60} % PANTONE 1615
\ctr@ld@f\def\ForestGreencmyk{0.91 0 0.88 0.12} % PANTONE 349
\ctr@ld@f\def\Goldenrodcmyk{ 0 0.10 0.84 0} % PANTONE 109
\ctr@ld@f\def\Marooncmyk{0 0.87 0.68 0.32} % PANTONE 201
\ctr@ld@f\def\Orangecmyk{0 0.61 0.87 0} % PANTONE ORANGE-021
\ctr@ld@f\def\Purplecmyk{0.45 0.86 0 0} % PANTONE PURPLE
\ctr@ld@f\def\RoyalBluecmyk{1. 0.50 0 0} % No PANTONE match
\ctr@ld@f\def\Violetcmyk{0.79 0.88 0 0} % PANTONE VIOLET
\ctr@ld@f\def\Blackrgb{0 0 0}
\ctr@ld@f\def\Whitergb{1 1 1}
\ctr@ld@f\def\Redrgb{1 0 0}
\ctr@ld@f\def\Greenrgb{0 1 0}
\ctr@ld@f\def\Bluergb{0 0 1}
\ctr@ld@f\def\Cyanrgb{0 1 1}
\ctr@ld@f\def\Magentargb{1 0 1}
\ctr@ld@f\def\Yellowrgb{1 1 0}
\ctr@ld@f\def\Grayrgb{0.5 0.5 0.5}
\ctr@ld@f\def\Chocolatergb{0.824 0.412 0.118}
\ctr@ld@f\def\DarkGoldenrodrgb{0.722 0.525 0.043}
\ctr@ld@f\def\DarkOrangergb{1 0.549 0}
\ctr@ld@f\def\Firebrickrgb{0.698 0.133 0.133}
\ctr@ld@f\def\ForestGreenrgb{0.133 0.545 0.133}
\ctr@ld@f\def\Goldrgb{1 0.843 0}
\ctr@ld@f\def\HotPinkrgb{1 0.412 0.706}
\ctr@ld@f\def\Maroonrgb{0.690 0.188 0.376}
\ctr@ld@f\def\Pinkrgb{1 0.753 0.796}
\ctr@ld@f\def\RoyalBluergb{0.255 0.412 0.882}
\ctr@ld@f\def\Pssetg@n#1=#2|{\keln@mun#1|%
    \def\n@mref{c}\ifx\l@debut\n@mref\update@ttr\D@FTcolor\Pssetc@lor{#2}\else% color
    \def\n@mref{d}\ifx\l@debut\n@mref\update@ttr\D@FTdash\Q@s@tdash{#2}\else% line dash
    \def\n@mref{f}\ifx\l@debut\n@mref\update@ttr\D@FTfill\Q@s@tfillmode{#2}\else% fillmode
    \def\n@mref{j}\ifx\l@debut\n@mref\update@ttr\D@FTjoin\Q@s@tjoin{#2}\else% line join
    \def\n@mref{u}\ifx\l@debut\n@mref\update@ttr\D@FTupdate\Q@s@tupdate{#2}\else% update
    \def\n@mref{w}\ifx\l@debut\n@mref\update@ttr\D@FTwidth\Q@s@twidth{#2}\else% line width
    \W@rnmesAttr{figset}{#1}\fi\fi\fi\fi\fi\fi}
\ctr@ln@m\CUR@dash
\ctr@ld@f\def\s@uvdash#1{\edef#1{\CUR@dash}}
\ctr@ld@f\def\D@FTdash{1}        % Valeur par defaut (numero sans espace)
\ctr@ld@f\def\Q@s@tdash#1{\ifGR@cri\edef\CUR@dash{#1}\ifCUR@PS\expandafter\Pssetd@sh#1 :\fi\fi}
\ctr@ld@f\def\Pssetd@shI#1{\PSc@mment{setdash Index=#1}\ifcase#1%
    \or\immediate\write\fwf@g{[] 0 \c@msetdash}%         Index=1
    \or\immediate\write\fwf@g{[6 2] 0 \c@msetdash}%      Index=2
    \or\immediate\write\fwf@g{[4 2] 0 \c@msetdash}%      Index=3
    \or\immediate\write\fwf@g{[2 2] 0 \c@msetdash}%      Index=4
    \or\immediate\write\fwf@g{[1 2] 0 \c@msetdash}%      Index=5
    \or\immediate\write\fwf@g{[2 4] 0 \c@msetdash}%      Index=6
    \or\immediate\write\fwf@g{[3 5] 0 \c@msetdash}%      Index=7
    \or\immediate\write\fwf@g{[3 3] 0 \c@msetdash}%      Index=8
    \or\immediate\write\fwf@g{[3 5 1 5] 0 \c@msetdash}%  Index=9
    \or\immediate\write\fwf@g{[6 4 2 4] 0 \c@msetdash}%  Index=10
    \fi}
\ctr@ld@f\def\Pssetd@sh#1 #2:{{\def\t@xt@{#1}\ifx\t@xt@\empty\Pssetd@sh#2:% Discard leading spaces
    \else\def\t@xt@{#2}\ifx\t@xt@\empty\Pssetd@shI{#1}\else\s@mme=\@ne\def\debutp@t{#1}%
    \an@lysd@sh#2:\ifodd\s@mme\edef\debutp@t{\debutp@t\space\finp@t}\def\finp@t{0}\fi%
    \PSc@mment{setdash Pattern=#1 #2}%
    \immediate\write\fwf@g{[\debutp@t] \finp@t\space\c@msetdash}\fi\fi}}
\ctr@ld@f\def\an@lysd@sh#1 #2:{\def\t@xt@{#2}\ifx\t@xt@\empty\def\finp@t{#1}\else%
    \edef\debutp@t{\debutp@t\space#1}\advance\s@mme\@ne\an@lysd@sh#2:\fi}
\ctr@ln@m\CUR@width
\ctr@ld@f\def\s@uvwidth#1{\edef#1{\CUR@width}}
\ctr@ld@f\def\D@FTwidth{0.4}     % Valeur par defaut
\ctr@ld@f\def\Q@s@twidth#1{\ifGR@cri\edef\CUR@width{#1}\ifCUR@PS%
    \PSc@mment{setwidth Width=#1}\immediate\write\fwf@g{#1 \c@msetlinewidth}\fi\fi}
\ctr@ln@m\CUR@join
\ctr@ld@f\def\s@uvjoin#1{\edef#1{\CUR@join}}
\ctr@ld@f\def\D@FTjoin{miter}   % Valeur par defaut
\ctr@ld@f\def\Q@s@tjoin#1{\ifGR@cri\edef\CUR@join{#1}\ifCUR@PS\expandafter\Pssetj@in#1:\fi\fi}
\ctr@ld@f\def\Pssetj@in#1#2:{\PSc@mment{setjoin join=#1}%
    \if#1r\def\t@xt@{1}\else\if#1b\def\t@xt@{2}\else\def\t@xt@{0}\fi\fi%
    \immediate\write\fwf@g{\t@xt@\space\c@msetlinejoin}}
\ctr@ld@f\def\Pss@tspecifSt#1{\trtlis@rg{#1}{\Pss@tspecifSt@}}
\ctr@ld@f\def\Pss@tspecifSt@#1=#2|{\keln@mun#1|%
    \def\n@mref{c}\ifx\l@debut\n@mref\def\n@mref{#2}\ifx\n@mref\D@FTref\else%
     \s@uvcolor{\typ@color}\Pssetc@lor{#2}\fi\else% color
    \def\n@mref{d}\ifx\l@debut\n@mref\def\n@mref{#2}\ifx\n@mref\D@FTref\else%
     \s@uvdash{\typ@dash}\Q@s@tdash{#2}\fi\else% line dash
    \def\n@mref{j}\ifx\l@debut\n@mref\def\n@mref{#2}\ifx\n@mref\D@FTref\else%
     \s@uvjoin{\typ@join}\Q@s@tjoin{#2}\fi\else% line join
    \def\n@mref{w}\ifx\l@debut\n@mref\def\n@mref{#2}\ifx\n@mref\D@FTref\else%
     \s@uvwidth{\typ@width}\Q@s@twidth{#2}\fi\else% line width
    \W@rnmeskwd{Pss@tspecifSt}{#1}\fi\fi\fi\fi}
\ctr@ld@f\def\Psrest@reSt#1{\trtlis@rg{#1}{\Psrest@reSt@}}
\ctr@ld@f\def\Psrest@reSt@#1=#2|{\keln@mun#1|%
    \def\n@mref{c}\ifx\l@debut\n@mref\def\n@mref{#2}\ifx\n@mref\D@FTref\else%
     \Pssetc@lor{\typ@color}\fi\else% color
    \def\n@mref{d}\ifx\l@debut\n@mref\def\n@mref{#2}\ifx\n@mref\D@FTref\else%
     \Q@s@tdash{\typ@dash}\fi\else% line dash
    \def\n@mref{j}\ifx\l@debut\n@mref\def\n@mref{#2}\ifx\n@mref\D@FTref\else%
     \Q@s@tjoin{\typ@join}\fi\else% line join
    \def\n@mref{w}\ifx\l@debut\n@mref\def\n@mref{#2}\ifx\n@mref\D@FTref\else%
     \Q@s@twidth{\typ@width}\fi\else% line width
    \W@rnmeskwd{Psrest@reSt}{#1}\fi\fi\fi\fi}
\ctr@ld@f\def\Pssettrim@sh#1=#2|{\keln@mde#1|%
    \def\n@mref{co}\ifx\l@debut\n@mref\update@ttr\D@FTref\P@settmeshcolor{#2}\else% color
    \def\n@mref{da}\ifx\l@debut\n@mref\update@ttr\D@FTref\P@settmeshdash{#2}\else% dash
    \def\n@mref{wi}\ifx\l@debut\n@mref\update@ttr\D@FTref\P@settmeshwidth{#2}\else% width
    \W@rnmesAttr{figset trimesh}{#1}\fi\fi\fi}
\ctr@ln@m\DDV@tmeshcolor
\ctr@ld@f\def\P@settmeshcolor#1{\edef\DDV@tmeshcolor{#1}}
\ctr@ln@m\DDV@tmeshdash
\ctr@ld@f\def\P@settmeshdash#1{\edef\DDV@tmeshdash{#1}}
\ctr@ln@m\DDV@tmeshwidth
\ctr@ld@f\def\P@settmeshwidth#1{\edef\DDV@tmeshwidth{#1}}
\ctr@ld@f\def\figdrawtrimesh#1[#2,#3,#4]{{\ifCUR@PS\ifGR@cri%
    \PSc@mment{trimesh Type=#1, Triangle=[#2,#3,#4]}%
    \s@uvc@ntr@l\et@tpstrimesh\ifnum#1>\@ne%
    \Pss@tspecifSt{color=\DDV@tmeshcolor,dash=\DDV@tmeshdash,width=\DDV@tmeshwidth}%
    \setc@ntr@l{2}%
    \Pstrimeshp@rt#1[#2,#3,#4]\Pstrimeshp@rt#1[#3,#4,#2]\Pstrimeshp@rt#1[#4,#2,#3]%
    \Psrest@reSt{color=\DDV@tmeshcolor,dash=\DDV@tmeshdash,width=\DDV@tmeshwidth}%
    \fi\figdrawline[#2,#3,#4,#2]%
    \PSc@mment{End trimesh}\resetc@ntr@l\et@tpstrimesh\fi\fi}}
\ctr@ld@f\def\Pstrimeshp@rt#1[#2,#3,#4]{{\l@mbd@un=\@ne\l@mbd@de=#1\loop\ifnum\l@mbd@de>\@ne%
    \advance\l@mbd@de\m@ne\figptbary-1:[#2,#3;\l@mbd@de,\l@mbd@un]%
    \figptbary-2:[#2,#4;\l@mbd@de,\l@mbd@un]\figdrawline[-1,-2]%
    \advance\l@mbd@un\@ne\repeat}}
\initpr@lim\initpss@ttings\initPDF@rDVI% Initialisation preliminaire
\ctr@ln@w{newbox}\figBoxA
\ctr@ln@w{newbox}\figBoxB
\ctr@ln@w{newbox}\figBoxC
\catcode`\@=12

\usepackage{tikz}
\usetikzlibrary{shapes,backgrounds}
\def\firstcircle{(0,0) circle (1.5cm)}
\def\secondcircle{(45:2cm) circle (1.5cm)}
\def\thirdcircle{(0:2cm) circle (1.5cm)}

\usepackage{mathtools}
\mathtoolsset{showonlyrefs=true}

\newcommand\uhr{\upharpoonright}
\newcommand\arr{\rightarrow}
\def\aa{\alpha}
\def\bb{\beta}
\def\frm{\mathfrak{m}}
\def\iff{{\it iff}\,}
\def\sfh{\mathsf{h}}
\def\I{\mathbb{I}}
\def\pp{\prime}
\def\dpp{{\prime\prime}}
\def\anot{\alpha_0}
\def\M{{\mathcal{M}}}
\def\ee{\mathsf{e}}

%spectrum
\def\s{\sigma}
\def\sd{\sigma_{\rm d}}

\def\lm{\lambda}

\def\radius{3cm}
\def\softness{0.4}

\definecolor{softred}{rgb}{1,\softness,\softness}
\definecolor{softgreen}{rgb}{\softness,1,\softness}
\definecolor{softblue}{rgb}{\softness,\softness,1}
\definecolor{softrg}{rgb}{1,1,\softness}
\definecolor{softrb}{rgb}{1,\softness,1}
\definecolor{softgb}{rgb}{\softness,1,1}

\newcounter{counter_a}
\newenvironment{myenum}{\begin{list}{{\rm(\roman{counter_a})}}%
{\usecounter{counter_a}
\setlength{\itemsep}{1.ex}\setlength{\topsep}{1.0ex}
\setlength{\leftmargin}{5ex}\setlength{\labelwidth}{5ex}}}{\end{list}}

\newcommand\ds{\displaystyle}

\newcommand{\red}{\color{darkred2}}

\newcommand{\eg}{{\it e.g.}\,}
\newcommand{\ie}{{\it i.e.}\,}
\newcommand{\cf}{{\it cf.}\,}

%--------------------------------
\numberwithin{figure}{section}
\numberwithin{equation}{section}
\theoremstyle{plain}% default
\newtheorem*{thm*}{Theorem}
\newtheorem{thm}{Theorem}[section]
\newtheorem{hyp}[thm]{Assumption}
%[section]
\newtheorem{lem}[thm]{Lemma}
\newtheorem{prop}[thm]{Proposition}

\newtheorem{dfn}[thm]{Definition}
\theoremstyle{remark}
\newtheorem{remark}[thm]{Remark}
\theoremstyle{plain}

%--------------------------------

%
\newcommand{\dsp}{\displaystyle}
\newcommand{\spec}{{\mathrm{spec}}}
\newcommand{\rmd}{\mathrm{d}}
\newcommand{\dd}{\mathrm{d}}
\newcommand{\rmi}{\mathrm{i}}
\newcommand{\supp}{\operatorname{supp}}
\newcommand{\beu}{\begin{equation*}}
\newcommand{\eeu}{\end{equation*}}
\newcommand{\besu}{\begin{equation*}
\begin{aligned}}
\newcommand{\eesu}{\end{aligned}
\end{equation*}}
\newcommand{\bes}{\begin{equation}
\begin{aligned}}
\newcommand{\ees}{\end{aligned}
\end{equation}}

\newcommand\cA{\mathcal A}
\newcommand\cB{\mathcal B}
\newcommand\cD{\mathcal D}
\newcommand\cF{\mathcal F}
\newcommand\cG{\mathcal G}
\newcommand\cH{\mathcal H}
\newcommand\cK{\mathcal K}
\newcommand\cJ{\mathcal J}
\newcommand\cL{\mathcal L}
\newcommand\cM{\mathcal M}
\newcommand\cN{\mathcal N}
\newcommand\cP{\mathcal P}
\newcommand\cR{\mathcal R}
\newcommand\cS{\mathcal S}
\newcommand\CC{\mathbb C}
\newcommand\NN{\mathbb N}
\newcommand\RR{\mathbb R}
\newcommand\ZZ{\mathbb Z}
\newcommand\frA{\mathfrak A}
\newcommand\frB{\mathfrak B}
\newcommand\frS{\mathfrak S}
\newcommand\fra{\mathfrak a}
\newcommand\frq{\mathfrak q}
\newcommand\frs{\mathfrak s}
\newcommand\frp{\mathfrak p}
\newcommand\frh{\mathfrak h}
\newcommand\frf{\mathfrak f}
\newcommand\fraD{\mathfrak{a}_{\rm D}}
\newcommand\fraN{\mathfrak{a}}
\newcommand\eps{\varepsilon}
\newcommand\dis{\displaystyle}
\newcommand\ov{\overline}
\newcommand\wt{\widetilde}
\newcommand\wh{\widehat}
\newcommand{\defeq}{\mathrel{\mathop:}=}
\newcommand{\eqdef}{=\mathrel{\mathop:}}
\newcommand{\defequ}{\mathrel{\mathop:}\hspace*{-0.72ex}&=}
\newcommand\vectn[2]{\begin{pmatrix} #1 \\ #2 \end{pmatrix}}
\newcommand\vect[2]{\begin{pmatrix} #1 \\[1ex] #2 \end{pmatrix}}
\newcommand\sess{\sigma_{\rm ess}}
\newcommand\sap{\sigma_{\rm app}}
\newcommand\restr[1]{\!\bigm|_{#1}}
\newcommand\RE{\text{\rm Re}}

\newcommand\sign{{\rm sign\,}}

\newcommand\void[1]{}

\def\ov{\overline}
\def\eps{\varepsilon}
\def\sess{\sigma_{\rm ess}}
\def\ran{{\rm ran\,}}
\def\sigp{\sigma_{\rm p}}
\DeclareMathOperator\tr{tr}

%%%%

\def\sA{{\mathfrak A}}   \def\sB{{\mathfrak B}}   \def\sC{{\mathfrak C}}
\def\sD{{\mathfrak D}}   \def\sE{{\mathfrak E}}   \def\sF{{\mathfrak F}}
\def\sG{{\mathfrak G}}   \def\sH{{\mathfrak H}}   \def\sI{{\mathfrak I}}
\def\sJ{{\mathfrak J}}   \def\sK{{\mathfrak K}}   \def\sL{{\mathfrak L}}
\def\sM{{\mathfrak M}}   \def\sN{{\mathfrak N}}   \def\sO{{\mathfrak O}}
\def\sP{{\mathfrak P}}   \def\sQ{{\mathfrak Q}}   \def\sR{{\mathfrak R}}
\def\sS{{\mathfrak S}}   \def\sT{{\mathfrak T}}   \def\sU{{\mathfrak U}}
\def\sV{{\mathfrak V}}   \def\sW{{\mathfrak W}}   \def\sX{{\mathfrak X}}
\def\sY{{\mathfrak Y}}   \def\sZ{{\mathfrak Z}}

\def\frb{{\mathfrak b}}
\def\frl{{\mathfrak l}}
\def\frs{{\mathfrak s}}
\def\frt{{\mathfrak t}}

\def\dA{{\mathbb A}}   \def\dB{{\mathbb B}}   \def\dC{{\mathbb C}}
\def\dD{{\mathbb D}}   \def\dE{{\mathbb E}}   \def\dF{{\mathbb F}}
\def\dG{{\mathbb G}}   \def\dH{{\mathbb H}}   \def\dI{{\mathbb I}}
\def\dJ{{\mathbb J}}   \def\dK{{\mathbb K}}   \def\dL{{\mathbb L}}
\def\dM{{\mathbb M}}   \def\dN{{\mathbb N}}   \def\dO{{\mathbb O}}
\def\dP{{\mathbb P}}   \def\dQ{{\mathbb Q}}   \def\dR{{\mathbb R}}
\def\dS{{\mathbb S}}   \def\dT{{\mathbb T}}   \def\dU{{\mathbb U}}
\def\dV{{\mathbb V}}   \def\dW{{\mathbb W}}   \def\dX{{\mathbb X}}
\def\dY{{\mathbb Y}}   \def\dZ{{\mathbb Z}}

\def\cA{{\mathcal A}}   \def\cB{{\mathcal B}}   \def\cC{{\mathcal C}}
\def\cD{{\mathcal D}}   \def\cE{{\mathcal E}}   \def\cF{{\mathcal F}}
\def\cG{{\mathcal G}}   \def\cH{{\mathcal H}}   \def\cI{{\mathcal I}}
\def\cJ{{\mathcal J}}   \def\cK{{\mathcal K}}   \def\cL{{\mathcal L}}
\def\cM{{\mathcal M}}   \def\cN{{\mathcal N}}   \def\cO{{\mathcal O}}
\def\cP{{\mathcal P}}   \def\cQ{{\mathcal Q}}   \def\cR{{\mathcal R}}
\def\cS{{\mathcal S}}   \def\cT{{\mathcal T}}   \def\cU{{\mathcal U}}
\def\cV{{\mathcal V}}   \def\cW{{\mathcal W}}   \def\cX{{\mathcal X}}
\def\cY{{\mathcal Y}}   \def\cZ{{\mathcal Z}}

\renewcommand{\div}{\mathrm{div}\,}
\newcommand{\grad}{\mathrm{grad}\,}
\newcommand{\Tr}{\mathrm{Tr}\,}

\newcommand{\dom}{\operatorname{dom}}
\newcommand{\mes}{\operatorname{mes}}

\newcommand{\N}{\mathbb{N}}
\newcommand{\R}{{\mathbb{R}}}
\newcommand{\C}{{\mathbb{C}}}
\newcommand{\Z}{{\mathbb{Z}}}
\newcommand{\T}{{\mathcal{T}}}
\newcommand{\rmr}{\mathrm{r}}
\newcommand{\vsp}{\vspace{0.8ex}}

\def\sfG{\mathsf{G}}
\def\sfT{\mathsf{T}}
\def\sfK{\mathsf{K}}
\def\sfL{\mathsf{L}}
\def\sfGamma{\mathsf{\Gamma}}
\def\sfH{\mathsf{H}}
\def\sfE{\mathsf{E}}
\def\sfS{\mathsf{S}}
\def\sfR{\mathsf{R}}
\def\sfD{\mathsf{D}}
\def\bm1{\mathbbm{1}}

\newcommand{\ii}{{\rm i}}
\renewcommand{\P}{{\mathcal{P}}}
\newcommand{\PT}{{\mathcal{P}\mathcal{T}}}
\newcommand{\scB}{\mathscr{B}}\newcommand{\scC}{\mathscr{C}}
\newcommand{\BH}{\mathscr{B}(\mathcal{H})}
\newcommand{\BHk}[1]{\mathscr{B}(\mathcal{H}_#1)}
\newcommand{\CH}{\mathcal{C}(\mathcal{H})}
\newcommand{\CHk}[1]{\mathcal{C}(\mathcal{H}_#1)}
\renewcommand{\ker}{{\rm{ker}\,}}
\newcommand{\ulim}{\mathop{\mathrm{u}\mbox{--}\lim}}
\newcommand{\slim}{\mathop{\mathrm{s}\mbox{--}\lim}}
\newcommand{\wlim}{\mathop{\mathrm{w}\mbox{--}\lim}}
\newcommand{\mC}{\mathcal{C}}
\newcommand{\mO}{\mathcal{O}}
\renewcommand{\Re}{\text{\rm Re}\,}
\renewcommand{\Im}{\text{\rm Im}\,}
\newcommand{\diag}{{\rm diag}}

\newcommand{\dist}{\operatorname{dist}}
\newcommand{\mgp}{\marginpar}
\newcommand{\sgn}{\operatorname{sgn}}
\newcommand{\spn}{\operatorname{span}}
\newcommand{\Hf}{(\cH, (\cdot,\cdot))}

\def\SchW{\sfH^\Omega_{\ii\anot, V_0}}
\def\LapW{\sfH^\Omega_{\ii\anot, V_0}}
\def\WT{\sfH^\I_{\ii\anot}}
\def\WL{\sfH^\R_{V_0}}

\newcommand{\spp}{\sigma_{++}}
\newcommand{\smm}{\sigma_{--}}
\def\snd{\sigma_{00}}
\def\sdef{\sigma_{\rm def}}
\newcommand\OpV{\sfH_{\aa,V}^\Omega}
\newcommand\Op{\sfH_{\aa}^\Omega}
\newcommand\frmV{\frh^\Omega_{\aa, V}}

\renewcommand\thethmL{\Alph{thmL}}

\numberwithin{equation}{section}

\title{Spectra of definite type in waveguide models}

\newcommand{\petr}[1]{{\color{darkblue}#1}}

\def\PT{{\mathcal{P}\mathcal{T}}}

\author{Vladimir Lotoreichik}
\address[Vladimir Lotoreichik]{Nuclear Physics Institute CAS, 25068 \v Re\v z, Czech Republic}
\email{lotoreichik@ujf.cas.cz}

\author{Petr Siegl}
\address[Petr Siegl]{Mathematisches Institut, Universit\"at Bern, Alpeneggstr.~22, 3012 Bern, Switzerland \& On leave from Nuclear Physics Institute CAS, 25068 \v Re\v z, Czech Republic}
\email{petr.siegl@math.unibe.ch}

\subjclass[2010]{47A55, 47B50, 81Q12}
	
\keywords{spectral points of definite and of type $\pi$, 
weakly coupled bound states, perturbations of essential spectrum, pseudospectrum, 
$\PT$-symmetric waveguide}
	
\date{February 28, 2016}

\begin{document}
	
%\graphicspath{{C:/Data/00Synchronized/Figures/2015/PTWGpm/}}
	
\begin{abstract}
We develop an abstract method to identify spectral points of definite type 
in the spectrum of the operator $T_1\otimes I_2 + I_1\otimes T_2$. 
The method is applicable in particular for non-self-adjoint waveguide type operators with symmetries. 
Using the remarkable properties of the spectral points of definite type, 
we obtain new results on realness of weakly coupled bound states 
and of low lying  essential spectrum in the $\PT$-symmetric waveguide.
Moreover, we show that the pseudospectrum has a normal tame behavior 
near the low lying essential spectrum 
and exclude the accumulation of non-real eigenvalues to this part of the essential spectrum. 
The advantage of our approach is particularly visible when the resolvent of 
the unperturbed operator cannot be explicitly expressed and most of 
the mentioned spectral conclusions are extremely hard to prove using direct methods.
\end{abstract}

\maketitle

%********************************************************
\section{Introduction}
%********************************************************	

Spectral points of a closed non-self-adjoint operator $T \in \CH$ in a Hilbert space $\cH$ may have 
special properties if $T$ possesses a symmetry like $J$-self-adjointness, \ie~~there is a bounded symmetric 
\emph{linear involution} $J$ such that
\begin{equation}\label{eq:Jsa}
	T = J T^* J.
\end{equation}
An isolated eigenvalue $\lambda$ of a $J$-self-adjoint 
operator $T$ is of definite type, namely, $\lambda$ is of positive (resp., negative) 
type if for all $0\neq f \in \ker(T-\lambda)$
\begin{equation}\label{EV.type}
	(Jf,f) > 0, \qquad (\text{resp., }  (Jf,f) < 0 ).
\end{equation}
If there is a neutral eigenelement, \ie~$(Jf,f)=0$ for some $0 \neq f \in \ker(T-\lambda)$, then 
$\lambda$ is sometimes called \emph{critical}, see \eg~\cite{Jonas-2003-2, Lancaster-1995-131, Langer-1997-308}. 
Eigenvalues of definite type of a $J$-self-adjoint operator are remarkable since they are \emph{real} 
and the type is \emph{stable} with respect to ``sufficiently small'' $J$-symmetric perturbations. 

The notion of spectral points of definite type is not limited to eigenvalues, 
it can be further generalized for $\lambda$ from the \emph{approximate point spectrum} $\sap(T)$, 
\cf~\cite{Azizov-2005-226,Jonas-2003-2, Lancaster-1995-131, Langer-1997-308, Philipp-2014}, 
requiring that a condition similar to~\eqref{EV.type} is satisfied for all approximate 
eigensequences for $\lambda \in \sap(T)$. Moreover, the realness of such spectral points, 
their stability with respect to perturbations and related resolvent estimates were proved 
in a series of works~\cite{Azizov-2011-83, Azizov-2005-226, Philipp-2014}, 
see also the review~\cite{Trunk-2015}.
% and the references therein.

Here we identify spectral points of definite type in tensor product type operators 
\begin{equation}\label{T.tensor}
	T_1\otimes I_2 + I_1\otimes T_2,
\end{equation}
when certain information on $T_1$ and $T_2$ is given. 
As demonstrated in an application, the remarkable properties of these spectral 
points enable us to draw spectral conclusions on perturbations of multi-dimensional 
non-self-adjoint differential operators with non-empty essential spectrum and 
without the convenient tensor product structure (since the latter is 
destroyed by usual perturbations \eg~in boundary conditions or potential). 

In the simplest setting of $\PT$-symmetric waveguide (\ie~without potentials), 
see \cite{Borisov-2008-62,Borisov-2012-76, Krejcirik-2008-41, Novak-2014}, 
we prove that the lowest part of the essential spectrum of the unperturbed waveguide 
is of positive type; \cf~Theorem \ref{thm:PTWG.pn} and 
Figure~\ref{fig:PTWG.V0}. Intuitively, the essential spectrum consists of layers $[\mu_k,\infty)$ 
of definite type, where $\mu_k$ are the eigenvalues of the transversal operator, however, 
the overlaps of layers spoil the definiteness and only $[\mu_0,\mu_1)$ remains of definite type. 
As a consequence, we receive for ``small'' or ``compact'' $\PT$-symmetric perturbations that: 
\begin{enumerate}[(i)]
	\item \label{wcb} eigenvalues emerging from the lowest threshold are real;
	\item essential spectrum in a neighborhood of $\mu_0$
	  may change, but it remains real;
	\item \label{nra} accumulation of non-real eigenvalues to real essential spectrum
      	in a neighborhood of  $\mu_0$ is excluded;
	\item \emph{pseudospectra} in a neighborhood of $\mu_0$ have a normal tame behavior;
\end{enumerate}
for precise claims see~\eqref{Msets.0} and Theorems~\ref{thm:B_ext},~\ref{thm:C_ext}. 
It appears that only~\ref{wcb} and moreover only in the simplest setting has been known, 
\cf~\cite{Borisov-2008-62,Novak-2014}. 
As our approach does not rely on an explicit knowledge of Green's function for the unperturbed operator, 
we obtain analogous conclusions without additional efforts also if rather general regular potentials 
are included (in which case the picture of definite type spectra may be much richer, 
see Figure~\ref{fig:PTWG.V}). 
Consideration of more
general differential expressions in this context is motivated by applications to curved waveguides;
\cf~\cite{Duclos-1995, Krejcirik-2008} 
for self-adjoint case. 
Finally, notice also that the stability results do not apply 
if the spectrum is not of definite type; see Remark~\ref{rem:wild}.

%******************************************************************************
\subsection{Notations and basic concepts}\label{sec:concepts}
%******************************************************************************
%
We denote by $\cH$ a Hilbert space with the scalar product $(\cdot,\cdot)$ (linear in the first entry) 
and the corresponding norm $\|\cdot\|$. $\BH$ and $\CH$ stand for bounded
(everywhere defined) and densely defined closed linear operators in $\cH$, respectively.  
We denote by $\s(T)$, $\sap(T)$ and $\rho(T)$ the spectrum, approximate point spectrum  
and resolvent set of $T\in \CH$, respectively. The \emph{$\eps$-pseudospectrum} $\s_\eps(T)$ of $T\in\CH$ 
reads
\[
	\s_\eps(T) := 
	\big\{
	\lm\in\rho(T)\colon \|(T - \lm)^{-1}\| > \eps^{-1}
	\big\}\cup\s(T).
\]
A bounded symmetric involution $J$ can be used to define a new, typically indefinite, 
inner product $[\cdot,\cdot]_J := (J \cdot, \cdot)$ in $\cH$ and a Krein space $(\cH,[\cdot,\cdot]_J)$; 
see \eg~\cite[\S I.3]{Azizov-1989}. 
A $J$-self-adjoint operator $T$, \ie~$T$ satisfying~\eqref{eq:Jsa}, is in fact a 
self-adjoint operator in $(\cH,[\cdot,\cdot]_J)$, nevertheless, 
we deliberately avoid the Krein space terminology here.

Finally, we recall the concept of spectra of definite type. 
\begin{dfn}\label{def:sdef}
	For $T\in\CH$ a point $\lambda\in\sap(T)$ is a spectral point of \emph{positive (negative)} type 
	(with respect to $J$) if every approximate eigensequence $\{f_n\}_n$ for $T$ 
	corresponding to $\lambda$ satisfies
	\begin{equation*}
	\liminf\limits_{n\rightarrow\infty} (Jf_n,f_n) > 0, 
	\qquad (\text{resp.},~ \limsup\limits_{n\rightarrow\infty} (Jf_n,f_n) < 0).
	\end{equation*}
	The set of all spectral points of $T$ of positive (negative) type is denoted by 
	$\spp(T)$ (resp., $\smm(T)$). 
	The union of spectral points of positive and negative type are 
	spectral points of definite type; the complement of the latter set 
	in the approximate point spectrum (\ie~spectral points of not definite type) 
	is denoted by  
	\begin{equation*}
	\snd(T) := \sap(T) \setminus (\spp(T) \cup \smm(T)).
	\end{equation*}
\end{dfn}

%*********************************************************
\section{Spectra of definite type and tensor products}
\label{sec:main}
%*********************************************************
We show that, based on certain information on $T_1$ and $T_2$, 
some parts in the spectrum of~\eqref{T.tensor} 
are or are not of definite type. We denote by $\otimes$ (resp., by $\odot$) tensor 
(resp., pre-tensor) products of Hilbert spaces and operators; 
see \eg~\cite[\S 2.4, 4.5, 5.7]{BEH} or \cite[\S III.7.5]{Schmuedgen-2012-265} for details.
Our basic assumption reads as follows.
\begin{hyp}\label{hyp.0}
Let $T_k \in \cC(\cH_k)$, $k=1,2$, be m-sectorial in $(\cH_k, (\cdot,\cdot)_k)$ and let
\begin{equation}\label{S.def}
S := T_1 \odot I_2 + I_1 \odot T_2, \qquad 	\sfS := \ov{S}.
\end{equation}
Moreover, let $J_k$ be bounded symmetric involutions in $\cH_k$ and let $J := J_1\otimes J_2$.
\end{hyp} 
Notice that $S$ is indeed closable and the closure $\sfS$ is m-sectorial in $\cH_1 \otimes \cH_2$ 
by \cite[\S XIII.9, Cor.~2]{Reed4}. 
In the sequel, the spectral points of positive and negative type are defined w.r.t.~$J_k$ 
for operators acting in $\cH_k$, $k=1,2$, and w.r.t.~$J := J_1\otimes J_2$ 
for operators acting in $\cH_1 \otimes \cH_2$.

The ``negative'' results, \ie~identification of spectral points of $\sfS$ that cannot be of definite type, 
are derived directly from the definition and the tensor-like structure of $\sfS$. 
To express claims in a more compact form, we define 
subsets of $\C$
\begin{equation}\label{Mi.def}
\begin{aligned}
	\M_+ & := 
	\big(\spp(T_1) + \spp(T_2)\big)
	\cup
	\big(\smm(T_1) + \smm(T_2)\big),\\
	\M_- & := 
	\big(\spp(T_1) + \smm(T_2)\big)
	\cup
	\big(\smm(T_1) + \spp(T_2)\big),\\
	\M_0 & := \big(\sap(T_1) + \sap(T_2)\big) \setminus
	(\M_+\cup\M_-).
\end{aligned}
\end{equation}
%
%Stopped!!!! 26.2.2016
%
\begin{prop}\label{prop:ndef}
	Let Assumption~\ref{hyp.0} hold and $\M_{\pm}$, $\M_{0}$ be as in~\eqref{Mi.def}. 
	Then
	\begin{equation*}
		\M_0 \cup (\M_+ \cap \M_-)\subset \snd(\sfS), 
		\quad
		\M_+ \cap \smm(\sfS) = \varnothing,
		\quad
		\M_- \cap \spp(\sfS) = \varnothing.
	\end{equation*}
\end{prop}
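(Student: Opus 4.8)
The plan is to work directly from Definition~\ref{def:sdef} using approximate eigensequences for $\sfS$ built from those of $T_1$ and $T_2$. The key structural fact I would exploit is that if $\lambda_k \in \sap(T_k)$ with approximate eigensequences $\{f_n^{(k)}\}_n$ (so $\|(T_k - \lambda_k)f_n^{(k)}\|_k \to 0$ with $\|f_n^{(k)}\|_k = 1$), then $\{f_n^{(1)} \otimes f_n^{(2)}\}_n$ is an approximate eigensequence for $\sfS$ corresponding to $\lambda_1 + \lambda_2$. Indeed, writing $g_n := f_n^{(1)} \otimes f_n^{(2)}$, one has $\|g_n\| = 1$ and
\begin{equation*}
(\sfS - (\lambda_1+\lambda_2))g_n = \big((T_1-\lambda_1)f_n^{(1)}\big)\otimes f_n^{(2)} + f_n^{(1)} \otimes \big((T_2-\lambda_2)f_n^{(2)}\big) \to 0,
\end{equation*}
so $\lambda_1 + \lambda_2 \in \sap(\sfS)$. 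Crucially, the factorized structure of $J = J_1 \otimes J_2$ gives $(Jg_n,g_n) = (J_1 f_n^{(1)}, f_n^{(1)})_1 \cdot (J_2 f_n^{(2)}, f_n^{(2)})_2$, which is what couples the sign data of the factors to the sign data of the sum.

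Next I would translate the three claims into statements about this product of two real sequences. For the third claim, suppose $\lambda = \lambda_1 + \lambda_2 \in \M_-$, say via $\lambda_1 \in \spp(T_1)$, $\lambda_2 \in \smm(T_2)$. Then for the constructed sequence, $(J_1 f_n^{(1)}, f_n^{(1)})_1$ is eventually bounded below by a positive constant and $(J_2 f_n^{(2)}, f_n^{(2)})_2$ eventually bounded above by a negative constant, so $\limsup (Jg_n, g_n) \le 0$ for this particular sequence. This forbids $\lambda$ from being of positive type (which would require $\liminf > 0$ over \emph{every} approximate eigensequence), giving $\M_- \cap \spp(\sfS) = \varnothing$; the claim $\M_+ \cap \smm(\sfS) = \varnothing$ is entirely symmetric. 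For the $\snd$ claim, if $\lambda \in \M_+ \cap \M_-$ then $\lambda$ admits two decompositions producing sequences whose $(Jg_n,g_n)$ are eventually positive in one case and eventually negative in the other, so $\lambda$ can be neither of positive nor of negative type, i.e. $\lambda \in \snd(\sfS)$. For $\lambda \in \M_0$, I would split into the sub-cases of the decomposition $\sap(T_1)+\sap(T_2)$ according to whether each summand lies in $\spp$, $\smm$, or $\snd$ of its factor; in every case that is excluded from $\M_+ \cup \M_-$ one can manufacture an approximate eigensequence whose indefinite-form values fail to stay of a single definite sign (for instance when one factor contributes from $\snd$, yielding a sequence with $(J_k f_n^{(k)}, f_n^{(k)})_k$ not bounded away from zero of a fixed sign), so again $\lambda \in \snd(\sfS)$.

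The main obstacle I anticipate is the handling of the \emph{pre-tensor} versus tensor structure and the density/closure issues around $\sfS = \ov{S}$. The operators $T_k$ are only m-sectorial and in general unbounded, so the elements $f_n^{(k)}$ must lie in the appropriate domains, the product $f_n^{(1)}\otimes f_n^{(2)}$ must lie in the domain of $S$ (hence of $\sfS$), and the displayed factorization of $(\sfS-\lambda)g_n$ must be justified on $\dom(S) = \dom(T_1)\odot\dom(T_2)$ before passing to the closure. One must also confirm that approximate eigensequences for $T_k$ can be taken inside $\dom(T_k)$, which is standard for $\lambda_k \in \sap(T_k)$. A second, more delicate point is the $\M_0$ case: the decomposition of an element of $\sap(\sfS)$ as a sum from the factors is not unique, so I must be careful to argue only that \emph{some} decomposition lies in each relevant set and that the constructed sequence indeed realizes the claimed sign behavior; in particular, when a factor sits in $\snd(T_k)$ I should invoke the definition to extract an approximate eigensequence whose form values do \emph{not} have a definite lower/upper bound of fixed sign, and verify that this defect survives multiplication by the bounded (possibly sign-varying) contribution of the other factor.
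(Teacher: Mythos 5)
Your proposal is correct and follows essentially the same route as the paper: tensor products of approximate eigensequences for $T_1$ and $T_2$ give approximate eigensequences for $\sfS$, the form $(J\cdot,\cdot)$ factorizes as the product $(J_1\cdot,\cdot)_1(J_2\cdot,\cdot)_2$, and the three claims reduce to sign bookkeeping on these products (with two sequences of opposite sign behaviour needed to land in $\snd(\sfS)$, exactly as in the paper's treatment of the representative case $\lambda_1\in\snd(T_1)$, $\lambda_2\in\spp(T_2)$). The domain and closure issues you flag are real but routine, and the paper likewise dispatches them with ``it is easy to verify.''
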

\begin{proof}
	We prove only the first inclusion and only for $\lambda = \lm_1 + \lm_2 \in \M_0 \cup (\M_+ \cap \M_-)$ 
	with $\lm_1 \in \snd(T_1)$ and $\lambda_2 \in \spp(T_2)$; 
	the rest is analogous. 
	Since $\lambda_1 \in \snd (T_1)$, there are two approximate eigensequences 
	$\{f_n\}_n, \{g_n\}_n \subset \dom T_1$  for $T_1$ corresponding to $\lambda_1$  
	such that $\lim_{n \to \infty} (J_1 f_n,f_n)_1 \leq 0$ and $\lim_{n \to \infty} (J_1 g_n,g_n)_1 \geq 0$.
	Similarly for $\lambda_2 \in \spp(T_2)$, there is an approximate eigensequence 
	$\{h_n\}_n \subset \dom T_2$ for $T_2$ corresponding to $\lambda_2$ such that
	$\lim_{n \to \infty} (J_2 h_n,h_n)_2 > 0$.
	It is easy to verify that $\{f_n \otimes h_n \}_n \subset \dom \sfS$ and  $\{g_n \otimes h_n \}_n \subset \dom \sfS$ are approximate eigensequences for $\sfS$ corresponding to 
	$\lambda = \lambda_1 + \lambda_2$. Thus, we get $\lm \in \snd(\sfS)$ since
	\begin{align}
		\lim_{n \to \infty} \big(J (f_n \otimes h_n),f_n \otimes h_n\big) &= 
		\lim_{n \to \infty} (J_1 f_n, f_n)_1 (J_2 h_n, h_n)_2 \leq 0,\\
		\lim_{n \to \infty} \big(J (g_n \otimes h_n),g_n \otimes h_n\big) &= 
		\lim_{n \to \infty} (J_1 g_n, g_n)_1 (J_2 h_n, h_n)_2 \geq 0. \qedhere
	\end{align}
\end{proof}
The second assumption is essential for passing to ``positive'' results.
\begin{hyp}\label{hyp}
	Let Assumption~\ref{hyp.0} hold and let there exist projections $P^\pm_k$ in $\cH_k$ and constants $\varkappa_k^\pm > 0$,  $k=1,2$,  such that:
	\begin{myenum}
		\item $P_k^\mu P_k^\nu = \delta_{\mu\nu}P_k^\mu$ for $\mu,\nu \in \{+,-\}$, $k=1,2$;
		\item $T_k P_k^\mu \supset P_k^\mu T_k$ for $\mu \in \{+,-\}$, $k=1,2$; 
		\item \label{hyp:dec.kap} for all $f_k \in \cH_k^\pm := P^\pm_k \cH_k$, we have
			\begin{equation}\label{hyp:Hk.pn}
				\pm (J_k f_k,f_k)_{k} \ge  \varkappa_k^\pm \|f_k\|^2_{k},  
				\quad k=1,2.				
			\end{equation}
	\end{myenum}
\end{hyp}
Several remarks on Assumption~\ref{hyp} are given below. 
In what follows we always take $k=1,2$ without repeating it everywhere.
The concept of a uniformly definite subspace (of a Krein space) implicitly employed
in Assumption~\ref{hyp}~(iii) is rather standard; see \eg~\cite[\S I.5]{Azizov-1989}.
Note that $\cH_{k}^\pm$ are reducing subspaces for the operators $T_k$; \cf \cite[\S III.5.6]{Kato-1966}. 
Finding projections $P_k^\pm$ is simple for isolated eigenvalues of $T_k$; 
the Riesz projections corresponding to a finite number of isolated eigenvalues of finite multiplicity 
and of the same definite type satisfy the assumption; this is used in our application, 
see Theorem~\ref{thm:PTWG.pn} and its proof. 
In a more general case, $P_k^{\pm}$ can be obtained from~\cite[Thm.~2.7]{Azizov-2012-285} 
or~\cite[Thm.~5.2]{Philipp-2014}, where the existence of a local spectral function is proved. 
In detail, if $T \in \CH$ is $J$-self-adjoint and 
$[a,b] \cap \sap(T) \subset \sigma_{\mu \mu}(T)$, $\mu \in \{+,-\}$, 
then for all bounded subintervals $\Delta$ of $(a,b)$ with $\overline \Delta \subset (a,b)$, 
there is a $J$-self-adjoint projection $E(\Delta)$ such that 
$TE(\Delta) \supset E(\Delta)T$ and $\pm (J f,f) \geq \varkappa^{\pm} \|f\|^2$ 
for all $f \in E(\Delta) \cH$ with some $\varkappa^{\mu} > 0$. 

The idea of Assumption~\ref{hyp} is to extract ``$++$'' and ``$--$'' parts of $T_k$. 
Nevertheless, we emphasize that we do not need to have spectral projections on the \emph{entire} 
$\spp(T_k)$ or $\smm(T_k)$ (which allows for avoiding to require 
\eg~a Riesz basis property of eigenvectors for $T_k$ with purely discrete spectrum).

With $P_k^\mu$ from Assumption~\ref{hyp}, define the projections and subspaces by
\begin{equation}\label{Prmr}
	P_k^\rmr := I_\cH - P_k^+ - P_k^-\quad \text{and} \quad \cH_k^\rmr := P_k^\rmr\cH, \quad k=1,2.
\end{equation}
One can verify that the two families of projections $\{P_k^\mu, P_k^-, P_k^\rmr\}$ satisfy 
\begin{equation}\label{Pk.com.rel}
	P_k^\mu P_k^\nu = \delta_{\mu\nu}P_k^\mu, \quad \mu, \nu \in \cI:=\{+,-,\rmr \}, \quad k=1,2,
\end{equation}
and we get the direct sum decompositions of $\cH_k$;  
$\cH_k = \cH_k^+\dotplus \cH_k^- \dotplus \cH_k^\rmr$, \cf~\cite[\S III.5.6]{Kato-1966}.
By Assumption~\ref{hyp}\,(i)-(ii) the operators
\begin{equation}\label{Tmu}
	T_k^\mu u  := T_k u  \quad \dom T_k^\mu := \dom T_k\cap \cH_k^\mu,\qquad \mu \in \cI, \quad k=1,2,
\end{equation}
are closed and following~\cite[\S III.5.6]{Kato-1966} we end up with the direct sum decomposition 
\begin{equation}\label{Tdecomp}
	T_k = T_k^+\dotplus T_k^- \dotplus T_k^{\rmr}, \quad k=1,2.
\end{equation}

To explain next steps we first prove a simple technical lemma.
\begin{lem}\label{lem:Theta}
	Let $\{P_k\}_{k=1}^n$, $n \in \N$, be a family of projections in $\cH$ such that 
	$P_i P_j = \delta_{ij} P_i$ for $i,j \in\{1,2,\dots,n\}$
	and $\sum_{k=1}^n P_k = I_{\cH}$. Define the operator 
	\begin{equation}\label{eq:Theta}
		\Theta := \sum_{k=1}^n P_k^*P_k.
	\end{equation}
	Then $\Theta \in \BH$, it is uniformly positive and satisfies the commutation relation
	\begin{equation}\label{P.Theta}
		\Theta P_j = P_j^* \Theta, \qquad j = 1,2,\dots,n.
	\end{equation}
\end{lem}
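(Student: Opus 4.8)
The plan is to dispatch the three assertions in order, each reducing to a short direct computation; no deep machinery is needed beyond the two algebraic relations satisfied by the $P_k$. Boundedness is immediate: each projection $P_k$ lies in $\BH$, hence so does $P_k^*P_k$, and $\Theta$ is a finite sum of bounded operators. That $\Theta$ is symmetric and nonnegative is equally transparent, since for every $f\in\cH$
\[
	(\Theta f,f)=\sum_{k=1}^n (P_k^*P_k f,f)=\sum_{k=1}^n \|P_k f\|^2 \ge 0.
\]

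For the uniform positivity I would exploit the completeness relation $\sum_{k=1}^n P_k=I_\cH$, which is the only place it enters. Writing $f=\sum_{k=1}^n P_k f$ and applying the Cauchy--Schwarz inequality in $\C^n$ to the vector $(\|P_1 f\|,\dots,\|P_n f\|)$ gives
\[
	\|f\|^2 \le \Big(\sum_{k=1}^n \|P_k f\|\Big)^2 \le n\sum_{k=1}^n \|P_k f\|^2 = n\,(\Theta f,f).
\]
Hence $(\Theta f,f)\ge n^{-1}\|f\|^2$ for all $f$, which is exactly uniform positivity with explicit lower bound $n^{-1}$.

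The commutation relation~\eqref{P.Theta} follows by evaluating both sides and observing they coincide. Using $P_iP_j=\delta_{ij}P_i$, the left-hand side collapses to a single term,
\[
	\Theta P_j=\sum_{k=1}^n P_k^*(P_kP_j)=\sum_{k=1}^n \delta_{kj}\,P_k^*P_k=P_j^*P_j,
\]
while taking adjoints of the same orthogonality relation, $(P_kP_j)^*=\delta_{kj}P_k^*$, yields for the right-hand side
\[
	P_j^*\Theta=\sum_{k=1}^n (P_kP_j)^*P_k=\sum_{k=1}^n \delta_{kj}\,P_k^*P_k=P_j^*P_j.
\]
Both expressions equal $P_j^*P_j$, so the commutation relation holds.

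None of the three steps poses a genuine obstacle. The only point meriting a moment's care is the uniform lower bound: nonnegativity alone would not deliver it, since the $P_k$ need not be \emph{orthogonal} projections and some $\|P_k f\|$ could vanish; it is the partition of unity $\sum_k P_k=I_\cH$, fed through Cauchy--Schwarz, that forces strict positivity. The resulting constant $n^{-1}$ is all that is required in the subsequent use of $\Theta$ as a metric operator.
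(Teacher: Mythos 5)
Your proof is correct and takes essentially the same route as the paper's: boundedness is immediate, uniform positivity follows from the partition of unity $\sum_k P_k = I_\cH$ combined with the Cauchy--Schwarz inequality (yielding the same constant $1/n$), and the commutation relation is the same direct computation using $P_iP_j=\delta_{ij}P_i$ and its adjoint. Your explicit remark that the $P_k$ need not be orthogonal projections, so that the completeness relation is genuinely needed for the lower bound, is a correct reading of where the hypothesis enters.
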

\begin{proof}
	Clearly, $\Theta \in \BH$. Since $f = \sum_{k=1}^n P_k f$, we get
	\begin{equation}\label{Th.ineq}
		(\Theta f,f) = \sum_{k=1}^n \|P_kf\|^2 \ge \frac{1}{n} \|f\|^2,
	\end{equation}
	where we use Cauchy-Schwarz inequality in the second step, hence, $\Theta$ is uniformly positive. 
	Moreover, using $P_i P_j = \delta_{ij} P_i$ and $P_i^*P_j^* = \delta_{ij} P_i^*$, 
	we obtain~\eqref{P.Theta}.
\end{proof}
Using Lemma~\ref{lem:Theta}, we construct uniformly positive bounded operators 
\begin{equation}
	\Theta_k := \sum_{\mu \in\cI}(P_k^\mu)^*P_k^\mu, \quad k = 1,2.
\end{equation}
Hence, the scalar products
%
%\begin{equation*}\label{eq:scalarTheta}
$(\cdot,\cdot)_{\Theta_k} := (\Theta_k \cdot,\cdot)_k$
%\end{equation*}
%
are well-defined on $\cH_k$ and topologically equivalent to $(\cdot,\cdot)_k$. 
Due to commutation relations~\eqref{Pk.com.rel}, the subspaces $\cH_k^\mu$, $\mu \in \cI$,
are mutually orthogonal w.r.t.~$(\cdot,\cdot)_{\Theta_k}$.
Thus, the decompositions~\eqref{Tdecomp} 
are orthogonal in $(\cdot,\cdot)_{\Theta_k}$, \ie~
$T = T^+ \oplus_{\Theta_k} T^- \oplus_{\Theta_k} T^\rmr$, and 
$\s(T_k) = \s(T_k^+)\cup\s(T_k^-)\cup\s(T_k^\rmr)$.

Consider now $\cH := \cH_1 \otimes \cH_2$ endowed with the natural scalar product
$(\cdot,\cdot)_\cH$ induced from the scalar products $(\cdot,\cdot)_k$. 
By Lemma~\ref{lem:Theta} and~\cite[Thm.~5.7.4]{BEH}
\begin{equation}\label{Phi}
	\Phi := \Theta_1\otimes\Theta_2
\end{equation}
is bounded and uniformly positive in $\cH$. 
Thus, the scalar product 
\begin{equation}\label{scalar_Phi}
	(\cdot,\cdot)_\Phi := (\Phi\cdot,\cdot)
\end{equation}
is well-defined on $\cH$ and topologically equivalent to the initial product $(\cdot,\cdot)$. 

Define the subspaces $\cH^{\mu\nu} := \cH^\mu_1 \otimes \cH^\nu_2$, $\mu,\nu\in \cI$ 
and observe that $\cH = \oplus^{\mu,\nu\in \cI}_\Phi\cH^{\mu\nu}$.
We introduce operators (the closures are m-sectorial in respective Hilbert spaces)
\begin{equation}\label{ClosedOps}
	S^{\mu \nu}  := T_1^\mu \odot I_2^\nu + I_1^\mu \odot T_2^\nu, \quad	
	\sfS^{\mu\nu} := \ov{S^{\mu\nu}},\qquad \mu,\nu\in\cI,
\end{equation}
and also some specific orthogonal sums
\begin{equation}
\label{Smu}
\begin{split}
	\sfS^\rmr & := \sfS^{\rmr +} \oplus_\Phi \sfS^{\rmr -} \oplus_\Phi \sfS^{\rmr\rmr}
				\oplus_\Phi \sfS^{+\rmr} \oplus_\Phi \sfS^{-\rmr},\\
	\sfS^+  & := \sfS^{++} \oplus_\Phi \sfS^{--},\\
	\sfS^-  & := \sfS^{+-} \oplus_\Phi \sfS^{-+}.
\end{split}	
\end{equation}
Next, using the tensor product rules (see \eg~\cite[Prop.~4.5.6]{BEH}) 
we obtain that $S$ in~\eqref{S.def} can be decomposed as
$S = \oplus_\Phi^{\mu,\nu\in\cI} S^{\mu\nu}$ and taking the closures we get
\begin{equation}\label{Cl.S.dec.1}
	  \sfS = \ov{S} =  \oplus_\Phi^{\mu,\nu\in\cI} \sfS^{\mu\nu}.
\end{equation}
From Assumption~\ref{hyp} and with $J=J_1\otimes J_2$, we verify that for $\mu\in\{+,-\}$ 
\begin{equation}
\label{eq:J.1}
	(Jf,f)  \ge \varkappa_1^\mu\varkappa_2^\mu \|f\|^2,
	\qquad \text{for all}~f\in\cH^{\mu\mu},\\
\end{equation}
and that for $\mu, \nu\in\{+,-\}$, $\mu\ne \nu$,
\begin{equation}
\label{eq:J.2}
	(Jf,f) \le -\varkappa_1^\mu\varkappa_2^\nu \|f\|^2,
	\qquad \text{for all}~f\in\cH^{\mu\nu}.
\end{equation}

The following theorem is the key result of this section.
We get a slightly sharper result in~item (iii) 
since also the mutual collocations of subspaces $\cH_k^\pm$ are used.
%The constants $\varkappa^{+-}_k$ introduced in~\eqref{cond_item_iii} are
%related to the concept of \emph{the angle between two subspaces} of a Hilbert space \cite{D95}.

%-----------------------------------------------------------------------------
\begin{thm}\label{thm:abstract}
	Let Assumption~\ref{hyp} hold and let the m-sectorial operators $\sfS^{\mu \nu}$, $\mu,\nu\in\cI$, 
	$\sfS^\mu$, $\mu\in\cI$ and $\sfS$ be as in~\eqref{ClosedOps},~\eqref{Smu} 
	and~\eqref{Cl.S.dec.1}, respectively.
	Let $\spp(\sfS)$ and $\smm(\sfS)$
	be defined w.r.t.~$J=J_1 \otimes  J_2$. 
	Then for $\mu,\nu\in\{+,-\}$, $\mu\ne \nu$, the following statements hold.
	\begin{enumerate}[{\upshape (i)}]
		\item $\sap(\sfS^{\mu\mu}) \setminus 
		     	\big(\sigma(\sfS^-)\cup\sigma(\sfS^\rmr)\cup\sigma(\sfS^{\nu\nu})\big) 
		   		 \subset \sigma_{++}(\sfS)$.

		\item $\sap(\sfS^{\mu\nu})\setminus 
				\big(\sigma(\sfS^+) \cup \sigma(\sfS^\rmr)\cup \sigma(\sfS^{\nu\mu}) \big)
				\subset \sigma_{--}(\sfS)$.

		\item If, in addition, there exist constants 
			$\varkappa^{+-}_k \ge 0$, for $k=1,2$,  such that 
			\begin{equation}
			\label{cond_item_iii}			
				|(J_k f_k^+,f_k^-)_k| \le \varkappa_k^{+-} \|f_k^+\|_k \|f_k^-\|_k,
						\qquad \text{for all}~f_k^\pm\in \cH_k^\pm,
			\end{equation}
			and, moreover, that 
			$(\varkappa_1^{+-} \varkappa_2^{+-})^2 < 
			\varkappa_1^+ \varkappa_2^+\varkappa_1^- \varkappa_2^-$.
			Then 
			\begin{equation}
			\label{sp.incl.2}
				\sap(\sfS^\mu)
				\setminus 
				\big(\sigma(\sfS^\nu) \cup \sigma(\sfS^\rmr) \big) 
				\subset \sigma_{\mu\mu}(\sfS).
			\end{equation}
	\end{enumerate}
\end{thm}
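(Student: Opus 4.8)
The plan is to run the whole argument through the $\Phi$-orthogonal block decomposition $\sfS=\oplus_\Phi^{\mu,\nu\in\cI}\sfS^{\mu\nu}$ from~\eqref{Cl.S.dec.1}, combined with the one-sided definiteness estimates~\eqref{eq:J.1}--\eqref{eq:J.2}. The key structural fact I would use is that the $\Phi$-orthogonal projections onto the blocks $\cH^{\mu\nu}$ reduce $\sfS$, so that any normalized approximate eigensequence $\{F_n\}_n$ for $\sfS$ at $\lambda$ splits as $F_n=\sum_{\mu,\nu}F_n^{\mu\nu}$ with each component satisfying $(\sfS^{\mu\nu}-\lambda)F_n^{\mu\nu}\to 0$. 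Consequently, whenever $\lambda\in\rho(\sfS^{\mu\nu})$ the boundedness of $(\sfS^{\mu\nu}-\lambda)^{-1}$ forces $F_n^{\mu\nu}\to 0$; throughout I would switch freely between $\|\cdot\|$ and the equivalent norm $\|\cdot\|_\Phi$ from~\eqref{scalar_Phi}. The first step in each case is therefore to read off, from the sets subtracted on the left-hand side, exactly which blocks may still carry $\lambda$ in their spectrum.

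For item (i), subtracting $\sigma(\sfS^-)\cup\sigma(\sfS^\rmr)\cup\sigma(\sfS^{\nu\nu})$ rules out eight of the nine blocks, leaving $\sfS^{\mu\mu}$ as the only one whose resolvent set may miss $\lambda$. Hence $F_n=F_n^{\mu\mu}+o(1)$ in norm, so $\|F_n^{\mu\mu}\|\to 1$ and, since $J$ is bounded, $(JF_n,F_n)=(JF_n^{\mu\mu},F_n^{\mu\mu})+o(1)$; the estimate~\eqref{eq:J.1} then gives $\liminf_n(JF_n,F_n)\ge\varkappa_1^\mu\varkappa_2^\mu>0$, i.e.\ $\lambda\in\sigma_{++}(\sfS)$. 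Item (ii) is identical, with $\sfS^{\mu\nu}$ ($\mu\ne\nu$) the sole surviving block and~\eqref{eq:J.2} yielding $\limsup_n(JF_n,F_n)\le-\varkappa_1^\mu\varkappa_2^\nu<0$. In both items one first notes $\lambda\in\sap(\sfS)$ because the relevant block is a summand of $\sfS$.

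The substantive case is item (iii), where I expect the real work to lie. Taking $\mu=+$ (the case $\mu=-$ is symmetric), subtracting $\sigma(\sfS^\nu)\cup\sigma(\sfS^\rmr)$ now leaves \emph{two} positive-type blocks, $\sfS^{++}$ and $\sfS^{--}$, so only $F_n=F_n^{++}+F_n^{--}+o(1)$ can be asserted. The obstacle is that $J=J_1\otimes J_2$ need not keep $\cH^{++}$ and $\cH^{--}$ apart, so $(JF_n,F_n)$ picks up a cross term $2\Re(JF_n^{++},F_n^{--})$ that must be controlled. To do so I would represent the forms $(J_k\cdot,\cdot)_k$ on $\cH_k^+\times\cH_k^-$ by bounded operators $B_k\colon\cH_k^+\to\cH_k^-$ with $\|B_k\|\le\varkappa_k^{+-}$ via Riesz representation and~\eqref{cond_item_iii}; then $(JF^{++},F^{--})=\big((B_1\otimes B_2)F^{++},F^{--}\big)$, and the tensor norm bound $\|B_1\otimes B_2\|\le\varkappa_1^{+-}\varkappa_2^{+-}$ yields $|(JF^{++},F^{--})|\le\varkappa_1^{+-}\varkappa_2^{+-}\|F^{++}\|\,\|F^{--}\|$.

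Combining this with~\eqref{eq:J.1} for the two diagonal blocks and writing $a=\|F_n^{++}\|$, $b=\|F_n^{--}\|$, I obtain
\[
	(JF_n,F_n)\ge \varkappa_1^+\varkappa_2^+a^2-2\varkappa_1^{+-}\varkappa_2^{+-}ab+\varkappa_1^-\varkappa_2^-b^2+o(1).
\]
The quadratic form on the right is positive definite precisely under the hypothesis $(\varkappa_1^{+-}\varkappa_2^{+-})^2<\varkappa_1^+\varkappa_2^+\varkappa_1^-\varkappa_2^-$, hence dominates $\delta(a^2+b^2)$ for some $\delta>0$. Finally, the $\Phi$-orthogonality of $\cH^{++}$ and $\cH^{--}$ together with the norm equivalence keeps $a^2+b^2$ bounded away from zero, since it controls $\|F_n^{++}+F_n^{--}\|_\Phi^2=\|F_n\|_\Phi^2+o(1)$. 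Therefore $\liminf_n(JF_n,F_n)>0$ and $\lambda\in\sigma_{++}(\sfS)$. The only delicate points are the Riesz/tensor bound on the cross term and keeping the surviving mass bounded below; the remainder is bookkeeping with the block decomposition.
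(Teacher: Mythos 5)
Your proposal is correct and follows essentially the same route as the paper: the $\Phi$-orthogonal block decomposition of $\sfS$, the resolvent bound on the complementary block forcing its component of the approximate eigensequence to vanish, and in item~(iii) the positive-definite quadratic form in $\|F_n^{++}\|,\|F_n^{--}\|$ under the discriminant condition. The only differences are cosmetic — you derive the cross-term bound $|(JF^{++},F^{--})|\le\varkappa_1^{+-}\varkappa_2^{+-}\|F^{++}\|\,\|F^{--}\|$ explicitly via Riesz representation and the tensor operator norm (the paper merely asserts it), and you bound $\|F_n^{++}\|^2+\|F_n^{--}\|^2$ from below via $\Phi$-orthogonality where the paper uses the elementary inequality $a^2+b^2\ge\tfrac{1}{2}(a+b)^2$.
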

%-----------------------------------------------------------------------------

%
\begin{proof}
	Let $\Phi$ and $(\cdot,\cdot)_\Phi$ be as in~\eqref{Phi} and~\eqref{scalar_Phi}, 		
	respectively. 

%	\vsp
	
	\noindent (i)
	We show the first inclusion ($\mu = +$, $\nu = -$) only, 
	the rest is fully analogous. 
	The decomposition~\eqref{Cl.S.dec.1} can be restructured as
	\begin{equation}\label{Cl.S.dec.2}
		\sfS =  \sfS^{++} \oplus_\Phi \sfS^0,
	\end{equation}
	where $\sfS^0:= \sfS^{--} \oplus_\Phi \sfS^- \oplus_\Phi \sfS^\rmr$. 
	Let $\lambda \in \sap(\sfS^{++}) \setminus \sigma(\sfS^0)\subset \sap(\sfS)$. 
	Pick an arbitrary approximate eigensequence 
	$\{f_n\}_n$ for the operator $\sfS$ corresponding to $\lambda$. 
	Each element $f_n$ can be decomposed uniquely as
	$f_n = f_n^{++} + f_n^0$ with $f_n^{++} \in\cH^{++}$ and
	$f_n^0 \,\, \bot_\Phi \,\, \cH^{++}$.
	By the equivalence of the norms 
	$\|\cdot\|$ and $\|\cdot\|_\Phi$, where $\|f\|_\Phi^2:=(f,f)_\Phi$, we get 
	\begin{equation}\label{Cl.S.theta.sing}
		\|(\sfS - \lambda) f_n\|_\Phi \le M \|(\sfS  - \lambda) f_n\| \to 0
	\end{equation}
	with some $M > 0$.
	Furthermore, the orthogonal decomposition~\eqref{Cl.S.dec.2} gives
	\begin{equation}\label{Cl.S.fn.dec}
	\begin{aligned}
		\|(\sfS - \lambda) f_n\|_\Phi^2 
		&= 
		\|(\sfS^{++}  - \lambda) f_n^{++}\|_\Phi^2 
		+ 
		\|(\sfS^0  - \lambda) f_n^0\|_\Phi^2.
	\end{aligned}
	\end{equation}
	Since $\lambda \in \rho(\sfS^0)$, there exists $k_\lambda>0$ such that
	$
		\|(\sfS^0  - \lambda) f_n^0\|\ge k_\lambda \|f_n^0\|.
	$
	Thus, using equivalence of the norms ($\|\cdot\|$ and $\|\cdot\|_\Phi$),~\eqref{Cl.S.theta.sing}, 
	and~\eqref{Cl.S.fn.dec}, 
	we conclude that $\|f_n^0\| \to 0$ and hence $\|f_n^{++}\| \to 1$. 
	Finally, the former and~\eqref{eq:J.1} (with $\mu = +$) 
	imply that $\lambda \in \sigma_{++}(\sfS)$ since
	\[
		\liminf_{n \to \infty} (J f_n, f_n) 
		= 
		\liminf_{n \to \infty} (J f_n^{++}, f_n^{++}) 
		\geq 
		\liminf_{n \to \infty} \varkappa_1^+ \varkappa_2^+ \|f_n^{++}\|^2 
		= 
		\varkappa_1^+\varkappa_2^+ >0.
	\]
	%
	%
%	\vsp	
	
	\noindent (ii)
	The proof is analogous to the one of item~(i). 
	In the case $\mu = +$ and $\nu = -$ the decomposition~\eqref{Cl.S.dec.1} 
	is restructured as
	%
%	\begin{equation*}\label{Cl.S.dec.3}
	$\sfS = \sfS^{+-} \oplus_\Phi \sfS^0$
%	\end{equation*}
	%
	where $\sfS^0 := \sfS^{-+}\oplus_\Phi \sfS^+\oplus_\Phi \sfS^\rmr$. The case $\mu = -$ and
	$\nu = +$ is analogous.
	
	\noindent (iii)
	We follow the lines of the proofs of items~(i) and~(ii). 
	In the case $\mu = +$ and $\nu = -$ 
	the decomposition in~\eqref{Cl.S.dec.1} is restructured as
	%
%	\begin{equation*}\label{Cl.S.OGdec.4}
	$\sfS = \sfS^+ \oplus_\Phi \sfS^0$ 
%	\end{equation*}
	%
	where $\sfS^0 = \sfS^- \oplus_\Phi \sfS^\rmr$. 
	A straightforward adaptation of the above arguments shows that 
	any approximate eigensequence $\{f_n\}_n$ for the operator $\sfS$
	corresponding to a point $\lambda \in \sap(\sfS^+)\setminus\sigma(\sfS^0)$
	can be uniquely decomposed as 
	$f_n = f_n^{++} + f_n^{--} + f_n^0$ with
	$f_n^{++}\in\cH^{++}$, $f_n^{--}\in\cH^{--}$
	and $f_n^0 \,\, \bot_\Phi \,\, (\cH^{++}\oplus_\Phi \cH^{--})$.
	Analogously, we get $\|f_n^0\| \to 0$ and  $\|f_n^{++} + f_n^{--}\| \to 1$.
	Similarly to~\eqref{eq:J.1} and~\eqref{eq:J.2} one can derive 
	\[
		|(Jf^{++},f^{--})| \le 
		\varkappa_1^{+-} \varkappa_2^{+-} \| f^{++} \| \| f^{--} \|,
		\qquad f^{++}\in\cH^{++},~f^{--}\in\cH^{--}. 
	\]	
	Thanks to the condition
	$(\varkappa_1^{+-} \varkappa_2^{+-})^2 < 
	\varkappa_1^+ \varkappa_2^+\varkappa_1^- \varkappa_2^-$,
	for all sufficiently small $\delta > 0$, we have 
	$\varkappa(\delta) := \big[
		(\varkappa_1^+ \varkappa_2^+ - \delta)
		(\varkappa_1^- \varkappa_2^- -\delta)\big]^{1/2} - 
		\varkappa_1^{+-} \varkappa_2^{+-} > 0$ 
	and, for such $\delta > 0$, 
	we then obtain, using~\eqref{eq:J.1} and Cauchy-Schwarz inequality in addition, that for $f_n^+ = f_n^{++} + f_n^{--}$ holds
	\begin{equation*}
	\begin{aligned}
		(J f_n^+, f_n^+) & = 
		(J(f_n^{++} + f_n^{--}), f_n^{++} + f_n^{--})\\
		&\ge 
		\varkappa_1^+ \varkappa_2^+ \|f_n^{++}\|^2 + 
		\varkappa_1^- \varkappa_2^- \|f_n^{--}\|^2 
		- 2|(Jf_n^{++}, f_n^{--})|\\ 
		& \ge 
		\varkappa_1^+ \varkappa_2^+\|f_n^{++}\|^2 +  
		\varkappa_1^- \varkappa_2^- \|f_n^{--}\|^2
		-2\varkappa_1^{+-}\varkappa_2^{+-}\|f_n^{++}\|\|f_n^{--}\|,\\
		&\ge 2\varkappa(\delta)\|f_n^{++}\|\|f_n^{--}\|  
		+ \delta \big(\|f_n^{++}\|^2 + \|f_n^{--}\|^2\big)\\
		& \ge
		\delta\big(\|f_n^{++}\|^2 + \|f_n^{--}\|^2\big)
		\ge
		\frac{\delta}{2} \|f_n^{++} + f_n^{--}\|^2 = 
		\frac{\delta}{2} \|f_n^+\|^2.
	\end{aligned}
	\end{equation*}
	Hence, using $\|f_n^0\| \rightarrow 0$ and $\|f_n^+\| \rightarrow 1$ we obtain
	\[
		\liminf_{n\rightarrow\infty} (J f_n, f_n) 
		= 
		\liminf_{n\rightarrow\infty} (J f_n^{+}, 
		f_n^{+})\\
		 \ge 
		\frac{\delta}{2}
		\liminf_{n\rightarrow\infty} 
		\|f_n^+\|^2
		=
		\frac{\delta}{2} > 0.	
	\]
	Thus $\lambda\in\spp(\sfS)$ and the first inclusion in~\eqref{sp.incl.2} is proven; 
	the rest is analogous. 
\end{proof}

%%-------------------------------------------------------------------------
\section{Application: $\PT$-symmetric waveguides}
\label{sec:appl}
%-------------------------------------------------------------------------
%
We investigate the two-dimensional $\PT$-symmetric waveguide suggested 
in~\cite{Borisov-2008-62} and studied further 
in~\cite{Borisov-2012-76, Krejcirik-2008-41, Novak-2014}. 
We view the waveguide as a perturbation of a certain ``unperturbed'' 
operator defined in Subsection~\ref{ssec:unperturb_def}, 
having the tensor product structure~\eqref{T.tensor}. 
Based on our abstract machinery from Section~\ref{sec:main} and known properties 
of the transversal one-dimensional operator given in Subsection~\ref{ssec:transversal}, 
we fully characterize the spectra of definite type for this unperturbed operator in 
Theorem~\ref{thm:PTWG.pn}. In the proof we deliberately avoid using the Riesz basis property 
of the eigensystem of the transversal operator $\WT$, which is typically not available 
in other similar problems. Finally, in Subsection~\ref{ssec:peturb}
we apply perturbation results to reveal intervals of definite type spectra
for the original (perturbed) operator, not having 
the convenient tensor product structure, and employ
properties of definite type spectra to draw spectral conclusions on the original operator.

%****************************************************************************	
\subsection{Definition of the waveguide}\label{sec:PT_results}
%****************************************************************************
%
Let $\I := (-a,a)$, $a \in (0, \infty)$, $\Omega := \R \times \I$ and
$\Sigma_\pm := \R \times \{\pm a\}$; the latter are the opposite sides of the strip $\Omega$. 
We denote the inner product in $L^2(\Sigma_\pm)$ by $(\cdot,\cdot)_{\Sigma_\pm}$
and in both $L^2(\Omega)$ and $L^2(\Omega;\dC^2)$ by $(\cdot,\cdot)_{\Omega}$.

Let $J_1 : = I_{L^2(\R)}$ and $J_2 := \P$ where 	%
\begin{equation}\label{P.def}
(\P\psi)(y) := \psi(-y), \qquad \psi \in L^2(\I).
\end{equation}
Both $J_k$, $k=1, 2$, are bounded symmetric involutions; 
$J_1$ is uniformly positive and $J_2$ is indefinite. 
The tensor product $J= J_1 \otimes J_2$ is easily seen to act as
\begin{equation}\label{def:JWG}
(J u) (x, y)= u(x,-y), 
\quad u \in L^2(\Omega).
\end{equation}	
The complex conjugation operator on any of the used functional spaces is denoted by $\T \psi := \overline{\psi}$.
Further, we introduce the space of bounded $\PT$-symmetric functions
\begin{equation*}\label{eq:Linfty_odd}
L^\infty_{\rm \PT}(\Omega) := \big\{V\in L^\infty(\Omega;\C) \colon V(x,y) = \ov{V(x,-y)}\big\}.
\end{equation*}
\begin{dfn}\label{def:Op}
Let $V \in L^\infty_{\rm \PT}(\Omega)$ and $\aa \in L^\infty(\R;\dC)$. 
The m-sectorial operator $\OpV$ in $L^2(\Omega)$ associated to the densely defined, closed, sectorial form 
\begin{equation}\label{eq:PTWG.form}
	H^1(\Omega) \mapsto 
	\|\nabla u\|_\Omega^2 +  (Vu,u)_\Omega
	+ 
	(\aa  u|_{\Sigma_+}, u|_{\Sigma_+})_{\Sigma_+}
	+ 
	(\ov{\aa}  u|_{\Sigma_-}, u|_{\Sigma_-})_{\Sigma_-},
\end{equation}
\cf~\cite[Thm.~VI.2.1]{Kato-1966} and \cite{Borisov-2008-62}, represents the $\PT$-symmetric waveguide with the coupling function $\aa$
and the potential $V$. If $V \equiv 0$, we write $\Op$ instead of 
$\sfH_{\aa,0}^\Omega$.
\end{dfn}

\begin{remark}\label{rem:WG.sym}
It can be verified by the first representation theorem, 
\cf~\cite[Thm.~VI.2.1]{Kato-1966}, that $\OpV$ is 
$\PT$-symmetric w.r.t parity 
$(\P u)(x,y) := u(x,-y)$, 
\ie~$\PT (\OpV) \subseteq (\OpV) \PT$ or, using formally the commutator, $[\PT, \OpV] = 0$. 
Moreover, $\OpV$ is also $\P$-self-adjoint and $\T$-self-adjoint, \ie~$\OpV = \T (\OpV)^*\T$.
\end{remark}

\subsection{The unperturbed operator}
\label{ssec:unperturb_def}

Let $V_0 \in L^\infty(\R;\R)$ and $\aa\in \dC$ be fixed.
In what follows we denote the functions $\dR\ni x \mapsto \aa$ and
$\Omega\ni (x,y) \mapsto V_0(x)$  again by $\aa$ and $V_0$.
Define the self-adjoint operator $\WL$ in $L^2(\R)$ 
and the m-sectorial operator $\sfH^\I_{\aa}$ 
in $L^2(\I)$ (\cf~\cite{Borisov-2008-62})
as:
\begin{align}
	\WL\psi   & := -\psi^\dpp + V_0\psi,& \dom\WL & := H^2(\dR); 
	\label{DeltaR} \\
	\sfH_{\aa}^\I\psi  & := -\psi^\dpp, & 
	\dom\sfH_{\aa}^\I & := 
		\big\{ \psi \in H^2(\I) \colon 
			\psi^\pp(\pm a) = - \aa \psi( \pm a)  
		\big\}.
	\label{DeltaM}
\end{align}
Henceforth, $\WL$ and $\sfH_{\aa}^\I$
are called the \emph{longitudinal} and the \emph{transversal}
operators, respectively. Introduce the sectorial operator
$S_{\aa,V_0} := \WL \odot I_{L^2(\I)}	+ I_{L^2(\R)} \odot \sfH_\aa^\I$
acting in $L^2(\Omega) = L^2(\R) \otimes L^2(\I)$, which is closable and its closure
\begin{equation}\label{def:OpWG}
	\sfH_{\aa,V_0}^\Omega := \ov{ S_{\aa,V_0} }
\end{equation}
is m-sectorial in $L^2(\Omega)$, see  \cite[\S XIII.9, Cor.~2]{Reed4}. 
It can be shown by standard arguments that this new definition of $\sfH_{\aa,V_0}^\Omega$
coincides with Definition~\ref{def:Op} in the special case
of constant coupling function and potential dependent only 
on $x$-variable.

Except for Remark~\ref{rem:wild}, we restrict our analysis to the case $\aa = \ii\aa_0$
with $\aa_0\in\dR$. 
	
%-------------------------------------------------------------------------
\subsection{The transversal operator}
%-------------------------------------------------------------------------
\label{ssec:transversal}
	
First, we collect known results.
	
%-------------------------------------------------------------------------
\begin{prop}[{\cite{Krejcirik-2006-39}}]\label{prop:PTWG.trans.1}
	Let $\sfH^\I_{\ii\anot}$, $\anot\in\dR$, be as in~\eqref{DeltaM} 
	and $\P$ as in~\eqref{P.def}. Then 
	\begin{enumerate}[{\upshape (i)}]
		\item we have $$(\WT)^* = \sfH^\I_{-\ii\anot}, \quad \WT = \P (\WT)^* \P, \quad \PT (\WT) \subseteq (\WT) \PT;$$
		\item
		$\s(\WT) = \cup_{n \in \N_0}\{ \lm_n \} \subset \R$, 	
		where $\lm_0 = \anot^2$ and  
		$\lm_n = \left(\frac{\pi n}{2a}\right)^2$, $n\in\dN$;
		\item if $\pm \frac{2a}{\pi }\alpha_0 \notin \N$, 
		then all the eigenvalues are simple; otherwise 
		 $\lm_0$ has the geometric multiplicity one 
		and the algebraic multiplicity two and all the other eigenvalues are simple.
	\end{enumerate}	
\end{prop}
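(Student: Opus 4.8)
The plan is to treat the three assertions in turn by direct computation, exploiting that $\WT=\sfH^\I_{\ii\anot}$ is a regular Sturm--Liouville operator on the bounded interval $\I=(-a,a)$.

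For (i) I would compute the adjoint from the definition, integrating $-\psi''$ against $\overline\phi$ twice. For $\psi\in\dom\WT$ and $\phi\in H^2(\I)$, the boundary terms collapse, after inserting the Robin conditions $\psi'(\pm a)=-\ii\anot\psi(\pm a)$, to
\[
\psi(a)\big(\ii\anot\overline{\phi(a)}+\overline{\phi'(a)}\big)-\psi(-a)\big(\ii\anot\overline{\phi(-a)}+\overline{\phi'(-a)}\big).
\]
Since the boundary values $\psi(\pm a)$ are unconstrained, requiring this to vanish for all admissible $\psi$ forces $\phi'(\pm a)=\ii\anot\phi(\pm a)$, which is exactly the defining condition of $\sfH^\I_{-\ii\anot}$; hence $(\WT)^*=\sfH^\I_{-\ii\anot}$. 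The symmetry identities then follow formally: since $-\mathrm d^2/\mathrm dy^2$ is real and $\P$ maps the condition at $a$ to the one at $-a$ (with a sign change in the derivative), one checks $\P\sfH^\I_{-\ii\anot}\P=\WT$ and $\T\WT\T=\sfH^\I_{-\ii\anot}=(\WT)^*$; combining these, and using that $\P,\T$ commute and are involutions, gives $\WT=\P(\WT)^*\P$ and $\PT\,\WT\,\PT=\WT$, i.e. $\PT(\WT)\subseteq(\WT)\PT$.

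For (ii), since $\dom\WT\subset H^2(\I)$ and the embedding $H^2(\I)\hookrightarrow L^2(\I)$ is compact, the resolvent is compact and the spectrum is purely discrete; it thus suffices to find all $\lambda=k^2$ for which $-\psi''=\lambda\psi$ admits a nonzero solution meeting both boundary conditions. Writing $\psi=Ae^{\ii ky}+Be^{-\ii ky}$ and imposing the two conditions yields a $2\times2$ linear system in $(A,B)$ whose determinant factorizes as a constant multiple of $(\anot^2-k^2)\sin(2ka)$. Its vanishing gives exactly $\lambda=\anot^2$ (first factor) and $\lambda=\big(\tfrac{\pi n}{2a}\big)^2$, $n\in\N$ (from $\sin(2ka)=0$); the case $k=0$ is checked separately and produces an eigenvalue only when $\anot=0$. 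This reproduces $\s(\WT)=\{\lm_0=\anot^2\}\cup\{\lm_n\}_{n\in\N}\subset\R$.

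For (iii), the geometric multiplicity is read off from the rank of the boundary matrix: for $\lm_n\neq\anot^2$ its rank is one, and at $\lambda=\anot^2$ the second column vanishes so the rank is again one; hence every eigenvalue has a one-dimensional eigenspace. The algebraic multiplicity equals the order of vanishing of the characteristic determinant in $k$: both $(\anot^2-k^2)$ and $\sin(2ka)$ contribute only simple zeros, so each eigenvalue is algebraically simple unless $\lm_0$ and some $\lm_n$ collide, i.e. $\pm\tfrac{2a}{\pi}\anot\in\N$, in which case the two simple zeros merge at $k=\anot$ into a double zero, giving algebraic multiplicity two at $\lm_0$ with geometric multiplicity still one. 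The main obstacle is precisely this resonant case, and rather than invoking the determinant-order identity I would confirm it by exhibiting a Jordan chain explicitly: with eigenfunction $\psi_0=e^{-\ii\anot y}$ one solves $(\WT-\anot^2)\psi_1=\psi_0$ via the ansatz $\psi_1=\tfrac{-\ii}{2\anot}\,y\,e^{-\ii\anot y}+Ae^{\ii\anot y}$, and the two Robin conditions become simultaneously solvable in $A$ exactly when $e^{4\ii\anot a}=1$, i.e. $\tfrac{2a}{\pi}\anot\in\Z$. This verifies algebraic multiplicity two in the resonant case and its absence otherwise, while the double zero of the determinant caps the chain length at two.
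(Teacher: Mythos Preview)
Your argument is correct: the adjoint computation in~(i) is the standard integration-by-parts calculation, the determinant in~(ii) indeed factorizes as $2\ii(\anot^2-k^2)\sin(2ka)$, and in~(iii) your explicit Jordan chain confirms that the generalized eigenvector exists precisely when $e^{4\ii\anot a}=1$.

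Note, however, that the paper itself does not prove this proposition at all: it is quoted from~\cite{Krejcirik-2006-39} without argument, so there is no ``paper's own proof'' to compare against. Your write-up is a self-contained verification by direct Sturm--Liouville computation, which is exactly how the result is obtained in the cited reference. One minor remark: in~(iii) you appeal to the identity between algebraic multiplicity and the order of vanishing of the characteristic determinant, which is correct but perhaps deserves a reference (it follows from the Keldysh-type theory for analytic operator pencils, or more elementarily from the fact that the resolvent of a regular Sturm--Liouville operator has a meromorphic integral kernel whose pole order at an eigenvalue equals the zero order of the characteristic function); your explicit Jordan-chain check makes this point moot anyway for the lower bound, and the determinant having at most a double zero caps the chain length, so the argument is complete.
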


Next, we classify the definiteness of eigenvalues
of $\WT$, see also Figure~\ref{fig:T1.pm}.
%
%-------------------------------------------------------------------------
\begin{prop}\label{prop:++}
	Let $\WT$ and $\{ \lm_n \}_{n \in \N_0}$ be as in~\eqref{DeltaM} and
	in Proposition~\ref{prop:PTWG.trans.1}\,(ii), respectively.  
	Let $\{\mu_n\}_{n \in \N_0}$ be eigenvalues $\{ \lm_n \}_{n \in \N_0}$ 
	ordered in non-decreasing order (with  algebraic multiplicities). Define the set 
	\begin{equation*}
	\cE(\alpha_0):=
	\begin{cases}
	\varnothing & \text{if } \quad \alpha_0^2 \notin \{\lambda_n\}_{n \in \N},
	\\  \{n_*-1,n_*\} & \text{if } \quad \alpha_0^2 = \lambda_{n_*} \text{ for some } n_* \in \N.
    \end{cases}
    \end{equation*}		
	Then, with respect to $J_2 = \P$ in~\eqref{P.def},  
	\begin{equation*}
	\begin{aligned}
		\spp(\WT) &= \{\mu_{2n}\colon n \in \N_0, 2n \notin \cE(\alpha_0) \}, 
	\\\ 
		\smm(\WT) &= \{\mu_{2n+1}\colon n \in \N_0, 2n+1 \notin \cE(\alpha_0)\},
	\\                                       
		\snd(\WT) &= \{\mu_n\colon n \in \cE(\alpha_0)\}.
	\end{aligned}
	\end{equation*}	
\end{prop}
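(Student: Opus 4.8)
The strategy is to diagonalise $\WT$ explicitly and to read off the type of each eigenvalue from the sign of $(\P\psi,\psi)$ on the corresponding eigenfunction $\psi$. This is legitimate because, by Proposition~\ref{prop:PTWG.trans.1}(ii), $\s(\WT)=\sap(\WT)$ consists of isolated eigenvalues of finite algebraic multiplicity: for an isolated algebraically simple eigenvalue $\lambda$ the Riesz projection $E$ has rank one and commutes with $\WT$, so any approximate eigensequence $\{f_n\}$ for $\lambda$ satisfies $\|(I-E)f_n\|\to 0$ and hence concentrates on $\ker(\WT-\lambda)=\operatorname{span}\{\psi\}$; consequently $(\P f_n,f_n)\to(\P\psi,\psi)/\|\psi\|^2$, and $\lambda$ is of positive (negative) type precisely when $(\P\psi,\psi)>0$ (resp.\ $<0$).

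First I would solve $-\psi''=\lambda\psi$ with $\psi'(\pm a)=-\ii\alpha_0\psi(\pm a)$. Writing $\psi=A\cos(ky)+B\sin(ky)$ with $k=\sqrt\lambda$, the sum and the difference of the two boundary conditions reduce to $\cos(ka)(Bk+\ii\alpha_0 A)=0$ and $\sin(ka)(Ak-\ii\alpha_0 B)=0$. Away from the coincidence $\alpha_0^2\in\{\lambda_n\}_{n\in\N}$ these leave exactly three mutually exclusive possibilities: the mode $\psi_0(y)=e^{-\ii\alpha_0 y}$ at $\lambda_0=\alpha_0^2$ (from $k=|\alpha_0|$); the modes with $\cos(ka)=0$, i.e.\ $k=\pi n/2a$ with $n$ odd and $A=\ii\alpha_0 B/k$; and the modes with $\sin(ka)=0$, i.e.\ $k=\pi n/2a$ with $n$ even and $B=-\ii\alpha_0 A/k$. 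Each such eigenvalue is simple, in accordance with Proposition~\ref{prop:PTWG.trans.1}(iii).

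The computation of the type rests on two elementary facts: $\P\psi(y)=A\cos(ky)-B\sin(ky)$ and $\int_{-a}^a\cos(ky)\sin(ky)\,dy=0$, so the cross terms cancel and $(\P\psi,\psi)=|A|^2\int_{-a}^a\cos^2(ky)\,dy-|B|^2\int_{-a}^a\sin^2(ky)\,dy$, which is automatically real (reflecting the realness of definite type points). For the linear modes $\sin(2ka)=0$, so both integrals equal $a$, and substituting the relations between $A$ and $B$ gives $(\P\psi,\psi)$ proportional to $\lambda_0-\lambda_n$ for $n$ odd and to $\lambda_n-\lambda_0$ for $n$ even; for the special mode one finds $(\P\psi_0,\psi_0)=\alpha_0^{-1}\sin(2\alpha_0 a)$.

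It remains to propagate these signs through the non-decreasing reordering $\{\lambda_n\}\mapsto\{\mu_n\}$. In the generic case $\cE(\alpha_0)=\varnothing$, fix $N\ge 0$ with $(N\pi/2a)^2<\alpha_0^2<((N+1)\pi/2a)^2$. The linear eigenvalue $\lambda_n$ then sits at sorted position $n-1$ for $n\le N$ and at position $n$ for $n\ge N+1$, while $\alpha_0^2$ sits at position $N$; using $N\pi<2\alpha_0 a<(N+1)\pi$ to obtain $\operatorname{sign}\sin(2\alpha_0 a)=(-1)^N$, a short case check shows that the sign of $(\P\psi,\psi)$ at sorted position $m$ is exactly $(-1)^m$, which is the claimed splitting $\spp(\WT)=\{\mu_{2n}\}$, $\smm(\WT)=\{\mu_{2n+1}\}$. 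The case $\alpha_0\le 0$ is identical, as every relevant quantity depends only on $\alpha_0^2$ and on the even function $\alpha_0^{-1}\sin(2\alpha_0 a)$. I expect the main obstacle to be the critical case $\alpha_0^2=\lambda_{n_*}$: here $k=|\alpha_0|$ makes the special mode coalesce with the $n_*$-th linear mode, so the geometric multiplicity is one while the algebraic multiplicity is two (Proposition~\ref{prop:PTWG.trans.1}(iii)); the unique eigenfunction is $\propto e^{-\ii\alpha_0 y}$, and the formula above gives $(\P\psi_0,\psi_0)=0$ (equivalently, a real eigenvalue of the $\P$-self-adjoint $\WT$ carrying a Jordan chain is forced to have a $\P$-neutral eigenvector). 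Since this eigenvector is itself an approximate eigensequence with $(\P f_n,f_n)\equiv 0$, the point $\mu_{n_*}$ satisfies neither the positive- nor the negative-type condition, so $\mu_{n_*}\in\snd(\WT)$, completing the proof.
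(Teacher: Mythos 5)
Your proof is correct and follows essentially the same route as the paper's: both diagonalise $\WT$ explicitly and classify each eigenvalue by the sign of $(\P\psi,\psi)_\I$ on its eigenfunction, arriving at the same quantities $\sin(2\alpha_0 a)/\alpha_0$ and (up to normalisation) $a(-1)^n(\lambda_n-\lambda_0)/\lambda_n$. You merely spell out steps the paper leaves implicit --- why the sign on the eigenfunction determines the type of an isolated simple eigenvalue, the bookkeeping under the non-decreasing reordering, and the Jordan-block case $\alpha_0^2=\lambda_{n_*}$ --- and these all check out.
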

%-------------------------------------------------------------------------
\begin{figure}[htb!]
	\includegraphics[width=0.48 \textwidth]{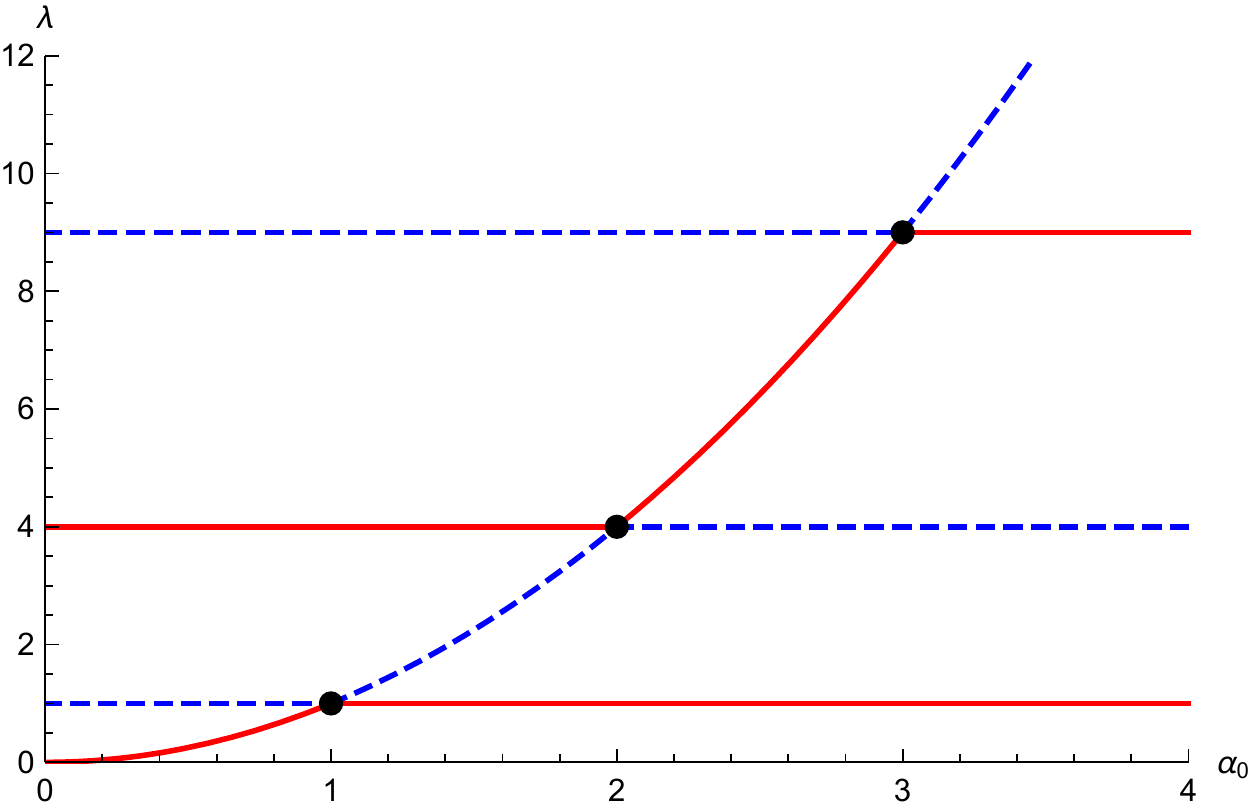}
	\caption{Lowest eigenvalues of $\WT$ as a function of $\aa_0 \in\dR_+$ 
		for $a=\pi/2$. 
		The red (full) curves correspond to $\spp(\WT)$, the blue (dashed) curves 
		to $\smm(\WT)$. The spectral points of not definite type (black balls) appear 
		for exceptional values $\aa_0= 1, 2, 3,\dots$ only. }
	\label{fig:T1.pm}
\end{figure}
\begin{proof}
The eigenfunctions corresponding to $\alpha^2_0$ and $\{\lambda_n\}_{n \in \dN}$ read, \cf~\cite[Prop.2.4]{Krejcirik-2014-8},
\begin{equation*}
\label{efunc}
\psi_0(x) = \ee^{-\ii \anot(x+a)},
\ \
\psi_n(x) = \cos (\sqrt{\lm_n} (x+a) ) - 
\frac{\ii \anot}{\sqrt{\lm_n}} \sin (\sqrt{\lm_n} (x+a)), \quad n \in \N.
\end{equation*}
The claims follow from the direct computations 
	\begin{equation*}\label{P.psi0}
		(\P \psi_0,\psi_0)_\I = 
		\frac{\sin(2\alpha_0a)}{\alpha_0},
		\qquad 
		(\P \psi_n,\psi_n)_\I  
		= a\frac{(-1)^n(\lm_n - \lm_0)}{\lm_n}, \quad n \in \N. \qedhere
	\end{equation*}	
\end{proof}
%

%-------------------------------------------------------------------------
\subsection{Spectra of definite type of the unperturbed waveguide}
%-------------------------------------------------------------------------
\label{ssec:unperturb}
Slightly extending \cite[Prop.~4.2, Rem.~4.2]{Borisov-2008-62}, one can straightforwardly check the following.
%
%------------------------------------------------------------------
\begin{lem}\label{lem:PTWG.sp}
	Let $\aa_0\in\R$, $V_0\in L^\infty(\R;\R)$ and $\WL$, $\WT$ and $\sfH^\Omega_{\ii\anot, V_0}$ be 
	as in~\eqref{DeltaR},~\eqref{DeltaM} and~\eqref{def:OpWG},
	respectively. Then $\sfH^\Omega_{\ii\anot, V_0} = J (\sfH^\Omega_{\ii\anot, V_0})^* J$  
	with $J$ as in~\eqref{def:JWG}. Moreover, we have 
	$$\s(\sfH^{\Omega}_{\ii\anot, V_0}) 	= 
			\s(\WT) + \s(\WL).$$
		In particular, for $V_0 \equiv 0$, 
		$\s(\sfH^{\Omega}_{\ii\anot}) = [\mu_0,\infty)$,
		where $\mu_0 = \min\s(\WT)$.
\end{lem}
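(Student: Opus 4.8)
The plan is to treat the three assertions in turn, leveraging the tensor-sum structure $\sfH^\Omega_{\ii\anot,V_0}=\ov{\WL\odot I + I\odot \WT}$ together with the self-adjointness of $\WL$ and the properties of $\WT$ recorded in Proposition~\ref{prop:PTWG.trans.1}.

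For the $J$-self-adjointness I would compute the adjoint via the tensor-product calculus, using that the adjoint of the closure of a tensor sum of m-sectorial operators is the closure of the tensor sum of the adjoints, so that $(\sfH^\Omega_{\ii\anot,V_0})^*=\ov{\WL^*\odot I + I\odot(\WT)^*}$ (\cf~\cite[\S XIII.9]{Reed4}, \cite[\S 4.5]{BEH}). Since $J=J_1\otimes J_2$ is a bounded involution, conjugation splits across the factors as $J(A\otimes B)J=(J_1 A J_1)\otimes(J_2 B J_2)$; combining $\WL=\WL^*=J_1\WL^* J_1$ (self-adjointness and $J_1=I$) with $\WT=J_2(\WT)^*J_2$ from Proposition~\ref{prop:PTWG.trans.1}(i) then yields $J(\sfH^\Omega_{\ii\anot,V_0})^*J=\sfH^\Omega_{\ii\anot,V_0}$ after taking closures.

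For the spectral identity, rather than expanding in the (possibly non-orthogonal, possibly non-complete) eigensystem of $\WT$, I would exploit only the self-adjointness of $\WL$ and fiber the resolvent over the spectral measure $E_{\WL}$ of $\WL$,
\[
	(\sfH^\Omega_{\ii\anot,V_0}-\lambda)^{-1}=\int_{\s(\WL)}\big(\WT-(\lambda-t)\big)^{-1}\,dE_{\WL}(t),
\]
which is bounded precisely when $\sup_{t\in\s(\WL)}\|(\WT-(\lambda-t))^{-1}\|<\infty$. Since $\WT$ is m-sectorial with purely discrete spectrum, its resolvent norm blows up exactly at $\s(\WT)$ and decays as the argument is driven to $-\infty$ (distance to the numerical range tends to $\infty$); hence the supremum is finite iff $\operatorname{dist}(\lambda-\s(\WL),\s(\WT))>0$, that is, iff $\lambda\notin\s(\WT)+\s(\WL)$. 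This gives $\rho(\sfH^\Omega_{\ii\anot,V_0})\supseteq\C\setminus(\s(\WT)+\s(\WL))$. The converse follows by combining an eigenfunction $\phi_n$ of $\WT$ for $\lambda_n$ with a Weyl sequence $\{u_k\}$ of $\WL$ for $t\in\s(\WL)$: the product $\{u_k\otimes\phi_n\}$ is a singular sequence for $\sfH^\Omega_{\ii\anot,V_0}$ at $\lambda_n+t$, so $\s(\WT)+\s(\WL)\subseteq\sap(\sfH^\Omega_{\ii\anot,V_0})$. I would also note that $\s(\WT)+\s(\WL)$ is automatically closed here, because $\s(\WL)$ is bounded below while $\s(\WT)=\{\lambda_n\}$ accumulates only at $+\infty$.

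Finally, for $V_0\equiv 0$ one has $\s(\WL)=\s(-\partial_x^2)=[0,\infty)$, whence $\s(\WT)+[0,\infty)=\bigcup_n[\lambda_n,\infty)=[\mu_0,\infty)$ with $\mu_0=\min\s(\WT)$. The main obstacle is the spectral identity itself: the classical tensor-sum spectral theorem is stated for self-adjoint operators, whereas $\WT$ is only m-sectorial, and indeed non-diagonalizable at the exceptional couplings where $\lambda_0$ carries a two-dimensional Jordan block. The argument must therefore avoid any basis expansion in $\WT$ and rely instead on the uniform resolvent control just described, which is exactly where the discreteness and m-sectoriality of $\WT$ do the real work.
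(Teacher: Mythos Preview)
The paper does not supply a proof of this lemma: it merely cites \cite[Prop.~4.2, Rem.~4.2]{Borisov-2008-62} (where $V_0\equiv 0$ and the longitudinal operator is diagonalized by Fourier transform) and declares the extension to general $V_0\in L^\infty(\R;\R)$ straightforward. Your argument is correct and self-contained; fibering over the spectral measure of the self-adjoint factor $\WL$ is precisely the natural generalization of the Fourier-transform approach, and your decision to avoid any eigenfunction expansion of $\WT$ correctly anticipates the non-diagonalizable exceptional couplings of Proposition~\ref{prop:PTWG.trans.1}(iii). The only step worth an extra line of justification is the uniform bound $\sup_{t\in\s(\WL)}\|(\WT-(\lambda-t))^{-1}\|<\infty$: since $\WT$ is not normal, positive distance from $\s(\WT)$ alone does not control the resolvent norm. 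What makes it work here is that $\{\lambda-t:t\in\s(\WL)\}$ lies on a single horizontal line with real part bounded above by $\Re\lambda-\inf\s(\WL)$; the portion of this set inside any fixed large disk is compact in $\rho(\WT)$ (so continuity of the resolvent norm suffices there), while outside the disk the m-sectoriality of $\WT$ gives the decay you invoke. This is implicit in what you wrote but is the genuine content of the step.
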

%-------------------------------------------
%
Notice that for the operators $\WT$ and $\WL$ the sets defined in~\eqref{Mi.def} 
read as
\begin{equation}\label{eq:Mmu}
	\M_\mu = \s_{\mu\mu}(\WT) + \s(\WL),\quad \mu\in\{+,-,0\},
	\qquad
	\M := \M_+\cup\M_- \cup \M_0.
\end{equation}
The definiteness of spectral points of $\sfH_{\ii\aa_0,V_0}^\Omega$ 
can be then characterized completely. 
\begin{thm}\label{thm:PTWG.pn}
	Let $\aa_0\in\R$, $V_0\in L^\infty(\R;\R)$, $\sfH_{\ii\aa_0,V_0}^\Omega$ 
	be as in~\eqref{def:OpWG}, $\M_\mu$, $\mu \in \{+,-,0\}$ be as in~\eqref{eq:Mmu} 
	and $\{\mu_n\}_{n \in \N_0}$ be as in Proposition~\ref{prop:++}. Then,
	\begin{align*}
		\spp(\sfH_{\ii\aa_0,V_0}^\Omega) &= \M_+\setminus (\M_-\cup \M_0), \!\quad\!
		\smm(\sfH_{\ii\aa_0,V_0}^\Omega) = \M_-\setminus(\M_+\cup \M_0),
		\\
		\snd(\sfH_{\ii\aa_0,V_0}^\Omega) &= \M_0 \cup (\M_+ \cap \M_-),
	\end{align*}
	with respect to $J$ as in~\eqref{def:JWG}. 
	Thus, for $\sfH_{\ii\aa_0}^\Omega$ (\ie~$V_0\equiv 0$) in particular, we have
	\begin{equation*}\label{M.sets.V0}
		\spp(\sfH_{\ii\aa_0}^\Omega) = [\mu_0,\mu_1), \quad 
		\smm(\sfH_{\ii\aa_0}^\Omega) = \varnothing, 
		\quad 
		\snd(\sfH_{\ii\aa_0}^\Omega)= [\mu_1, \infty).
	\end{equation*}
\end{thm}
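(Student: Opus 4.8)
The plan is to deduce Theorem~\ref{thm:PTWG.pn} by applying the abstract machinery of Section~\ref{sec:main} to the concrete pair $T_1 = \WL$ and $T_2 = \WT$, and then to combine the ``positive'' inclusions of Theorem~\ref{thm:abstract} with the ``negative'' inclusions of Proposition~\ref{prop:ndef} so that the three sets $\spp$, $\smm$, $\snd$ are pinned down exactly. First I would record that Assumption~\ref{hyp.0} holds: $\WL$ is self-adjoint (hence $I$-self-adjoint) in $L^2(\R)$ and $\WT$ is $\P$-self-adjoint in $L^2(\I)$ by Proposition~\ref{prop:PTWG.trans.1}(i), while $J = J_1\otimes J_2$ with $J_1 = I$ and $J_2 = \P$ is exactly $J$ in~\eqref{def:JWG} and $\sfH^\Omega_{\ii\anot,V_0} = \ov{S_{\anot,V_0}}$ is the operator $\sfS$. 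The set-identities~\eqref{eq:Mmu} then translate $\M_\pm,\M_0$ into the concrete statement, so the theorem becomes the assertion that, for $\sfS = \sfH^\Omega_{\ii\anot,V_0}$, one has $\spp(\sfS) = \M_+\setminus(\M_-\cup\M_0)$ and symmetrically for $\smm$, with $\snd$ the remainder.

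Next I would verify Assumption~\ref{hyp} so that Theorem~\ref{thm:abstract} applies. For $\WL$ this is trivial: $J_1 = I$ is uniformly positive, so I take $P_1^+ = I$, $P_1^- = 0$ (hence $P_1^\rmr = 0$ and $\varkappa_1^+$ arbitrary, say $1$); there is no ``negative'' or ``residual'' part at all. For $\WT$ the key observation, as the text already flags, is that Riesz projections onto finitely many isolated eigenvalues of one definite type satisfy Assumption~\ref{hyp}, and by Proposition~\ref{prop:++} each nonexceptional eigenvalue $\mu_n$ is of pure positive or pure negative type with the quantitative bound $\pm(\P\psi_n,\psi_n)_\I = \varkappa_n \|\psi_n\|^2$ for an explicit $\varkappa_n>0$. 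The clean way to handle \emph{all} of $\spp(\WT)$ and $\smm(\WT)$ at once — which is what yields the full characterization rather than finitely many layers — is to apply Theorem~\ref{thm:abstract} iteratively, or to a single finite truncation and let it exhaust the spectrum: for any fixed target point $\lambda = \mu_n + s \in \sap(\sfS)$ I choose $P_2^\pm$ to be the Riesz projections onto all positive- (resp. negative-) type eigenvalues up to index $n$, absorbing any exceptional $\snot$-type eigenvalue into the residual part $\cH_2^\rmr$. The condition~\eqref{cond_item_iii} with $\varkappa_2^{+-}$ small enough holds because distinct eigenfunctions of $\WT$, while not orthogonal in $L^2(\I)$, have controllable $\P$-pairings on the finite-dimensional spectral subspaces involved, and on the $\WL$ side $\varkappa_1^{+-}$ can be taken $0$ since $\cH_1^- = \{0\}$; thus $(\varkappa_1^{+-}\varkappa_2^{+-})^2 = 0 < \varkappa_1^+\varkappa_2^+\varkappa_1^-\varkappa_2^-$ is automatic and item~(iii) is available.

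With both hypotheses in hand the argument closes by matching inclusions. The negative direction is immediate from Proposition~\ref{prop:ndef}: using~\eqref{eq:Mmu} it gives $\M_0\cup(\M_+\cap\M_-)\subset\snd(\sfS)$, and that $\spp(\sfS)$ avoids $\M_-$ while $\smm(\sfS)$ avoids $\M_+$. For the positive direction I apply Theorem~\ref{thm:abstract}, whose items~(i),(ii),(iii) yield (after identifying $\sap(\sfS^{\mu\nu}) = \s_{\mu\mu}(\WT)\cup\s_{\mu\nu}(\cdots) + \s(\WL)$ via Lemma~\ref{lem:PTWG.sp} applied to the reduced operators) that every point of $\M_+$ lying outside the ``contaminating'' pieces $\M_-\cup\M_0$ belongs to $\spp(\sfS)$, and symmetrically for $\M_-$ and $\smm(\sfS)$. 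Combining the two directions forces equality: $\spp(\sfS) = \M_+\setminus(\M_-\cup\M_0)$, $\smm(\sfS) = \M_-\setminus(\M_+\cup\M_0)$, and $\snd(\sfS)$ is the complement $\M_0\cup(\M_+\cap\M_-)$ inside $\sap(\sfS) = \M = \M_+\cup\M_-\cup\M_0$. Finally the special case $V_0\equiv 0$ follows by inserting $\s(\WL) = [0,\infty)$: then $\M_+ = \spp(\WT)+[0,\infty) = [\mu_0,\infty)$ and $\M_- = \smm(\WT)+[0,\infty) = [\mu_1,\infty)$ (since $\mu_1$ is the lowest negative-type eigenvalue), whence $\M_+\setminus\M_- = [\mu_0,\mu_1)$, $\smm = \varnothing$, and the overlap $[\mu_1,\infty)$ is of not-definite type.

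I expect the main obstacle to be the careful verification of~\eqref{cond_item_iii} and, more delicately, organizing the spectral projections of $\WT$ so that Theorem~\ref{thm:abstract} covers the \emph{entire} spectrum rather than an initial segment: since the eigenfunctions $\psi_n$ of $\WT$ are not orthogonal and need not form a Riesz basis, one cannot project onto all of $\spp(\WT)$ or $\smm(\WT)$ simultaneously, which is precisely why the authors emphasize avoiding the Riesz-basis property. The resolution is to exploit that for any \emph{individual} target point only finitely many eigenvalues lie below it, so a finite Riesz projection suffices and the residual part $\sfS^\rmr$ has resolvent at $\lambda$; one must check the relevant point lies in $\rho(\sfS^\nu)\cap\rho(\sfS^\rmr)$, which holds because those operators' spectra are built from the \emph{other} eigenvalues of $\WT$ shifted by $\s(\WL)$ and the exceptional points are pushed into $\snd$ exactly where $\M_+\cap\M_-$ or $\M_0$ occurs.
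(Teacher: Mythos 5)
Your proposal follows the paper's own route: the same specialization $T_1=\WL$, $T_2=\WT$, $J_1=I_1$, $J_2=\P$, the trivial choice $P_1^+=I_1$, $P_1^-=0$, finite Riesz projections $P_2^\pm(N)$ onto the first $N+1$ eigenvalues of each definite type with the exceptional eigenvalues absorbed into the residual part, exhaustion over $N$ (each target point needs only finitely many layers below it), and Proposition~\ref{prop:ndef} for the reverse inclusions. Two remarks. First, you invoke Theorem~\ref{thm:abstract}\,(iii), whereas the paper uses only items (i)--(ii); since $\cH_1^-=\{0\}$ the operator $\sfS^+$ reduces to $\sfS^{++}$ and item (iii) degenerates to item (i), so this costs nothing but also buys nothing, and your announced ``main obstacle'' \eqref{cond_item_iii} is in fact a non-issue ($\varkappa_1^{+-}=0$ makes the product condition automatic, as you yourself observe). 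Second --- and this is the one genuine elision --- you justify \eqref{hyp:Hk.pn} on $\cH_2^\pm(N)$ by appealing to the paper's remark that Riesz projections onto finitely many eigenvalues of one definite type satisfy Assumption~\ref{hyp}, but the paper explicitly defers the proof of that remark to the proof of this very theorem, so the appeal is circular as written. The per-eigenvalue positivity $\pm(\P\psi_n,\psi_n)_\I>0$ from Proposition~\ref{prop:++} does not by itself give a uniform lower bound on the span, precisely because the $\psi_n$ are not orthogonal and cross terms could destroy definiteness. What closes the gap is that eigenfunctions of the $\P$-self-adjoint operator $\WT$ at distinct real eigenvalues are $\P$-orthogonal, so $(\P f,f)_\I=\sum_{n}(\P Q_n^\pm f,Q_n^\pm f)_\I\ge\sum_{n}\kappa_n^\pm\|Q_n^\pm f\|_\I^2$, after which Lemma~\ref{lem:Theta} converts $\sum_{n}\|Q_n^\pm f\|_\I^2$ into a bound $\ge\|f\|_\I^2/(N+1)$, yielding the uniform constant $\varkappa_2^\pm=\min_{n\le N}\kappa_n^\pm/(N+1)$. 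With that step supplied, your argument coincides with the paper's and is complete.
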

\begin{proof}
	In what follows, let $\cH_1 = L^2(\R)$, $\cH_2 = L^2(\I)$,
	$T_1 := \sfH_{V_0}^\R$, $T_2 := \WT$,
	$\sfS := \ov{T_1 \odot I_2 + I_1 \odot T_2}$,
 	$J_1 := I_1$ and $J_2 := \P$ with $\WL$, $\WT$, $\P$ as 
 	in~\eqref{DeltaR},~\eqref{DeltaM} 
 	and~\eqref{P.def}, respectively.
 	First, observe that $\M = \s(\sfS) = \sap(\sfS)$. 
 	Moreover, by Proposition~\ref{prop:ndef}, 
 	$\M_0 \cup (\M_+ \cap \M_-) \subset \snd(\sfS)$ 
 	and $\s_{\pm\pm}(\sfS) \subset \M_\pm \setminus (\M_\mp \cup \M_0)$. 
 	The opposite inclusions for the latter are shown below.
 	
	Define the projections $P_1^+ := I_1$ and $P_1^- := 0$ in $\cH_1$; 
	notice that $P_1^{\rm r} =0$ as well. 
	Obviously, $\cH_1, T_1, J_1, P^\pm_1$ 
	and $\cH^\pm_1:= P_1^{\pm} \cH_1$ satisfy Assumption~\ref{hyp} 
	with $\varkappa_1^+ = 1$. 

	Now we decompose $T_2$. We order the eigenvalues of $T_2$ 
	of positive and negative type, see Proposition~\ref{prop:++},  
	in the increasing order
	%	
%	\begin{equation*}
		$\sigma_{\pm \pm}(T_2) = \{\mu^\pm_n\}_{n \in \N_0}$
%	\end{equation*}
	%	
	and denote by $\{Q_n^{\pm}\}_{n \in \N_0}$ the corresponding Riesz (spectral) projections.
	Let $N \in \N_0$ be arbitrary and define  
	$P_2^{\pm}(N) := \sum_{n=0}^N Q^\pm_n$, $P_2^{\rm r}(N)   := I_2 - P_2^+(N) - P_2^-(N)$.  

	For every $N \in \N_0$, the family $\cH_2, T_2, J_2, P^\pm_2(N)$
	and $\cH^\pm_2(N):= P^\pm_2(N) \cH_2$ satisfies Assumption~\ref{hyp} 
	if we verify \eqref{hyp:Hk.pn}. 
	To this end, observe that, 
	for $\{Q_0^\pm, \dots, Q_N^\pm, I_2 - P^{\pm}_2(N)\}$, 
	Lemma~\ref{lem:Theta} yields operators $\Theta^\pm(N)$ 
	such that we have the orthogonal decompositions 
	\begin{equation*}
	\begin{split}
		\cH_2 & = 
		P_2^\pm(N) \cH_2 \oplus_{\Theta^\pm(N)}(I_2 - P_2^{\pm}(N)) \cH_2,\\
		\cH^\pm_2(N)  & = 
		Q_0^{\pm} \cH_2 \oplus_{\Theta^{\pm}(N)} Q_1^{\pm} \cH_2 \oplus_{\Theta^{\pm}(N)}
		\dots \oplus_{\Theta^{\pm}(N)} 		
		Q_N^{\pm} \cH_2,
	\end{split}	
	\end{equation*}
	w.r.t.~products $(\Theta^{\pm}(N) \cdot, \cdot )_\I$ 
	inducing norms equivalent to $\|\cdot\|_\I$. 
	Using the latter and the mutual $J_2$-orthogonality of $Q_n^{\pm}$ 
	(since the corresponding eigenfunctions of $T_2$ are $J_2$-orthogonal), 
	we obtain for arbitrary 
	$f^{\pm} \in \cH_2^{\pm}(N)$ that
	%
%	\marginpar{Check this computation!}
	%
	\begin{equation*}
	\begin{aligned}
		\pm (J_2 f^{\pm}, f^\pm)_{\I}
		& = 
%		\pm \sum_{m,n=0}^N 
%		(J_2 Q_m^\pm f^{\pm}, Q_n^\pm f^{\pm})_\I 
%		=
		\pm \sum_{n=0}^N 
		(J_2 Q_n^\pm f^\pm, Q_n^\pm f^\pm)_\I
		\geq  
		\sum_{n=0}^N \kappa_n^\pm \|Q_n^\pm f^\pm\|^2_\I
%		\\
%		&
		\\
		&
		\geq 
		\min_{n=0,\dots,N} \kappa_n^{\pm } 
		\sum_{n=0}^N \|Q_n^\pm f^\pm\|^2_\I
		\geq 
		\frac{\min_{n=0,\dots,N} \kappa_n^{\pm } }{N+1} \|f^\pm\|^2_\I,
	\end{aligned}
	\end{equation*}
	here $\kappa_n^\pm := \pm (J_2\psi, \psi)_\I$,
	where $\psi \in \dom \WT$ satisfies $\WT\psi = \mu_n^\pm\psi$
	and $\|\psi\|_\I = 1$, and the inequality in \eqref{Th.ineq} is used in the last step. 
	
	Next, as in~\eqref{Tmu}, \eqref{ClosedOps} and~\eqref{Smu}, 
	we introduce the operators $T_1^\mu$, $T_2^\mu(N)$ 
	with $\mu \in \cI = \{+,-,\rm r\}$, defined on the respective subspaces 
	$\cH_1^\mu := P_1^\mu\cH_1$, $\cH_2^\mu(N) := P_2^\mu(N) \cH_2$, 
	and the corresponding tensor products $\sfS^{\mu \nu}(N)$ 
	and $\sfS^{\mu}(N)$, $\mu, \nu \in \cI$. 

	It is straightforward to see that $\s(T_1^+) = \s(T_1)$, $\s(T_1^-) = \s(T_1^\rmr) =\varnothing$,
	$\s(T_2^\pm (N)) = \{\mu_n^\pm\}_{n=0}^N$ and 
	$\s(T_2^{\rm r}(N))  = \s(T_2) \setminus (\s(T_2^+(N)) \cup \s(T_2^-(N)))$.
	Hence, we obtain 
	\begin{align*}
		\s(\sfS^+(N)) & = \s(\sfS^{++}(N)) = \cup_{n=0}^N (\mu_n^+ + \s(T_1)),
		\\
		\s(\sfS^-(N)) &= \s(\sfS^{+-}(N)) = \cup_{n=0}^N (\mu_n^- + \s(T_1)),
		\\
		\s(\sfS^{\rm r}(N)) &= \s(\sfS) \setminus 
		\left(
			\left( \cup_{n=0}^N (\mu_n^+ + \s(T_1)) \right)
			\cup 
			\left( \cup_{n=0}^N (\mu_n^- + \s(T_1)) \right)
		\right).
	\end{align*}
	From Theorem~\ref{thm:abstract}\,(i) and (ii), we receive that, 
	for every $N \in \N_0$, 
	\begin{equation*}
		\big(\cup_{n=0}^N (\mu_n^\pm + \s(T_1))\big) 
		\setminus (\M_\mp \cup \M_0 \cup 
		(\cup_{n=N+1}^{\infty}(\mu_n^\pm + \s(T_1)))) \subset \s_{\pm\pm}(\sfS),
	\end{equation*}
	thus the proof is complete since $\M_\pm\setminus (\M_\mp\cup \M_0)  \subset  \s_{\pm \pm}(\sfS)$.
\end{proof}

The definiteness of the low-lying spectral points in 
$\sigma(\sfH_{\ii\alpha_0}^\Omega)$ for $V_0 \equiv 0$ is visualized in 
Figure~\ref{fig:PTWG.V0}. For a non-trivial $V_0 \neq 0$, $\s(\WL)$ 
can be more complicated than $[0,\infty)$, 
which yields much richer structure for $\sigma(\sfH_{\ii\alpha_0,V}^\Omega)$; 
see Figure~\ref{fig:PTWG.V} for an illustration. 
\begin{figure}[htb!]
	\includegraphics[width=0.8 \textwidth]{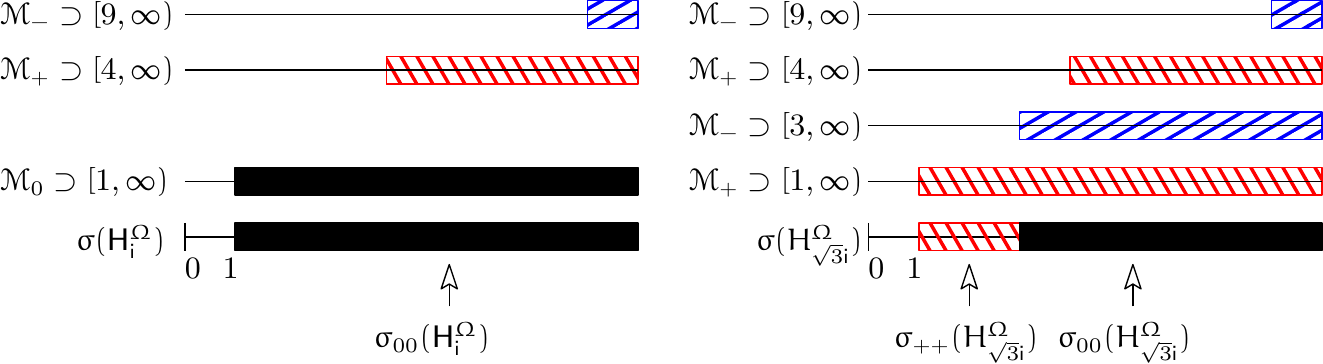}
	\caption{The bottom of $\sigma(\sfH_{\ii\alpha_0}^\Omega)$ 
	and parts of sets $\M_{\pm}, \M_{0}$ for $a=\pi/2$, 
	$\aa_0=1$ and $\aa_0=\sqrt 3$ (left to right).}
	\label{fig:PTWG.V0}
\end{figure}
\begin{figure}[htb!]
	\includegraphics[width=0.8 \textwidth]{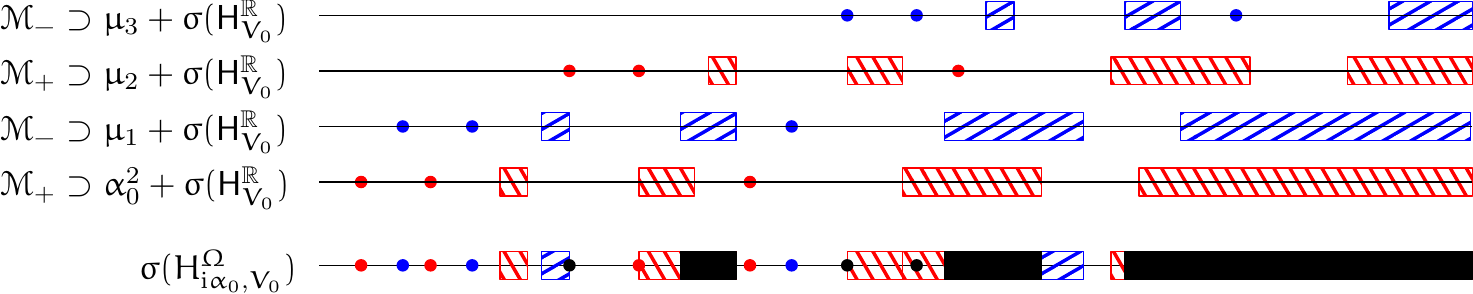}
	\caption{A possible structure and definiteness 
	of the bottom of $\sigma(\sfH_{\ii\alpha_0,V_0}^\Omega)$ 
	for $V_0 \neq 0$. The color codding (red for $++$, blue for $--$ and black for $00$) 
	is the same as in Figure~\ref{fig:PTWG.V0}.}
	\label{fig:PTWG.V}
\end{figure}

\subsection{Perturbations of $\PT$-symmetric waveguides}
\label{ssec:peturb}

We perturb the waveguide $\sfH_{\ii\alpha_0,V_0}$ both in boundary conditions and potential. 
As long as $\|\aa - \bb\|_\infty$ and $\|V - W\|_\infty$ are small, the gap distance $\wh \delta \,
(\sfH_{\aa,V}^\Omega, \sfH_{\beta,W}^\Omega)$, \cf~\cite[Sec.~IV \S2]{Kato-1966},
between $\sfH_{\aa,V}^\Omega$ and $\sfH_{\bb,W}^\Omega$ is small. Moreover, the resolvent difference
of $\sfH_{\aa,V}^\Omega$ and $\sfH_{\bb,W}^\Omega$
is compact if $\aa - \bb \in L^\infty_\infty(\R)$ and $V - W \in L^\infty_\infty(\Omega)$, where 
for $\dX \in \{\dR, \Omega\}$, we define  
\begin{equation}\label{eq:Linftyinfty}
L^\infty_\infty(\dX) 
:= 
\big\{ 
u \in L^\infty(\dX;\dC)
\colon 
\{ x \in \dX \colon |u(x)| \le \eps\}~\text{is bounded}~\forall \eps > 0
\big\}. 
\end{equation}
We indicate how this slight extension of \cite[Prop.~5.1]{Borisov-2008-62} and \cite[Prop. 4.7]{Novak-2014}, 
where less general perturbations in boundary conditions were considered, can be proved. 
%

%------------------------------------------------------------	
\begin{prop}\label{prop:gap}
		Let $a > 0$,
		$\aa \in L^\infty(\R;\C)$ and $V \in L^\infty_\PT(\Omega;\dC)$.
		Then 
		\begin{myenum}
			\item for any $\eps > 0$ there exists 
			$\delta = \delta(\eps, a, \aa, V) > 0$ such that for 
			$\beta \in L^\infty(\R;\C)$ and $W \in L^\infty_\PT(\Omega)$
			satisfying $\|\beta - \aa\|_\infty +  \|W - V\|_\infty \le \delta$
			the operators $\sfH_{\aa, V}^\Omega$ 
			and $\sfH_{\beta, W}^\Omega$ in Definition~\ref{def:Op}
			fulfill
			$\wh \delta \,
			(\sfH_{\aa,V}^\Omega, \sfH_{\beta,W}^\Omega) \le \eps$;
			\item if $\aa - \bb \in L^\infty_\infty(\R)$
			and $V - W \in L^\infty_\infty(\Omega)$, then 
			$(\OpV - \lm)^{-1} - (\sfH_{\beta,W}^\Omega -\lm)^{-1} $
			is compact for all  $\lm \in \rho(\sfH_{\aa,V}^\Omega) \cap \rho(\sfH_{\bb,W}^\Omega)$.		
		\end{myenum}
\end{prop}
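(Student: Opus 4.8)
The plan is to place both operators in the common sesquilinear-form framework of Definition~\ref{def:Op}. Since $\OpV$ and $\sfH_{\beta,W}^\Omega$ are the m-sectorial operators associated with sectorial forms that share the \emph{same} form domain $H^1(\Omega)$, their entire difference is carried by the bounded perturbing form
\[
\mathfrak{p}[u,v] := ((V-W)u,v)_\Omega + ((\aa-\beta)\,u|_{\Sigma_+},v|_{\Sigma_+})_{\Sigma_+} + ((\ov\aa-\ov\beta)\,u|_{\Sigma_-},v|_{\Sigma_-})_{\Sigma_-}.
\]
Using the trace inequality $\|u|_{\Sigma_\pm}\|_{\Sigma_\pm}\le C_{\rm tr}\|u\|_{H^1(\Omega)}$ for the strip $\Omega$, I would first record
\[
|\mathfrak{p}[u,v]|\le \big(\|V-W\|_\infty + C_{\rm tr}^2\,\|\aa-\beta\|_\infty\big)\,\|u\|_{H^1(\Omega)}\,\|v\|_{H^1(\Omega)},
\]
so that $\mathfrak p$ is a bounded form on $H^1(\Omega)$ whose norm is $\le C\delta$ with $\delta:=\|\beta-\aa\|_\infty+\|W-V\|_\infty$. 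Fixing $\lambda$ deep in the common resolvent set (to the left of both sectors, where all resolvents are bounded uniformly for $\delta$ small), I would use the factorised resolvent identity
\[
(\OpV-\lambda)^{-1}-(\sfH_{\beta,W}^\Omega-\lambda)^{-1}=-(\sfH_{\beta,W}^\Omega-\lambda)^{-1}\,\mathsf P\,(\OpV-\lambda)^{-1},
\]
where $(\OpV-\lambda)^{-1}$ is viewed as bounded from $L^2(\Omega)$ into $H^1(\Omega)$, the operator $\mathsf P\colon H^1(\Omega)\to H^1(\Omega)^*$ represents $\mathfrak p$, and $(\sfH_{\beta,W}^\Omega-\lambda)^{-1}$ is extended to a bounded map $H^1(\Omega)^*\to L^2(\Omega)$.

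For \textup{(i)}, the two outer resolvent factors are bounded uniformly in the perturbation once $\lambda$ is fixed, so the identity above gives $\|(\OpV-\lambda)^{-1}-(\sfH_{\beta,W}^\Omega-\lambda)^{-1}\|\le C\delta$. I would then convert this into the desired gap bound through the standard inequality $\wh\delta(T,T')\le C_\lambda\,\|(T-\lambda)^{-1}-(T'-\lambda)^{-1}\|$ relating Kato's gap metric to resolvent differences at a fixed point, \cf~\cite[\S IV.2,\,\S VI.3]{Kato-1966}; given $\eps>0$ it then suffices to take $\delta$ small enough.

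For \textup{(ii)}, the defining decay property in~\eqref{eq:Linftyinfty} means that $\aa-\beta$ and $V-W$ are $L^\infty$-limits of their truncations to $\{|x|\le R\}$. Splitting $\mathsf P=\mathsf P_R+\mathsf R_R$ accordingly and using the factorisation together with the bound from \textup{(i)}, the full resolvent difference is the operator-norm limit of the resolvent differences produced by the compactly supported pieces $\mathsf P_R$; since the compact operators form a norm-closed ideal, it suffices to prove compactness for a compactly supported perturbation. For such $\mathsf P_R$ I would localise with an $x$-cutoff to a bounded Lipschitz subdomain $\Omega_R\subset\Omega$ and treat the terms of $\mathsf P_R(\OpV-\lambda)^{-1}$ separately: the bulk term factors through multiplication by a function supported in $\Omega_R$ composed with $(\OpV-\lambda)^{-1}\colon L^2(\Omega)\to H^1(\Omega)$, hence is compact by the Rellich--Kondrachov embedding $H^1(\Omega_R)\hookrightarrow L^2(\Omega_R)$; the boundary terms factor through the trace onto a bounded portion of $\Sigma_\pm$, a compact map $H^1(\Omega)\to L^2(\Sigma_\pm)$. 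Composing the resulting compact operator with the bounded factor $(\sfH_{\beta,W}^\Omega-\lambda)^{-1}$ yields compactness of the resolvent difference, first at the chosen $\lambda$ and then for every $\lambda\in\rho(\OpV)\cap\rho(\sfH_{\beta,W}^\Omega)$ via the resolvent identity.

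The main obstacle is \textup{(ii)}. Because the coupling-dependent (Robin-type) boundary conditions built into the form~\eqref{eq:PTWG.form} make the operator domains of $\OpV$ and $\sfH_{\beta,W}^\Omega$ genuinely different, one cannot form an operator difference and must run the whole argument at the level of the bounded form $\mathfrak p$ and its factorised resolvent identity. Within that scheme the delicate step is the compactness of the boundary contribution --- the trace onto a \emph{bounded} portion of the non-compact boundary $\Sigma_\pm$ being compact from $H^1(\Omega)$ into $L^2$ --- which is precisely why the preliminary $x$-localisation to a bounded Lipschitz subdomain is needed before the compact trace embedding can be invoked; the bulk contribution, by contrast, is a routine application of Rellich's theorem.
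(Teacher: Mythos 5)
Your argument is correct and follows essentially the same route as the paper: both treat the difference of the two operators as a bounded perturbing form on the common form domain $H^1(\Omega)$, derive the factorized resolvent identity consisting of one bulk term and two boundary-trace terms, and obtain compactness from the decay of $\aa-\bb$ and $V-W$ combined with Rellich/compact-trace arguments (the paper cites \cite[Lem.~3.3\,(i)]{LR12} for the truncation step you carry out inline, and \cite[Thm.~VI.3.6]{Kato-1966} for item (i) instead of your passage through resolvent differences and the gap--resolvent inequality). The one detail to make explicit is that locating a common resolvent point $\lm$ with resolvent bounds uniform over the whole family $\sfH_{\bb,W}^\Omega$, $\delta$ small, requires the Ehrling-type refinement $\|u|_{\Sigma_\pm}\|_{\Sigma_\pm}^2\le \eps\|\nabla u\|_\Omega^2+C_\eps\|u\|_\Omega^2$ rather than the crude trace bound $\|u|_{\Sigma_\pm}\|_{\Sigma_\pm}\le C_{\rm tr}\|u\|_{H^1(\Omega)}$ you state, which is precisely the lemma the paper invokes.
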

\begin{proof}
(i) The claim follows in a straightforward way 
from \cite[Thm.~VI.3.6]{Kato-1966} and Ehrling-type lemma, see \eg~\cite[Lem.~3.1]{Borisov-2008-62} for details in this special situation.

(ii) 
Set $U := W - V$ and $\omega := \beta - \aa$. Denote
$\sfR_{\aa,V}^\Omega(\lm) := (\OpV - \lm)^{-1}$ for $\lm\in\rho(\OpV)$
and define the operators 
$
\sfT^\pm_{\aa,V}(\lm) \colon L^2(\Omega)\to L^2(\R)$,  
$\sfT^\pm_{\aa,V}(\lm) f := ( (\sfR_{\aa, V}^\Omega(\lm)f )|_{\Sigma_\pm}
$
and similarly for $\alpha$ and  $V$ replaced by $\beta$ and $W$.  
Since $\sfH_{\aa,V}^\Omega$ and $\sfH_{\bb,W}^\Omega$
are m-sectorial, there exists $a < 0$ such that
$ a\in \rho(\sfH_{\aa,V}^\Omega) \cap \rho(\sfH_{\bb,W}^\Omega) \cap \rho(\sfH_{\ov{\aa},\ov{V}}^\Omega)\cap \rho(\sfH_{\ov{\bb},\ov{W}}^\Omega).$
The resolvent difference is denoted by $\sfD := \sfR_{\aa,V}^\Omega(a) - \sfR_{\bb,W}^\Omega(a)$.
	
From the trace theorem (\cite[Chap.~3]{McL}), the operators $\sfT^\pm_{\aa,V}(a)$
are everywhere defined in $L^2(\Omega)$ and bounded, moreover, we have
$\ran \sfT^\pm_{\aa,V}(a) \subset H^{1/2}(\R)$.
Let $f,g \in L^2(\Omega)$ and set 
$u := \sfR_{\aa,V}^\Omega(a)f$, $v := \sfR_{\ov{\bb},\ov{W}}^\Omega(a)g$.	
Then, we have
	\begin{equation}\label{eq:D}
	\begin{split}
		(\sfD f,g)_\Omega & 
		= 
		\big( \sfR_{\aa,V}^\Omega(a)f, g\big)_\Omega -
		\big( \sfR_{\bb,W}^\Omega(a)f, g\big)_\Omega
		= \big(u, g\big)_\Omega 
			- 
			\big(f,v \big)_\Omega\\
		& = \big(u, (\sfH_{\ov{\bb},\ov{W}}^\Omega - a)v\big)_\Omega 
			-   
			 \big((\sfH_{\aa,V}^\Omega - a)u, v\big)_\Omega
		\\ & = (u,\sfH_{\ov{\bb},\ov{W}}^\Omega v)_\Omega 
						- (\sfH_{\aa,V}^\Omega u, v)_\Omega.
	\end{split}	
	\end{equation}
	%
%	This formula can be rewritten in a more suitable way. 
	Observe that $u, v  \in H^1(\Omega)$,
	which is the form domain of both the operators $\sfH_{\aa,V}^\Omega$
	and $\sfH_{\ov{\bb},\ov{W}}^\Omega$. 
	Hence, we can use \cite[Thm.~VI.2.1,~VI.2.5]{Kato-1966} 
	to rewrite~\eqref{eq:D} as
	\[
		(\sfD f,g)_\Omega 	= (U u,v)_\Omega + 
		(\omega u|_{\Sigma_+},v|_{\Sigma_+})_{\Sigma_+} + 
		(\ov{\omega}u|_{\Sigma_-},v|_{\Sigma_-})_{\Sigma_-},
	\]
	where we made use of~\eqref{eq:PTWG.form}. 
	In fact, we have shown the resolvent identity
	\begin{align}
		\sfD 
		= 
		\sfR_{\bb, W}^\Omega(a)U\sfR_{\aa,V}^\Omega(a)
		+
		(\sfT_{\ov{\beta},\ov{W}}^+(a))^* \omega \sfT_{\aa,V}^+(a)
		+		
		(\sfT_{\ov{\beta},\ov{W}}^-(a))^* \ov{\omega} \sfT_{\aa,V}^-(a).		
	\end{align}
	The compactness of $\sfD$ follows from $U \in L^\infty_\infty(\Omega)$, $\omega \in L^\infty_\infty(\R)$ and inclusions $\ran \sfT_{\aa,V}^\pm(a)\subset H^{1/2}(\R)$, $\ran \sfR_{\aa,V}^\Omega(a) = \dom \sfH_{\aa,V}^\Omega \subset H^1(\Omega)$. 
	In detail, for $\dX\in\{\Omega,\dR\}$ 
	the product of any $A \in L^\infty_\infty(\dX)$ and $B \in \scB(\cH,L^2(\dX))$ 
	with $\ran B \subset H^s(\dX)$, $s > 0$,
	is a compact operator from $\cH$ into $L^2(\dX)$ .
	The latter can be shown using compactness of Sobolev embeddings and the 
	definition of $L^\infty_\infty(\dX)$; \cf~\cite[Lem. 3.3\,(i)]{LR12} and its proof.
\end{proof}

\subsection{Spectral conclusions for the perturbed waveguide}
	
We draw a spectral conclusion on $\PT$-symmetric waveguides based on Theorem~\ref{thm:PTWG.pn} and
stability of definite type spectra.
Notice that in the case $V_0 \equiv 0$, 
the sets in the claims of Theorems~\ref{thm:B_ext} and~\ref{thm:C_ext} are explicit, 
namely (\cf~Theorem \ref{thm:PTWG.pn})
\begin{equation}\label{Msets.0}
\M_+\setminus (\M_-\cup\M_0) = [\mu_0,\mu_1), \quad \M_-\setminus (\M_+\cup\M_0) = \varnothing.
\end{equation}

\begin{thm}\label{thm:B_ext}
	Let $a > 0$, $\aa_0\in\dR$, $V_0\in L^\infty(\R;\R)$
	and $\M_\pm, \M_0, \M$ be as in~\eqref{eq:Mmu}.
	Then for any compact set $\cF\subset \dC$ satisfying
	either $\cF\cap \M \subset \M_+\setminus (\M_-\cup\M_0)$ or
	$\cF\cap \M \subset \M_-\setminus (\M_+\cup\M_0)$,
	there exists a constant $\delta = \delta(\cF,a,\aa_0,V_0) > 0$ 
	such that for any
	$\aa \in L^\infty(\dR;\dC)$ and any $V\in L^\infty_{\rm \PT}(\Omega;\dC)$
	with $\|\aa - \ii \aa_0\|_\infty + \|V - V_0\|_\infty \le \delta$
	the operator $\OpV$ in Definition~\ref{def:Op} satisfies:
	\begin{myenum}
		\item $\s(\OpV)\cap \cF \subset\dR$;
		\item $\s_\eps(\OpV)\cap \cF 
		\subset \big\{\lm\in \cF\colon |\Im\lm| \le \eps M\big\}$, $\eps >0$,
		with $M = M(\cF,a,\aa,V) > 0$.
	\end{myenum}
\end{thm}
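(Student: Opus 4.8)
The strategy is to deduce Theorem~\ref{thm:B_ext} from the characterization of definite type spectra in Theorem~\ref{thm:PTWG.pn} together with the stability of such spectra under small (gap-distance) perturbations. The key observation is that the unperturbed operator $\sfH_{\ii\aa_0,V_0}^\Omega$ has, on the relevant part of the spectrum, only spectral points of one definite type: by hypothesis $\cF\cap\M$ lies entirely inside either $\spp(\sfH_{\ii\aa_0,V_0}^\Omega)=\M_+\setminus(\M_-\cup\M_0)$ or $\smm(\sfH_{\ii\aa_0,V_0}^\Omega)=\M_-\setminus(\M_+\cup\M_0)$. I treat the positive-type case; the negative-type case is identical with the roles of $J$ and $-J$ exchanged.

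First I would recall the abstract stability machinery for definite type spectra. The operator $\sfH_{\ii\aa_0,V_0}^\Omega$ is $J$-self-adjoint by Lemma~\ref{lem:PTWG.sp}, and by Proposition~\ref{prop:gap}~(i) a bound $\|\aa-\ii\aa_0\|_\infty+\|V-V_0\|_\infty\le\delta$ forces the gap distance $\wh\delta(\sfH_{\ii\aa_0,V_0}^\Omega,\OpV)$ to be small. The perturbed operator $\OpV$ is also $J$-self-adjoint (Remark~\ref{rem:WG.sym}, since $\Omega\ni(x,y)\mapsto\overline{V(x,-y)}=V(x,y)$ places $V$ in the right symmetry class and $J=\P$). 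I would then invoke the quantitative stability results for spectra of positive type under small gap perturbations of $J$-self-adjoint operators from the cited works~\cite{Azizov-2011-83,Azizov-2005-226,Philipp-2014}: there is a $\delta_0>0$ (depending on $\cF$ and on the positivity constant of $(J\cdot,\cdot)$ on the relevant spectral subspace, hence on $a,\aa_0,V_0$) so that whenever the gap distance is below $\delta_0$, the part of $\sap(\OpV)$ inside a neighborhood of $\cF\cap\M_+$ remains of positive type, while no spectrum of $\OpV$ enters $\cF\setminus\M$. Choosing $\delta$ via Proposition~\ref{prop:gap}~(i) to realize this gap bound gives claim~(i): since positive type spectral points are real, $\s(\OpV)\cap\cF\subset\dR$.

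For claim~(ii) I would use that spectral points of positive type come with uniform resolvent estimates in a neighborhood: the standard consequence of the definite type condition (again from~\cite{Azizov-2011-83,Philipp-2014}) is a bound of the form $\|(\OpV-\lm)^{-1}\|\le M/|\Im\lm|$ for $\lm$ in a neighborhood of the relevant spectral band with $\Im\lm\ne0$, with $M$ controlled by the definiteness constant and the gap bound. This is precisely the tame linear pseudospectral behavior. Feeding this estimate into the definition of $\s_\eps(\OpV)$ shows that if $\lm\in\cF$ satisfies $\|(\OpV-\lm)^{-1}\|>\eps^{-1}$ then $M/|\Im\lm|>\eps^{-1}$, i.e. $|\Im\lm|<\eps M$, which is exactly the inclusion in~(ii); the points of $\s(\OpV)\cap\cF$ already satisfy $|\Im\lm|=0\le\eps M$ by~(i).

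\textbf{Main obstacle.} The delicate point is the uniformity and the correct bookkeeping of constants: I must ensure that the single $\delta$ produced by Proposition~\ref{prop:gap}~(i) simultaneously (a) keeps the spectrum away from the endpoints where $\M_+$ meets $\M_-$ or $\M_0$ (so that no spectrum of indefinite type can drift into $\cF$), and (b) preserves a uniform positivity constant, which is what yields the $\eps$-independent (in fact $\eps$-linear) pseudospectral bound. Concretely, the compactness of $\cF$ and the hypothesis $\cF\cap\M\subset\M_+\setminus(\M_-\cup\M_0)$ guarantee a positive distance from $\cF\cap\M$ to the non-definite part, and this distance is what must be matched against the gap distance; verifying that the abstract perturbation theorems apply with these quantitative inputs—and that the positivity constant degrades controllably—is the step requiring care, whereas the reduction itself is routine once the stability statements are in hand.
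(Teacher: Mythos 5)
Your overall route is the one the paper takes: identify $\cF\cap\M$ as lying in $\spp(\sfH_{\ii\aa_0,V_0}^\Omega)$ via Lemma~\ref{lem:PTWG.sp} and Theorem~\ref{thm:PTWG.pn}, transfer this to $\OpV$ by the gap-distance stability result for definite type spectra (Theorem~\ref{thm:perturb++}, i.e.\ \cite[Thm.~4.5]{Azizov-2011-83}) combined with Proposition~\ref{prop:gap}~(i), and then read off items (i) and (ii) from the realness and the linear resolvent bound valid near spectra of definite type of a $J$-self-adjoint operator (Theorem~\ref{thm:++}).

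There is, however, one step you pass over that is not automatic. The stability theorem yields $\cF\subset\spp(\OpV)\cup\rmr(\OpV)$, where $\rmr(\OpV)=\C\setminus\sap(\OpV)$ is the set of points of \emph{regular type}, not the resolvent set. For a non-self-adjoint operator a non-real point of regular type can still lie in the (residual) spectrum, so your assertion that ``no spectrum of $\OpV$ enters $\cF\setminus\M$'' does not yet follow, and neither does (i); likewise Theorem~\ref{thm:++}~(ii) needs $\cF\cap\s(\OpV)\subset\spp(\OpV)$, not merely $\cF\cap\sap(\OpV)\subset\spp(\OpV)$. The paper closes this by noting that $\OpV$ is $\T$-self-adjoint (Remark~\ref{rem:WG.sym}), hence has empty residual spectrum, so $\rmr(\OpV)=\rho(\OpV)$ and $\cF\subset\spp(\OpV)\cup\rho(\OpV)$. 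With that one observation added, your argument coincides with the paper's proof; the uniformity issues you flag as the main obstacle are in fact already packaged inside Theorems~\ref{thm:perturb++} and~\ref{thm:++} and require no extra bookkeeping.
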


\begin{proof}
	We prove the claim only for  
	$\cF\cap \M \subset \M_+\setminus (\M_-\cup\M_0)$; the other case is analogous.
	By Lemma~\ref{lem:PTWG.sp} and Theorem~\ref{thm:PTWG.pn} we have
	$\s(\sfH_{\ii\aa_0,V_0}) = \M$ and 
	$\spp(\sfH_{\ii\aa_0,V_0}^\Omega) = \M_+\setminus (\M_-\cup\M_0)$. 
	By Theorem~\ref{thm:perturb++} there exists 
	$\gamma = \gamma(\cF,a,\aa_0,V_0)$, 
	$\gamma\in(0,1)$,
	such that for any operator $\sfH\in \cC(L^2(\Omega))$
	satisfying $\wh \delta(\sfH_{\ii\aa_0,V_0}^\Omega,\sfH)\le \gamma$ we have
	$\cF \subset \spp(\sfH) \cup \rmr(\sfH)$. 
	By Proposition~\ref{prop:gap} there exists $\delta = \delta(\gamma,a,\aa_0,V_0) >0$
	such that for $\|\aa-\ii\aa_0\|_\infty + \|V - V_0\|_\infty \le \delta$ 
	we have	$\wh \delta(\sfH_{\ii\aa_0,V_0}^\Omega,\sfH_{\aa,V}^\Omega) \le 
	\gamma$. 
	Moreover, since $\sfH_{\aa,V}^\Omega$ is $\T$-self-adjoint, see Remark~\ref{rem:WG.sym}, 
	the residual spectrum of $\sfH_{\aa,V}^\Omega$ is empty,~\cf~\cite[Cor.~2.1]{Borisov-2008-62}, 
	thus $\rmr(\sfH_{\aa,V}^\Omega) = \rho(\sfH_{\aa,V}^\Omega)$. 
	Hence, we obtain 
	$\cF\subset\spp(\sfH_{\aa,V}^\Omega)\cup \rho(\sfH_{\aa,V}^\Omega)$.
	Now $J$-self-adjointness of $\sfH_{\aa,V}^\Omega$,
	w.r.t.~$J$ in~\eqref{def:JWG}, and Theorem~\ref{thm:++}~(i),~(ii) imply the claims.
\end{proof}

\begin{remark}\label{rem:wild}
	In particular, for $V_0 \equiv 0$, Theorem~\ref{thm:B_ext} 
	shows that the lowest part of the essential spectrum which is of $++$ type, 	
	remains real for all sufficiently small perturbations respecting the symmetry. 		
	If the bottom of the essential spectrum is not of definite type, 
	such conclusions are not valid as shown below.
	
	If $a=\pi/2$ and $\aa_0 = 1$, then the whole $\sess(\sfH^\Omega_\ii)$ is 
	not of definite type, see Theorem \ref{thm:PTWG.pn} and  Figure~\ref{fig:PTWG.V0}. 
	We consider $\sfH_{\ii + \beta_0}^\Omega$ with $\beta_0 \in \R$, \
	\ie~a perturbation of $\sfH^\Omega_\ii$ in boundary conditions. The  eigenvalues of the 
	new transversal operator $\sfH_{\ii + \beta_0}^\I$ 
	obey the algebraic equation (with $\lm = k^2$)
	\begin{equation}\label{beta.EV}
		(k^2 -1 - \beta_0^2)\sin(\pi k) -  2\beta_0 k\cos(\pi k) = 0,
	\end{equation}
	see \cite[Prop.~4.3]{Krejcirik-2010-43}. 
	While $k = 1$ is clearly the solution of~\eqref{beta.EV} 
	for $\beta_0=0$, it is 		
	not difficult to verify that, for any negative $\beta_0$ with
	sufficiently small $|\beta_0|$, the 
	only two solutions $\kappa_1$, $\kappa_2$, of~\eqref{beta.EV} in the 	
	neighborhood of $k=1$ are non-real. Note also that
	$\kappa_1 \to 1$ as $\beta_0 \to 0$.
	Hence, for $\beta_0 < 0$ with sufficiently small $|\beta_0|$, 
	the essential spectrum of $\sfH_{\ii + \beta_0}^\Omega$ contains
	two non-real branches $\kappa_1^2 + \dR_+$, $\kappa_2^2 + \dR_+$ 
	with $\kappa_2^2 = \overline{\kappa_1^2} \notin \R$. It can also
	be shown that all the other solutions of~\eqref{beta.EV}
	for sufficiently small $|\beta_0|$ are real. 
	Hence, the operator $\sfH_{\ii +\beta_0}^\Omega$ has 
	also one more real branch of the essential spectrum  $\kappa_3^2 + \dR_+$;
	see Figure~\ref{fig:3branches}. 
	\begin{figure}[h!]
\figinit{pt}
% Vertices of the triangle
\figpt 1:(0, -50)
\figpt 2:(0, 50)
\figpt 3:(-40, 0)
\figpt 4:(90, 0)
\figpt 5:(10, -20)
\figpt 6:(70, -20)
\figpt 7:(25, 0)
\figpt 8:(90, 0)
\figpt 9:(10, 20)
\figpt 10:(70, 20)
\figpt 11:(100, -20)\figpt 12:(100, 20)\figpt 13:(120, 0)
\figpt 14:(50, 0)

                    % and the barycenter

% 2. Creation of the graphical file
\figdrawbegin{}
% A closed line to draw the triangle

\figdrawline[1,2]
\figdrawline[3,4]

\figset (width = 1.4)        
\figset (color = \Blackrgb)
\figdrawline [5,6]
\figdrawline [9,10]
\figset (color = \Redrgb)
\figdrawline [7,14]
\figset (color = \Blackrgb)
\figdrawline [14,8]

\figset (color = \Redrgb)
\figdrawcirc 7(1.0)
\figset (color = \Blackrgb)
\figdrawcirc 5(1.0)
\figdrawcirc 9(1.0)

\figset(dash = 8)
\figset (color = \Blackrgb)
\figdrawline [6,11]
\figdrawline [10,12]
\figset(dash = 8)
\figset (color = \Blackrgb)
\figdrawline [8,13]

%\figdrawline [3,1]
%\figdrawline [3,2]
%%\figdrawline [13,11]
%%\figdrawline [13,12]
%\figdrawline[14,11]
%\figdrawline[15,12]
%\figdrawarrow [4,5]
%\figdrawarcell 0 ; 0.5,2.776221382176024 (25,335, 0)
%\figdrawarcell 10 ; 0.3,1.388110691088013 (90,260,0)
%\figdrawarcell 20 ; 0.5,2.776221382176024 (0,360, 0)
%\figdrawarcell 21 ; 0.3,1.65 (90,260,0)
%\figdrawarrowcircP 3;1.2[0,1]
%\figset (dash=8)
%\figdrawline[13,14]
%\figdrawline[13,15]
%\figdrawarcell 10 ; 0.3,1.388110691088013 (-90,90,0)
%\figdrawarcell 0 ; 0.5,2.776221382176024 (-25,25, 0)
%\figdrawarcell 21 ; 0.3,1.65 (-90,90,0)
\figdrawend

%
% 3. Writing text on the figure
\figvisu{\figBoxA}{}
{
\figwritee 2:{\tiny $\Im\lm$} (3)
\figwrites 4:{\tiny $\Re\lm$} (3)
\figwriten 7:{\tiny$\kappa_3^2$} (3)
\figwrites 5:{\tiny$\kappa_2^2$} (3)
\figwriten 9:{\tiny$\kappa_1^2$} (3)
}
\centerline{\box\figBoxA}
\caption{$\s(\sfH_{\ii + \beta_0}^\Omega) = \sess(\sfH_{\ii + \beta_0}^\Omega)$ for $\beta_0 < 0$ with sufficiently small $|\beta_0|$
consists of two non-real branches $\kappa_1^2 + \dR_+$ and $\kappa_2^2 + \dR_+$ and one real branch $\kappa_3^2 + \dR_+$.}
\label{fig:3branches}
\end{figure}
	On the qualitative level, arbitrary small perturbation of $\sfH_\ii^\Omega$
	in the gap metric drastically changes spectral properties of the Hamiltonian. 
\end{remark}	
	
\medskip
\begin{thm}\label{thm:C_ext}
	Let $a > 0$, $\aa_0\in\dR$, $V_0 \in L^\infty(\R;\R)$,
	$\M, \M_\pm,\M_0$ be as in~\eqref{eq:Mmu}, $\aa \in L^\infty(\R;\C)$
	be such that $\aa -\ii\aa_0 \in L^\infty_\infty(\R)$ 
	and let $V \in L^\infty_{\rm \PT}(\Omega)$
	be such that $V(x,y) - V_0(x) \in L^\infty_\infty(\Omega)$. 
	Assume that the interval $[a,b]$ satisfies
	either $[a,b]\cap \M \subset \M_+\setminus (\M_-\cup\M_0)$ or $[a,b]\cap \M \subset 
		\M_-\setminus (\M_+\cup\M_0)$.
	Then there is an open neighborhood 
	$\cU \subset\C$ of $[a,b]$ such that $\OpV$ from Definition~\ref{def:Op} satisfies:
	\begin{myenum}
		\item $\s(\OpV)\cap\cU\subset\dR$;
		\item $\s_\eps(\OpV)\cap\cU\subset 
		\big\{\lm\in \cU \colon |\Im\lm| \le M\eps^{1/m}\big\}$,
		$\eps > 0$, with $m = m(\cU,a,\aa,V)\in\dN$ and
		$M = M(\cU,a,\aa,V) > 0$;
		\item there exists at most finite number of eigenvalues
		$\{\nu_k\}_{k=1}^N$, $N\in\dN_0$, of $\OpV$ in $[a,b]$
		such that for any $[c,d]\subset [a,b] \setminus \cup_{k=1}^N\{\nu_k\}$
		one finds an open neighborhood $\cV \subset\dC$ for which
		$\s_\eps(\OpV)\cap\cV\subset \big\{\lm\in \cV \colon |\Im\lm| \le K\eps
			\big\}$ for any $\eps > 0$ with $K = K(\cV,a,\aa,V) >0$.		
	\end{myenum}
\end{thm}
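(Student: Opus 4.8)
The plan is to mirror the proof of Theorem~\ref{thm:B_ext}, but to replace the genuine definite-type stability (which holds under small gap perturbations) by its type-$\pi$ counterpart, which is the correct notion under compact resolvent perturbations. As in Theorem~\ref{thm:B_ext} it suffices to treat the case $[a,b]\cap\M\subset\M_+\setminus(\M_-\cup\M_0)$, the other case being symmetric. First I would assemble the structural data. By Lemma~\ref{lem:PTWG.sp} and Theorem~\ref{thm:PTWG.pn} the unperturbed operator $\sfH_{\ii\aa_0,V_0}^\Omega$ is $J$-self-adjoint for $J$ as in~\eqref{def:JWG}, its spectrum equals $\M$, and $[a,b]\cap\M\subset\spp(\sfH_{\ii\aa_0,V_0}^\Omega)$. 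By Remark~\ref{rem:WG.sym} the perturbed operator $\OpV$ is $J$-self-adjoint (and $\T$-self-adjoint) for the same $J$. Finally, Proposition~\ref{prop:gap}(ii) turns the hypotheses $\aa-\ii\aa_0\in L^\infty_\infty(\R)$ and $V-V_0\in L^\infty_\infty(\Omega)$ into the key quantitative fact: the resolvent difference of $\OpV$ and $\sfH_{\ii\aa_0,V_0}^\Omega$ is compact.

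Next I would feed this into the abstract theory of spectra of type $\pi$. Under a $J$-self-adjoint compact resolvent perturbation a spectral point of positive type need not stay of positive type, but it does remain of type $\pi_+$, meaning that the indefinite form is nonnegative on the relevant approximate eigenspaces up to a finite-dimensional defect; this is the compact-perturbation counterpart of Theorem~\ref{thm:perturb++}, see~\cite{Azizov-2011-83,Azizov-2005-226,Philipp-2014}. Two consequences are decisive for item~(i). First, non-real spectrum cannot accumulate to points of type $\pi_+$, so a sufficiently thin open neighborhood $\cU$ of $[a,b]$ meets no non-real spectrum. Second, exactly as in Theorem~\ref{thm:B_ext} the $\T$-self-adjointness of $\OpV$ forces $\rmr(\OpV)=\rho(\OpV)$, so the residual spectrum is empty. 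Together these give $\s(\OpV)\cap\cU\subset\R$.

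For the pseudospectral statements I would use the resolvent estimates attached to type-$\pi_+$ points, that is, the compact-perturbation analogue of Theorem~\ref{thm:++}. On $\cU$ the local spectral function of a type-$\pi_+$ point yields a bound $\|(\OpV-\lm)^{-1}\|\le C|\Im\lm|^{-m}$ with a finite integer $m$ governed by the total order of non-definiteness; inserting the defining inequality $\|(\OpV-\lm)^{-1}\|>\eps^{-1}$ of the $\eps$-pseudospectrum gives $|\Im\lm|\le M\eps^{1/m}$, which is item~(ii). For item~(iii) I would localize: the loss of genuine positivity is carried by the finite negative-square defect and is therefore concentrated at finitely many real points $\{\nu_k\}_{k=1}^N\subset[a,b]$, away from which the spectrum is again of genuine positive type. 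Hence on any $[c,d]\subset[a,b]\setminus\bigcup_{k}\{\nu_k\}$ the definite-type resolvent bound $\|(\OpV-\lm)^{-1}\|\le K'|\Im\lm|^{-1}$ of Theorem~\ref{thm:++} holds on a suitable neighborhood $\cV$, giving the tame linear enclosure $|\Im\lm|\le K\eps$.

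I expect the main obstacle to lie in the second and third steps: extracting from the abstract type-$\pi$ machinery both the uniform polynomial resolvent bound on $\cU$ with an explicit finite exponent $m$ and the finiteness of the exceptional set $\{\nu_k\}$, and checking that the hypotheses of those results are genuinely met by the unbounded, non-self-adjoint differential operator $\OpV$; the realness in item~(i) and the bookkeeping of orders of non-definiteness are the delicate points, whereas the passage from resolvent bounds to pseudospectral enclosures in items~(ii)--(iii) is routine.
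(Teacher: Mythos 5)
Your proposal follows essentially the same route as the paper: Proposition~\ref{prop:gap}\,(ii) gives compactness of the resolvent difference, Theorem~\ref{thm:peturb_pi_+} transfers $[a,b]$ into $\sigma_{\pi_+}(\OpV)\cup\rho(\OpV)$, and Theorems~\ref{thm:pi_+} and~\ref{thm:++}\,(ii) then deliver items (i)--(iii) exactly as you describe. The only step you gloss over is the hypothesis $[a,b]\subset\overline{\rho(\OpV)}$ needed to invoke Theorem~\ref{thm:pi_+}\,(ii); the paper verifies it via the invariance of the essential spectrum under relatively compact perturbations (\cite[Thm.~IX.2.4]{EE}), a routine but necessary check.
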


\begin{proof}
	We give the proof only for the case 
	$[a,b] \cap \M \subset \M_+\setminus (\M_-\cup\M_0)$; the second case is analogous. 
	By Lemma~\ref{lem:PTWG.sp} and Theorem~\ref{thm:PTWG.pn} we have
	$\s(\sfH_{\ii\aa_0,V_0}^\Omega) = \M$ and 
	$\spp(\sfH_{\ii\aa_0,V_0}^\Omega) =	\M_+\setminus (\M_-\cup\M_0)$. 
	By Proposition~\ref{prop:gap} the resolvent difference
	$\sfR_{\aa,V}^\Omega(\lm) - \sfR_{\ii\aa_0,V_0}^\Omega(\lm)$
	is compact for all $\lm\in \rho(\OpV)\cap \rho(\sfH_{\ii\aa_0,V_0}^\Omega)$.
	Hence, by Theorem~\ref{thm:peturb_pi_+}	we have 
	$[a,b]\subset\s_{\pi_+}(\OpV)\cup\rho(\OpV)$. 
	Moreover, the essential spectrum of $\OpV$ 
	(all five definitions in \cite[Sec.~IX]{EE} coincide for $\OpV$) 
	is the same as for $\sfH_{\ii\aa_0,V_0}^\Omega$, \cf~\cite[Thm.~IX.2.4]{EE}. 
	This implies in particular that $[a,b] \subset \overline{\rho(\OpV)}$ and therefore 
	the $J$-self-adjointness of $\OpV$, w.r.t.~$J$ in~\eqref{def:JWG}, and
	Theorem~\ref{thm:pi_+}~(i),~(ii) imply respective
	items of this theorem. Theorem~\ref{thm:pi_+}\,(i), (iii) and 
	Theorem~\ref{thm:++}\,(ii) yield item~(iii).
\end{proof}

\subsection*{Acknowledgments}
The authors thank Jussi Behrndt for drawing their attention to definite type 
spectra and Milo\v{s} Tater for numerical examples. VL thanks Carsten Trunk for discussions. 
The research of PS was supported by SNSF Ambizione project PZ00P2\_154786. 
The research of VL was supported by the Austrian Science Fund (FWF): Project P~25162-N26 and the Czech Science Foundation: Project 14-06818S. 
Both the authors acknowledge the support by the Austria-Czech Republic co-operation grant  CZ01/2013.

\begin{appendix}
\section{Properties of spectra of definite type and of type $\pi$}
%----------------------------------------------------------------------
\label{app:sdef}

In what follows $\cH$ is a Hilbert space and $J$ is a bounded symmetric involution in $\cH$. 
Spectra of definite type or type $\pi$ are always defined w.r.t.~this $J$. 
For $T \in \CH$, the set of points of regular type is denoted by  ${\rm r}(T):=\C \setminus \sap(T)$.
\begin{dfn}[\cite{Azizov-2005-226,Philipp-2014}]\label{def:spi}
	A spectral point $\lambda\in\sap(T)$ is of type $\pi_+$ 
	(or $\pi_-$) w.r.t.~$J$ if there exists a closed
	subspace $\cH_\lambda\subseteq\cH$ with ${\rm codim}\, \cH_\lambda <\infty$ 
	such that every approximate eigensequence $\{f_n\}_n \subset \cH_\lambda\cap\dom T$ 
	corresponding to $\lambda$ satisfies
	\begin{equation*}
		\liminf\limits_{n\rightarrow\infty} (Jf_n,f_n) > 0, 
		\qquad (\text{resp.},~ \limsup\limits_{n\rightarrow\infty} (Jf_n,f_n) < 0).
	\end{equation*}
	The set of all spectral points of type $\pi_\pm$ of $T$ is 
	denoted by $\sigma_{\pi_\pm}(T)$.
\end{dfn}
%
%Some remarkable properties of spectra definite type and of type $\pi$ for $J$-self-adjoint operators are listed in the next two theorems.
%
\begin{thm}\label{thm:++}
\cite[Prop.~3]{Azizov-2005-226}
	Let $T \in \CH$ be such that $T=JT^*J$. Then
	\begin{myenum}
		\item $\sigma_{++}(T)\cup\sigma_{--}(T)\subseteq \dR$.
		\item If $\cF\subset \dR$ is closed and
			  either $\cF\cap\sigma(T)\subseteq \sigma_{++}(T)$ or 
			  $\cF\cap\sigma(T)\subseteq \sigma_{--}(T)$, then
			  there exists an open neighborhood $\cU\subset\dC$ of $\cF$ 
			  and a constant $M = M(\cF,T) >0$ such that 
			  $\cU\setminus \dR\subset\rho(T)$ and
			  $\s_\eps(T)\cap\cU \subset 
			  \big\{\lm\in \cU\colon |\Im\lm| \le M\eps\big\}$ for $\eps >0$.
	\end{myenum}
\end{thm}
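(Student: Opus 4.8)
The statement is quoted from \cite{Azizov-2005-226}, but here is how I would prove it. For part~(i) the plan is to exploit that $J$-self-adjointness turns $JT$ into a \emph{symmetric} operator. Indeed, from $T=JT^*J$ and $J=J^*=J^{-1}$ one gets $JT=T^*J$, so for $u\in\dom T$ the quantity $(JTu,u)=(Ju,Tu)$ is real. Consequently, for every $u\in\dom T$ and $\lambda\in\C$,
\begin{equation*}
	\Im\,(J(T-\lambda)u,u) = -(\Im\lambda)\,(Ju,u),
\end{equation*}
since $(JTu,u)$ and $(Ju,u)$ are both real. This yields the basic inequality $|\Im\lambda|\,|(Ju,u)|\le \|J\|\,\|(T-\lambda)u\|\,\|u\|$. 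If $\lambda\in\sigma_{++}(T)$, I would insert an approximate eigensequence $\{u_n\}$ with $\|u_n\|=1$ and $(T-\lambda)u_n\to0$: the right-hand side tends to $0$, while $\liminf(Ju_n,u_n)>0$ keeps $(Ju_n,u_n)$ bounded away from $0$, forcing $\Im\lambda=0$. The case $\sigma_{--}(T)$ is identical with signs reversed.

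For part~(ii) I would first reduce the pseudospectral claim to a resolvent estimate: it suffices to produce a neighbourhood $\cU$ of $\cF$ and $M>0$ with $\cU\setminus\R\subset\rho(T)$ and $\|(T-\lambda)^{-1}\|\le M/|\Im\lambda|$ for $\lambda\in\cU\setminus\R$; the bound $\sigma_\eps(T)\cap\cU\subset\{|\Im\lambda|\le M\eps\}$ then follows directly from the definition of $\sigma_\eps$. The heart of the matter is a \emph{uniform} definiteness estimate: I claim there are $\delta,\gamma>0$ such that $(Ju,u)\ge\gamma$ whenever $u\in\dom T$, $\|u\|=1$ and $\|(T-t)u\|<\delta$ for some $t\in\cF$. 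This I would prove by contradiction and compactness (taking $\cF$ compact first): a failing sequence would produce $t_n\in\cF$ and unit vectors $u_n$ with $\|(T-t_n)u_n\|\to0$ and $\limsup_n(Ju_n,u_n)\le0$; passing to a convergent $t_n\to t_*\in\cF$ exhibits $\{u_n\}$ as an approximate eigensequence at $t_*$, so $t_*\in\sap(T)\cap\cF\subset\sigma_{++}(T)$, contradicting $\liminf(Ju_n,u_n)>0$.

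With this in hand I would combine it with the basic inequality from part~(i). For $\lambda=a+\ii b$ close enough to $\cF$ pick $t\in\cF$ with $|\lambda-t|<\delta/2$ and argue by dichotomy for unit $u$: if $\|(T-\lambda)u\|\le\delta/2$ then $\|(T-t)u\|<\delta$, the uniform estimate gives $(Ju,u)\ge\gamma$, and the basic inequality yields $\|(T-\lambda)u\|\ge(\gamma/\|J\|)|\Im\lambda|$; otherwise $\|(T-\lambda)u\|>\delta/2\ge(\delta/2)|\Im\lambda|$ on a neighbourhood with bounded $\Im$. Either way $\|(T-\lambda)u\|\ge c|\Im\lambda|\,\|u\|$ with $c=\min(\gamma/\|J\|,\delta/2)$, so $T-\lambda$ is bounded below, hence injective with closed range. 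To upgrade to $\lambda\in\rho(T)$ I would use $J$-self-adjointness once more: since $T^*=JTJ$ and $\|Jw\|=\|w\|$, the same estimate applied to $\overline\lambda$ shows $T^*-\overline\lambda=J(T-\overline\lambda)J$ is also bounded below, whence $\ker(T^*-\overline\lambda)=\{0\}$ and $\ran(T-\lambda)$ is dense; together with closed range this gives surjectivity and $\|(T-\lambda)^{-1}\|\le 1/(c|\Im\lambda|)$, i.e. $M=1/c$.

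The main obstacle I expect is the uniform definiteness estimate, and specifically making it work for a \emph{non-compact} closed $\cF$: the contradiction argument as written uses compactness to extract $t_n\to t_*$, and for unbounded $\cF$ the accumulation point may escape. I would handle this by a localisation/covering argument --- proving the estimate on each compact piece of $\cF$ and assembling the neighbourhood $\cU$ (possibly shrinking towards infinity) --- with $M=M(\cF,T)$ allowed to depend on $\cF$, exactly as the statement permits. The remaining steps (reality, the algebraic resolvent identities, and the translation to pseudospectra) are routine once the uniform estimate is secured.
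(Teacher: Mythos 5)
The paper does not prove this theorem---it is quoted verbatim from \cite{Azizov-2005-226}---so there is no in-paper argument to compare against and your proof must stand on its own. Part~(i) is correct: $T=JT^*J$ gives $JT=T^*J$, hence $(JTu,u)=(Ju,Tu)=\ov{(JTu,u)}$ is real, the inequality $|\Im\lm|\,|(Ju,u)|\le\|J\|\,\|(T-\lm)u\|\,\|u\|$ follows, and an approximate eigensequence at a point of definite type forces $\Im\lm=0$. Part~(ii) is also correct \emph{for compact} $\cF$: the uniform definiteness estimate by contradiction plus compactness, the dichotomy yielding $\|(T-\lm)u\|\ge c\,|\Im\lm|\,\|u\|$ on a neighbourhood with $|\Im\lm|\le 1$, the upgrade from ``bounded below with closed range'' to $\lm\in\rho(T)$ via $T^*-\ov{\lm}=J(T-\ov{\lm})J$ and the unitarity of a symmetric involution, and the translation into the pseudospectral inclusion with $M=1/c$ are all sound. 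This is the standard route to the cited result, and compact $\cF$ is the only case the paper ever invokes (Theorems~\ref{thm:B_ext} and~\ref{thm:C_ext}).

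The gap is exactly the one you flagged, but your covering repair cannot close it: the constants obtained on compact pieces $\cF\cap[n,n+1]$ need not be bounded in $n$, and shrinking $\cU$ towards infinity does not help, because every neighbourhood of $\cF$ still contains $t+\ii s$ with $t\in\cF\cap[n,n+1]$ and $|s|$ arbitrarily small, which is where the local constraint $|\Im\lm|\le M_n\eps$ is sharp. In fact the statement is \emph{false} for unbounded closed $\cF$ as literally quoted. Take $\cH=\oplus_{n\ge 5}\dC^2$, $J=\oplus_n\operatorname{diag}(1,-1)$ and $T=\oplus_n T_n$ with $T_n=\bigl(\begin{smallmatrix} n+\frac14 & b_n\\ -b_n & n-\frac14\end{smallmatrix}\bigr)$ and $b_n=(\tfrac1{16}-\tfrac1{n^2})^{1/2}$. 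Each $T_n$ is $J$-self-adjoint with simple real eigenvalues $n\pm\tfrac1n$, and the normalized eigenvector $v_n$ at $n+\tfrac1n$ satisfies $(Jv_n,v_n)=\tfrac4n>0$; since an isolated simple eigenvalue whose eigenvector is $J$-positive lies in $\sigma_{++}$, the closed set $\cF:=\{n+\tfrac1n\}_{n\ge5}$ satisfies $\cF\cap\s(T)\subset\sigma_{++}(T)$. Yet $\|(T-(n+\tfrac1n+\ii s))^{-1}\|\ge b_n\,|s|^{-1}(4n^{-2}+s^2)^{-1/2}\ge c\,n\,|s|^{-1}$ for $0<|s|\le n^{-1}$ with a universal $c>0$, so no single $M$ can satisfy $\s_\eps(T)\cap\cU\subset\{|\Im\lm|\le M\eps\}$ on any neighbourhood $\cU$ of $\cF$. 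The upshot is that ``closed'' should be read as ``compact'' (consistent with how the result is actually applied in this paper); with that reading your proof is complete.
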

\begin{thm}\label{thm:pi_+}
\cite[Thm.~17, Thm.~18, Thm.~20]{Azizov-2005-226}
	Let $T \in \CH$ be such that $T=JT^*J$. Then the following statements hold.
	\begin{myenum}
		\item If $\lambda_0 \in\sigma_{\pi_+}(T)\setminus\sigma_{++}(T)$ 
				($\lambda_0 \in\sigma_{\pi_-}(T)\setminus\sigma_{--}(T)$),
			then $\lambda_0 \in \sigma_{\rm p}(T)$ and there is a 
			corresponding eigenvector $\psi_0$ satisfying
			$(J \psi_0, \psi_0) \leq 0$ 
			(resp., $(J \psi_0, \psi_0) \geq  0$).
		\item Let a closed finite interval $[a,b]$ be such that
			\begin{equation*}
				[a,b]\cap\sigma(T)\subseteq \sigma_{\pi_\pm}(T) 
				\quad \mbox{and} \quad [a,b]\subset \ov{\rho(T)},
			\end{equation*}
			where $\ov{\rho(T)}$ stands for the topological closure of $\rho(T)$ in $\dC$.
			Then there exists an open neighborhood
			 $\cU\subset\dC$ of $[a,b]$ such that:
			\begin{enumerate}[\upshape (a)]
				\item $\s(T)\cap\cU\subset\R$;
				\item there exist constants $m = m(\cU,T) \in \dN$ and 
				$M = M(\cU,T) > 0$ for which
				$\s_\eps(T)\cap\cU 
					\subset 
					\big\{\lm\in \cU\colon |\Im\lm| \le M\eps^{1/m}\big\}$,
					for $\eps >0$;
		                \item 
				there is at most finite number $N \in \N_0$ of 
				exceptional eigenvalues 
				$\{\nu_k\}_{k=1}^N \subset \cU\cap\dR$ of $T$
				such that
				$\big(\cU\cap\s(T)\cap \dR\big)
					\setminus \{\nu_k\}_{k=1}^N
					\subset \sigma_{\pm\pm}(T)$.
%               %
		\end{enumerate}
	\end{myenum}

\end{thm}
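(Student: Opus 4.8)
The plan is to treat the two items in turn, using item~(i) as structural input for item~(ii) and borrowing the definite-type localization already recorded in Theorem~\ref{thm:++}. For item~(i) I would prove the equivalent core implication: if $\lambda_0\in\sigma_{\pi_+}(T)$ and the form $(J\cdot,\cdot)$ is positive definite on the entire eigenspace $\ker(T-\lambda_0)$, then $\lambda_0\in\sigma_{++}(T)$. Granting this, the hypothesis $\lambda_0\notin\sigma_{++}(T)$ forces the form to be non-positive somewhere on $\ker(T-\lambda_0)$, producing a nonzero eigenvector $\psi_0$ with $(J\psi_0,\psi_0)\le 0$, which is exactly the claim (the $\sigma_{\pi_-}$ case being identical with signs reversed). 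To prove the core implication I would take any normalized approximate eigensequence $\{f_n\}_n\subset\dom T$ for $\lambda_0$ and split it against the finite-dimensional obstruction $\cH_{\lambda_0}^\perp$ supplied by Definition~\ref{def:spi}, writing $f_n=g_n+h_n$ with $g_n\in\cH_{\lambda_0}$ and $h_n\in\cH_{\lambda_0}^\perp$. By finite-dimensionality a subsequence gives $h_n\to h$; the closedness of $T$ together with $(T-\lambda_0)f_n\to 0$ then pins $h$ to $\ker(T-\lambda_0)$, where the form is assumed positive. The $\pi_+$ property controls the $g_n$-part from below, and estimating the cross term $(Jg_n,h_n)$ by boundedness and the assumed kernel positivity yields $\liminf_{n\to\infty}(Jf_n,f_n)>0$, i.e.~$\lambda_0\in\sigma_{++}(T)$.

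For item~(ii) I would invoke the local spectral function for spectra of type $\pi$, whose existence on intervals where $\s(T)$ is of type $\pi_\pm$ is guaranteed by the results quoted after Assumption~\ref{hyp}; see \cite{Azizov-2012-285,Philipp-2014}. For a bounded interval $\Delta$ with $[a,b]\subset\Delta$ and $\overline{\Delta}$ still inside the $\pi_\pm$ region this yields a $J$-self-adjoint projection $E(\Delta)$ commuting with $T$ whose range $E(\Delta)\cH$, equipped with $(J\cdot,\cdot)$, is a Pontryagin space of finite negative index. On such a space classical Pontryagin-space spectral theory forces $T|_{E(\Delta)\cH}$ to have only finitely many non-real eigenvalues and finitely many real critical points; since $[a,b]\subset\overline{\rho(T)}$ excludes non-real spectrum on or near $[a,b]$, only finitely many real exceptional points $\{\nu_k\}_{k=1}^N$ survive there. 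Away from the $\nu_k$ the spectrum is of type $++$, so Theorem~\ref{thm:++} supplies an open neighborhood with reality and a linear pseudospectral bound $|\Im\lambda|\le M\eps$, which gives item~(c); by item~(i) each $\nu_k$ carries a non-positive eigenvector, which is precisely what distinguishes it from the $++$ spectrum. Combining the reality on the $++$ part with that of the finitely many $\nu_k$ on a small enough neighborhood $\cU$ gives item~(a), while the maximal size $m$ of the finite Jordan blocks at the $\nu_k$ produces resolvent growth of order $|\Im\lambda|^{-m}$, hence the global bound $|\Im\lambda|\le M\eps^{1/m}$ of item~(b).

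The main obstacle sits in item~(i): identifying the limit $h$ of the finite-dimensional part as a genuine element of $\ker(T-\lambda_0)$ and controlling the cross term $(Jg_n,h_n)$ simultaneously. This needs a compatible interplay between the orthogonal splitting and $\dom T$—the projection onto $\cH_{\lambda_0}^\perp$ need not map into the domain—which I would secure by first replacing $\cH_{\lambda_0}$ with a nearby finite-codimension subspace whose finite-dimensional complement lies in $\dom T$, legitimate since $\dom T$ is dense and the $\pi_+$ property is stable under such finite-dimensional adjustments. In item~(ii) the delicate quantitative point is extracting the exponent $m$ from the Jordan structure at the exceptional points, together with the verification, via $[a,b]\subset\overline{\rho(T)}$, that the Pontryagin part contributes no spectrum off $\dR$ near $[a,b]$.
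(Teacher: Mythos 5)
First, note what the ``paper's own proof'' is here: the paper does not prove this theorem at all --- it is imported verbatim from \cite[Thm.~17, Thm.~18, Thm.~20]{Azizov-2005-226} in an appendix of known facts --- so your attempt has to stand on its own, measured against the argument in that reference. It does not stand: your item (i) fails at exactly the step you flag as the main obstacle. Having written $f_n = g_n + h_n$ with $h_n$ in the finite-dimensional complement and $h_n \to h$, you assert that closedness of $T$ together with $(T-\lambda_0)f_n \to 0$ ``pins $h$ to $\ker(T-\lambda_0)$''. This is false: closedness applies to a sequence jointly with its image sequence, and here neither $f_n$ converges strongly nor does $(T-\lambda_0)h_n$ tend to zero --- only the sum $(T-\lambda_0)(g_n + h_n)$ does, while $(T-\lambda_0)h_n \to (T-\lambda_0)h$ (after your domain adjustment), which need not vanish. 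What is true is different: along a subsequence $f_n \rightharpoonup f$ weakly, and since the graph of $T$ is a closed subspace, hence weakly closed, the \emph{weak limit} satisfies $f \in \ker(T-\lambda_0)$; your $h$ is then the projection of $f$ onto the complement, in general not a kernel element. The error propagates: the $\pi_+$ property applies to $\{g_n\}$ only if $(T-\lambda_0)g_n \to 0$, which again requires the unproved $h \in \ker(T-\lambda_0)$; and the cross term cannot be handled ``by boundedness'', since $|(Jg_n,h_n)| \le \|J\|\,\|g_n\|\,\|h_n\|$ is of the same order as the positive terms and carries no sign, so no positive $\liminf$ of $(Jf_n,f_n)$ can follow. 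The actual mechanism in \cite{Azizov-2005-226} is: argue by contradiction with $(Jf_n,f_n) \to L \le 0$, extract the weak limit $f \in \ker(T-\lambda_0)$ as above, and distinguish two cases. If $f = 0$, the finite-dimensional component tends to zero \emph{strongly} (weak and strong convergence coincide in finite dimensions), so $g_n$ is a genuine approximate eigensequence in $\cH_{\lambda_0} \cap \dom T$ and the $\pi_+$ property gives $\liminf (Jf_n,f_n) = \liminf (Jg_n,g_n) > 0$, a contradiction. If $f \neq 0$, then $(Jf,f) > 0$ by the kernel-positivity hypothesis, one subtracts $f$, notes that the cross term $(J(f_n - f),f)$ vanishes because of weak convergence against the \emph{fixed} vector $f$ (not because of a norm bound), and reduces the renormalized sequence $f_n - f$ to the first case. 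Your sketch contains neither the weak-compactness step nor this case distinction, and both are essential.

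Item (ii) has a different problem: your outline follows the right general lines, but it is circular at its key point. The existence of a local spectral function whose ranges are Pontryagin spaces on intervals of type-$\pi_\pm$ spectrum is not among the facts this paper imports --- the results quoted after Assumption~\ref{hyp} (from \cite{Azizov-2012-285,Philipp-2014}) concern intervals of \emph{definite} type --- and for type-$\pi$ intervals that existence statement is essentially equivalent to the theorem you are proving: in \cite{Azizov-2005-226} it is \emph{derived} from Theorems 17--18 together with Jonas's local definitizability machinery, not presupposed. Likewise, the exponent $m$ in (b) does not come from Jordan block sizes alone but from the order of local definitizability near the exceptional points, and that derivation is absent. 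As written, item (ii) is an appeal to the result rather than a proof of it.
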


\begin{thm}\label{thm:peturb_pi_+}
\cite[Thm.~19]{Azizov-2005-226} 
Let $T_k \in \CH$ be such that $T_k=JT_k^*J$, $k=1,2$.
%    Suppose that a bounded symmetric involution $J$ is dfnd on $\cH$
%    and that $A_k = JA^*_kJ$ for $k=1, 2$.
	Assume that $\rho(T_1)\cap\rho(T_2)\ne \varnothing$ and that
	$ (T_2-\mu)^{-1} - (T_1 -\mu)^{-1}$ is compact for some  $\mu\in\rho(T_1)\cap\rho(T_2)$. Then
	$$(\sigma_{\pi_\pm}(T_2)\cup \rho(T_2)) \cap \R  = (\sigma_{\pi_\pm}(T_1)\cup\rho(T_1)) \cap \R.$$
\end{thm}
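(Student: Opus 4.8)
The plan is to reduce the whole statement to the behaviour of \emph{weakly null} approximate eigensequences, for which the compactness of the resolvent difference is completely transparent. Two preliminary reductions help. First, since $T_k=(-J)T_k^*(-J)$ as well, replacing $J$ by $-J$ interchanges the roles of $\sigma_{\pi_+}$ and $\sigma_{\pi_-}$, so it suffices to treat the $\pi_+$ case. Second, I would fix $\lambda_0\in\R$ and aim to show that the property $\lambda_0\in\sigma_{\pi_+}(T_k)\cup\rho(T_k)$ is independent of $k$; this is symmetric in $T_1,T_2$ and delivers the asserted set equality at once.

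The analytic core is a transfer lemma. Put $R_k:=(T_k-\mu)^{-1}$ and $\zeta_0:=(\lambda_0-\mu)^{-1}$; a short resolvent computation shows that if $\{g_n\}\subset\dom T_k$ is a normalized approximate eigensequence for $T_k$ at $\lambda_0$, then $\|(R_k-\zeta_0)g_n\|\to 0$. Now assume $\{g_n\}$ is such a sequence for $T_2$ with $g_n\rightharpoonup 0$. Compactness of $K:=R_2-R_1$ gives $Kg_n\to 0$, whence $\|(R_1-\zeta_0)g_n\|\le\|(R_2-\zeta_0)g_n\|+\|Kg_n\|\to 0$. The vectors $h_n:=(\lambda_0-\mu)R_1g_n$ lie in $\dom T_1$ and satisfy $\|h_n-g_n\|=|\lambda_0-\mu|\,\|(R_1-\zeta_0)g_n\|\to 0$ together with $(T_1-\lambda_0)h_n=(\lambda_0-\mu)(g_n-h_n)\to 0$, so $\{h_n\}$ is again a weakly null approximate eigensequence, now for $T_1$, with $(Jh_n,h_n)-(Jg_n,g_n)\to 0$ because $J$ is bounded. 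By symmetry the weakly null approximate eigensequences of $T_1$ and of $T_2$ at $\lambda_0$ coincide up to a null perturbation and carry the same asymptotic values of $(J\cdot,\cdot)$.

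The remaining ingredient is a characterization of the $\pi_+$ property in terms of these sequences alone: for $\lambda_0\in\R$ one has $\lambda_0\in\sigma_{\pi_+}(T)\cup\rho(T)$ if and only if every normalized weakly null approximate eigensequence $\{g_n\}$ for $T$ at $\lambda_0$ satisfies $\liminf_n(Jg_n,g_n)>0$ (vacuously true when none exists). The forward direction is soft: a finite-rank projection annihilates any weakly null sequence in the limit, so such a sequence is asymptotically contained in any prescribed finite-codimensional subspace, in particular in the one furnished by Definition~\ref{def:spi}; subtracting the vanishing finite-rank part and renormalizing then gives $\liminf_n(Jg_n,g_n)>0$. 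Granting this characterization, the transfer lemma shows that its right-hand side holds for $T_1$ exactly when it holds for $T_2$, which is the desired $k$-independence and hence the theorem.

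The hard part is the converse of the characterization, namely producing from positivity on weakly null sequences an \emph{actual} finite-codimensional subspace realizing Definition~\ref{def:spi}. The obstruction is the part of an approximate eigensequence with a nonzero weak limit, which necessarily lies in $\ker(T-\lambda_0)$. Points off the essential spectrum are harmless (there the near-kernel is finite dimensional, a normal eigenvalue is automatically of type $\pi_+$, and no weakly null approximate eigensequence exists), so only $\lambda_0\in\sess(T)$ is at stake, and there one must show that $(J\cdot,\cdot)$ has only a finite-dimensional negative part on the relevant spectral subspace so that the finitely many offending directions can be excised. This finiteness of the negative index is the genuine Krein-space content, supplied by the local spectral function of \cite{Azizov-2005-226}; I expect it to be the true obstacle, the compactness transfer of the weakly null sequences being comparatively elementary.
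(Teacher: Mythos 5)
The paper offers no proof of this statement at all --- it is imported verbatim from \cite[Thm.~19]{Azizov-2005-226} --- so your proposal can only be judged on its own terms. Your two reductions and your transfer lemma are correct and are indeed the standard mechanism: the identity $(R_k-\zeta_0)g_n=-(\lambda_0-\mu)^{-1}R_k(T_k-\lambda_0)g_n$ shows a normalized approximate eigensequence for $T_k$ at $\lambda_0\neq\mu$ is one for $R_k$ at $\zeta_0$; compactness of $R_2-R_1$ then transports weakly null such sequences from $T_2$ to $T_1$ via $h_n=(\lambda_0-\mu)R_1g_n$, with $\|h_n-g_n\|\to0$ and hence equal asymptotics of $(J\cdot,\cdot)$.

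The argument is nevertheless incomplete, and the gap is exactly the characterization you state but do not prove: ``$\lambda_0\in\sigma_{\pi_+}(T)\cup\rho(T)$ iff every normalized weakly null approximate eigensequence at $\lambda_0$ has $\liminf_n(Jg_n,g_n)>0$.'' Both directions are needed (the forward one for one operator, the converse for the other), so without them you have only shown that the weakly-null positivity property is invariant under compact resolvent perturbations, not that $\sigma_{\pi_\pm}\cup\rho$ is. Moreover: (a) your forward direction as written fails because $(I-P)g_n$ need not lie in $\dom T$, so it is not admissible for Definition~\ref{def:spi}; the fix is to project along a finite-dimensional complement $F\subset\dom T$ of $\cH_{\lambda_0}\cap\dom T$ in $\dom T$ chosen with $F\cap\cH_{\lambda_0}=\{0\}$ (then the projection onto $F$ along $\cH_{\lambda_0}$ is bounded, has finite rank, and maps $\dom T$ into itself), and one must also note that a $J$-self-adjoint operator has no real residual spectrum, so that ``vacuously true'' does not misclassify points of $\sigma(T)\setminus\sap(T)$. (b) The converse, which you defer to the local spectral function and call the true obstacle, actually does not need that machinery: if $\lambda_0\in\sap(T)\setminus\sigma_{\pi_+}(T)$, then for every $m$ the finite-codimensional subspace $\{e_1,\dots,e_m\}^{\perp}$ fails the defining property, and a diagonal selection over $m$ produces a normalized weakly null approximate eigensequence with $\limsup_m(Jg_m,g_m)\le 0$. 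This equivalence is precisely \cite[Thm.~15]{Azizov-2005-226} (see also \cite{Philipp-2014}); supplying it (in either the elementary or the cited form) is what is missing before your transfer lemma yields the theorem.
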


\begin{thm}\label{thm:perturb++}
\cite[Thm.~4.5]{Azizov-2011-83}
Let $T_1\in\cC(\cH)$. Let a compact set $\cF\subset\dC$ satisfy  
	$\cF\subset \sigma_{++}(T_1)\cup \rmr(T_1)$, 
	(resp. $\cF\subset \sigma_{--}(T_1)\cup \rmr(T_1)$). 
	Then there exists a constant $\gamma = \gamma(\cF,T_1)\in(0,1)$ such that, 
	for all $T_2\in\cC(\cH)$ satisfying $\wh\delta(T_1,T_2) \le \gamma$,
	\begin{equation*}
		\cF\subset \sigma_{++}(T_2)\cup \rmr(T_2), 
		\quad (\text{resp., } \cF\subset \sigma_{--}(T_2)\cup \rmr(T_2)).
	\end{equation*}
\end{thm}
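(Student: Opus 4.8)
The plan is to recast the condition $\cF\subset\spp(T_1)\cup\rmr(T_1)$ as a single \emph{uniform coercivity estimate} and then to show that such an estimate is stable under small perturbations in the gap metric. Concretely, I claim that for a compact $\cF\subset\dC$ and any $T\in\CH$ one has $\cF\subset\spp(T)\cup\rmr(T)$ if and only if there is a constant $c>0$ with
\begin{equation}\label{eq:coercive}
	\|(T-\lm)f\|^2 + (Jf,f)\ge c\|f\|^2,\qquad \lm\in\cF,\ f\in\dom T.
\end{equation}
The implication ``\eqref{eq:coercive}$\Rightarrow$inclusion'' is immediate: if $\lm\in\cF\cap\sap(T)$ and $\{f_n\}_n$ is a normalized approximate eigensequence, then $\|(T-\lm)f_n\|\to0$ forces $\liminf_n(Jf_n,f_n)\ge c>0$, so $\lm\in\spp(T)$, while points of $\cF$ outside $\sap(T)$ lie in $\rmr(T)$ by definition. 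The converse is the only place where compactness is used: were \eqref{eq:coercive} to fail for every $c>0$, one produces $\lm_n\in\cF$ and normalized $f_n\in\dom T$ with $\|(T-\lm_n)f_n\|^2+(Jf_n,f_n)\to0$; extracting $\lm_n\to\lm_0\in\cF$ gives $\|(T-\lm_0)f_n\|\to0$, i.e.\ an approximate eigensequence for $\lm_0\in\sap(T)\cap\cF\subset\spp(T)$ along which $(Jf_n,f_n)\le\|(T-\lm_n)f_n\|^2+(Jf_n,f_n)\to0$, contradicting the definition of positive type.

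The heart of the matter is transferring \eqref{eq:coercive} from $T_1$ to any $T_2$ with $\wh\delta(T_1,T_2)\le\gamma$ small. I would use the graph description of the gap: since $\delta(T_2,T_1)\le\wh\delta(T_1,T_2)\le\gamma$, for every $f_2\in\dom T_2$ the graph vector $u_2:=(f_2,T_2f_2)$ admits $u_1=(f_1,T_1f_1)$ in the graph of $T_1$ with $\|u_2-u_1\|\le\gamma\|u_2\|$, whence $\|f_2-f_1\|\le\gamma\|u_2\|$ and $\|T_2f_2-T_1f_1\|\le\gamma\|u_2\|$. Fix $R>0$ with $\cF\subset\{|\lm|\le R\}$ and argue with $\|f_2\|=1$ by homogeneity. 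If $\|T_2f_2\|\ge K$ with $K$ chosen so large that $(K-R)^2>\|J\|$, then $\|(T_2-\lm)f_2\|\ge K-R$ already makes the left-hand side of \eqref{eq:coercive} at least $(K-R)^2-\|J\|>0$, uniformly and for every $\gamma$. If instead $\|T_2f_2\|\le K$, then $\|u_2\|\le\sqrt{1+K^2}$ is controlled, so $f_1$ is $O(\gamma)$-close to $f_2$ (hence $\|f_1\|\ge\tfrac12$ for small $\gamma$) and $(T_1-\lm)f_1$ is $O(\gamma)$-close to $(T_2-\lm)f_2$, with all error constants depending only on $K,R,\|J\|$ and on the a priori bound $\|(T_1-\lm)f_1\|\le K+1+2R$. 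Inserting $f_1$ into \eqref{eq:coercive} for $T_1$ and estimating the discrepancies of the three terms $\|(\,\cdot\,-\lm)\cdot\|^2$, $(J\cdot,\cdot)$ and $\|\cdot\|^2$ shows that \eqref{eq:coercive} holds for $T_2$ with constant, say, $c/8$ once $\gamma=\gamma(\cF,T_1)\in(0,1)$ is small enough. The inclusion $\cF\subset\spp(T_2)\cup\rmr(T_2)$ then follows from the easy direction applied to $T_2$, and the negative-type statement is obtained verbatim by replacing $J$ with $-J$.

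The main obstacle is precisely the non-scale-invariant term $\|(T-\lm)f\|^2$ in \eqref{eq:coercive}: the gap metric only controls a graph vector $(f,Tf)$ relative to its full norm $\sqrt{\|f\|^2+\|Tf\|^2}$, so the perturbation error in the transfer step is proportional to $\|T_2f_2\|$ rather than to $\|f_2\|$ and is \emph{not} uniformly small on the unit sphere of $\dom T_2$. The case split on the size of $\|T_2f_2\|$ is what circumvents this: large $\|T_2f_2\|$ is handled for free by the $\lm$-shift, while on the complementary bounded region the graphs genuinely stay close and \eqref{eq:coercive} transfers. One must take care to fix $K$ first, making the large-norm case unconditional, and only then choose $\gamma$, so that the two thresholds do not interfere.
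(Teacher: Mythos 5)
The paper offers no proof of this statement---it is quoted verbatim from \cite[Thm.~4.5]{Azizov-2011-83}---so you are only being measured against the cited source; but your argument has a genuine gap in its key lemma. The claimed equivalence between $\cF\subset\spp(T)\cup\rmr(T)$ and the estimate $\|(T-\lm)f\|^2+(Jf,f)\ge c\|f\|^2$ is false in the direction you actually need. Take $\cH=\C^2$, $J=\mathrm{diag}(1,-1)$, $T=\mathrm{diag}(0,\frac12)$ and $\cF=\{0\}$: then $0\in\spp(T)$ (any normalized approximate eigensequence for $0$ must concentrate on the $e_1$-direction, where $(Jf,f)\to 1$), yet for $f=e_2$ one has $\|(T-0)f\|^2+(Jf,f)=\frac14-1<0$, so no $c>0$ works. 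The defect is visible in your proof of the converse: since $J$ is a self-adjoint involution, $(Jf_n,f_n)\ge-\|f_n\|^2$, so from $\|(T-\lm_n)f_n\|^2+(Jf_n,f_n)<1/n$ you only obtain $\|(T-\lm_n)f_n\|^2<1+1/n$, i.e.\ boundedness of the residuals, not their convergence to zero. The sequence you extract therefore need not be an approximate eigensequence for $\lm_0$, and no contradiction with positivity of type is reached.

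The repair is to weight the residual term: for compact $\cF$, the inclusion $\cF\subset\spp(T)\cup\rmr(T)$ is equivalent to the existence of \emph{two} constants $K,c>0$ with $K\|(T-\lm)f\|^2+(Jf,f)\ge c\|f\|^2$ for all $\lm\in\cF$ and $f\in\dom T$. The forward implication is unchanged, and in the converse the choice $K=n$, $c=1/n$ now forces $\|(T-\lm_n)f_n\|^2<(1+1/n)/n\to0$, after which your compactness argument runs as written. The weighted estimate is equally amenable to your transfer step---the case split on the size of $\|T_2f_2\|$, which is the genuinely good idea here for taming the non-scale-invariance of the graph metric, goes through with $K$ carried along, and large residuals are handled even more easily. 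With this correction your proof becomes essentially the quantitative-characterization-plus-gap-perturbation argument of the cited reference, merely packaged as a single coercive inequality rather than as an $(\eps,\delta)$-characterization of points of positive type.
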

%----------------------------------------------------------------------

\end{appendix}
%
%\newpage
%
%\bibliographystyle{acm}

%\bibliography{references}
%\bibliography{C:/Data/00Synchronized/references}

%
\end{document}